\newcommand{\andf}{\quad\hbox{and}\quad}
\newcommand{\with}{\quad\hbox{with}\quad}
\newcommand{\newcom}{\newcommand}
\def\inte#1{
\displaystyle\mathop{#1\kern0pt}^\circ }
\newcom{\al}{\alpha}
\newcom{\de}{\delta}
\newcom{\Th}{\Theta}
\newcom{\be}{\beta}
\newcom{\s}{\sigma}
\newcom{\eps}{\epsilon }
\newcom{\ve}{\varepsilon}
\newcom{\ga}{\gamma}
\newcom{\Ga}{\Gamma}
\newcom{\ka}{\kappa}
\newcom{\Lam}{\Lambda}
\newcom{\lam}{\lambda}
\newcom{\bp}{\Phi}
\newcom{\om}{\omega}
\newcom{\Sig}{\Sigma}
\newcom{\sig}{\sigma}
\newcom{\tht}{\theta}
\newcom{\tri}{\triangle}
\newcom{\oo}{\infty}
\newcom{\h}{{\rm h}}
\newcom{\rmv}{{\rm v}}
\newcom{\hs}{\hslash}
\newcom{\vphi}{\varphi}
\def\dive{\mathop{\rm div}\nolimits}
\newcom{\grad}{\nabla}
\newcom{\lap}{\Delta}
\newcom{\cB}{{\mathcal B}}
\newcom{\cC}{{\mathcal C}}
\newcom{\cD}{{\mathcal D}}
\newcom{\cF}{{\mathcal F}}
\newcom{\cL}{{\mathcal L}}
\newcom{\cM}{{\mathcal M}}
\newcom{\cP}{{\mathcal P}}
\newcom{\cS}{{\mathcal S}}
\newcom{\cQ}{{\mathcal Q}}
\newcom{\cT}{{\mathcal T}}
\newcom{\cY}{{\mathcal Y}}
\newcom{\cZ}{{\mathcal Z}}
\newcom{\R}{\Bbb R}
\newcom{\T}{\Bbb T}
\newcom{\N}{\Bbb N}
\newcom{\Z}{\Bbb Z}
\newcom{\C}{\Bbb C}
\newcom{\E}{\Bbb E}
\newcom{\f}{\frac}
\newcom{\dint}{\displaystyle\int}
\newcom{\dsum}{\displaystyle\sum}
\newcom{\dlim}{\displaystyle\lim}
\newcom{\ov}{\overline}
\newcom{\wt}{\widetilde}
\newcom{\pt}{\partial_t}
\newcom{\p}{\partial}
\newcom\na{\nabla}
\newcom\rto{\rightarrow}
\newcom\lto{\leftarrow}
\newcom\mto{\mapsto}
\newcom{\disp}{\displaystyle}
\newcom{\non}{\nonumber}
\newcom{\no}{\noindent}
\newcom{\QED}{$\square$}
\def\eqdefa{\buildrel\hbox{\footnotesize def}\over =}
\newcommand{\beq}{\begin{equation}}
\newcommand{\eeq}{\end{equation}}
\newcommand{\beqo}{\begin{equation*}}
\newcommand{\eeqo}{\end{equation*}}
\newcommand{\ben}{\begin{eqnarray}}
\newcommand{\een}{\end{eqnarray}}
\newcommand{\beno}{\begin{eqnarray*}}
\newcommand{\eeno}{\end{eqnarray*}}
\def\gpe{\nabla \psi ^\varepsilon}
\def\cle{c^\ve_1}
\def\cre{c^\ve_2}
\def\cei{c^\ve_i}
\def\ue{u^\ve}
\def\lc{c_1}
\def\rc{c_2}
\def\dl{D_1}
\def\dr{D_2}
\def\zl{z_1}
\def\zr{z_2}
\def\re{\rho^\ve}
\def\Gl{\Gamma_1}
\def\Gr{\Gamma_2}
\def\pw{\Phi_W}
\def\pe{\psi^\varepsilon}
\def\cirs{c_i^R}
\def\prs{\Phi^R}
\def\urs{u^R}
\def\rrs{\rho^R}
\def\cisum{c_i^S}
\def\psum{\Phi^S}
\def\usum{u^S}
\def\rsum{\rho^S}
\def\cik{c_i^{\scriptscriptstyle( k)}}
\def\ciik{c_{\scriptscriptstyle i,I}^{\scriptscriptstyle(k)}}
\def\cilbk{c_{\scriptscriptstyle i,LB}^{\scriptscriptstyle(k)}}
\def\cirbk{c_{\scriptscriptstyle i,RB}^{\scriptscriptstyle(k)}}
\def\cilmk{c_{\scriptscriptstyle i,LM}^{\scriptscriptstyle(k)}}
\def\cirmk{c_{\scriptscriptstyle i,RM}^{\scriptscriptstyle(k)}}
\def\cio{c_i^{\scriptscriptstyle(0)}}
\def\ciio{c_{\scriptscriptstyle i,I}^{\scriptscriptstyle(0)}}
\def\cilbo{c_{\scriptscriptstyle i,LB}^{\scriptscriptstyle(0)}}
\def\cirbo{c_{\scriptscriptstyle i,RB}^{\scriptscriptstyle(0)}}
\def\cilmo{c_{\scriptscriptstyle i,LM}^{\scriptscriptstyle(0)}}
\def\cirmo{c_{\scriptscriptstyle i,RM}^{\scriptscriptstyle(0)}}
\def\cil{c_i^{\scriptscriptstyle(1)}}
\def\ciil{c_{\scriptscriptstyle i,I}^{\scriptscriptstyle(1)}}
\def\cilbl{c_{\scriptscriptstyle i,LB}^{\scriptscriptstyle(1)}}
\def\cirbl{c_{\scriptscriptstyle i,RB}^{\scriptscriptstyle(1)}}
\def\cilml{c_{\scriptscriptstyle i,LM}^{\scriptscriptstyle(1)}}
\def\cirml{c_{\scriptscriptstyle i,RM}^{\scriptscriptstyle(1)}}
\def\cir{c_i^{\scriptscriptstyle(2)}}
\def\ciir{c_{\scriptscriptstyle i,I}^{\scriptscriptstyle(2)}}
\def\cilbr{c_{\scriptscriptstyle i,LB}^{\scriptscriptstyle(2)}}
\def\cirbr{c_{\scriptscriptstyle i,RB}^{\scriptscriptstyle(2)}}
\def\cilmr{c_{\scriptscriptstyle i,LM}^{\scriptscriptstyle(2)}}
\def\cirmr{c_{\scriptscriptstyle i,RM}^{\scriptscriptstyle(2)}}
\def\ciis{c_{\scriptscriptstyle i,I}^{\scriptscriptstyle(3)}}
\def\rio{\rho_{\scriptscriptstyle I}^{\scriptscriptstyle(0)}}
\def\rlbo{\rho_{\scriptscriptstyle LB}^{\scriptscriptstyle(0)}}
\def\rlmo{\rho_{\scriptscriptstyle LM}^{\scriptscriptstyle(0)}}
\def\rl{\rho^{\scriptscriptstyle(1)}}
\def\ril{\rho_{\scriptscriptstyle I}^{\scriptscriptstyle(1)}}
\def\rlbl{\rho_{\scriptscriptstyle LB}^{\scriptscriptstyle(1)}}
\def\rlml{\rho_{\scriptscriptstyle LM}^{\scriptscriptstyle(1)}}
\def\rr{\rho^{\scriptscriptstyle(2)}}
\def\rir{\rho_{\scriptscriptstyle I}^{\scriptscriptstyle(2)}}
\def\rlbr{\rho_{\scriptscriptstyle LB}^{\scriptscriptstyle(2)}}
\def\rrbr{\rho_{\scriptscriptstyle RB}^{\scriptscriptstyle(2)}}
\def\rlmr{\rho_{\scriptscriptstyle LM}^{\scriptscriptstyle(2)}}
\def\rrmr{\rho_{\scriptscriptstyle RM}^{\scriptscriptstyle(2)}}
\def\ris{\rho_{\scriptscriptstyle I}^{\scriptscriptstyle(3)}}
\def\uo{u^{\scriptscriptstyle(0)}}
\def\uio{u_{\scriptscriptstyle I}^{\scriptscriptstyle(0)}}
\def\ul{u^{\scriptscriptstyle(1)}}
\def\uil{u_{\scriptscriptstyle I}^{\scriptscriptstyle(1)}}
\def\ur{u^{\scriptscriptstyle(2)}}
\def\uir{u_{\scriptscriptstyle I}^{\scriptscriptstyle(2)}}
\def\uis{u_{\scriptscriptstyle I}^{\scriptscriptstyle(3)}}
\def\vk{v^{\scriptscriptstyle(k)}}
\def\vik{v_{\scriptscriptstyle I}^{\scriptscriptstyle(k)}}
\def\vlbk{v_{\scriptscriptstyle LB}^{\scriptscriptstyle(k)}}
\def\vrbk{v_{\scriptscriptstyle RB}^{\scriptscriptstyle(k)}}
\def\vlmk{v_{\scriptscriptstyle LM}^{\scriptscriptstyle(k)}}
\def\vrmk{v_{\scriptscriptstyle RM}^{\scriptscriptstyle(k)}}
\def\vio{v_{\scriptscriptstyle I}^{\scriptscriptstyle (0)}}
\def\vlbo{v_{\scriptscriptstyle LB}^{\scriptscriptstyle (0)}}
\def\vrbo{v_{\scriptscriptstyle RB}^{\scriptscriptstyle (0)}}
\def\vlmo{v_{\scriptscriptstyle LM}^{\scriptscriptstyle (0)}}
\def\vrmo{v_{\scriptscriptstyle RM}^{\scriptscriptstyle (0)}}
\def\vil{v_{\scriptscriptstyle I}^{\scriptscriptstyle (1)}}
\def\vlbl{v_{\scriptscriptstyle LB}^{\scriptscriptstyle (1)}}
\def\vrbl{v_{\scriptscriptstyle RB}^{\scriptscriptstyle (1)}}
\def\vlml{v_{\scriptscriptstyle LM}^{\scriptscriptstyle (1)}}
\def\vrml{v_{\scriptscriptstyle RM}^{\scriptscriptstyle(1)}}
\def\vir{v_{\scriptscriptstyle I}^{\scriptscriptstyle(2)}}
\def\vlbr{v_{\scriptscriptstyle LB}^{\scriptscriptstyle(2)}}
\def\vrbr{v_{\scriptscriptstyle RB}^{\scriptscriptstyle(2)}}
\def\vlmr{v_{\scriptscriptstyle LM}^{\scriptscriptstyle(2)}}
\def\vrmr{v_{\scriptscriptstyle RM}^{\scriptscriptstyle(2)}}
\def\wk{w^{\scriptscriptstyle (k)}}
\def\wik{w_{\scriptscriptstyle I}^{\scriptscriptstyle (k)}}
\def\wlbk{w_{\scriptscriptstyle LB}^{\scriptscriptstyle (k)}}
\def\wrbk{w_{\scriptscriptstyle RB}^{\scriptscriptstyle (k)}}
\def\wlmk{w_{\scriptscriptstyle LM}^{\scriptscriptstyle (k)}}
\def\wrmk{w_{\scriptscriptstyle RM}^{\scriptscriptstyle (k)}}
\def\wio{w_{\scriptscriptstyle I}^{\scriptscriptstyle (0)}}
\def\wlbo{w_{\scriptscriptstyle LB}^{\scriptscriptstyle (0)}}
\def\wrbo{w_{\scriptscriptstyle RB}^{\scriptscriptstyle (0)}}
\def\wlmo{w_{\scriptscriptstyle LM}^{\scriptscriptstyle (0)}}
\def\wrmo{w_{\scriptscriptstyle RM}^{\scriptscriptstyle (0)}}
\def\wil{w_{\scriptscriptstyle I}^{\scriptscriptstyle(1)}}
\def\wlbl{w_{\scriptscriptstyle LB}^{\scriptscriptstyle(1)}}
\def\wrbl{w_{\scriptscriptstyle RB}^{\scriptscriptstyle(1)}}
\def\wlml{w_{\scriptscriptstyle LM}^{\scriptscriptstyle(1)}}
\def\wrml{w_{\scriptscriptstyle RM}^{\scriptscriptstyle(1)}}
\def\wir{w_{\scriptscriptstyle I}^{\scriptscriptstyle(2)}}
\def\wlbr{w_{\scriptscriptstyle LB}^{\scriptscriptstyle(2)}}
\def\wrbr{w_{\scriptscriptstyle RB}^{\scriptscriptstyle(2)}}
\def\wlmr{w_{\scriptscriptstyle LM}^{\scriptscriptstyle(2)}}
\def\wrmr{w_{\scriptscriptstyle RM}^{\scriptscriptstyle(2)}}
\def\pk{\Phi^{\scriptscriptstyle (k)}}
\def\pik{\Phi_{\scriptscriptstyle I}^{\scriptscriptstyle(k)}}
\def\plbk{\Phi_{\scriptscriptstyle LB}^{\scriptscriptstyle(k)}}
\def\prbk{\Phi_{\scriptscriptstyle RB}^{\scriptscriptstyle (k)}}
\def\plmk{\Phi_{\scriptscriptstyle LM}^{\scriptscriptstyle (k)}}
\def\prmk{\Phi_{\scriptscriptstyle RM}^{\scriptscriptstyle (k)}}
\def\po{\Phi^{\scriptscriptstyle(0)}}
\def\pio{\Phi_{\scriptscriptstyle I}^{\scriptscriptstyle (0)}}
\def\plbo{\Phi_{\scriptscriptstyle LB}^{\scriptscriptstyle(0)}}
\def\prbo{\Phi_{\scriptscriptstyle RB}^{\scriptscriptstyle(0)}}
\def\plmo{\Phi_{\scriptscriptstyle LM}^{\scriptscriptstyle(0)}}
\def\prmo{\Phi_{\scriptscriptstyle RM}^{\scriptscriptstyle(0)}}
\def\pl{\Phi^{\scriptscriptstyle(1)}}
\def\pil{\Phi_{\scriptscriptstyle I}^{\scriptscriptstyle(1)}}
\def\plbl{\Phi_{\scriptscriptstyle LB}^{\scriptscriptstyle(1)}}
\def\prbl{\Phi_{\scriptscriptstyle RB}^{\scriptscriptstyle(1)}}
\def\plml{\Phi_{\scriptscriptstyle LM}^{\scriptscriptstyle(1)}}
\def\prml{\Phi_{\scriptscriptstyle RM}^{\scriptscriptstyle(1)}}
\def\pr{\Phi^{\scriptscriptstyle(2)}}
\def\pir{\Phi_{\scriptscriptstyle I}^{\scriptscriptstyle(2)}}
\def\plbr{\Phi_{\scriptscriptstyle LB}^{\scriptscriptstyle(2)}}
\def\prbr{\Phi_{\scriptscriptstyle RB}^{\scriptscriptstyle(2)}}
\def\plmr{\Phi_{\scriptscriptstyle LM}^{\scriptscriptstyle(2)}}
\def\prmr{\Phi_{\scriptscriptstyle RM}^{\scriptscriptstyle(2)}}
\def\prek{p^{\scriptscriptstyle(k)}}
\def\preik{p_{\scriptscriptstyle I}^{\scriptscriptstyle(k)}}
\def\prelbk{p_{\scriptscriptstyle LB}^{\scriptscriptstyle(k)}}
\def\prerbk{p_{\scriptscriptstyle RB}^{\scriptscriptstyle(k)}}
\def\prelmk{p_{\scriptscriptstyle LM}^{\scriptscriptstyle(k)}}
\def\prermk{p_{\scriptscriptstyle RM}^{\scriptscriptstyle(k)}}
\def\prefr{p^{\scriptscriptstyle(-2)}}
\def\preifr{p_{\scriptscriptstyle I}^{\scriptscriptstyle(-2)}}
\def\prelbfr{p_{\scriptscriptstyle LB}^{\scriptscriptstyle(-2)}}
\def\prelmfr{p_{\scriptscriptstyle LM}^{\scriptscriptstyle(-2)}}
\def\prefl{p^{\scriptscriptstyle(-1)}}
\def\preifl{p_{\scriptscriptstyle I}^{\scriptscriptstyle(-1)}}
\def\prelbfl{p_{\scriptscriptstyle LB}^{\scriptscriptstyle(-1)}}
\def\prerbfl{p_{\scriptscriptstyle RB}^{\scriptscriptstyle(-1)}}
\def\prelmfl{p_{\scriptscriptstyle LM}^{\scriptscriptstyle(-1)}}
\def\prermfl{p_{\scriptscriptstyle RM}^{\scriptscriptstyle(-1)}}
\def\preo{p^{\scriptscriptstyle (0)}}
\def\preio{p_{\scriptscriptstyle I}^{\scriptscriptstyle(0)}}
\def\prelbo{p_{\scriptscriptstyle LB}^{\scriptscriptstyle(0)}}
\def\prerbo{p_{\scriptscriptstyle RB}^{\scriptscriptstyle(0)}}
\def\prelmo{p_{\scriptscriptstyle LM}^{\scriptscriptstyle(0)}}
\def\prermo{p_{\scriptscriptstyle RM}^{\scriptscriptstyle(0)}}
\def\prel{p^{\scriptscriptstyle(1)}}
\def\preil{p_{\scriptscriptstyle I}^{\scriptscriptstyle(1)}}
\def\prelbl{p_{\scriptscriptstyle LB}^{\scriptscriptstyle(1)}}
\def\prerbl{p_{\scriptscriptstyle RB}^{\scriptscriptstyle(1)}}
\def\prelml{p_{\scriptscriptstyle LM}^{\scriptscriptstyle(1)}}
\def\prerml{p_{\scriptscriptstyle RM}^{\scriptscriptstyle(1)}}
\def\prer{p^{\scriptscriptstyle(2)}}
\newtheorem{thm}{Theorem}[section]
\newtheorem{lem}{Lemma}[section]
\newtheorem{rmk}{Remark}[section]
\newtheorem{cor}{Corollary}[section]
\newtheorem{prop}{Proposition}[section]
\renewcommand{\theequation}{\thesection.\arabic{equation}}
\newtheorem{Theorem}{Theorem}[section]
\newtheorem{Proposition}[Theorem]{Proposition}
\begin{document}
\title[Quasi-neutral limit of NPNS system]
{Quasi-neutral limit of Nernst-Planck-Navier-Stokes  system }

\author[P. Zhang]{Ping Zhang}%
\address[P. Zhang]
 {Academy of Mathematics $\&$ Systems Science
and  Hua Loo-Keng Key Laboratory of Mathematics, The Chinese Academy of
Sciences, Beijing 100190, CHINA, and School of Mathematical Sciences, University of Chinese Academy of Sciences, Beijing 100049, CHINA. }
\email{zp@amss.ac.cn}

\author[Y. Zhang]{Yibin Zhang}
\address[Y. Zhang]
 {Academy of Mathematics $\&$ Systems Science, The Chinese Academy of
Sciences, Beijing 100190, CHINA.  } \email{zhangyibin22@mails.ucas.ac.cn}

\date{\today}

\begin{abstract}
In this paper, we investigate the quasi-neutral limit of Nernst-Planck-Navier-Stokes  system
in a smooth bounded domain $\Omega$ of $\R^d$ for $d=2,3,$ with
``electroneutral boundary conditions" and well-prepared data. We first prove by using modulated energy estimate
that the solution sequence converges to the limit system in the norm of $L^\infty((0,T);L^2(\Omega))$ for some positive time $T.$
In order to justify the limit in a stronger norm, we need to construct both the  initial layers and weak boundary layers in the approximate
solutions.
\end{abstract}

\maketitle

{\bf \normalsize Keywords:}  Nernst-Planck-Navier-Stokes  system;\, quasi-neutral limit;\, modulated energy estimate.

\vskip 0.2cm
\noindent {\sl AMS Subject Classification (2000):}
{ 82C21; 82D15; 35Q92}

\renewcommand{\theequation}{\thesection.\arabic{equation}}
\setcounter{equation}{0}

\section{Introduction}\label{section1}
The Nernst-Planck-Navier-Stokes  system,  in short NPNS system,  describes the time evolution of ionic concentration in solvents,
which are transported by viscous incompressible fluid and diffuse under an electric potential and their own concentration gradients.
 The main purpose of this paper is to study the quasi-electroneutral limit ($\ve \rightarrow $0) of the following binary  NPNS system on a smooth bounded domain $\Omega \subset  \mathbb{R}^d$ for $d = 2,3,$
\begin{equation}\label{NPNS}
 \quad\left\{\begin{array}{l}
\pt c_1 + u\cdot \grad c_1 = D_1 \dive (\grad c_1 + z_1 c_1 \grad \bp),\\
\pt c_2 + u\cdot \grad c_2 = D_2 \dive (\grad c_2 + z_2 c_2 \grad \bp),\\
- \ve^2 \lap \bp = \rho = z_1 c_1+z_2 c_2,\\
\pt u+ u\cdot \grad u -\nu \lap u=-\grad p - K \rho \grad \bp, \\
\dive u=0, \\
c_1|_{t=0}=c_1(0),\quad c_2|_{t=0}=c_2(0),\quad u|_{t=0}=u_0.
\end{array}\right.
\end{equation}
The function $c_i=c_i(x,t)$ represents the concentration of the $i$-th species ($i=1,2$),
and $\bp$ is the electrical potential generated by the charge density $\rho.$
The divergence-free vector field $u$ is the velocity of the fluid, and $p$ is a scalar  pressure function.
$\zl>0>\zr $, $D_i >0$ are  constants (which may differ from each other), and designate the valency and diffusivity of the $i$-th species, respectively. In what follows, we denote $D^* \eqdefa \min\{D_1,D_2\}$, and without loss of generality,  we assume that $D_1 \geq D_2$.
The positive constant $\ve $ is a rescaled dielectric permittivity of the solvent and is proportional to  the  Debye length. The kinematic viscosity of the fluid  $\nu >0 $, and $K>0 $ is a constant related to the Boltzmann's constant and the absolute temperature (for simplicity,
 here we take $K=1$). One may check \cite{Ru90} for an introduction of the
basic physical and mathematical issues about the system \eqref{NPNS}.

We consider the electroneutrality (or the vanishing of electronic charge) of  NPNS system with ``well-prepared" initial data, i.e., $\rho(0)=0$. In addition, we consider the following ``electroneutral boundary conditions" for $c_i$
\begin{equation}\label{EN}
    (\mathbf{EN})\quad c_1|_{\partial \Omega}=\ga_1(x)>0, \quad c_2|_{\partial \Omega}=\ga_2(x)>0, \quad z_1 \ga_1+z_2 \ga_2=0.
\end{equation}
We also implement the system \eqref{NPNS} with  Dirichlet boundary conditions for $u$ and $\Phi:$
\begin{equation}
    u |_{\p\Omega}=0 \andf \bp |_{\p \Omega}=W(x).
    \nonumber
\end{equation}
Since here we consider  the quasineutral limit of the system \eqref{NPNS} in finite time, the boundary data $\ga_i(x)$ and $W(x)$
are allowed to depend on time. For simplicity, we just take the time independent data.

We denote by $\Ga_i$ and $\pw$ to be determined  respectively by
\begin{equation}\label{1.3}
    \quad\left\{ \begin{array}{l}
         \lap \Gamma_i =0 ,\quad x\in \Omega,\\
         \Gamma_i |_{\p \Omega}=\gamma_i(x),
    \end{array} \right.
\andf \
    \left\{ \begin{array}{l}
         \lap \pw =0 ,\quad x\in \Omega,\\
         \pw |_{\p \Omega}=W(x).
    \end{array} \right.
\end{equation}
Then due to the uniqueness of Poisson equation and (\ref{EN}), we have $\zl \Gl +\zr \Gr =0$.

In the case $d=2$, the global well-posedness of  binary NPNS system has been established in \cite{CIL NPNS far from eq}. In the case  $d=3$,
the problem of global well-posedness of binary NPNS system has only partial results, especially due to the part of Navier-Stokes equations.
In fact, even the global existence of pure 3D Nernst-Planck system or the system coupled to Stokes flow is in general open. One may check \cite{FS17, JS09, LW20, S09}
for the global existence of weak solutions of the 3-D system \eqref{NPNS}.
Thus, we can only expect that the  lifespan to the  strong solution of 3-D NPNS system has a positive lower bound, which is independent of the Debye length $\ve$, and which will be established in Section \ref{section2}.

 Constantin and Ignatova etal \cite{CI On the npns system, CIL NPNS far from eq} first proved  the global existence and stability
 of solutions to 2-D NPNS system under the blocking boundary conditions  or uniform selective boundary conditions (we also mention that the same result was obtained in \cite{BFA14} with blocking boundary conditions for the ions and a Robin boundary condition for the electric potential). They further proved in \cite{CIL Interior electroneutrlity} that
 $$
 \lim_{\ve\to 0}\lim_{t\to\infty}\sup_{x\in K}|\rho(t,x)|=0,
 $$
 for any fixed initial conditions and any compact subset $K$ of $\Omega.$ They also established the same result in 3D with the same boundary conditions and small perturbations of steady states data. In \cite{WJ21}, the authors investigated the initial layer for the 3-D system \eqref{NPNS} in $\T^3$ in the quasi-neutral regime.

Formally, by setting $\ve=0$ in the Poisson equation of \eqref{NPNS}, one has $\rho = 0 $ and the Nernst-Planck equations and Navier-Stokes equations will be decoupled. To make this scenario mathematically rigorous, we introduce $\psi^\ve \eqdefa \bp^\ve - \pw $, and  rewrite the NPNS system as
\begin{subequations} \label{S1eq1}
\begin{gather}
\label{e1}
    \partial_t \cle + \ue \cdot\grad \cle =\dl \dive (\grad \cle + \zl \cle \grad \pe + \zl \cle \grad \pw),\\
\label{e2}
    \partial_t \cre + \ue \cdot\grad \cre =\dl \dive (\grad \cre + \zr \cre \gpe+ \zr \cre \grad \pw),\\
\label{e3}
    -\ve^2 \lap \psi^\ve =\re=\zl \cle +\zr\cre,\\
\label{e4}
     \p_t  \ue + \ue \cdot \grad \ue -\nu \lap \ue =-\grad p^\ve - \re\gpe - \re \grad \pw \\
    \qquad\quad = -\grad p^\ve +\ve ^2 \dive (\grad \psi^\ve \otimes \grad \psi^\ve)-\frac{\ve ^2}{2}\grad \lvert\grad \psi^\ve \rvert^2 -\re \grad \pw, \nonumber\\
\label{e5}
    \dive \ue=0.
    \end{gather}
\end{subequations}
Here we have used the relation
\begin{equation}
    \lap \pe \grad \pe =\dive (\grad \psi^\ve \otimes \grad \psi^\ve)-\frac{1}{2}\grad \lvert\grad \psi^\ve \rvert^2.
    \nonumber
\end{equation}
And we implement the following boundary conditions for the system \eqref{S1eq1}:
\begin{equation}\label{ebdry}
   \cle|_{\p \Omega}=\gamma_i(x), \quad \ue|_{\p \Omega}=0, \quad \pe |_{\p \Omega}=0,\quad z_1 \ga_1+z_2 \ga_2=0.
\end{equation}

By using modulated energy estimate, we shall prove in   Section \ref{section3} that as $\ve \rightarrow 0, $
the solutions of the   system \eqref{S1eq1} with ``well-prepared" initial data converge to the following decoupled system:
\begin{subequations} \label{S1eq2}
\begin{gather}
\label{limit1}
    \partial_t \lc + u \cdot\grad \lc =\dl \dive (\grad \lc + \zl \lc \grad \psi + \zl \lc \grad \pw),\\
\label{limit2}
    \partial_t \rc + u \cdot\grad \rc =\dr \dive (\grad \rc + \zr \rc \grad \psi + z_2 \rc \grad \pw),\\
\label{limit3}
    \rho =\zl \lc+\zr\rc=0,\\
\label{limit4}
    \pt u+ u\cdot \grad u -\nu \lap u=-\grad p,\\
\label{limit5}
    \dive u=0,\\
  \label{limit-bdry}
    c_i|_{\p \Omega}=\gamma_i(x),\quad \psi|_{\p \Omega}=0,\quad u|_{\p \Omega} =0 ,\quad z_1 \ga_1+z_2 \ga_2=0.
    \end{gather}
\end{subequations}

By combining equations (\ref{limit1}), (\ref{limit2}), (\ref{limit3}) and boundary conditions(\ref{limit-bdry}), $\psi$ is the unique solution of the following non-degenerate second order elliptic equation
\begin{equation}\label{psi}
\quad \left\{  \begin{array}{l}
 \dive\big( (\zl \dl \grad \lc +\zr\dr \grad \rc)+(\zl^2\dl \lc +\zr^2\dr\rc)(\grad \psi +\grad \pw)\big)=0,\\
\psi |_{\p \Omega}=0.
\end{array}\right.
\end{equation}

In view of (\ref{limit3}), we can equivalently rewrite (\ref{psi}) as
\begin{equation}\label{equivalent psi}
\quad \left\{  \begin{array}{l}
 \dive\big( (\dl-\dr) \grad \lc +(\zl\dl -\zr\dr) \lc (\grad \psi +\grad \pw) \big)=0, \\
\psi |_{\p \Omega}=0.
\end{array}\right.
\end{equation}

\begin{rmk} In Section \ref{section2}, we shall prove by
 maximum principle that $\zl^2\dl \lc +\zr^2\dr\rc$ has a  positive lower bound as long as $\zl^2\dl \lc(0) +\zr^2\dr\rc(0)$ is strictly positive.
So that the equation (\ref{psi}) is strictly elliptic.

Furthermore, by plugging equations (\ref{limit3}) and (\ref{psi}) into equations (\ref{limit1}) and (\ref{limit2}),
we obtain the following   equation for $c_i$ ($i=1,2$):
\begin{equation}\label{ trans + diff}
    \p_t c_i +u\cdot \grad c_i = \frac{(\zl-\zr)\dl\dr}{\zl\dl-\zr\dr}\lap c_i.
\end{equation}
\end{rmk}

In what follows, we always denote
\begin{subequations} \label{S1eq3}
\begin{gather}
    \lam^\ve_i \eqdefa \min\bigl\{ \inf_{\p \Omega} \ga_i, \inf_\Omega c^\ve_i(0)\bigr\},\qquad \Lam^\ve_i \eqdefa \max\bigl\{\sup_{\p \Omega} \ga_i, \sup_\Omega c^\ve_i(0)\bigr\},\label{S1eq3a}\\
\lam_i \eqdefa \min\bigl\{ \inf_{\p \Omega} \ga_i, \inf_\Omega c_i(0) \bigr\}  ,\qquad \Lam_i \eqdefa \max\bigl\{\sup_{\p \Omega} \ga_i, \sup_\Omega c_i(0)\bigr\}, \label{S1eq3b}
 \end{gather}
\end{subequations}
and we denote by  $A$ to be the Stokes operator with domain $D(A) \eqdefa H^2(\Omega)\cap H^1_{0,\sigma}(\Omega)$, and denote by $V$ to be the space $D(A^\frac{1}{2})$.
We always assume that there exist $0<\lam \leq \Lam < \oo$ such that
\begin{equation}\label{uniform bound for ci}
    0<\lam \leq \min \{ \lam^\ve_i, \lam_i \} \leq \max \{ \Lam^\ve_i, \Lam_i \} \leq \Lam <\oo.
\end{equation}

Our first main result of this paper states as follows:

\begin{thm}\label{thm1}
{\sl
Let $d=2$ or $3$,  and the initial data be ``well-prepared" i.e. $\re(0)=0.$ We assume $(\cei(0),\ue(0), c_i(0),u(0)) \in H^5$.
If there exists a positive constant $C>0 $ so that
\begin{subequations}
    \begin{gather}
         \|\cei(0) - c_i(0)\|_{L^2} + \|\ue(0) - u_0\|_ {L^2}  \leq C\ve, \label{assumption 1}\\
          \|\ue(0)\|_{V} \leq C \label{assumtion 2},
    \end{gather}
\end{subequations}
then there exist positive constants $M, T>0,$ which  depend  only on initial data, $\nu,\lam, \Lam, W(x)$, $ \ga_i(x), z_i$ and $D_i$ for $i=1,2,$  so that
\begin{subequations} \label{S1eq4}
\begin{gather}
 \|\cei -c_i\|_{L^\oo_T(L^2)}+\|\ue - u\|_{L^\oo_T(L^2)} +\ve \|\grad \pe\|_{L^\oo_T(L^2)}\leq M \ve,
   \label{result 1 in thm 1}\\
    \|\grad \cei - \grad c_i\|_{L^2_T(L^2)} + \|\grad \ue - \grad u\|_{L^2_T(L^2)}
    + \|\gpe - \grad \psi\|_{L^2_T(L^2)} \leq M \ve,  \label{result 2 in thm 1} \\
    \ve^{-1}\|\re \|_{L^2_T(L^2)} = \ve \|\lap\pe\|_{L^2_T(L^2)} \leq M \ve. \label{result 3 in thm 1}
    \end{gather}
\end{subequations}
}
\end{thm}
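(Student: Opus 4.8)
The plan is to run a modulated energy estimate comparing the solution $(\cei, \ue, \pe)$ of \eqref{S1eq1} with the limit solution $(c_i, u, \psi)$ of \eqref{S1eq2}, exploiting the well-prepared initial condition $\re(0)=0$ and the relative-entropy structure of the Nernst-Planck part. Set the error quantities $d_i \eqdef \cei - c_i$, $v \eqdef \ue - u$, $\phi \eqdef \pe - \psi$, and consider the functional
\begin{equation}\label{Penergy}
  \mathcal E(t) \eqdef \frac12 \int_\Omega |v|^2\,dx + \sum_{i=1,2} \frac{D_i}{z_i^2}\int_\Omega H\!\left(\frac{\cei}{c_i}\right) c_i\,dx + \frac{\ve^2}{2}\int_\Omega |\grad\phi|^2\,dx,
\end{equation}
where $H(s) = s\log s - s + 1 \ge 0$ is the standard convex entropy density; the weighting by $D_i/z_i^2$ is what produces the cancellation of the stiff cross terms $z_i \cei \grad\pe$ against the Poisson equation $-\ve^2\lap\pe = \re$. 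Differentiating $\mathcal E$ in time and using \eqref{e1}--\eqref{e5} together with \eqref{limit1}--\eqref{limit5}, the electric forcing terms $-\re\grad\pe$ in the momentum equation and the $\ve^2\dive(\grad\pe\otimes\grad\pe) - \tfrac{\ve^2}{2}\grad|\grad\pe|^2$ reformulation combine with the time derivative of $\tfrac{\ve^2}{2}\|\grad\phi\|_{L^2}^2$ and the entropy dissipation to yield, after integration by parts (all boundary terms vanish because $d_i|_{\p\Omega}=0$, $v|_{\p\Omega}=0$, $\phi|_{\p\Omega}=0$ by \eqref{ebdry} and \eqref{limit-bdry}), a bound of the schematic form
\begin{equation}\label{Pgronwall}
  \frac{d}{dt}\mathcal E(t) + c\,\mathcal D(t) \le C\bigl(1 + \|\grad u\|_{L^\infty} + \|\grad\psi\|_{L^\infty} + \cdots\bigr)\,\mathcal E(t) + C\ve^2\cdot(\text{known data}),
\end{equation}
where $\mathcal D(t)$ collects $\|\grad v\|_{L^2}^2$, the Fisher-information-type dissipation $\sum_i \int c_i |\grad(d_i/c_i) + z_i(\grad\phi)\cdot(\text{weights})|^2$ controlling $\|\grad d_i\|_{L^2}^2$ and $\|\grad\phi\|_{L^2}^2$, and $\ve^2\|\lap\pe\|_{L^2}^2 = \ve^{-2}\|\re\|_{L^2}^2$. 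The coefficient functions in \eqref{Pgronwall} are integrable on $(0,T)$ by the $H^5$ regularity of the limit solution and the uniform-in-$\ve$ a priori bounds from Section \ref{section2}; the uniform positive lower bound $\lambda \le \cei, c_i$ from \eqref{uniform bound for ci} and the remark following \eqref{equivalent psi} is essential to convert entropy control into $L^2$ control of $d_i$ and to handle the quotients. Gr\"onwall then gives $\mathcal E(t) + \int_0^t \mathcal D \le C\ve^2$ on $[0,T]$, which unpacks into \eqref{result 1 in thm 1}, \eqref{result 2 in thm 1}, \eqref{result 3 in thm 1}; the hypotheses \eqref{assumption 1}, \eqref{assumtion 2} guarantee $\mathcal E(0) \le C\ve^2$ (here $\re(0)=0$ forces the initial electric-energy term to vanish) and provide the starting control on $\|\ue(0)\|_V$ needed to close the velocity estimates.

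The main obstacle is the quasi-neutral cross term and the reciprocal structure: the terms coupling $\grad\pe$ to the concentrations are $O(\ve^{-2})$ in naive estimates, and only the precise algebraic cancellation between $\sum_i (D_i/z_i^2)$-weighted entropy fluxes, the Poisson identity $\re = -\ve^2\lap\pe$, and the momentum forcing makes them telescope into controllable quantities; getting every sign and weight right (and isolating a genuinely dissipative Fisher information rather than merely a sign-indefinite remainder) is the delicate point. A secondary difficulty is that the limit potential $\psi$ solves the quasilinear elliptic problem \eqref{psi}/\eqref{equivalent psi}, so one must first establish enough regularity and lower bounds for $\psi$ (via the positivity of $\zl^2\dl\lc + \zr^2\dr\rc$ from Section \ref{section2} and elliptic theory for \eqref{psi}) before the coefficients in \eqref{Pgronwall} can be asserted to lie in $L^1_T$. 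Finally, because the time $T$ is only the common lifespan of the strong solutions (Section \ref{section2}), one carries the estimate on a fixed interval where all norms of the limit solution are under control, and no bootstrap to global time is attempted.
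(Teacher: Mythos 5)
Your overall strategy (modulated relative entropy between $(\cei,\ue,\pe)$ and $(c_i,u,\psi)$, closed by Gronwall on a common lifespan) matches the paper's, and the functional you write down is close in spirit to the paper's $H^\ve$. There are, however, two structural departures and one genuine gap.

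On structure: the paper's modulated functional is $H^\ve(t)=\sum_i\int_\Omega c_i\,\varphi(\cei/c_i)\,dx+\tfrac{\ve^2}{2}\|\grad\pe\|_{L^2}^2+\tfrac12\|\ue-u\|_{L^2}^2$, with \emph{unit} weights on the entropy and with $\grad\pe$, not $\grad(\pe-\psi)$, in the Poisson term. Your weights $D_i/z_i^2$ are problematic: the cancellation in the paper comes from the electrochemical-potential identity $\grad\mu_i^\ve=\grad\log\cei+z_i\grad(\pe+\pw)$ and from pairing the entropy dissipation (carrying a $D_i$ in front of $\int c_i^\ve|\grad\mu_i^\ve|^2$) with a \emph{single} Poisson-energy term common to both species; inserting species-dependent weights $D_i/z_i^2$ in the entropy would require a matching species-dependent weighting of the one Poisson energy, which is not available, so the ``stiff'' terms do not telescope as you assert. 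Also, using $\grad(\pe-\psi)$ rather than $\grad\pe$ changes the time-derivative computation because $-\ve^2\lap(\pe-\psi)=\re+\ve^2\lap\psi$, which introduces extra $\pt\lap\psi$ contributions; the paper's choice of $\grad\pe$ is what produces the clean identity $\tfrac{\ve^2}{2}\tfrac{d}{dt}\|\grad\pe\|_{L^2}^2=\int_\Omega\pt\re\,\pe\,dx$.

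The genuine gap is the claim that everything ``telescopes'' into a pure Gronwall inequality \eqref{Pgronwall}. The paper's Proposition \ref{S3prop1} explicitly does \emph{not} close the estimate at the level of a differential inequality: after all the cancellations, there remains the term $\int_\Omega\psi\bigl(\pt\re+\ue\cdot\grad\re\bigr)\,dx$ on the right-hand side, which cannot be absorbed into $H^\ve$ or $\Theta^\ve$ by H\"older and cannot be estimated to be $O(\ve^2)$ directly — the remark closing Section \ref{section3} exhibits precisely why any naive bound loses the $\ve^2$ factor. The paper's resolution is to first integrate Proposition \ref{S3prop1} over $[0,t]$ and then integrate that problematic term \emph{by parts in time}, exploiting the well-prepared datum $\re(0)=0$ together with $-\ve^2\lap\pe=\re$ to produce
$\int_\Omega\psi\re\,dx\big|_0^t-\int_0^t\int_\Omega(\pt\psi+\ue\cdot\grad\psi)\re\,dxdt'$,
each piece of which is then $O(\ve^2)$. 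Your outline only uses $\re(0)=0$ to bound the initial energy $\mathcal E(0)$, and does not identify this time-integration-by-parts step, which is the main technical idea of the proof. Without it, the argument does not close.
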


\begin{rmk}\begin{enumerate}
  \item[(1)]  Due to the ``well-prepared" assumption, $\pe(0)$ equals to zero and doesn't appear in our assumption (\ref{assumption 1}).

  \item[(2)]   (\ref{assumtion 2}) is not required in the case d=2.

   \item[(3)]  The main idea used to prove Theorem \ref{thm1} is to use modulated energy estimate,
  which was first introduced by Brenier \cite{B00}  to study the quasi-neutral limit of
  Vlasov-Poisson system to the incompressible Euler equations, which motivates the first author of this
  paper to  investigate the semi-classical limit of Schr\"odinger-Poisson equations through Wigner transform (\cite{Z1}).
  The ``modulated energy functional" in \cite{Z1} was simplified by the authors in \cite{lz3} in order to deal with
  the semi-classical limit of cubic Sch\"odinger equation in the exterior domain.
    \end{enumerate}
\end{rmk}

In order to consider the convergence in \eqref{S1eq4} in stronger norm, we need to investigate both the initial layers
and boundary layers. For simplicity, we just take $\Omega=\T^{d-1}\times(0,1)$ for $d=2,3.$

\begin{thm}\label{main thm}
    {\sl
Let $d=2$ or $3$,  and the initial data be ``well-prepared" i.e. $\re(0)=0.$ We assume $(\cei(0),\ue(0), c_i(0),u(0)) \in H^5$.
If there exists a positive constant $C>0 $ so that
\begin{subequations}
    \begin{gather}
        \|\cei(0) - c_i(0)\|_{L^2}  + \|\ue(0) - u_0\|_ {L^2} \leq C\ve^3 ,\label{main thm,assumption1}\\
        \|\grad\cei(0) - \grad c_i(0)\|_{L^2} + \|\ue(0)- u_0\|_V   \leq C\ve^\frac{3}{2},\label{main thm,assumption2} \\
        \|\lap\cei(0)-\lap c_i(0)\|_{L^2} + \|A\ue(0)-Au_0\|_{L^2} \leq \ve^\frac{1}{2},\label{main thm,assumption3} \\
         \psi(0) = 0, \label{main thm,assumption4}
    \end{gather}
\end{subequations}
then there exist positive constants $\ve_0, M, T>0,$ which  depend only on initial data, $\nu, \lam,\Lam,$ $W(x), \ga_i(x), z_i$ and $D_i$ for $i=1,2,$  so that for $\ve\leq \ve_0$,
\begin{equation*}
     \|\cei- c_i\|_{L^\oo_T(H^2(\T^{d-1}\times(0,1)))} +  \|\re\|_{L^\oo_T(H^2(\T^{d-1}\times(0,1)))} \leq M\ve^\frac{1}{2}.
\end{equation*}
    }
\end{thm}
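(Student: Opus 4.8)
\medskip
\noindent\textbf{Strategy and ansatz.} The plan is to carry out the classical three-scale scheme. First I construct an approximate solution $(\barcei,\barue,\barpe)$ of \eqref{S1eq1} which satisfies the boundary conditions \eqref{ebdry}, matches the prescribed data, and solves the system up to a residual negligible in the norms to be closed; the theorem then follows from a weighted energy estimate on the difference. The approximate solution is a superposition of an \emph{interior} part, an \emph{initial layer} and a \emph{weak boundary layer},
\[
\barcei=c_i+\ve^2 c_i^{(2)}+\cdots\;+\;c_i^{\mathrm{ini}}\!\left(x,\frac{t}{\ve^2}\right)\;+\;c_i^{\mathrm{bl}}\!\left(x,\frac{\mathrm{dist}(x,\p\Omega)}{\ve},t\right),
\]
and $\barue,\barpe$ decompose likewise; here $t/\ve^2$ is the fast time $\tau$ (the initial-layer profiles decaying exponentially in $\tau$) and $\mathrm{dist}(x,\p\Omega)/\ve$ is the stretched normal variable, the boundary layer being ``weak'' in that its amplitude is $O(\ve^2)$ while its transition length is $O(\ve)$. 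It is exactly the $H^2$-norm of this $O(\ve^2)$-amplitude, $O(\ve)$-width boundary layer, which is of order $O(\ve^{2}\cdot\ve^{-2}\cdot\ve^{1/2})=O(\ve^{1/2})$, that is responsible for the rate $\ve^{1/2}$; the same layer in $\re=-\ve^2\lap\pe$ yields $\|\re\|_{L^\oo_T(H^2)}=O(\ve^{1/2})$, although $\re$ is only $O(\ve^2)$ in the interior.

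\medskip
\noindent\textbf{Construction of the approximate solution.} The leading interior profiles $(c_i,u)$ solve the decoupled limit system \eqref{S1eq2}, with $\psi^{\mathrm{int}}=\psi+\ve^2\psi^{(2)}+\cdots$ where $\psi$ is determined by the non-degenerate elliptic equation \eqref{equivalent psi}; its non-degeneracy rests on the maximum-principle lower bound for $\zl^2\dl\lc+\zr^2\dr\rc$ proved in Section \ref{section2}, which together with \eqref{uniform bound for ci} keeps all the auxiliary elliptic and parabolic problems below uniformly elliptic on $[0,T]$. Matching powers of $\ve$ in \eqref{S1eq1}, the order-$\ve^2$ correctors $(c_i^{(2)},u^{(2)},\psi^{(2)})$ solve the linearisation of \eqref{S1eq2} about the leading profile, subject to the algebraic constraint $\zl c_1^{(2)}+\zr c_2^{(2)}=-\lap\psi$ coming from the $\ve^2$-balance of \eqref{e3}; because the assumption $\psi(0)=0$ gives $\lap\psi|_{t=0}=0$, one may initialise the correctors compatibly with \eqref{main thm,assumption1}--\eqref{main thm,assumption3}. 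Since $\lap\psi$ does not in general vanish on $\p\Omega$, the interior correctors cannot meet \eqref{ebdry} and the Poisson constraint simultaneously at order $\ve^2$: the resulting $O(\ve^2)$ boundary discrepancy is cancelled by the boundary-layer profiles, which solve, in $(z,t)$ with $z=\mathrm{dist}(x,\p\Omega)/\ve$, a linear reaction-diffusion system whose reaction coefficient is of Debye type, comparable to $\zl^2\dl\lc+\zr^2\dr\rc$, hence has exponentially decaying solutions of transition length $O(\ve)$. Finally, the fast relaxation of $\re,\pe$ toward the quasi-static interior state near $t=0$, together with the $H^2$ initial discrepancy permitted by \eqref{main thm,assumption3}, is absorbed by the initial-layer profiles, which at leading order solve $\p_\tau(\cdot)=\cL_0(\cdot)$ with $\cL_0$ the operator of \eqref{S1eq2} frozen at $t=0$ and decay like $e^{-c\tau}$. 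Iterating to a sufficiently high order (and adding a corner profile on $\{t=0\}\times\p\Omega$), one obtains $(\barcei,\barue,\barpe)$ solving \eqref{S1eq1} with a residual of order $\ve^{N}$, $N$ as large as desired, in $L^\oo_T(L^2)$, meeting \eqref{ebdry}, matching the data up to $O(\ve^N)$, and with $0<\lam/2\le\barcei\le 2\Lam$ for $\ve$ small.

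\medskip
\noindent\textbf{The error estimate.} Write $c_i^\ve=\barcei+R_i$, $\ue=\barue+R_u$, $\pe=\barpe+R_\psi$, so that $-\ve^2\lap R_\psi=\zl R_1+\zr R_2+O(\ve^N)$ with $R_\psi|_{\p\Omega}=0$, and $(R_1,R_2,R_u)$ solves the linearisation of \eqref{S1eq1} about the approximate solution plus the $O(\ve^N)$ residual. The plan is a weighted $H^2$ energy estimate, the weights on $\|(R_1,R_2,R_u)\|_{L^2}$, $\|\grad(R_1,R_2,R_u)\|_{L^2}$ and $\|\lap(R_1,R_2,R_u)\|_{L^2}$ being chosen so that \eqref{main thm,assumption1}--\eqref{main thm,assumption3} control the data and the stated bound is obtained by propagation. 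Two structural facts drive the estimate. First, a modulated-energy pairing (the $H^2$ analogue of the one behind Theorem \ref{thm1}): testing the equations for $(R_1,R_2)$ against suitable multiples of the relative chemical potentials, the Poisson-coupled drift terms produce, at leading order, the damping $-\ve^{-2}\!\int(\zl^2\dl\lc+\zr^2\dr\rc)\,|\zl R_1+\zr R_2|^2\le 0$ together with the parabolic dissipation $\dl\|\grad R_1\|^2+\dr\|\grad R_2\|^2$; since $\dl\neq\dr$ the charge $\re$ satisfies no closed equation, so this favourable structure cannot be read off from $\zl R_1+\zr R_2$ alone and must be organised on the pair $(R_1,R_2)$. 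Second, parabolic smoothing of the Nernst-Planck operators and the Stokes structure (through the operator $A$) for $R_u$, together with the elliptic gain $\|R_\psi\|_{H^{k}}\lesssim\ve^{-2}\|\zl R_1+\zr R_2\|_{H^{k-2}}$ from \eqref{e3}. Commuting $\lap$ through the equations additionally produces commutators with the large derivatives of the boundary layer, controlled using the $O(\ve^2)$ smallness of its amplitude and the damping. One then closes the resulting differential inequality by a continuation argument for $\ve\le\ve_0$ on $[0,T]$, with $T$ the uniform-in-$\ve$ lifespan of Section \ref{section2}; finally $\|c_i^\ve-c_i\|_{H^2}\le\|R_i\|_{H^2}+\|\barcei-c_i\|_{H^2}=O(\ve^{1/2})$, the first term by the energy estimate and the second because $\barcei-c_i$ consists of the $O(\ve^2)$ interior corrector, the exponentially small initial layer and the $O(\ve^2)$-amplitude boundary layer; similarly $\|\re\|_{H^2}=\ve^2\|\lap\pe\|_{H^2}=O(\ve^{1/2})$.

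\medskip
\noindent\textbf{Main obstacle.} The heart of the proof is this $H^2$ energy estimate with the singular $\ve^{-2}$ Poisson coupling: two derivatives must be pushed through the Nernst-Planck equations and through the thin boundary layer (each normal derivative of which costs a power of $\ve^{-1}$) while the sign of the damping term is preserved, and, because $\dl\neq\dr$, the favourable structure must be extracted on the pair $(R_1,R_2)$ rather than on $\re$, which makes the commutator bookkeeping the delicate point. The remaining steps are more routine but still need care: constructing the boundary and corner layers to high enough order that their $H^2$ residuals are $o(\ve^{1/2})$, which is what forces the ``weak'' $O(\ve^2)$ amplitude and hence the rate $\ve^{1/2}$ rather than anything smaller; arranging the compatibility of all the correctors at the corner $\{t=0\}\times\p\Omega$, which is precisely the role of \eqref{main thm,assumption1}--\eqref{main thm,assumption4} and of the $H^5$ regularity; and, for $d=3$, working on the finite interval $[0,T]$ supplied by the lifespan bound of Section \ref{section2}.
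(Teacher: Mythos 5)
Your construction mirrors the paper's scheme — a two-time-scale and two-spatial-scale ansatz with interior, initial-layer, boundary-layer (and corner/mixed-layer) profiles, and you correctly identify that the $O(\ve^{1/2})$ rate is driven entirely by the $H^2$ norm of the weak boundary layer of amplitude $O(\ve^2)$ and width $O(\ve)$ in $c_i^\ve-c_i$ and in $\re=-\ve^2\lap\pe$. The uniform ellipticity via the maximum-principle lower bound, the relative-entropy/chemical-potential pairing, and the fact that the favourable structure must be organised on $(R_1,R_2)$ rather than on $\re$ when $D_1\neq D_2$ are all exactly as in the paper.

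Where your route genuinely diverges is in the closure of the error estimate at the $H^2$ level. You propose a \emph{weighted $H^2$ energy estimate obtained by commuting $\lap$ through the equations}, controlling the resulting commutators with the large normal derivatives of the boundary layer by the $O(\ve^2)$ amplitude and the damping. The paper does not do this. Instead it proceeds in three independent steps: first an $H^1$-level modulated energy estimate (Theorem \ref{thm4}) giving $\|\grad\cisum\|_{L^\oo_T(L^2)}+\ve\|\lap\Phi^S\|_{L^\oo_T(L^2)}\lesssim\ve^{3/2}$ and $\|\lap\cisum\|_{L^2_T(L^2)}\lesssim\ve^{3/2}$, obtained by testing the error equation against $-\dive(\cio^{-1}\grad\cisum+z_i\grad\Phi^S)$; second a $\partial_t$-estimate (Theorem \ref{thm3}) giving $\|\p_t c^R_i\|_{L^\oo_T(L^2)}\lesssim\ve^{1/2}$, obtained by differentiating the error equations in time and repeating the modulated energy argument; and third, one \emph{reads $\lap\cisum$ off the equation itself} in Corollary \ref{cor5.1}, so that the $L^\oo_T$-in-time control of $\lap\cisum$ comes from the $L^\oo_T$-control of $\p_t\cisum$ rather than from a direct second-order spatial estimate. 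This $\p_t$-then-elliptic-regularity trick is precisely what sidesteps the commutators you flag as the ``delicate point'': differentiating in $t$ does not differentiate the $y/\ve$-scaled boundary-layer coefficients, whereas applying $\lap$ does and immediately costs extra inverse powers of $\ve$. So the two approaches buy different things: yours, if it could be closed, is more direct and self-contained at the $H^2$ level; the paper's exploits parabolicity to reduce the $H^2$ problem to a first-order-in-time problem plus algebra. I would view your $\lap$-commutation as an unverified and substantially harder alternative, not an equivalent presentation.

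Two smaller discrepancies worth noting. First, you assert an expansion to arbitrarily high order $N$; the paper stops at order $\ve^2$, carefully balancing the residual sizes (Lemma \ref{estimate of K_i-P_i}) against the $\ve^{1/2}$ target — higher order is unnecessary because the boundary layer already fixes the rate at $\ve^{1/2}$ regardless. Second, a key device in the paper's proof of the theorem, which your sketch omits, is that the assumptions \eqref{main thm,assumption1}--\eqref{main thm,assumption4} (in particular $\psi(0)=0$ and $\cei(0)-c_i(0)=O(\ve^3)$) force the first-order interior correctors and all the initial and mixed layers to vanish identically, drastically simplifying the residual bookkeeping and the verification of the initial compatibility \eqref{assumption 2 in thm3}. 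If one were to pursue the general $\lap$-commutation route without this reduction, the corner-layer interactions would have to be tracked explicitly in the commutators, which is another place the two schemes differ materially.
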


\begin{rmk} In fact, we shall present more precise expansions for $\cei$ and $\re$ in {Theorem \ref{thm4}} below.
    One may check \eqref{S5eq1} and \eqref{S5eq2} for details.
\end{rmk}

We end this section with the structure of this paper.

In Section \ref{section2}, we shall prove that the $\ve$-dependent NPNS system has a strong solution on some $\ve$-independent time interval.

In Section \ref{section3}, we present the proof of Theorem \ref{thm1}.

In Section \ref{analysis of layers}, we investigate both the initial and boundary layers of the $\ve$-dependent NPNS system.

Finally in Section \ref{convergence of Loo}, we present the proof of Theorem \ref{main thm}. In fact, we shall present
more detailed approximation, see Theorems \ref{thm4} and \ref{thm3}.

\renewcommand{\theequation}{\thesection.\arabic{equation}}
\setcounter{equation}{0}
\section{Preliminaries and technical lemmas}\label{section2}

In this section, we present some relevant results and some basic lemmas.  We first define the electrochemical potentials as follows:
\begin{equation}\label{electrochemical potential}
    \mu^\ve_i\eqdefa \log c^\ve_i +z_i(\pe + \pw), \quad \mu^*_i\eqdefa\log \Gamma_i + z_i\pw \andf \mu_i\eqdefa\log c_i +z_i(\psi+\pw).
\end{equation}
Then  we may rewrite the mass conservation equations (\ref{e1}-\ref{e2}) and (\ref{limit1}-\ref{limit2}) of $c^\ve_i$ and $c_i$ as
\begin{equation}\label{mass conservtion of divergence form}
    \pt c^\ve_i= \dive (D_ic^\ve_i \grad \mu^\ve_i-\ue c^\ve_i),
\end{equation}
and
\begin{equation}\label{ci mass conservtion of divergence form}
    \pt c_i= \dive (D_ic_i \grad \mu_i-u c_i).
\end{equation}

In the case of $\zl=-\zr,$ the positive lower bound of $c^\ve_i$ for the system \eqref{NPNS}
with Dirichlet boundary condition can be obtained by using maximum principle (see\cite{CIL Interior electroneutrlity,CIL Physica D}).
 Along the same lines to \cite{CIL Interior electroneutrlity},  we shall prove similar result for  the system \eqref{NPNS} with
  ``electro-neutral boundary conditions" \eqref{EN} and ``well-prepared" initial data. However, $\zl$ and $-\zr$ are relaxed to arbitrary positive constants.

\begin{lem}\label{maximal principle}
 {\sl   Let $(\cle,\cre,\pe,\ue)$ be smooth enough solution of the system (\ref{e1}-\ref{e5}) with boundary conditions (\ref{ebdry}) on $[0,T].$
 We assume that the initial data is ``well-prepared" (i.e. $\re(0)=0$). Then $c^\ve_i (i=1, 2)$ verifies
    \begin{equation*}
       0<\lam^\ve_i  \leq c^\ve_i(t,x) \leq \Lam^\ve_i
    \end{equation*} for $\lam^\ve_i$ and $\Lam^\ve_i$ being defined by \eqref{S1eq3a}.}
\end{lem}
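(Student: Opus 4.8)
The plan is to combine a Stampacchia-type truncation argument with the two algebraic identities forced by the ``electro-neutral'' data. Set $\Th^\ve\eqdefa\pe+\pw$; by \eqref{e3} and \eqref{1.3} it solves $\lap\Th^\ve=-\ve^{-2}\re$ with $\Th^\ve|_{\p\Omega}=W(x)$, and the $\cei$-equations \eqref{mass conservtion of divergence form} read $\pt\cei=\dive\bigl(D_i\grad\cei+D_iz_i\cei\grad\Th^\ve-\ue\cei\bigr)$. Because the data are well-prepared, $\re(0)=\zl\cle(0)+\zr\cre(0)=0$, hence $\cre(0)=\frac{\zl}{|\zr|}\cle(0)$, and \eqref{EN} gives $\ga_2=\frac{\zl}{|\zr|}\ga_1$ as well; since $\frac{\zl}{|\zr|}>0$ this scaling commutes with $\sup$ and $\inf$, so by \eqref{S1eq3a}
\[
\zl\Lam^\ve_1+\zr\Lam^\ve_2=0,\qquad \zl\lam^\ve_1+\zr\lam^\ve_2=0 .
\]
These two relations are the crux of the whole argument: they make $\re$ carry almost the correct sign on each truncation set.

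As a preliminary I would record the crude bound $\cei\geq0$. Testing the $\cei$-equation against $-(\cei)^-$, the transport term drops ($\dive\ue=0$, $(\cei)^-|_{\p\Omega}=0$), the diffusion term gives $D_i\|\grad(\cei)^-\|_{L^2}^2\geq0$, and the drift term, integrated by parts and combined with $\lap\Th^\ve=-\ve^{-2}\re$, produces $-\frac{D_iz_i}{2\ve^2}\int_\Omega\re\,((\cei)^-)^2\,dx$. Since the solution is smooth on $[0,T]$, each $\|\cei\|_{L^\infty([0,T]\times\Omega)}$ is finite (no $\ve$-uniformity is needed at this point), so on $\{\cei<0\}$ one has $|\re|\leq|z_i|(\cei)^-+C_\ve$; hence the right-hand side is $\leq C_\ve\|(\cei)^-\|_{L^2}^2$, and as $(\cei)^-|_{t=0}=0$, Gronwall forces $(\cei)^-\equiv0$.

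Now set $g_i^{+}\eqdefa(\cei-\Lam^\ve_i)_+$ and $g_i^{-}\eqdefa(\lam^\ve_i-\cei)_+$ for $i=1,2$; by \eqref{S1eq3a} and \eqref{ebdry} all four vanish at $t=0$ and on $\p\Omega$. Testing the $\cei$-equation against $g_i^{+}$ and against $-g_i^{-}$ exactly as above — transport drops, diffusion yields $D_i\|\grad g_i^{\pm}\|_{L^2}^2$, and the drift term, after splitting $\cei$ on the support of $\grad g_i^{\pm}$, integrating by parts and using $\lap\Th^\ve=-\ve^{-2}\re$ — one obtains
\[
\tfrac12\tfrac{d}{dt}\|g_i^{+}\|_{L^2}^2+D_i\|\grad g_i^{+}\|_{L^2}^2=-\frac{D_iz_i}{2\ve^2}\int_\Omega\re\,g_i^{+}(\cei+\Lam^\ve_i)\,dx ,
\]
and the same identity with $(g_i^{+},\Lam^\ve_i)$ replaced by $(g_i^{-},\lam^\ve_i)$ and the overall sign on the right reversed. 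The decisive point is the sign of $\re$ on each support: using the two identities, $\zl>0>\zr$, and the trivial bounds $\cei-\Lam^\ve_i\leq g_i^{+}$ and $\lam^\ve_i-\cei\leq g_i^{-}$, one checks that $\re\geq-|\zr|g_2^{+}$ on $\{g_1^{+}>0\}$, $\re\leq\zl g_1^{+}$ on $\{g_2^{+}>0\}$, $\re\leq|\zr|g_2^{-}$ on $\{g_1^{-}>0\}$, and $\re\geq-\zl g_1^{-}$ on $\{g_2^{-}>0\}$. Substituting these, and using $\cei\geq0$ (so $\cei+\Lam^\ve_i$ and $\cei+\lam^\ve_i$ are nonnegative) together with the crude $L^\infty$ bounds on $\cei$, each right-hand side is dominated by $\frac{C_\ve}{\ve^2}\bigl(\|g_1^{\pm}\|_{L^2}^2+\|g_2^{\pm}\|_{L^2}^2\bigr)$. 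Hence, summing the four estimates, $E(t)\eqdefa\sum_{i=1,2}\bigl(\|g_i^{+}\|_{L^2}^2+\|g_i^{-}\|_{L^2}^2\bigr)$ satisfies $\frac{d}{dt}E\leq\frac{C_\ve}{\ve^2}E$ on $[0,T]$ with $E(0)=0$; therefore $E\equiv0$ by Gronwall, i.e. $g_i^{\pm}\equiv0$, which gives $\lam^\ve_i\leq\cei\leq\Lam^\ve_i$.

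The only genuine difficulty is the stiff coupling through $\ve^{-2}\re$: a direct pointwise maximum principle fails because $\re$ has no a priori sign, and merely bounding $\re$ in $L^\infty$ leaves an un-absorbable $O(\ve^{-2})$ forcing term in the energy inequality. It is overcome by treating all four one-sided bounds simultaneously and exploiting $\zl\Lam^\ve_1+\zr\Lam^\ve_2=0$ and $\zl\lam^\ve_1+\zr\lam^\ve_2=0$, which bound the sign-violating part of $\re$ on each truncation set by the \emph{companion} truncation — a quadratically small quantity — so that the $\ve^{-2}$ contributions are absorbed into $C_\ve\ve^{-2}E$; since $E(0)=0$, the blow-up of $C_\ve$ as $\ve\to0$ is immaterial.
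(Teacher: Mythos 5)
Your proof is correct, and it takes a genuinely different route from the paper's. The paper runs a pointwise maximum-principle argument by contradiction: it tracks $m_i^\ve(t)=\sup_{x\in\Omega}c_i^\ve(x,t)$, observes that at the first time $t_0$ where $m_1^\ve$ reaches a level $\kappa>\Lam_1^\ve$ the interior-maximum condition in the $c_1^\ve$-equation together with the Poisson equation forces $m_2^\ve(t_0)\geq-\frac{z_1}{z_2}\kappa>\Lam_2^\ve$, and then shows, via the auxiliary Lemma~\ref{f nonincreasing} on upper-Dini difference quotients, that the weighted combination $D_2m_1^\ve+D_1m_2^\ve$ is non-increasing on a neighborhood of $t_0$, contradicting the minimality of $t_0$. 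You instead run a Stampacchia-type truncation/energy argument, testing the divergence-form equation against $(c_i^\ve-\Lam_i^\ve)_+$, $-(\lam_i^\ve-c_i^\ve)_+$, and (preliminarily) $-(c_i^\ve)^-$, and closing a single Gronwall inequality. The common crux in both approaches is the pair of algebraic identities $z_1\Lam_1^\ve+z_2\Lam_2^\ve=0=z_1\lam_1^\ve+z_2\lam_2^\ve$ forced by the electroneutral data, which make the sign-violating part of $\re$ on the bad set for one species controllable by the companion truncation of the other species. The paper's pointwise argument requires the somewhat delicate first-crossing/continuity bookkeeping and the Dini-derivative lemma, but it keeps the $\ve^{-2}$ factor exactly where the sign is favorable and never invokes an a priori $L^\infty$ bound on $c_i^\ve$; your energy argument is more mechanical, avoids those subtleties, and needs the preliminary $c_i^\ve\geq0$ step only to keep $c_i^\ve+\lam_i^\ve\geq0$ on $\{(\lam_i^\ve-c_i^\ve)_+>0\}$, at the cost of an $\ve^{-2}$ and an $\ve$-dependent $L^\infty$ constant in the Gronwall rate — harmless here, as you correctly observe, since the truncations vanish identically at $t=0$.
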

\begin{proof}
Notice that $z_1>0$ and $z_2<0,$ we deduce from \eqref{EN} and $\rho^\ve(0)=0$ that
\begin{equation*} \label{S2eq1}
\zl \lam_1^\ve+\zr \lam_2^\ve=0 \andf \zl \Lam_1^\ve+\zr \Lam_2^\ve=0.
\end{equation*}

In what follows, we just prove $\lc^\ve(t,x) \leq \Lam_1^\ve$.
 Otherwise,  we denote $m_i^\ve(t)\eqdefa\sup_{x\in\Omega} c_i^\ve(x,t)$ and fix a positive constant $\kappa > \Lam_1^\ve,$ then
 there exists some $t\in (0,T)$ so that
$m_1^\ve(t) \geq \kappa$. We denote $t_0>0$ to be the first time when $m_1^\ve(t_0)=\kappa$ is attained.
Since $\kappa > \Lam_1^\ve \geq \sup_{\p \Omega} \ga_1^\ve$, there exists an interior point $x_0 \in \Omega$ so that
$c_1^\ve(x_0,t_0)=\kappa$. Hence using the the equation (\ref{e1}), (\ref{e3}) and maximality at point $(x_0, t_0)$, we find
    \begin{equation*}
        0\leq \pt c_1^\ve(x_0,t_0) \leq -\frac{\zl \dl}{\ve^2}c_1^\ve(x_0,t_0)\big( \zl \lc^\ve(x_0,t_0)+\zr \rc^\ve(x_0,t_0)\big),
    \end{equation*}
where we used the facts that the gradient of $c_1^\ve$ vanishes at $(x_0,t_0),$
 and the Laplacian is non-positive at an interior maximal point. As a result, it comes out
    \begin{equation*}
    \zl c_1^\ve(x_0,t_0)+\zr \rc^\ve(x_0,t_0) \leq 0,
    \end{equation*}
    which implies
    \begin{equation}\label{S2eq2}
        \rc^\ve(x_0,t_0) \geq -\frac{\zl}{\zr}\lc^\ve(x_0,t_0) \geq -\frac{\zl}{\zr} \kappa > -\frac{\zl}{\zr}\Lam_1^\ve = \Lam_2^\ve.
    \end{equation}
    So that
    we can choose the first time $0<t_1 \leq t_0$ when $m_2^\ve(t_1)=-\frac{\zl}{\zr} \kappa.$  We claim that $t_1=t_0$.
    Otherwise by repeating the proof of \eqref{S2eq2}, we can choose the first time  $0<t_2 \leq t_1< t_0$ when $m_1^\ve(t_2)=\kappa,$ which contradicts with the definition of $t_0$.

    Since $m_i^\ve(t_0)>\Lam_i^\ve$ ($i=1,2$), by continuity, there exists $\delta>0$ such that for all $s\in [t_0-\de ,t_0]$, there exist interior points $x_i(s) \in \Omega$ so that $c_i^\ve(x_i(s),s)=m_i^\ve(s)>\Lam_i^\ve$. Then for any $t_0-\delta < r< s \leq t_0$, we have
    \beq\label{m1}
    \begin{split}
        \limsup_{r \rto s^-} \frac{m_1^\ve(s)-m_1^\ve(r)}{s-r} \leq \limsup_{r \rto s^-} \frac{c_1^\ve(x_1(s),s)-c_1^\ve(x_1(s),r)}{s-r} =\pt c_1^\ve(x_1(s),s) \\
        \leq -\frac{\zl \dl}{\ve^2} m_1^\ve(s)(\zl m_1^\ve(s)+\zr m_2^\ve(s)) \leq -\frac{\zl^2 \dl }{2\ve ^2} (m^\ve_1)^2(s)
        + \frac{\zr^2 \dl }{2\ve ^2} (m^\ve_2)^2(s).
       \end{split}\eeq
       Along the same line, we obtain
       \begin{equation}\label{m2}
           \limsup_{r \rto s^-} \frac{m_2^\ve(s)-m^\ve_2(r)}{s-r} \leq \frac{\zl^2 \dr }{2\ve ^2} (m^\ve_1)^2(s)
           -\frac{\zr^2 \dr }{2\ve^2 } (m^\ve_2)^2(s).
       \end{equation}

    By   multiplying (\ref{m1}) by $\dr$ and (\ref{m2}) by $\dl,$ and then summing up the resulting inequalities, we achieve
       \begin{equation*}
           \limsup_{r \rto s^-} \Bigl( \dr \frac{m_1^\ve(s)-m_1^\ve(r)}{s-r} +\dl \frac{m_2^\ve(s)-m_2^\ve(r)}{s-r} \Bigr) \leq 0.
       \end{equation*}
 Then we deduce from Lemma \ref{f nonincreasing} below that $ \dr m_1^\ve(s) +\dl {m_2^\ve(s)} $ is a non-increasing function on $[t_0-\delta,t_0].$
     As a consequence, we obtain
       \begin{equation*}
           \dr m_1^\ve(t_0-\de)+\dl m_2^\ve(t_0-\de) \geq \dr m_1^\ve(t_0)+\dl m_2^\ve(t_0) = \dr \kappa -\frac{\zl}{\zr}\dl \kappa.
       \end{equation*}
       Hence we have $m_1^\ve(t_0-\de) \geq \kappa$, or $m_2^\ve(t_0-\de) \geq -\frac{\zl}{\zr} \kappa,$ either of which contradicts with our choice of $t_0$. This completes the proof of Lemma \ref{maximal principle}.
\end{proof}

\begin{lem}\label{f nonincreasing}
 {\sl   Let $f: (a,b] \rightarrow \mathbb{R}$ satisfies
    \begin{equation}\label{limsup condition}
        \limsup_{r \rto s^-} \frac{f(s)-f(r)}{s-r} \leq 0\quad \mbox{for any}\ \ s \in (a,b].
    \end{equation}
    Then f is non-increasing on $(a,b]$.}
\end{lem}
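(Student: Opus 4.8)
The plan is to argue by contradiction after reducing the statement to a two‑point comparison. Saying that $f$ is non‑increasing on $(a,b]$ is the same as saying $f(d)\le f(c)$ whenever $a<c<d\le b$, so I would suppose instead that $f(c)<f(d)$ for some such pair and look for a contradiction. The main device is to \emph{tilt} $f$: choose $\eta$ with $0<\eta<\frac{f(d)-f(c)}{d-c}$ and set $g(s):=f(s)-\eta s$. Then $g$ is continuous, one still has $g(d)>g(c)$ (the tilt was chosen small enough), and the hypothesis improves to the strict form $\limsup_{r\to s^-}\frac{g(s)-g(r)}{s-r}\le-\eta<0$ for every $s\in(a,b]$. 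Unwinding this last inequality: for each $s$ there is $\de(s)>0$ such that $g(r)>g(s)$ for all $r\in(s-\de(s),s)$; in words, immediately to the left of any point, $g$ strictly exceeds its value at that point.

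Next I would look at the \emph{upper} level set $S:=\{x\in[c,d]:g(x)\ge g(d)\}$ and set $s^\ast:=\inf S$. Since $d\in S$, the set $S$ is nonempty, and since $g$ is continuous, $S$ is closed, so the infimum is attained: $g(s^\ast)\ge g(d)>g(c)$, whence $s^\ast\neq c$ and therefore $s^\ast>c$. Applying the displayed property of $g$ at the point $s^\ast$, for any $r\in\bigl(\max\{c,\,s^\ast-\de(s^\ast)\},\,s^\ast\bigr)$ one gets $g(r)>g(s^\ast)\ge g(d)$, so $r\in S$; but $r<s^\ast=\inf S$, which is absurd. Hence no pair $(c,d)$ with $f(c)<f(d)$ exists, and $f$ is non‑increasing on $(a,b]$.

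I expect the only genuine subtlety to be the choice of which extremal point to track. Running the contradiction through a level crossing ``from below'' — e.g.\ through $\sup\{x\in[c,d]:f(x)\le f(c)\}$ — does not work: continuity pins $f$ exactly on the level there and yields no contradiction, precisely because the left Dini quantity $\limsup_{r\to s^-}\frac{f(s)-f(r)}{s-r}$ controls $f$ only just to the left of each point. Tilting by $\eta$ and taking the \emph{infimum} of an upper level set is what converts this one‑sided control into a contradiction. Finally I would record that continuity of $f$ is used essentially here and cannot be dropped (a function with an upward jump satisfies the hypothesis while failing to be monotone); this costs nothing in the application to Lemma \ref{maximal principle}, where $f$ is $t\mapsto m^\ve_i(t)=\sup_{x\in\Omega}c^\ve_i(x,t)$, which is continuous on the relevant interval because $c^\ve_i$ is jointly continuous on the compact set $\ov\Omega\times[0,T]$.
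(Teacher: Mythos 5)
Your proof is correct, and it follows a genuinely different route from the paper's. The paper fixes $c\in(a,b)$ and aims directly at $f(c)\ge f(b)$: it adds an $\epsilon$-slack so that for each $s$ there is $\delta_s>0$ with $f(s)-f(r)\le\epsilon(s-r)$ on $(s-\delta_s,s)$, asserts that the intervals $\{(s-\delta_s,s)\}_{s\in(a,b)}$ form an open cover of $[c,\,b-\delta_b]$, extracts a finite subcover, chains the resulting increments to obtain $f(b)-f(c)\le\epsilon(b-c)$, and lets $\epsilon\to 0$. You argue instead by contradiction: tilt by $g(s)=f(s)-\eta s$ to make the one-sided Dini bound strictly negative, then track $s^\ast=\inf\{x\in[c,d]:g(x)\ge g(d)\}$; the strict bound at $s^\ast$ pushes points of the level set to the left of the infimum, which is absurd. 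The paper's route, when it works, buys a clean quantitative inequality before taking limits; yours buys local verifiability at every step. In particular you invoke continuity explicitly and exactly where it is needed (to make the upper level set closed so that its infimum is attained), whereas the paper's statement does not list continuity as a hypothesis and its covering assertion is left unverified --- it is not automatic that every $x\in[c,\,b-\delta_b]$ lies in some $(s-\delta_s,s)$ with $s>x$, since $\delta_s$ may shrink as $s\to x^{+}$. Your closing remarks --- that the result fails for an upward jump, that the left Dini condition pairs naturally with the infimum of an upper level set rather than the supremum of a lower one, and that continuity is available in the application because $t\mapsto m_i^\ve(t)=\sup_{x}c_i^\ve(x,t)$ is continuous --- make explicit points the paper leaves implicit, and they strengthen rather than merely reproduce the argument.
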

\begin{proof}
    Without loss of generality, it suffices to fix some $c\in (a,b)$ and prove $f(c) \geq f(b)$. In view of
    (\ref{limsup condition}), for any $\epsilon >0$ and $ s\in (a,b]$, there exists $\delta_s>0 $ such that for any $r \in (s-\delta_s,s)$, we have  $\frac{f(s)-f(r)}{s-r} \leq \epsilon $. Thus $\{(s-\delta_s,s) | s \in (a,b)\}$ constitutes an open covering of closed interval $[c,b-\delta_b]$, and by finite covering theorem, there exists a set of finite open intervals $\{(s_k-\delta_k,s_k)\}_{k=1}^n$ such that
    $$s_1-\delta_1 < c < s_2- \delta_2 < s_1 <s_3-\delta_3 <s_2 < \dots s_{n}-\delta_n <s_{n-1}< b-\delta_b< s_n < b, $$
    and
    \begin{equation*}
        \quad \left\{\begin{array}{ll}
            f(s_1)-f(c) \leq \epsilon (s_1 -c ), \quad &  k=1,  \\
             f(s_k)-f(s_{k-1}) \leq \epsilon (s_k - s_{k-1}),\quad  &2\leq k \leq n, \\
             f(b)-f(s_{n}) \leq \epsilon (b-s_{n}). \quad &
        \end{array} \right.
    \end{equation*}
    By summing up the above inequalities for $k$ from $1$ to $n,$ we achieve  $f(b)-f(c) \leq \epsilon(b-c)$, and then letting $\epsilon \rightarrow 0 $ leads to $f(b)\leq f(c).$ This completes the proof of the lemma.
\end{proof}

\begin{rmk}
  Since $c_i$ satisfies a transport-diffusion equation (\ref{ trans + diff}), it's easy to deduce the same result for $c_i$, i.e. $\lam_i \leq  c_i \leq \Lam_i$ with $\lam_i$ and $\Lam_i$ being defined by \eqref{S1eq3b}.
\end{rmk}

In the subsequent lemma, we shall use the dissipative structure of the system \eqref{S1eq1} to derive the estimate for the energy functional $E^\ve(t).$

\begin{lem}\label{lem:energy}
{\sl Let $(\cle,\cre,\ue)$ be a smooth enough solution of the system \eqref{S1eq1} supplemented with the boundary conditions (\ref{ebdry}) on interval $[0,T]$, we define energy functional $E^\ve(t)$ via
\begin{equation}
E^\ve(t) \eqdefa \sum^2_{i=1} \int_\Omega \Gamma_i \varphi\bigl(\frac{c^\ve_i}{\Gamma_i}\bigr)\,dx +\frac{\ve^2}{2} \| \grad \pe(t) \|^2_{L^2} +\frac{1}{2}\| \ue(t)\|^2_{L^2},
\label{S2eq3}
\end{equation}
where $\vphi(s)=s\log s -s+1 \geq 0 $.
Then there hold
\begin{enumerate}
\item
\beq\label{dissipation equality}
    \begin{split}
  &\frac{d}{dt}E^\ve(t) +\nu \| \grad \ue \|^2_{L^2} + \sum^2_{i=1}D_i \int_{\Omega} c^\ve_i | \grad \mu^\ve_i |^2\,dx  \\
  &= -\sum^2_{i=1} \int_\Omega \ue c^\ve_i \grad \log \Gamma_i\,dx -\int_\Omega \ue \re \grad \pw \,dx + \sum^2_{i=1}D_i \int_\Omega c^\ve_i \grad \mu^\ve_i \grad \mu^*_i  \,dx.
    \end{split}\eeq
\item There exists a positive constant $M$ depending only on $\Lam,W(x),\ga_i(x),z_i$ and $D_i$ for $i=1,2$, such that for any $t\in [0,T],$
    \beq
    \label{dissipation inequality}
        E^\ve(t)+\frac{\nu}{2} \| \grad \ue \|^2_{L^2_t(L^2)} + \frac{1}{2} \sum^2_{i=1}D_i \int^t_0 \int_{\Omega} c^\ve_i | \grad \mu^\ve_i |^2 dxdt  \leq (E^\ve(0)+Mt)e^t.
    \eeq
\item  There exists a positive constant $M$ depending only on $\lam,\Lam,z_i$ and $D_i$ for $i=1,2$, such that
 \begin{equation}\label{dissipation controls rho and electric field}
     \|\grad \cei\|_{L^2}^2 + \| \gpe +\grad \pw \|^2_{L^2} + \| \frac{\re}{\ve} \|^2_{L^2} \leq M \sum^2_{i=1}D_i \int_{\Omega} c^\ve_i | \grad \mu^\ve_i |^2\,dx.
 \end{equation}
\end{enumerate}
}
\end{lem}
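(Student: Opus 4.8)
The plan is to treat the three items in sequence, deriving (1) by a direct energy computation, then (2) by controlling the right-hand side of (1), and finally (3) as a pointwise-in-time algebraic consequence of the chemical-potential structure.

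For item (1), the natural approach is to test each equation against its ``conjugate'' quantity and add. I would multiply the divergence-form equation \eqref{mass conservtion of divergence form}, $\pt \cei = \dive(D_i \cei \grad \mu^\ve_i - \ue \cei)$, by $\log(\cei/\Ga_i) = \mu^\ve_i - \mu^*_i$; since $\frac{d}{dt}\int_\Omega \Ga_i \vphi(\cei/\Ga_i)\,dx = \int_\Omega \pt \cei \log(\cei/\Ga_i)\,dx$ (using that $\Ga_i$ is time-independent), integrating by parts and using $\cei|_{\p\Omega}=\ga_i=\Ga_i|_{\p\Omega}$ so the boundary term $\log(\cei/\Ga_i)$ vanishes on $\p\Omega$, one gets $-\int_\Omega (D_i\cei\grad\mu^\ve_i - \ue\cei)\cdot\grad(\mu^\ve_i-\mu^*_i)\,dx$. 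Expanding the $\ue$-term using $\grad\mu^\ve_i - \grad\mu^*_i = \grad\log(\cei/\Ga_i)$ and $\dive\ue=0$ turns $\int_\Omega \ue\cei\cdot\grad\log(\cei/\Ga_i)\,dx$ into $-\int_\Omega\ue\cei\cdot\grad\log\Ga_i\,dx$ (the $\int\ue\cdot\grad\cei$ piece integrates to zero against the constant-in-the-integrand by divergence-freeness and the boundary condition), which produces the first term on the right of \eqref{dissipation equality}. For the fluid part, I test \eqref{e4} against $\ue$: the pressure and the two $\ve^2$ Korteweg-type terms vanish after integration by parts (using $\dive\ue=0$ and $\ue|_{\p\Omega}=0$), $-\nu\int\lap\ue\cdot\ue = \nu\|\grad\ue\|_{L^2}^2$, and $-\int\re\ue\cdot\grad\pw$ gives the second right-hand term. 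The coupling term $-\int\re\gpe\cdot\ue$ must be shown to cancel against the contribution of the potential energy: differentiating $\frac{\ve^2}{2}\|\gpe\|_{L^2}^2$ and using $-\ve^2\lap\pe = \re$ together with $\pt\re = z_1\pt\cle + z_2\pt\cre = \dive(\sum_i D_i\cei\grad\mu^\ve_i z_i - \ue\re)$ — here one needs $\pe|_{\p\Omega}=0$ so that $\pt\pe|_{\p\Omega}=0$ and the boundary terms in $\ve^2\int\grad\pt\pe\cdot\gpe = \int\pt\re\,\pe$ (after integrating by parts) are clean. Collecting all $\sum_i D_i\int \cei\grad\mu^\ve_i\cdot(\text{something})$ contributions and separating the dissipative square $\sum_i D_i\int\cei|\grad\mu^\ve_i|^2$ leaves exactly the last right-hand term $\sum_i D_i\int\cei\grad\mu^\ve_i\cdot\grad\mu^*_i$. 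The bookkeeping here — making sure every cross term lands where \eqref{dissipation equality} says — is the step I expect to be most delicate.

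For item (2), I bound the right-hand side of \eqref{dissipation equality} by $CE^\ve(t) + M + \frac14\sum_i D_i\int\cei|\grad\mu^\ve_i|^2 + \frac{\nu}{2}\|\grad\ue\|_{L^2}^2$. The velocity terms $-\sum_i\int\ue\cei\grad\log\Ga_i$ and $-\int\ue\re\grad\pw$ are handled by Cauchy-Schwarz: using $\|\cei\|_{L^\infty}\le\Lam$ (Lemma \ref{maximal principle}) and boundedness of $\grad\log\Ga_i$ and $\grad\pw$ (these are harmonic with smooth boundary data, hence $W^{1,\infty}$), together with $\|\re\|_{L^2}\le\ve\|\gpe\|_{L^2}\cdot(\text{const})$ bounded by $E^\ve(t)^{1/2}$, and then Young's inequality absorbing $\|\ue\|_{L^2}^2 \le 2E^\ve(t)$. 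The chemical term $\sum_i D_i\int\cei\grad\mu^\ve_i\cdot\grad\mu^*_i$ is split by Young's inequality as $\frac14\sum_i D_i\int\cei|\grad\mu^\ve_i|^2 + \sum_i D_i\int\cei|\grad\mu^*_i|^2$, and the last integral is $\le \Lam\sum_i D_i\|\grad\mu^*_i\|_{L^2}^2 \le M$ since $\grad\mu^*_i = \grad\log\Ga_i + z_i\grad\pw$ is bounded. Feeding this into \eqref{dissipation equality} gives $\frac{d}{dt}E^\ve(t) + \frac{\nu}{2}\|\grad\ue\|_{L^2}^2 + \frac12\sum_i D_i\int\cei|\grad\mu^\ve_i|^2 \le E^\ve(t) + M$, and Grönwall's inequality in the form $E^\ve(t) \le (E^\ve(0)+Mt)e^t$ closes it, the dissipative terms being retained by integrating the differential inequality in $t$.

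For item (3), this is purely pointwise in $t$. Write $\grad\mu^\ve_i = \frac{\grad\cei}{\cei} + z_i(\gpe+\grad\pw)$, so $\cei|\grad\mu^\ve_i|^2 = \frac{|\grad\cei|^2}{\cei} + 2z_i\grad\cei\cdot(\gpe+\grad\pw) + z_i^2\cei|\gpe+\grad\pw|^2$. Multiplying by $D_i$ and summing, the cross terms combine, using $z_1\grad\cle + z_2\grad\cre = \grad\re$, into $2\int(\gpe+\grad\pw)\cdot(D_1 z_1\grad\cle + D_2 z_2\grad\cre)\,dx$; integrating by parts (legitimate since $\gpe+\grad\pw = \grad(\pe+\pw)$ and $(\pe+\pw)|_{\p\Omega} = W$, but one actually uses $\dive(\gpe+\grad\pw) = \lap\pe = -\re/\ve^2$ with the boundary term handled via $\int_{\p\Omega} W\,\partial_n(\cdots)$ — more simply, one keeps this term and estimates) produces a piece controlling $\|\re/\ve\|_{L^2}^2$. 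The cleanest route: use the lower bound $\cei\ge\lam>0$ so that $\sum_i D_i\int\cei|\grad\mu^\ve_i|^2 \ge D^*\lam\sum_i\int|\grad\mu^\ve_i|^2$, then observe $\grad\mu^\ve_1 - \grad\mu^\ve_2 = \frac{\grad\cle}{\cle} - \frac{\grad\cre}{\cre} + (z_1-z_2)(\gpe+\grad\pw)$ and $z_2\grad\mu^\ve_1 - z_1\grad\mu^\ve_2 = z_2\frac{\grad\cle}{\cle} - z_1\frac{\grad\cre}{\cre}$; inverting this $2\times2$ system (the determinant $z_1-z_2\ne 0$) recovers $\gpe+\grad\pw$ and each $\grad\cei/\cei$ as linear combinations of the $\grad\mu^\ve_i$, giving $\|\grad\cei\|_{L^2}^2 + \|\gpe+\grad\pw\|_{L^2}^2 \le M\sum_i\int|\grad\mu^\ve_i|^2 \le \frac{M}{D^*\lam}\sum_i D_i\int\cei|\grad\mu^\ve_i|^2$; finally $\|\re/\ve\|_{L^2} = \ve\|\lap\pe\|_{L^2}$ is controlled by $\|\gpe\|_{L^2}$-type quantities only after an elliptic estimate, so instead one notes $\re = z_1\cle+z_2\cre$ directly and $\grad\re$ is a combination of $\grad\cei$, but to bound $\re$ itself in $L^2$ one uses $\re|_{\p\Omega}=0$ (from $z_1\ga_1+z_2\ga_2=0$) and Poincaré: $\|\re\|_{L^2}\le C\|\grad\re\|_{L^2}\le C\sum_i\|\grad\cei\|_{L^2}$, which is not the $\ve$-gain claimed — so in fact the $\|\re/\ve\|_{L^2}$ bound must come from testing \eqref{e3}: $\ve^2\|\gpe\|_{L^2}^2 \le \|\re\|_{L^2}\|\pe\|_{L^2} \le C\|\re\|_{L^2}\|\gpe\|_{L^2}$ is circular, so one genuinely uses that from $\grad\mu^\ve_i$ controlling $\gpe+\grad\pw$ in $L^2$ one gets $\|\gpe\|_{L^2}$ bounded, hence $\ve^2\|\gpe\|_{L^2}^2$ is an energy, and then $\|\re\|_{L^2}^2 = \ve^4\|\lap\pe\|_{L^2}^2$; the identity $\int\re(\gpe+\grad\pw)\cdot\text{stuff}$ above is what carries the factor, and I will route the estimate through $\int|\grad(\cei/\text{-})|^2$ as indicated. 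I expect item (3)'s $\ve$-scaling of the charge term to be the subtle point, resolved by keeping the cross term from the expansion of $\cei|\grad\mu^\ve_i|^2$ rather than discarding it.
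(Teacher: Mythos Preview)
Your plan for items (1) and (2) is essentially the paper's argument, modulo two small slips. First, $\mu^\ve_i - \mu^*_i = \log(\cei/\Ga_i) + z_i\pe$, not $\log(\cei/\Ga_i)$ alone; the extra $z_i\pe$ is exactly what gets absorbed when you differentiate $\tfrac{\ve^2}{2}\|\gpe\|_{L^2}^2$, so the bookkeeping still closes, but your identification is not literally correct. Second, in item (2) the claim $\|\re\|_{L^2}\le \ve\|\gpe\|_{L^2}\cdot(\text{const})$ is false (that would be an elliptic gain in the wrong direction); fortunately it is unnecessary, since Lemma~\ref{maximal principle} gives $\|\re\|_{L^\infty}\le (|z_1|+|z_2|)\Lam$ and $\grad\pw\in L^\infty$, so $|\int\ue\re\grad\pw|\le C\|\ue\|_{L^2}$ directly. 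With these fixes your route for (1)--(2) matches the paper.

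Item (3) has a genuine gap. Your linear-algebra inversion recovers $\|\grad\cei\|_{L^2}$ and $\|\gpe+\grad\pw\|_{L^2}$ from $\sum_i\int|\grad\mu^\ve_i|^2$, but as you noticed it cannot produce $\|\re/\ve\|_{L^2}$: Poincar\'e, elliptic estimates, and the energy $\ve^2\|\gpe\|_{L^2}^2$ all fail to supply the $\ve^{-1}$ gain. The missing mechanism is the one you gesture at in your last sentence but do not carry out. Expand directly (with $D_i\ge D^*$):
\[
\sum_{i=1}^2 D_i\int_\Omega \cei|\grad\mu^\ve_i|^2\,dx
\ge D^*\Bigl(\sum_{i=1}^2\int_\Omega\frac{|\grad\cei|^2}{\cei}\,dx
+2\int_\Omega\grad\re\cdot(\gpe+\grad\pw)\,dx
+\int_\Omega(z_1^2\cle+z_2^2\cre)|\gpe+\grad\pw|^2\,dx\Bigr).
\]
The cross term is the crux: integrate by parts, use that $\re|_{\p\Omega}=z_1\ga_1+z_2\ga_2=0$ from (\textbf{EN}) to kill the boundary contribution, and use $\lap\pw=0$ to obtain
\[
\int_\Omega\grad\re\cdot(\gpe+\grad\pw)\,dx = -\int_\Omega\re\,\lap\pe\,dx = \frac{1}{\ve^2}\int_\Omega|\re|^2\,dx = \Bigl\|\frac{\re}{\ve}\Bigr\|_{L^2}^2.
\]
This term is \emph{positive}, so all three pieces on the right of \eqref{dissipation controls rho and electric field} appear with favorable sign (using $\cei\ge\lam$ for the first and third). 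That is the paper's proof of (3), and it is the only place the electroneutral boundary condition and the harmonicity of $\pw$ enter this lemma.
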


\begin{proof} Notice that $\frac{c^\ve_i}{\Gamma_i}|_{\partial\Omega}=1,$ by multiplying (\ref{mass conservtion of divergence form})  by $\log \frac{c^\ve_i}{\Gamma_i}$ and
integrating over $\Omega$ and then summing up the resulting inequalities for $i=1,2,$ we find
\begin{align*} \label{dissipation 1}
    &  \sum^2_{i=1}\frac{d}{dt} \int_\Omega \Gamma_i \varphi(\frac{c^\ve_i}{\Gamma_i})\,dx = \sum^2_{i=1} \int_\Omega \pt c^\ve_i \log \frac{c^\ve_i}{\Gamma_i} \,\,dx \\
    & =- \sum^2_{i=1} \int_\Omega  ( D_i c^\ve_i \grad \mu^\ve_i-\ue c^\ve_i) (\grad \log c^\ve_i -\grad \log \Gamma_i ) \,dx \nonumber \\
    & = - \sum^2_{i=1} D_i \int_\Omega  c^\ve_i \grad \mu^\ve_i \grad \log c^\ve_i \,dx + \sum^2_{i=1} D_i \int_\Omega  c^\ve_i \grad \mu^\ve_i \grad \log \Gamma_i \,dx -\sum^2_{i=1} \int_\Omega \ue c^\ve_i \grad \log \Gamma_i \,dx,
\end{align*}
where we used  $\dive \ue =0$ and $u^\ve|_{\p\Omega}=0$ in the third equality.

While due to $\psi^\ve|_{\partial\Omega}=0,$ we get, by using integration by parts and the equations (\ref{e3}), (\ref{mass conservtion of divergence form}), that
\begin{align*}
     \frac{\ve^2}{2}\frac{d}{dt} & \| \grad \pe(t)\|^2_{L^2} = \ve^2 \int_\Omega \pt\grad \pe \grad \pe \,dx = \int_\Omega \pt \re \pe \,dx \\
    & = \sum^2_{i=1} \int_\Omega \pt c^\ve_i (z_i \pe ) \,dx = - \sum^2_{i=1} \int_\Omega  ( D_i c^\ve_i \grad \mu^\ve_i-\ue c^\ve_i) (z_i \gpe  ) \,dx \nonumber \\
    &=- \sum^2_{i=1} z_i D_i \int_\Omega  c^\ve_i \grad \mu^\ve_i \gpe  \,dx +\int_\Omega \ue \re \gpe \,dx.
\end{align*}

Whereas by taking $L^2$ inner product of the equation \eqref{e4} with $u^\ve$ and using integration by parts and \eqref{e5}, we obtain
\beno
      \frac{1}{2}\frac{d}{dt} \| \ue(t) \|^2_{L^2} = -\nu \| \grad \ue \|^2_{L^2} - \int_\Omega \ue \re (\gpe + \grad \pw) \,dx.
\eeno

By
summing  up the above equalities  and recalling the definition of  electrochemical potentials (\ref{electrochemical potential}), we obtain (\ref{dissipation equality}).

To prove (\ref{dissipation inequality}), it suffices to estimate the right hand side of (\ref{dissipation equality}). Indeed
we deduce from {Lemma \ref{maximal principle}} that
\begin{align*}
    -&\sum^2_{i=1}  \int_\Omega \ue c^\ve_i \grad \log \Gamma_i\,dx -\int_\Omega \ue \re \grad \pw \,dx + \sum^2_{i=1}D_i \int_\Omega c^\ve_i \grad \mu^\ve_i \grad \mu^*_i  \,dx \\
   & \leq \frac{1}{2}\| \ue \|^2_{L^2} + \frac{1}{2}\sum^2_{i=1}D_i \int_{\Omega} c^\ve_i | \grad \mu^\ve_i |^2\,dx +M(\Lam,W,\gamma_i,z_i,D_i)  \nonumber \\
   & \leq E^\ve  + \frac{1}{2}\sum^2_{i=1}D_i \int_{\Omega} c^\ve_i | \grad \mu^\ve_i |^2\,dx +M(\Lam,W,\gamma_i,z_i,D_i).
\end{align*}
Then (\ref{dissipation inequality}) follows from  Gronwall's inequality.

Recalling $D^* \eqdefa \min \{\dl,\dr\}$, we deduce
from  \eqref{EN}, \eqref{1.3}, (\ref{e3}), {Lemma \ref{maximal principle}}  that
\begin{align*}
    \sum^2_{i=1}D_i & \int_{\Omega} c^\ve_i | \grad \mu^\ve_i |^2\,dx \geq D^* \sum^2_{i=1} \int_{\Omega} c^\ve_i | \grad \mu^\ve_i |^2\,dx  \\
    &= D^*  \sum^2_{i=1} \int_{\Omega} c^\ve_i \bigl|\frac{\grad c^\ve_i}{c^\ve_i} + z_i(\gpe +\grad \pw)\bigr|^2 \,dx \nonumber \\
    &=D^* \sum^2_{i=1} \int_{\Omega} \frac{|\grad c^\ve_i|^2}{c^\ve_i} \,dx +2 D^*  \int_{\Omega} \grad \re (\gpe +\grad \pw) \,dx \nonumber \\
    &\quad + D^*\int_{\Omega} (\zl^2 \lc^\ve +\zr^2 \rc^\ve)|\gpe +\grad \pw|^2 \,dx \nonumber \\
    &\geq 2 D^*  \| \frac{\re}{\ve} \|^2_{L^2} + D^* (\zl^2  +\zr^2 )\lam  \| \gpe +\grad \pw \|^2_{L^2}  +\frac{D^*}{\Lam}\sum_{i=1}^2\|\grad \cei\|_{L^2}^2,
\end{align*}
which leads to (\ref{dissipation controls rho and electric field}). This completes the proof of Lemma \ref{lem:energy}.
\end{proof}

As we mentioned in the introduction, the global wellposedness of 2-D NPNS system with Dirichlet boundary conditions (\ref{ebdry})
 was settled in \cite{CI On the npns system} (see Theorem 9). While it follows from Theorem 3 of \cite{CIL NPNS far from eq} that
  the 3-D binary NPNS system with Dirichlet boundary conditions (\ref{ebdry}) has a unique  strong solution on $[0,T]$
   as long as  $U(T)=\int^T_0 \| \ue \|^4_{V} dt < \oo $. Based on this criterion,  we are able to derive an $\ve$-independent lower bound
    for the lifespan of the $\ve$-depending NPNS system (\ref{S1eq1})-(\ref{ebdry}). Precisely, we have the following Theorem.

\begin{thm}\label{coexistence of npns}
  {\sl Under the assumption  (\ref{uniform bound for ci}), one has
  \begin{itemize}

  \item 2-D system \eqref{S1eq1} with boundary conditions (\ref{ebdry}) has a unique global strong solution and there exists a positive constant $M$ so that for any $T\geq 0$,
    \begin{equation}\label{estimate of ue in H^1, 2d}
        \|\ue\|_V^2 +\int_0^T \|A\ue\|_H^2 dt \leq M(\|\ue(0)\|_V^2 + E^\ve(0))e^{ME^\ve(0)}.
    \end{equation}

\item If we assume in addition that $\|\ue(0)\|_V^2+E^\ve(0) \leq M_{in} <+\oo $. Then there exists an $\ve$-independent $T>0,$  so that the 3-D NPNS system \eqref{S1eq1} with boundary conditions (\ref{ebdry}) has a unique strong solution on $[0,T]$, and there exists an increasing function $h(a,b):\mathbb{R^+} \times \mathbb{R^+} \to \mathbb{R^+}$, so that
    \begin{equation}\label{estimate of ue in H^1, 3d}
        \|\ue\|_V^2 +\int_0^{T} \|A\ue\|_H^2 dt \leq h(\|\ue(0)\|_V^2, E^\ve(0)).
    \end{equation} \end{itemize}}
\end{thm}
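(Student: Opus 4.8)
The plan is to obtain the required control of $\|\ue\|_V$ from a single differential inequality for $y(t)\eqdefa\|\ue(t)\|_V^2=\|\grad\ue(t)\|_{L^2}^2$, derived by testing the momentum equation \eqref{e4} against the Stokes operator $A\ue$, and then to close it using the dissipation estimates of Lemma \ref{lem:energy} (and, in $d=3$, the continuation criterion of \cite{CIL NPNS far from eq}). Taking the $L^2$ inner product of \eqref{e4} with $A\ue$: the pressure term drops since $A\ue\in H$, the viscous term yields $\nu\|A\ue\|_{L^2}^2$ (because $\langle-\lap\ue,A\ue\rangle=\|A\ue\|_{L^2}^2$), and $\langle\pt\ue,A\ue\rangle=\tfrac12\tfrac{d}{dt}\|\grad\ue\|_{L^2}^2$; writing the electric force compactly as $-\re\grad\Phi^\ve$ with $\Phi^\ve\eqdefa\pe+\pw$, this produces
\begin{equation*}
\tfrac12\tfrac{d}{dt}\|\ue\|_V^2+\nu\|A\ue\|_{L^2}^2=-\int_\Omega(\ue\cdot\grad\ue)\cdot A\ue\,dx-\int_\Omega\re\grad\Phi^\ve\cdot A\ue\,dx .
\end{equation*}

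The step I expect to be the main obstacle is to control the electric force \emph{uniformly in} $\ve$. Naively $\re\grad\Phi^\ve=-\ve^2\lap\pe(\gpe+\grad\pw)$, and elliptic regularity only gives $\|\pe\|_{H^2}=O(\ve^{-1})$, so the factor $\lap\pe$ looks fatal. The resolution is to avoid any $H^2$ bound on $\pe$ altogether: by the maximum principle (Lemma \ref{maximal principle}) one has $\|\re\|_{L^\infty}\le(|z_1|+|z_2|)\Lam$ uniformly in $\ve$, hence
\begin{equation*}
\Bigl|\int_\Omega\re\grad\Phi^\ve\cdot A\ue\,dx\Bigr|\le\|\re\|_{L^\infty}\|\gpe+\grad\pw\|_{L^2}\|A\ue\|_{L^2}\le\tfrac{\nu}{8}\|A\ue\|_{L^2}^2+C\,g(t),
\end{equation*}
where $g(t)\eqdefa\|\gpe+\grad\pw\|_{L^2}^2$ is, by \eqref{dissipation controls rho and electric field} and \eqref{dissipation inequality}, bounded by $M\sum_iD_i\int_\Omega c_i^\ve|\grad\mu_i^\ve|^2$, so $\int_0^Tg(t)\,dt\le M(E^\ve(0)+MT)e^T$, which is $\ve$-independent once $E^\ve(0)\le M_{in}$. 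Thus only the $L^2$ dissipation and the $L^\infty$ bound on $c_i^\ve$ are needed.

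For the convection term one uses the usual interpolation on the Stokes domain (where $\|A^{1/2}u\|_{L^2}=\|\grad u\|_{L^2}$ and $\|Au\|_{L^2}\simeq\|u\|_{H^2}$): in $d=3$, $|\int(\ue\cdot\grad\ue)\cdot A\ue|\le\|\ue\|_{L^6}\|\grad\ue\|_{L^3}\|A\ue\|_{L^2}\le C\|\ue\|_V^{3/2}\|A\ue\|_{L^2}^{3/2}\le\tfrac{\nu}{8}\|A\ue\|_{L^2}^2+C_\nu\|\ue\|_V^6$; in $d=2$, Ladyzhenskaya's inequality gives $|\int(\ue\cdot\grad\ue)\cdot A\ue|\le C\|\ue\|_{L^2}^{1/2}\|\ue\|_V\|A\ue\|_{L^2}^{3/2}\le\tfrac{\nu}{8}\|A\ue\|_{L^2}^2+C_\nu\|\ue\|_{L^2}^2\|\ue\|_V^4$. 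Combining, in $d=3$ we get the Riccati-type inequality $y'+\nu\|A\ue\|_{L^2}^2\le C_\nu y^3+Cg$; setting $R\eqdefa y(0)+C\int_0^Tg\le M_{in}+CM(M_{in}+MT)e^T$, an elementary comparison with $\bar y'=C_\nu\bar y^3$, $\bar y(0)=R$, gives $y(t)\le R(1-2C_\nu R^2t)^{-1/2}$ on any $[0,T]$ with $2C_\nu R^2T<1$. Since $R$ depends only on $M_{in}$ and $T$, one may first fix $T=T(M_{in})>0$ small enough (hence $\ve$-independent) so that this holds; then $\sup_{[0,T]}\|\ue\|_V^2\le C(M_{in})<\infty$, so $\int_0^T\|\ue\|_V^4\,dt<\infty$ and the strong solution extends to $[0,T]$ by the criterion of \cite{CIL NPNS far from eq}, and integrating the inequality once more over $[0,T]$ gives \eqref{estimate of ue in H^1, 3d} with $h$ increasing in its two arguments.

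In $d=2$, global strong solvability is already known from \cite{CI On the npns system}, so only the estimate is at stake. Here the inequality reads $y'+\nu\|A\ue\|_{L^2}^2\le\phi(t)\,y+Cg(t)$ with $\phi(t)\eqdefa C_\nu\|\ue\|_{L^2}^2\|\ue\|_V^2$, which is now \emph{linear} in $y$; since $\sup_t\|\ue\|_{L^2}^2$ and $\int_0^T\|\ue\|_V^2\,dt$ are both controlled by Lemma \ref{lem:energy}, one has $\int_0^T\phi\,dt<\infty$, and Gronwall's inequality together with $\int_0^Tg\,dt<\infty$ yields $y(t)$ bounded on $[0,T]$; feeding this back into the dissipation term and collecting the constants gives \eqref{estimate of ue in H^1, 2d}. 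In both dimensions the computations should first be carried out on the maximal existence interval furnished by the local theory, so that all integrations by parts are legitimate, and the continuation criterion then upgrades the a priori bound to existence on $[0,T]$ (resp. on $[0,\infty)$).
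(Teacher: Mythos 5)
Your proposal is correct and follows essentially the same route as the paper: you test \eqref{e4} against $A\ue$, control the electric force $-\re\grad\Phi^\ve$ via the $L^\infty$ bound on $\re$ from Lemma~\ref{maximal principle} together with the $L^2_t(L^2)$ bound on $\gpe+\grad\pw$ from \eqref{dissipation controls rho and electric field}--\eqref{dissipation inequality}, interpolate the convection term, and close by Gronwall in $d=2$ and by a Riccati comparison in $d=3$. The only superficial difference is that you keep the factor $\|\ue\|_{L^2}^2$ in the 2-D convection bound (giving a linear Gronwall inequality) where the paper writes the cruder $\|\ue\|_V^4$; both yield \eqref{estimate of ue in H^1, 2d} after invoking the energy dissipation estimate.
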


\begin{proof}
    Based on  Theorem 3 of \cite{CIL NPNS far from eq}, it suffice to deal with the {\it a priori}
     estimate of $\| \ue \|_{V}$. By  taking $L^2$ inner product of (\ref{e3}) with $Au^\ve$ and  using standard  estimates for the Navier-Stokes equations in \cite{Constantin & Foias}, Lemma \ref{maximal principle}  and the  assumption (\ref{uniform bound for ci}), we find
     \begin{equation}\label{growth of ue,2d}
        \frac{d}{dt}\| \ue\|^2_{V}+\nu\|Au^\ve\|^2_H \leq M(\Lam,z_i) (\| \ue\|^4_{V}+ \|\gpe + \grad \pw\|^2_{L^2}), \qquad d=2,
    \end{equation}
    and
    \begin{equation}\label{growth of ue,3d}
        \frac{d}{dt}\| \ue\|^2_{V}+\nu\|Au^\ve\|^2_H \leq M(\Lam,z_i) (\| \ue\|^6_{V}+ \|\gpe + \grad \pw\|^2_{L^2}), \qquad d=3.
    \end{equation}
   Thus, together with estimate (\ref{dissipation inequality}) and (\ref{dissipation controls rho and electric field}), (\ref{growth of ue,2d}) gives rise to (\ref{estimate of ue in H^1, 2d}), (\ref{growth of ue,3d}) implies the existence of $T$ so that  (\ref{estimate of ue in H^1, 3d}) holds.
\end{proof}

\begin{rmk}
    It is easy to deduce the local wellposedness of the limit  system (\ref{limit1})-(\ref{limit5}) with boundary condition (\ref{limit-bdry}).
    As a consequence, we can choose $T>0$ so that both the $\ve$-depending NPNS systems and the limit system  have a unique strong solution on the time interval $[0,T]$.
\end{rmk}

Next Lemma is about the convexity of the energy function.

\begin{lem}\label{convexity of vphi}
  {\sl  For arbitrary $0<m\leq s \leq M$, we have
    \begin{equation*}
       \frac{1}{2M} (s-1)^2 \leq \vphi(s) \leq \frac{1}{2m} (s-1)^2.
    \end{equation*}}
\end{lem}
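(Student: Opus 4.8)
The plan is to exploit the second‑order Taylor expansion of $\vphi$ about the base point $s=1$, where both $\vphi$ and $\vphi'$ vanish, and to sandwich the quadratic remainder using the elementary two‑sided bound on $\vphi''$. First I would record the basic facts: $\vphi(1)=0$, $\vphi'(s)=\log s$ so that $\vphi'(1)=0$, and $\vphi''(s)=1/s>0$ on $(0,\infty)$. By Taylor's formula with integral remainder,
\[
\vphi(s)=\int_1^s (s-t)\,\vphi''(t)\,dt=\int_1^s \frac{s-t}{t}\,dt .
\]
Note that this lemma is only ever applied to arguments $s$ lying in an interval that contains $1$ — indeed $\vphi$ is evaluated at the ratios $c_i^\ve/\Gamma_i$, which equal $1$ on $\p\Omega$ and hence, by Lemma \ref{maximal principle} and the maximum principle for $\Gamma_i$, lie between $\lam_i^\ve/\max_{\p\Omega}\ga_i\le 1$ and $\Lam_i^\ve/\min_{\p\Omega}\ga_i\ge 1$ — so throughout we may and do assume $m\le 1\le M$.

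With that, I would split into the two cases $s\ge 1$ and $s\le 1$. If $s\ge 1$, the integration variable $t$ runs over $[1,s]\subseteq[1,M]\subseteq[m,M]$, on which $s-t\ge 0$ and $1/M\le 1/t\le 1/m$; if $s\le 1$, rewriting the integral as $\int_s^1 \frac{t-s}{t}\,dt$, the variable $t$ runs over $[s,1]\subseteq[m,1]\subseteq[m,M]$, on which $t-s\ge 0$ and again $1/M\le 1/t\le 1/m$. In either case the integrand keeps a fixed sign, so pulling the extreme values of $1/t$ out of the integral yields
\[
\frac{1}{M}\,\Bigl|\int_1^s (s-t)\,dt\Bigr|\ \le\ \vphi(s)\ \le\ \frac{1}{m}\,\Bigl|\int_1^s (s-t)\,dt\Bigr| ,
\]
and since a direct computation gives $\bigl|\int_1^s (s-t)\,dt\bigr|=\tfrac12(s-1)^2$ in both cases, the asserted inequalities follow.

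An equivalent route, which avoids the case split, is to introduce $g(s)=\tfrac{1}{2m}(s-1)^2-\vphi(s)$ and $h(s)=\vphi(s)-\tfrac{1}{2M}(s-1)^2$: each vanishes together with its first derivative at $s=1$, while $g''(s)=\tfrac1m-\tfrac1s\ge 0$ for $s\ge m$ and $h''(s)=\tfrac1s-\tfrac1M\ge 0$ for $s\le M$, so on $[m,M]$ both $g$ and $h$ are convex with a critical point at $s=1$, hence nonnegative. There is essentially no obstacle in this argument; the only point that genuinely requires care is the observation that the expansion point $s=1$ belongs to $[m,M]$ — without it the upper bound can fail (e.g. $m=M=s=2$, where $\vphi(2)=2\log 2-1>\tfrac14$) — which is precisely why the remark about $c_i^\ve/\Gamma_i\big|_{\p\Omega}=1$ is needed before invoking the lemma.
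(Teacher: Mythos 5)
Your proof is correct and rests on the same core idea as the paper's: expand $\varphi$ about $s=1$, where both $\varphi$ and $\varphi'$ vanish, and bound the quadratic remainder using $1/M \le \varphi''(t) = 1/t \le 1/m$ along the segment between $1$ and $s$. The paper uses the Lagrange form of the remainder (a single $\theta$ strictly between $1$ and $s$), whereas you use the integral form together with a sign split on $s\ge 1$ versus $s\le 1$; these are cosmetic variants of the same argument, and your alternative convexity route with $g$ and $h$ is equally standard.

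What you add, and what is genuinely worth flagging, is the explicit observation that the conclusion requires $1\in[m,M]$: otherwise the intermediate point $\theta$ (resp. the integration variable $t$) need not lie in $[m,M]$, and indeed your counterexample $m=M=s=2$, where $\varphi(2)=2\log 2-1\approx 0.386>\tfrac14$, shows the upper bound fails. The paper states the lemma for ``arbitrary $0<m\le s\le M$'' and its one-line proof silently relies on $m\le 1\le M$. In every application in the paper the argument of $\varphi$ is a ratio of concentrations (such as $c_i^\ve/\Gamma_i$ or $c_i^\ve/c_i$) whose range does contain $1$, so nothing downstream is affected — but as literally written the lemma is false, and your proposal is the more careful statement of what is actually being used.
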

\begin{proof}
  By  using Taylor expansion and the fact that  $\vphi{''}(s)=\frac{1}{s} $, for arbitrary $0<s<M$, we have
    \begin{equation*}
        \vphi(s)=\vphi(1)+\vphi{'}(1)(s-1)+\frac{\vphi{''}(\theta)}{2}(s-1)^2 \geq \frac{1}{2M} (s-1)^2,
    \end{equation*}
    where $\theta$ is a number between $1$ and $s$. This completes the proof of the second inequality. The first inequality follows
    along the same line.
\end{proof}

\begin{rmk}\label{l2 eq varphi}
In particular, by taking $s=\frac{\cei}{c_i}$, we  deduce from Lemmas \ref{convexity of vphi}  and  \ref{maximal principle}  that $$ \int_\Omega c_i \varphi\bigl(\frac{\cei}{c_i}\bigr) \,dx \approx \|\cei -c_i \|^2_{L^2}. $$
\end{rmk}

\renewcommand{\theequation}{\thesection.\arabic{equation}}
\setcounter{equation}{0}

\section{The Convergence in $L_T^\oo(L^2)$}\label{section3}

In this section, we shall use modulated energy estimate to prove Theorem \ref{thm1}, which will be
 based on the following modulated energy functional:
\begin{equation}\label{defintion of H}
    H^\ve(t) \eqdefa \sum^2_{i=1} \int_\Omega c_i \varphi\bigl(\frac{c^\ve_i}{c_i}\bigr)(t)\,dx +\frac{\ve^2}{2} \| \grad \pe(t) \|^2_{L^2}  +\frac{1}{2}\| (\ue - u)(t) \|^2_{L^2},
\end{equation}
where $(\cle,\cre,\pe,\ue)$, $(\lc,\rc,\psi,u)$ are smooth enough solutions of NPNS system (\ref{S1eq1}-\ref{ebdry}) and the limit  system \eqref{S1eq2} on $[0,T]$ respectively.
We construct the energy dissipation functional $\Theta^\ve(t)$ as follows:
\beq\label{difinition of Theta}
\begin{split}
    \Theta^\ve  \eqdefa & \sum^2_{i=1} D_i \int_\Omega \frac{|\grad c^\ve_i-\grad c_i|^2}{c^\ve_i}\, dx+\sum^2_{i=1} z^2_i D_i \int_\Omega c^\ve_i |\gpe - \grad \psi|^2\, dx \\
    & +{D^*}\bigl\| \frac{\re}{\ve}\bigr\|^2_{L^2} + \nu \|(\grad\ue- \grad u)\|_{L^2}^2.
\end{split}\eeq

The main result states as follows:

\begin{prop}\label{S3prop1}
{\sl Let $(\cle,\cre,\pe,\ue)$, $(\lc,\rc,\psi,u)$ be smooth enough solutions of NPNS system (\ref{S1eq1}-\ref{ebdry}) and the limiting system \eqref{S1eq2} on $[0,T]$ respectively. Then for any $t\in
[0,T]$: one has
\begin{equation}\label{main goal}
\begin{split}
    \frac{d}{dt}& H^\ve(t) + \f{D_2}{4D_1} \Theta^\ve(t) \\
    \leq & M\bigl(1+\|\na u\|_{L^\infty}+\sum_{i=1}^2\|\na c_i\|_{L^\infty}^2
    +\|\na\psi\|_{L^\infty}^2
    +\|\na\Phi_W\|_{L^\infty}^2\bigr)H^\ve(t)\\
    & +M\ve^2\|\lap\psi\|_{L^2}^2
    +\int_\Omega \psi(\pt \re + \ue \cdot \grad \re ) \,dx.
   \end{split}
\end{equation}
Here and in the rest of this paper, we always denote $M$ to be a   positive constant which depends on
$\nu,\lambda, \Lambda, z_i$ and $D_i$ for $i=1,2$, unless otherwise stated, and which may vary from line to line.
}\end{prop}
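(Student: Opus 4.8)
The plan is to differentiate the modulated energy $H^\ve(t)$ in time and organize the resulting terms by comparison with the two energy identities already established: the one satisfied by the $\ve$-solution (an analogue of \eqref{dissipation equality}), and the corresponding identity for the limit solution. First I would compute $\frac{d}{dt}\sum_i \int_\Omega c_i\varphi(\cei/c_i)\,dx$. Writing $\varphi(s)=s\log s-s+1$, one has the pointwise identity $\partial_t\bigl(c_i\varphi(\cei/c_i)\bigr) = \partial_t\cei\,\log(\cei/c_i) + \partial_t c_i\,\bigl(1-\cei/c_i\bigr)$, and then I substitute the divergence-form equations \eqref{mass conservtion of divergence form} and \eqref{ci mass conservtion of divergence form} for $\partial_t\cei$ and $\partial_t c_i$. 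Analogously I differentiate $\frac{\ve^2}{2}\|\nabla\pe\|_{L^2}^2$ using \eqref{e3} (this is exactly where the term $\int_\Omega \psi(\partial_t\re + \ue\cdot\nabla\re)\,dx$ will be produced, since one integrates $\partial_t\re$ against a test function and the natural one after the manipulations is $\psi$, not $\pe$), and $\frac12\|\ue-u\|_{L^2}^2$ by taking the $L^2$ inner product of \eqref{e4} minus \eqref{limit4} with $\ue-u$. The viscous terms combine to give $-\nu\|\nabla\ue-\nabla u\|_{L^2}^2$, which is the last piece of $\Theta^\ve$.

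Next I would assemble the dissipative quadratic form. Summing the $\varphi$-contributions produces the cross terms $-\sum_i D_i\int_\Omega \cei\nabla\mu_i^\ve\cdot\nabla\log(\cei/c_i)$ together with the terms coming from the limit equation; after completing the square in the variable $\nabla\mu_i^\ve-\nabla\mu_i$ (using $\mu_i^\ve-\mu_i = \log(\cei/c_i) + z_i(\pe-\psi)$) and invoking Lemma \ref{maximal principle} for the lower and upper bounds $\lambda\le\cei,c_i\le\Lambda$, one extracts $\sum_i D_i\int_\Omega \cei|\nabla\mu_i^\ve-\nabla\mu_i|^2\,dx$ up to lower-order terms. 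Then, just as in the proof of \eqref{dissipation controls rho and electric field}, expanding $\nabla\mu_i^\ve-\nabla\mu_i = \frac{\nabla\cei-\nabla c_i}{\cei} - \frac{(\nabla c_i)(\cei-c_i)}{\cei c_i} + z_i(\nabla\pe-\nabla\psi)$ and using $\sum_i z_i(\nabla\mu_i^\ve-\nabla\mu_i)$-type cancellations (Poisson equation \eqref{e3} feeds $\sum_i z_i\cei(\ldots)$ through $\re$) yields control of $\sum_i D_i\int \frac{|\nabla\cei-\nabla c_i|^2}{\cei} + \sum_i z_i^2 D_i\int \cei|\nabla\pe-\nabla\psi|^2 + D^*\|\re/\ve\|_{L^2}^2$, i.e. the first three pieces of $\Theta^\ve$. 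The constant $\frac{D_2}{4D_1}$ reflects the loss incurred when passing from $\sum_i D_i(\cdots)$ to a single clean multiple of $\Theta^\ve$ after absorbing the cross terms generated by $\nabla c_i$ having an $L^\infty$ bound; those cross terms are Young-inequality'd, half into $\Theta^\ve$ and half into $H^\ve(t)$ with coefficient involving $\|\nabla c_i\|_{L^\infty}^2$, $\|\nabla\psi\|_{L^\infty}^2$, $\|\nabla\Phi_W\|_{L^\infty}^2$.

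The remaining terms to dispose of are: (i) the advection/force mismatches in the velocity equation, namely $-\int_\Omega (\ue\cdot\nabla\ue - u\cdot\nabla u)\cdot(\ue-u) = -\int_\Omega (\ue-u)\cdot\nabla u\cdot(\ue-u)$ after using $\dive\ue=\dive u=0$, bounded by $\|\nabla u\|_{L^\infty}\|\ue-u\|_{L^2}^2 \le \|\nabla u\|_{L^\infty} H^\ve$; (ii) the electric forcing difference $-\int_\Omega(\re\nabla\pe - \rho\nabla\psi + (\re-\rho)\nabla\Phi_W)\cdot(\ue-u)$, where $\rho=0$, so this is $-\int_\Omega \re(\nabla\pe+\nabla\Phi_W)\cdot(\ue-u)$; the piece with $\nabla\Phi_W$ is handled by Young with $\|\nabla\Phi_W\|_{L^\infty}^2\|\re/\ve\|_{L^2}^2$ against $\Theta^\ve$ and $\ve^2\|\ue-u\|_{L^2}^2\le\ve^2 H^\ve$, while $\int_\Omega\re\nabla\pe\cdot(\ue-u)$ is rewritten via $\re\nabla\pe = -\ve^2\dive(\nabla\pe\otimes\nabla\pe) + \frac{\ve^2}{2}\nabla|\nabla\pe|^2$ and integrated by parts, producing $\ve^2$-weighted quadratic terms in $\nabla\pe$ that are controlled by $\|\re/\ve\|$-type bounds and $\ve^2\|\lap\psi\|_{L^2}^2$ after splitting $\nabla\pe = (\nabla\pe-\nabla\psi)+\nabla\psi$; (iii) a genuinely $\ve^2$-small leftover of the form $M\ve^2\|\lap\psi\|_{L^2}^2$, which is exactly the penultimate term on the right of \eqref{main goal}. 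Finally the term $\int_\Omega\psi(\partial_t\re+\ue\cdot\nabla\re)\,dx$ is kept as is — it is precisely the quantity that will be estimated separately in the sequel (it is small because $\re$ is $O(\ve)$ in the right norms and $\psi$ solves the elliptic problem \eqref{psi}).

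The main obstacle I anticipate is the bookkeeping of the electric-field cross terms: the interaction between $\re/\ve$, $\nabla\pe-\nabla\psi$, and the velocity difference $\ue-u$ must all land on the correct side with the right power of $\ve$, and one must be careful that the $\ve^2\dive(\nabla\pe\otimes\nabla\pe)$ manipulation of the Lorentz force does not generate a term that cannot be absorbed into $\Theta^\ve$ — this forces the introduction of the auxiliary comparison $\nabla\pe = (\nabla\pe-\nabla\psi)+\nabla\psi$ and the appearance of the modest loss $\frac{D_2}{4D_1}$ rather than $1$. The other delicate point is verifying that the test function emerging naturally from $\frac{\ve^2}{2}\frac{d}{dt}\|\nabla\pe\|_{L^2}^2 + (\text{limit terms})$ is $\psi$ and not $\pe$, which uses $\psi|_{\partial\Omega}=\pe|_{\partial\Omega}=0$ together with $-\ve^2\lap\pe=\re$ and the fact that $\rho=0$ for the limit system.
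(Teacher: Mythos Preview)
Your broad architecture—differentiate $H^\ve$, extract $\Theta^\ve$ from the dissipative quadratic form, absorb the cross terms by Young's inequality—matches the paper's. But two of your attributions are off in ways that matter.

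\textbf{Where $\int_\Omega\psi(\partial_t\rho^\ve+u^\ve\!\cdot\!\nabla\rho^\ve)\,dx$ actually comes from.} Differentiating $\tfrac{\ve^2}{2}\|\nabla\pe\|_{L^2}^2$ gives $\int\partial_t\rho^\ve\,\pe\,dx$, with test function $\pe$, not $\psi$. In the paper the $\psi$ appears for a different reason: after expanding the dissipation in $I_3$, there remains the flux term $-\int_\Omega\nabla\psi\cdot\bigl(\sum_i z_iD_i\cei\nabla\mu_i^\ve\bigr)\,dx$ (this is what the paper calls ``third line of $J_2$'' together with $I_{33}$). Integrating by parts against $\psi|_{\partial\Omega}=0$ and using the divergence-form mass conservation \eqref{mass conservtion of divergence form} gives $\int_\Omega\psi\,\dive(\sum_i z_iD_i\cei\nabla\mu_i^\ve)=\int_\Omega\psi(\partial_t\rho^\ve+u^\ve\!\cdot\!\nabla\rho^\ve)\,dx$. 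So this term is a \emph{residue of the dissipation organization}, not of the Poisson energy.

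\textbf{The real source of $D_2/(4D_1)$ and of $\ve^2\|\Delta\psi\|_{L^2}^2$.} Your explanation (``loss when passing from $\sum_iD_i(\cdots)$ to a clean multiple of $\Theta^\ve$'') misses the key algebraic move. In the part of $I_{32}$ the paper calls $J_2$, one must handle $-\int(\nabla\pe-\nabla\psi)\cdot\bigl(z_1(D_1-D_2)\nabla\cle+D_2\nabla\re+(\cdots)\bigr)\,dx$. The term $z_1(D_1-D_2)\nabla\cle$ is not a difference and would not be absorbable; the trick is to invoke the \emph{limit $\psi$-equation} \eqref{equivalent psi} to replace $\nabla\cle$ by $\nabla(\cle-c_1)$ and $\cle(\nabla\psi+\nabla\Phi_W)$ by $(\cle-c_1)(\nabla\psi+\nabla\Phi_W)$. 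After this substitution the Cauchy--Schwarz bound costs $\bigl(1-\tfrac{D_2}{D_1}\bigr)\cdot\tfrac12\Theta^\ve$, and combined with the $\tfrac12\Theta^\ve$ already spent on $J_1$ one is left with $\tfrac{D_2}{2D_1}\Theta^\ve$, whence $\tfrac{D_2}{4D_1}$ after the $\delta$-absorptions. The $\ve^2\|\Delta\psi\|_{L^2}^2$ arises here too, from $-D_2\int(\nabla\pe-\nabla\psi)\cdot\nabla\re\,dx=-D_2\|\re/\ve\|_{L^2}^2+D_2\int\Delta\psi\,\re\,dx$, not from the Maxwell stress manipulation you propose. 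Incidentally, the Maxwell stress term is handled much more simply in the paper: $\ve^2\int\nabla u:\nabla\pe\otimes\nabla\pe\,dx\le\|\nabla u\|_{L^\infty}\,\ve^2\|\nabla\pe\|_{L^2}^2\le M\|\nabla u\|_{L^\infty}H^\ve$, with no splitting of $\nabla\pe$ needed.

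In short: your strategy is right, but the proof hinges on using \eqref{equivalent psi} to turn absolute fluxes into differences, and on recognizing the leftover flux-against-$\nabla\psi$ term as $\int\psi(\partial_t\rho^\ve+u^\ve\!\cdot\!\nabla\rho^\ve)$ via mass conservation. Without those two moves the bookkeeping does not close.
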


\begin{proof} We shall divide the proof of Proposition \ref{S3prop1} into the following steps:

\no{\bf Step 1.} The derivation of the differential equality.

Observing that
\begin{align*}
c_i \varphi\bigl(\frac{c^\ve_i}{c_i}\bigr) &= c_i\bigl(\frac{c^\ve_i}{c_i}\log\frac{c^\ve_i}{c_i} -\frac{c^\ve_i}{c_i}+1\bigr) =c^\ve_i \log c^\ve_i -c^\ve_i\log c_i -c^\ve_i+c_i \\
&= \Gamma_i \varphi(\frac{c^\ve_i}{\Gamma_i}) - c^\ve_i(\log c_i -\log \Gamma_i) +c_i -\Gamma_i,
\end{align*}
in view of \eqref{S2eq3} and \eqref{defintion of H}, we have
\begin{equation}\label{S3eq1}
    H^\ve(t)=E^\ve(t) - \sum^2_{i=1} \int_\Omega \cei \bigl(\log c_i - \log \Gamma_i\bigr) \,dx +\sum^2_ {i=1} \int_\Omega\bigl( c_i - \Gamma_i\bigr)\, dx +\frac{1}{2}\| u \|^2_{L^2}-\int_\Omega \ue \cdot u\,dx.
\end{equation}

Below let us calculate the time derivative of the right-hand side of \eqref{S3eq1}.
Indeed it follows from  (\ref{dissipation equality}) that
 \begin{align*}
  \frac{d}{dt}E^\ve(t) = &-\nu \| \grad \ue \|^2_{L^2} - \sum^2_{i=1}D_i \int_{\Omega} c^\ve_i | \grad \mu^\ve_i |^2\,dx  -\sum^2_{i=1} \int_\Omega \ue c^\ve_i \grad \log \Gamma_i \,dx\\
  &-\int_\Omega \ue \re \grad \pw \,dx + \sum^2_{i=1}D_i \int_\Omega c^\ve_i \grad \mu^\ve_i \grad (\log \Gamma_i + z_i \pw)  \,dx.   \nonumber
    \end{align*}

While by virtue of (\ref{mass conservtion of divergence form}), \eqref{ci mass conservtion of divergence form} and $\frac{\cei}{c_i}|_{\p \Omega}=1,$  we get,
by using integration by parts, that
\begin{align*}\label{dt cei logci-loggi}
-\frac{d}{dt} & \int_\Omega \cei  (\log c_i - \log \Gamma_i) \,dx  =-\int_\Omega  \pt\cei(\log c_i-\log \Gamma_i) \,\,dx -\int_\Omega \cei \frac{\pt c_i}{c_i} \,dx \\
=&-\int_\Omega \dive (D_i\cei \grad \mu^\ve_i - \ue \cei)(\log c_i - \log \Gamma_i) \,dx
   - \int_\Omega \frac{\cei}{c_i} \dive (D_i c_i \grad \mu_i - u  c_i) \,dx\nonumber \\
=&D_i \int_\Omega  \cei \grad \mu^\ve_i \bigl(\frac{\grad c_i}{c_i} - \grad \log \Gamma_i\bigr) \,dx -\int_\Omega  \ue \cei \bigl(\frac{\grad c_i}{c_i} - \grad \log \Gamma_i\bigr) \,dx \nonumber \\
&+D_i \int_\Omega \grad \bigl(\frac{\cei}{c_i}\bigr) c_i \grad \mu_i \,dx -D_i \int_\Omega \dive (c_i \grad \mu_i)\,dx +\int_\Omega \frac{\cei}{c_i} u\cdot \grad c_i \,dx \\
=& D_i \int_\Omega  \cei \grad \mu^\ve_i \frac{\grad c_i}{c_i} \,dx
+D_i \int_\Omega \grad \bigl(\frac{\cei}{c_i}\bigr) c_i \grad \mu_i \,dx
- \int_\Omega (\ue-u)\frac{\cei}{c_i}\grad c_i  \,dx \\
&  -D_i \int_\Omega  \cei \grad \mu^\ve_i  \grad \log \Gamma_i \,dx +\int_\Omega  \ue \cei \grad \log \Gamma_i \,dx -D_i \int_\Omega \dive (c_i \grad \mu_i)\,dx.
\end{align*}

Thanks to (\ref{ci mass conservtion of divergence form}), one has
\begin{equation*}\label{dt ci}
    \frac{d}{dt} \int_\Omega c_i \,dx = D_i \int_\Omega \dive (c_i \grad \mu_i)\,dx.
\end{equation*}

Whereas due to $\dive u=0$ and $u|_{\p \Omega}=0,$ we get, by using energy method to equation (\ref{limit4}), that
\begin{equation*}\label{dt u}
    \frac{1}{2}\frac{d}{dt} \|u\|^2_{L^2}=-\nu\|\grad u \|^2_{L^2}.
\end{equation*}

Finally thanks to  (\ref{e5}) and (\ref{limit5}), we get, by using integration by parts, that
\begin{align*}\label{dt ue u}
    -\frac{d}{dt} &\int_\Omega \ue u \,dx = -\int_\Omega\pt \ue u \,dx -\int_\Omega\ue \pt u \,dx \\
    =&-\int_\Omega \bigl(\nu \lap \ue -\ue\cdot \grad \ue +\ve^2 \dive (\grad \psi^\ve \otimes \grad \psi^\ve) -\re \grad \pw\bigr)\cdot u \,dx \nonumber \\
    & -\int_\Omega \ue \cdot(\nu \lap u -u\cdot \grad u ) \,dx \nonumber \\
    =&2\nu \int_\Omega\grad \ue : \grad u \,dx +\int_\Omega (\ue\cdot\grad \ue) \cdot u \,dx + \int_\Omega (u \cdot \grad u)\cdot \ue \,dx \nonumber\\
    &+ \ve^2 \int_\Omega\grad u: \grad \psi^\ve \otimes \grad \psi^\ve \,dx +\int_\Omega u \re \grad\pw \,dx.
    \end{align*}

Thanks to \eqref{S3eq1}, we get,
by summarizing the above equalities, that
\beq \label{S3eq2}
    \frac{d}{dt} H^\ve(t)=-\nu \| \grad (\ue-u) \|^2_{L^2} +I_1+I_2+I_3,
\eeq
where
\beq\label{S3eq3}
\begin{split}
    I_1 \eqdefa& \int_\Omega (\ue\cdot\grad \ue) \cdot u \,dx + \int_\Omega (u \cdot \grad u)\cdot \ue \,dx,\\
    I_2 \eqdefa & \ve^2 \int_\Omega\grad u: \grad \psi^\ve \otimes \grad \psi^\ve \,dx -\sum ^2_{i=1} \int_\Omega (\ue-u)\bigl(\frac{\cei}{c_i}-1\bigr)\grad c_i  \,dx\\
    &  - \int_\Omega (\ue -u ) \re \grad \pw \,dx,\\
        I_3 \eqdefa&  - \sum^2_{i=1}D_i \int_{\Omega} c^\ve_i | \grad \mu^\ve_i |^2\,dx +\sum^2_{i=1} D_i\int_\Omega  \cei \grad \mu^\ve_i \frac{\grad c_i}{c_i} \,dx \\
    & +\sum^2_{i=1} D_i \int_\Omega \grad \Bigl(\frac{\cei}{c_i}\Bigr) c_i \grad \mu_i \,dx +\sum^2_{i=1} z_i D_i \int_\Omega c^\ve_i \grad \mu^\ve_i \cdot \grad \pw   \,dx.
\end{split}\eeq

\no{\bf Step 2.} The estimates of $I_1$ to $I_2.$

 We first get, by  using  $\dive\ue=\dive u=0$ and the homogeneous
boundary conditions of $\ue$ and $u,$ that
\beno
\begin{split}
    I_1&=\int_\Omega (\ue\cdot\grad \ue) \cdot u \,dx + \int_\Omega (u \cdot \grad u)\cdot \ue \,dx = \int_\Omega \big((\ue-u)\cdot\grad u \big) \cdot (u-\ue) \,dx,
\end{split} \eeno
from which, we infer
\beq\label{estimate of i1}
|I_1|\leq \|\grad u\|_{L^\oo}\|\ue-u\|^2_{L^2}.
\eeq

Similarly, in view of $\re=\sum_{i=1}^2 z_i(\cei-c_i)$,  one has
\beq\label{estimate of i2}
\begin{split}
|I_2|
    \leq & \ve^2  \|\grad u\|_{L^\oo}\|\gpe\|^2_{L^2} +  \bigl(\sum_{i=1}^2\|\grad c_i\|_{L^\oo}\|\frac{\cei}{c_i} -1  \|_{L^2} +  \|\grad\pw\|_{L^\oo}\|\re\|_{L^2}\bigr)\|\ue - u \|_{L^2} \\
     \leq & M\bigl(\|\grad u\|_{L^\oo}+\sum_{i=1}^2\|\grad c_i\|_{L^\oo}+  \|\grad\pw\|_{L^\oo}\bigr)H^\ve.
\end{split} \eeq

\no{\bf Step 3.} The estimates of  $I_3.$

In view of (\ref{electrochemical potential}), we write
\beq\label{S3eq4}
\begin{split}
I_3
= -\sum ^2_{i=1} D_i \int_\Omega \frac{|\grad \cei - \grad c_i|^2}{\cei}\,dx -\sum ^2_{i=1}  z_i^2 D_i\int_\Omega \cei |\gpe - \grad \psi |^2 \,dx +I_{31} +I_{32} +I_{33}, \
\end{split}\eeq
where we denote $I_{31}$, $I_{32}$ and $I_{33}$ respectively by
\begin{align*}
    I_{31}\eqdefa &-\sum ^2_{i=1}  D_i \int_\Omega \frac{2\grad \cei \cdot \grad c_i - |\grad c_i|^2}{\cei} \,dx\\
     &+\sum ^2_{i=1} D_i \bigl(\int_\Omega \frac{\grad \cei \cdot \grad c_i }{c_i}\,dx +\int_\Omega (\grad \cei-\frac{\cei}{c_i}\grad c_i)\frac{\grad c_i}{c_i} \,dx\bigr),\\
     I_{32}\eqdefa &  \sum ^2_{i=1}z_i D_i \int_\Omega \frac{\cei}{c_i}\grad c_i (\gpe - \grad \psi )\,dx -\sum ^2_{i=1}z_i D_i \int_\Omega \grad \cei(\gpe - \grad \psi)\,dx \nonumber \\
    &-\sum ^2_{i=1}z_i D_i \int_\Omega \bigl(\grad \cei+z_i \cei(\grad \psi + \grad \pw)\bigr)(\gpe +\grad \pw)\,dx  \\
    &  -\sum ^2_{i=1}  z_i^2 D_i \int_\Omega \cei   (\gpe- \grad \psi)(\grad \psi +  \grad \pw )  \,dx, \\
    I_{33}\eqdefa & \sum ^2_{i=1}z_i D_i \int_\Omega \cei \grad \mu^\ve_i \cdot \grad\pw \,dx.
\end{align*}

\no{\bf Step 3.1} The estimates of  $I_{31}.$

The estimate of $I_{31}$ is straightforward. Notice that
 $$\frac{1}{c_i}- \frac{1}{\cei}=\bigl(\frac{\cei }{c_i}-1\bigr)\frac{1}{\cei}   \andf
 \frac{\cei}{c_i^2} -\frac{1}{\cei} =\bigl(\frac{\cei }{c_i}+1\bigr)\bigl(\frac{\cei }{c_i}-1\bigr)\frac{1}{\cei},$$
   we write
\begin{align*}
    I_{31}
   & =2\sum ^2_{i=1} D_i \int_\Omega \grad \cei \cdot \grad c_i \bigl(\frac{\cei }{c_i}-1\bigr)\frac{1}{\cei}\,dx -\sum ^2_{i=1} D_i \int_\Omega\bigl(\frac{\cei }{c_i}+1\bigr) |\grad c_i|^2 \bigl(\frac{\cei }{c_i}-1\bigr)\frac{1}{\cei}\,dx   \\
   &=2 \sum ^2_{i=1} D_i \int_\Omega  (\grad \cei- \grad c_i)\cdot \grad c_i \bigl(\frac{\cei }{c_i}-1\bigr)\frac{1}{\cei}\,dx  -\sum ^2_{i=1} D_i \int_\Omega |\grad c_i|^2 \bigl(\frac{\cei }{c_i}-1\bigr)^2 \frac{1}{\cei} \,dx.
  \end{align*}
 Then we get, by using Lemma \ref{maximal principle}, Lemma \ref{convexity of vphi}  and H{\"o}lder's inequality,  that for any $\delta>0,$
 \begin{align*}
   |I_{31}|
   & \leq \delta \sum_{i=1}^2D_i \int_\Omega \frac{ |\grad \cei- \grad c_i|^2}{c_i^\ve}\,dx+\f{2}\delta \sum ^2_{i=1} D_i \int_\Omega |\grad c_i|^2 \bigl(\frac{\cei }{c_i}-1\bigr)^2 \frac{1}{\cei} \,dx\\
   &\leq \delta
   \Theta^\ve +\f{M}\delta \bigl(\sum_{i=1}^2\|\grad c_i\|_{L^\infty}^2\bigr) H^\ve.
\end{align*}

\no{\bf Step 3.2} The estimates of  $I_{32}.$

We split $I_{32}$ further into the following two parts:
\begin{align*}
I_{32}=&J_1+J_2\with\\
J_1 \eqdefa &\sum ^2_{i=1}z_i D_i \int_\Omega \frac{\cei}{c_i}\grad c_i (\gpe - \grad \psi )\,dx -\sum ^2_{i=1}z_i D_i \int_\Omega \grad \cei(\gpe - \grad \psi)\,dx,\\
  J_2 \eqdefa & -\sum ^2_{i=1}z_i D_i \int_\Omega \big(\grad \cei+z_i \cei(\grad \psi + \grad \pw)\big)(\gpe +\grad \pw)\,dx  \\
    & -\sum ^2_{i=1}  z_i^2 D_i \int_\Omega \cei   ( \gpe- \grad \psi)(\grad \psi +  \grad \pw )  \,dx.
    \end{align*}
 It is easy to observe that
 \beno
 J_1
    =\sum ^2_{i=1}z_i D_i \int_\Omega \bigl(\frac{\cei}{c_i}-1\bigr) \grad c_i (\gpe - \grad \psi )\,dx - \sum ^2_{i=1}z_i D_i \int_\Omega (\grad \cei-\grad c_i)(\gpe - \grad \psi)\,dx,
    \eeno
    so that one has
\begin{align*}
    |J_1|
    &\leq \bigl(\delta+\frac12\bigr)\sum_{i=1}^2D_iz_i^2\int_\Omega \cei|\gpe - \grad \psi|^2\,dx +\frac12 \sum_{i=1}^2D_i\int_\Omega\frac{ |\grad \cei-\grad c_i|^2}{\cei}\,dx\\
    &\quad+\f{M}{\delta} \sum_{i=1}^2
    \int_\Omega\frac{|\grad c_i|^2}{\cei}  \bigl(\frac{\cei}{c_i}-1\bigr)^2\,dx \\
    &\leq \bigl(\delta+\f12\bigr) \Theta^\ve +\frac{M}{\delta}\bigl(\sum_{i=1}^2\|\nabla c_i\|_{L^\infty}^2 \bigr) H^\ve.
\end{align*}

The estimate of the rest part in $I_{32}$  relies on the evolution equation of $\rho^\ve$. We first write
\begin{align*}
    J_2
    &=-\int_\Omega (\gpe-\grad \psi) \Big(\zl \dl \grad \cle + \zr \dr \grad \cre +(\zl^2 \dl \cle + \zr^2\dr\cre)(\grad \psi +\grad \pw) \Big)\,dx \nonumber\\
    &\quad-\int_\Omega (\grad \psi +\grad \pw) \Big(\zl \dl \grad \cle + \zr \dr \grad \cre +(\zl^2 \dl \cle + \zr^2\dr\cre)(\grad \psi +\grad \pw) \Big)\,dx \\
    &\quad -\int_\Omega (\grad \psi +\grad \pw)(\gpe -\grad \psi)(\zl^2 \dl \cle + \zr^2\dr\cre)\,dx.
    \end{align*}
    As $\rho^\ve=z_1c_1^\ve+z_2c_2^\ve,$ we have
    \begin{align*}
    J_2
    &= -\int_\Omega (\gpe-\grad \psi)  \Big( \zl (\dl-\dr) \grad \cle + \dr \grad \re\\
     &\qquad\qquad\qquad\qquad\qquad+\big(\zl (\zl \dl-\zr \dr) \cle + \zr\dr\re \big)                       (\grad \psi +\grad \pw) \Big)\,dx\\
    &\quad - \int_\Omega (\grad \psi +\grad \pw) \Big(\zl \dl \grad \cle + \zr \dr \grad \cre +(\zl^2 \dl \cle + \zr^2\dr\cre)(\gpe +\grad \pw) \Big)\,dx,\end{align*}
    which together with the  $\psi$ equation  (\ref{equivalent psi}) ensures that
     \begin{align*}
    J_2
    &= -\int_\Omega (\gpe-\grad \psi)  \Big( \zl (\dl-\dr) (\grad \cle -\grad c_1)\\
    &\qquad\qquad\qquad\qquad\qquad  +\big(\zl (\zl \dl-\zr \dr) c_1 (\frac{\cle}{\lc}-1) \big)                    (\grad \psi +\grad \pw) \Big)\,dx \nonumber\\
    &\quad -D_2\int_\Omega (\gpe-\grad \psi)\grad \re \,dx -z_2 D_2\int_\Omega (\gpe-\grad \psi)\re (\grad \psi +\grad \pw) \,dx \\
    & \quad - \int_\Omega (\grad \psi +\grad \pw) \big(\zl \dl \cle \grad \mu^\ve_1 + \zr \dr \cre \grad \mu^\ve_2 \big)\,dx.
\end{align*}

For the first line in $J_2$, we get, by applying H\"older's inequality, that
\begin{align*}
    & \bigl|\int_\Omega (\gpe-\grad \psi)  \big( \zl (\dl-\dr) (\grad \cle -\grad c_1)\\
     &\qquad\qquad\qquad\quad+\big(\zl (\zl \dl-\zr \dr) c_1 (\frac{\cle}{\lc}-1) \big)(\grad \psi +\grad \pw) \big)\,dx \bigr|\\
    & \leq \bigl(1-\f{D_2}{D_1}\bigr)D_1\Bigl(\bigl(\delta+\f{1}2\bigr)z_1^2\int_\Omega c_1^\ve |\gpe-\grad \psi|^2\,dx+\int_\Omega \f{|\grad \cle -\grad c_1|^2}{c_1^\ve}\,dx\Bigr)\\
    &\quad+\f{M}\delta\bigl(\|\na\psi\|_{L^\infty}^2+\|\na\Phi_W\|_{L^\infty}^2\bigr)\|c_1^\ve-c_1\|_{L^2}^2\\
    &\leq  \bigl(\frac{1}{2}-\frac{D_2}{2D_1}\bigr)\Theta^\ve +  \delta \Theta^\ve +\f{M}\delta\bigl(\|\na\psi\|_{L^\infty}^2+\|\na\Phi_W\|_{L^\infty}^2\bigr)H^\ve.
\end{align*}

For the second line in $J_2$, by using integration by parts and the equation (\ref{e3}), we obtain
\begin{align*}
     &-D_2\int_\Omega (\gpe-\grad \psi)\grad \re \,dx -z_2 D_2\int_\Omega (\gpe-\grad \psi)\re (\grad \psi +\grad \pw) \,dx  \\
     &\leq  -D_2\|\frac{\re}{\ve}\|^2_{L^2} +D_2\|\lap \psi\|_{L^2} \|\re\|_{L^2} +D_2|z_2|\bigl(\|\na\psi\|_{L^\infty}+\|\na\Phi_W\|_{L^\infty}\bigr)\|\gpe-\grad \psi\|_{L^2}
     \|\re\|_{L^2} \\
     &\leq  -D_2\|\frac{\re}{\ve}\|^2_{L^2} + \delta D_2\|\frac{\re}{\ve}\|^2_{L^2} +\delta D_2z_2^2\int_\Omega c_2^\ve |\gpe-\grad \psi|^2\,dx \\
&\quad +\f{M}\delta\bigl(\|\na\psi\|_{L^\infty}^2+\|\na\Phi_W\|_{L^\infty}^2\bigr) H^\ve+\frac{M}{\delta}\ve^2\|\lap \psi\|_{L^2}^2.\end{align*}

Finally by virtue of \eqref{mass conservtion of divergence form}, we find
\begin{align*}
    &- \int_\Omega (\grad \psi +\grad \pw) \Big(\zl \dl \cle \grad \mu^\ve_1 + \zr \dr \cre \grad \mu^\ve_2 \Big)\,dx +I_{33} \\
    & = -\int_\Omega \grad \psi \Big(\zl \dl \cle \grad \mu^\ve_1 + \zr \dr \cre \grad \mu^\ve_2 \Big)\,dx  \\
    & = \int_\Omega \psi \dive\Big(\zl \dl \cle \grad \mu^\ve_1 + \zr \dr \cre \grad \mu^\ve_2 \Big)\,dx \\
    & = \int_\Omega \psi(\pt \re + \ue \cdot \grad \re ) \,dx.
\end{align*}

By substituting the above estimates into \eqref{S3eq4}, we achieve
\beq \label{S3eq5}
\begin{split}
I_3
\leq & -\sum ^2_{i=1} D_i \int_\Omega \frac{|\grad \cei - \grad c_i|^2}{\cei}\,dx -\sum ^2_{i=1}  z_i^2 D_i\int_\Omega \cei |\gpe - \grad \psi |^2 \,dx
-D_2\|\frac{\re}{\ve}\|^2_{L^2}\\
&+\bigl(1+4\delta-\f{D_2}{2D_1}\bigr)\Theta^\ve +\f{M}\delta \bigl(1+\sum_{i=1}^2\|\grad c_i\|_{L^\infty}^2+\|\na\psi\|_{L^\infty}^2+\|\na\Phi_W\|_{L^\infty}^2\bigr) H^\ve\\
&+ M\ve^2\|\lap \psi\|_{L^2}^2
+\int_\Omega \psi(\pt \re + \ue \cdot \grad \re ) \,dx.
\end{split}
\eeq

By inserting the estimates \eqref{estimate of i1}, \eqref{estimate of i2} and \eqref{S3eq5} into \eqref{S3eq2} and then
taking $\delta=\f{D_2}{16D_1},$ we arrive at \eqref{main goal}. This completes the proof of Proposition \ref{S3prop1}.
\end{proof}

Now we are in a position to complete the proof of Theorem \ref{thm1}.

\begin{proof}[Proof of Theorem \ref{thm1}]Firstly, together with (\ref{assumption 1}-\ref{assumtion 2}) and $\pe(0)=0$, Theorem \ref{coexistence of npns} implies the existence of $T$ so that the systems \eqref{S1eq1} and \eqref{S1eq2} have unique strong solutions on $[0,T].$ Since the initial data is ``well-prepared", by virtue of (\ref{e3}), we get, by using integration by parts, that
\beq\label{S3eq6}
\begin{split}
    \int_0^t &\int_\Omega \psi(\pt \re + \ue \cdot\grad \re ) \,dxdt \\
     = & \int_\Omega \psi \re \,dx \big|^t_0  -\int_0^t \int_\Omega \pt \psi \re \,dxdt - \int_0^t \int_\Omega u^\ve \cdot \grad \psi \re \,dxdt \\
    =& \ve^2 \int_\Omega \grad \psi (t) \cdot \grad \psi^\ve (t) \,dx  -\int_0^t \int_\Omega (\pt \psi +u^\ve \cdot \grad \psi) \re \,dxdt   \\
    \leq & \f{\ve^2}4\|\gpe(t)\|_{L^2}^2+\ve^2\|\na\psi(t)\|_{L^2}^2+\f{D_2}{8D_1}D^*\bigl\|\f{\rho^\ve}{\ve}\bigr\|_{L^2_t(L^2)}^2\\
    &+M\ve^2\bigl(\|\pt\psi\|_{L^2_t(L^2)}^2+\|\na\psi\|_{L^2_t(L^\infty)}^2\|u^\ve\|_{L^\infty_t(L^2)}^2\bigr).
\end{split}\eeq
Observing that $\|u^\ve\|_{L^\infty_t(L^2)}^2\lesssim 1,$
by integrating \eqref{main goal} over $[0,t]$ and then inserting \eqref{S3eq6} to the resulting inequality, we find
\begin{align*}
    \f12& H^\ve(t) + \f{D_2}{8D_1} \int_0^t\Theta^\ve(t')\,dt'\\
    \leq & H^\ve(0)+M\ve^2\bigl(\|\na\psi\|_{L^\infty_T(L^2)}^2+\|\pt\psi\|_{L^2_T(L^2)}^2+\|\na\psi\|_{L^2_T(L^\infty)}^2+\|\lap\psi\|_{L^2_T(L^2)}^2\bigr)\\
    &
    +M\int_0^t\bigl(1+\|\na u\|_{L^\infty}+\sum_{i=1}^2\|\na c_i\|_{L^\infty}^2
    +\|\na\psi\|_{L^\infty}^2
    +\|\na\Phi_W\|_{L^\infty}^2\bigr)H^\ve(t')\,dt'.
   \end{align*}
By  using Gronwall's inequality, we achieve
\beq \label{S3eq7}
\begin{split}
H^\ve(t)& + \f{D_2}{16D_1} \int_0^t\Theta^\ve(t')\,dt'\\
\leq &  \Bigl(H^\ve(0)+M\ve^2\bigl(\|\na\psi\|_{L^\infty_T(L^2)}^2+\|\pt\psi\|_{L^2_T(L^2)}^2+\|\na\psi\|_{L^2_T(L^\infty)}^2 +\|\lap\psi\|_{L^2_T(L^2)}^2\bigr)\Bigr)\\
&\times\exp\Bigl(M\int_0^t\bigl(1+\|\na u\|_{L^\infty}+\sum_{i=1}^2\|\na c_i\|_{L^\infty}^2
    +\|\na\psi\|_{L^\infty}^2
    +\|\na\Phi_W\|_{L^\infty}^2\bigr)(t')\,dt'\Bigr),
    \end{split}\eeq
which together with \eqref{defintion of H} and \eqref{difinition of Theta} leads to (\ref{result 1 in thm 1}-\ref{result 3 in thm 1}). We thus complete the proof of Theorem \ref{thm1}.
\end{proof}

\begin{rmk} We remark that it is crucial to deal with the term $ \int_0^t\int_\Omega \psi(\pt \re + \ue \cdot\grad \re ) \,dxdt$ as that in \eqref{S3eq7}.
Indeed in view of (\ref{e1}) and \eqref{e2}, we have
\begin{align*}
\pt \re &+ \ue \cdot\grad \re=D_2\dive\bigl(\na\re+z_2\re(\na\psi^\ve+\na\Phi_W)\bigr)\\
&+\dive\bigl((D_1-D_2)z_1\na c_1^\ve+(D_1z_1-D_2z_2)z_1 c_1^\ve(\na\psi^\ve+\na\Phi_W)\bigr),
\end{align*}
which together with \eqref{equivalent psi} ensures that
\begin{align*}
\pt \re &+ \ue \cdot\grad \re=D_2\dive\bigl(\na\re+z_2\re(\na\psi^\ve+\na\Phi_W)\bigr)\\
&+\dive\bigl((D_1-D_2)z_1\na (c_1^\ve-c_1)+(D_1z_1-D_2z_2)z_1 \bigl(c_1^\ve(\na\psi^\ve+\na\Phi_W)-c_1(\na\psi+\na\Phi_W)\bigr).
\end{align*}
Yet it is impossible for us to gain $\ve^2$ for the following term:
\begin{align*}
\int_0^t\int_\Omega& \psi\dive\bigl((D_1-D_2)z_1\na (c_1^\ve-c_1)\\
&+(D_1z_1-D_2z_2)z_1 \bigl(c_1^\ve(\na\psi^\ve+\na\Phi_W)-c_1(\na\psi+\na\Phi_W)\bigr)\,dx\,dt'.
\end{align*}
\end{rmk}

\renewcommand{\theequation}{\thesection.\arabic{equation}}
\setcounter{equation}{0}
\section{Analysis of layers}\label{analysis of layers}

In order to estimate the difference between the solutions of the system \eqref{S1eq1} and that of \eqref{S1eq2} in stronger norm than $L^\infty_T(L^2),$
we need to analysis the initial layer and weak boundary layer. For simplicity, here and in the rest of this paper we always take  $\Omega=\mathbb{T}^{d-1} \times [0,1]$ for $ d=2$ or $3$,
and denote the velocity field $u^\ve=(v^\ve, w^\ve)$, where $v^\ve$ is the first $d-1$ components of $u^\ve$.  Motivated by \cite{WXM06,WW12}, we introduce the smooth cut-off functions $f(y)$, $g(y)$ as
\begin{equation}\label{f}
    \quad f(y)= \left\{\begin{array}{l}
    1 ,\quad 0\leq y \leq \frac{1}{4}, \\
     0, \quad \frac{1}{2}\leq y \leq 1,
    \end{array}
    \right.
\andf
    \quad g(y)= \left\{\begin{array}{l}
    0, \quad  0\leq y \leq \frac{1}{2}, \\
     1, \quad \frac{3}{4}\leq y \leq 1.
    \end{array}
    \right.\end{equation}
    And in what follows, we denote
 $\tau \eqdefa \frac{t}{\ve^2}$, $\xi\eqdefa\frac{y}{\ve}$, $\eta\eqdefa \frac{1-y}{\ve}.$ Then formally, we expand $(\cei,\Phi^\ve,v^\ve,w^\ve,p^\ve)$ as follows:
\begin{equation}\label{expansion}
    \quad \left\{\begin{array}{l}
     \cei(x',y,t)= \sum_{k=0}^{+\oo} \ve^k  \Big( \cik(x',y,t)+\ciik(x',y,\tau)+f(y)\cilbk(x',\xi,t) \\
    \qquad\qquad\qquad  +g(y)\cirbk(x',\eta,t) +f(y)\cilmk(x',\xi,\tau)+g(y)\cirmk(x',\eta,\tau) \Big),\\
     \Phi^\ve(x',y,t)=\sum_{k=0}^{+\oo} \ve^k  \Big( \pk(x',y,t)+\pik(x',y,\tau)+f(y)\plbk(x',\xi,t)\\
     \qquad\qquad\qquad  +g(y)\prbk(x',\eta,t)  +f(y) \plmk (x',\xi,\tau)+g(y)\prmk(x',\eta,\tau) \Big),\\
    v^\ve(x',y,t)=\sum_{k=0}^{+\oo} \ve ^k  \Big( \vk(x',y,t)+\vik(x',y,\tau)+f(y)\vlbk(x',\xi,t) \\
     \qquad\qquad\qquad +g(y)\vrbk(x',\eta,t)+f(y)\vlmk(x',\xi,\tau)+g(y)\vrmk(x',\eta,\tau) \Big) ,\\
     w^\ve(x',y,t)=\sum_{k=0}^{+\oo} \ve ^k  \Big( \wk(x',y,t)+\wik(x',y,\tau)+f(y)\wlbk(x',\xi,t) \\
    \qquad\qquad\qquad   +g(y)\wrbk(x',\eta,t)+f(y)\wlmk(x',\xi,\tau)+g(y)\wrmk(x',\eta,\tau) \Big),\\
    p^\ve(x',y,t)=\sum_{k=-2}^{+\oo} \ve ^k  \Big( \prek(x',y,t)+\preik(x',y,\tau)+f(y)\prelbk(x',\xi,t)\\
     \qquad\qquad\qquad +g(y)\prerbk(x',\eta,t) +f(y)\prelmk(x',\xi,\tau)+g(y)\prermk(x',\eta,\tau) \Big) ,
    \end{array}
    \right.
\end{equation}
where we set the subscript ``I", ``LB", ``RB", ``LM", ``RM" to represent the parts of the solutions in ``initial layer", ``boundary layer near $y=0$", ``boundary layer near $y=1$", ``mixed layer near $y=0$", ``mixed layer near $y=1$", respectively.

By plugging the expansions (\ref{expansion}) into \eqref{S1eq1} and comparing the coefficients  of $\ve^k$, we formally derive the equations
for $\bigl(c_i^{(k)},\Phi^{(k)},v^{(k)},w^{(k)},p^{(k)}\bigr)$ and the initial, boundary and mixed layers.

\begin{rmk}
    Expansion (\ref{expansion}) is only valid under the assumption of ``well-prepared" initial condition and the boundary condition $(\mathbf{EN})$. The expansion of pressure function starts from order of $\ve^{-2}$ because we have to balance the term $\lap \ue$, $\p _t \ue$ in the boundary and initial layers, respectively.
\end{rmk}

{\Large $\bullet$}{\bf Analysis of inner approximate solutions.}

For inner solutions $(\cik,\pk,\vk,\wk)$, it's easy to deduce that $\prefr= \prefl=\textit{Constant}$ (without loss of generality, we take $\prefr= \prefl=0$) and $(\cio,\po,\uo,\preo)=(c_i,\psi+\pw,u,p)$ with boundary conditions (\ref{limit-bdry}). Thus we have
\begin{prop}
  {\sl  Let $d=2, 3,$ $\lam \leq \cio(0) \in H^5$ and $\uo(0) \in H^5$, then system \eqref{S1eq2} admits a unique solution $(\cio,\uo)$ in $C([0,T_0]; H^5)\cap L^2(0,T_0; \Dot{H}^1\cap\Dot{H}^6)$}, where $T_0$ depends only on $\uo(0)$ and equals to $+\infty$ in the case $d=2$.
\end{prop}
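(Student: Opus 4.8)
The plan is to establish local well-posedness of the decoupled limit system \eqref{S1eq2} by treating the Navier--Stokes part and the Nernst--Planck part essentially separately, exploiting the fact that the velocity $u$ does not depend on the concentrations. First I would solve \eqref{limit4}--\eqref{limit5} with boundary condition $u|_{\partial\Omega}=0$ and initial data $u^{(0)}(0)=u_0\in H^5\cap H^1_{0,\sigma}$. This is the standard incompressible Navier--Stokes system in the smooth bounded domain $\Omega=\mathbb T^{d-1}\times[0,1]$; by the classical theory (energy estimates using the Stokes operator $A$, together with Sobolev embedding and the Ladyzhenskaya--type inequalities as in \cite{Constantin & Foias}) one gets a unique strong solution $u\in C([0,T_0];H^5)\cap L^2(0,T_0;\dot H^1\cap\dot H^6)$, where $T_0$ depends only on $\|u_0\|_{H^5}$ in dimension $d=3$, and $T_0=+\infty$ when $d=2$ since in two dimensions global regularity holds. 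The gain of two derivatives in $L^2$ in time is the usual parabolic smoothing for the Stokes semigroup.

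Next, having $u$ in hand as a given divergence-free drift, I would solve the concentration system. The key observation, already recorded in the Remark following \eqref{equivalent psi}, is that after using the electroneutrality constraint \eqref{limit3} and the elliptic equation \eqref{psi} for $\psi$, each $c_i$ satisfies the linear (in its principal part) transport--diffusion equation \eqref{ trans + diff}, namely
\begin{equation*}
\partial_t c_i + u\cdot\nabla c_i = \frac{(z_1-z_2)D_1D_2}{z_1D_1-z_2D_2}\,\Delta c_i,
\end{equation*}
with Dirichlet data $c_i|_{\partial\Omega}=\gamma_i(x)$ and initial data $c_i^{(0)}(0)\in H^5$. Since $z_1>0>z_2$ and $D_i>0$, the coefficient $\frac{(z_1-z_2)D_1D_2}{z_1D_1-z_2D_2}$ is a strictly positive constant, so this is a genuine uniformly parabolic equation with a smooth (in view of the first paragraph) divergence-free drift $u$. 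Standard linear parabolic theory in $\Omega$ then produces a unique solution with $c_i\in C([0,T_0];H^5)\cap L^2(0,T_0;\dot H^1\cap\dot H^6)$; the maximum principle (cf. the Remark after Lemma \ref{f nonincreasing}) gives $\lambda_i\le c_i\le\Lambda_i$, so in particular $c_i\ge\lambda>0$ and $z_1^2D_1c_1+z_2^2D_2c_2$ stays bounded below by a positive constant, which is exactly what is needed for \eqref{psi} to be uniformly elliptic. One then recovers $\psi$ from \eqref{psi} (or equivalently \eqref{equivalent psi}) as the unique $H^1_0$ solution of a uniformly elliptic divergence-form equation with the regularity of its coefficients and data inherited from $c_i$, and checks a posteriori that $(c_1,c_2,\psi,u)$ solves the original system \eqref{S1eq2}, using that $\psi(0)=0$ is consistent with the well-prepared data.

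The main obstacle, and the only place where dimension enters, is propagating the $H^5$ regularity of $u$ on a time interval that depends only on the data: in $d=3$ one only gets local-in-time control, via the blow-up criterion and a continuity/bootstrap argument on $\|u\|_V$ and $\int_0^T\|Au\|_H^2$, whereas in $d=2$ global regularity is classical. A secondary, more technical, point is compatibility of the initial data with the Dirichlet boundary conditions at corners of $\mathbb T^{d-1}\times[0,1]$ in order to obtain genuine $H^5$ regularity rather than a loss near $\partial\Omega$; one assumes the needed compatibility conditions are built into the hypothesis ``$c_i^{(0)}(0)\in H^5$'' and ``$u^{(0)}(0)\in H^5$''. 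Once $u$ is controlled, the concentration equation is linear and its estimates are routine, so the whole argument reduces to the Navier--Stokes well-posedness plus linear parabolic and elliptic estimates, with $T_0$ chosen as the minimum of the (a priori infinite, when $d=2$) existence times.
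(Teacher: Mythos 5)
Your proposal is correct and is exactly the argument the paper has in mind: the Proposition is stated without proof, relying on the decoupling already flagged in the Remark following \eqref{equivalent psi} and the Remark after Theorem \ref{coexistence of npns} — solve Navier--Stokes for $u$ (global in $d=2$, local in $d=3$ with lifespan depending only on $\|\uo(0)\|_V$), then the uniformly parabolic equation \eqref{ trans + diff} for each $c_i$ with the given drift $u$ (noting that $z_1\cio[1]+z_2\cio[2]$ satisfies the same equation with zero initial and boundary data, hence vanishes identically), and finally recover $\psi$ from the uniformly elliptic problem \eqref{psi}, its ellipticity guaranteed by the maximum principle $\cio\ge\lambda>0$. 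One small correction: the boundary of $\T^{d-1}\times[0,1]$ is $\T^{d-1}\times\{0,1\}$, which is smooth with no corners, so the compatibility conditions needed for genuine $H^5$ parabolic regularity up to $t=0$ are the usual ones matching the initial data with the boundary data and with time derivatives of the equation at $\partial\Omega$, not corner conditions.
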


$(\cil,\pl,\ul,\prel)$ satisfy the following system

\begin{equation}\label{equation of outer solution of order 1 }
\quad \left\{ \begin{array}{l}
         \pt \cil +\uo \cdot \grad \cil + \ul \cdot \grad \cio =D_i \dive (\grad \cil +z_i \cil \grad \po  +z_i \cio \grad\pl ), \\
          \rl =z_1 c_1^{\scriptscriptstyle(1)} +z_2 c_2^{\scriptscriptstyle(1)} =0, \\
         \pt \ul +\ul \cdot \grad \uo + \uo \cdot \grad \ul -\nu \lap \ul +\grad \prel =0, \\
         \dive \ul =0.
    \end{array}\right.
\end{equation}
We implement the system (\ref{equation of outer solution of order 1 }) with Dirichlet boundary conditions:
\begin{equation}\label{bdry condition of outer solution of order 1}
    \cil |_{\p \Omega} =0,\quad  \ul |_{\p \Omega} =0 \andf  \pl |_{\p \Omega} =0.
\end{equation}

Similar with system (\ref{limit1})-(\ref{limit-bdry}), by using $\rl=0$, we can rewrite the equations of $(\cil,\pl)$ as
\begin{align*}
\pt \cil +\uo \cdot \grad \cil + \ul \cdot \grad \cio = \frac{(z_1-z_2)D_1D_2}{z_1D_1-z_2D_2} \lap \cil, \\
       \dive ( \sum_{i=1}^2  z_i D_i \grad \cil +\sum_{i=1}^2 z_i^2 D_i \cil \grad \po  +\sum_{i=1}^2  z_i^2 D_i \cio \grad\pl )  =0.
\end{align*}

We have the following Proposition concerning the existence and uniqueness of solution to the system (\ref{equation of outer solution of order 1 }), the proof of which  will be omitted.

\begin{prop}
   {\sl Let $d=2, 3,$ $\lam \leq \cio(0) $ and $\bigl(\cio(0), \cil(0),\uo(0),\ul(0)\bigr) \in H^5$, then system (\ref{equation of outer solution of order 1 })-(\ref{bdry condition of outer solution of order 1}) admits a unique solution $(\cil,\ul)$ in $C([0,T_0]; H^5)\cap L^2(0,T_0; H^6)$.}
\end{prop}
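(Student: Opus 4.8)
The plan is to exploit the fact that system \eqref{equation of outer solution of order 1 }--\eqref{bdry condition of outer solution of order 1} is \emph{linear} in the unknowns $(\cil,\ul,\pl,\prel)$, all of whose coefficients are built from the zeroth-order profiles $(\cio,\po,\uo)=(c_i,\psi+\pw,u)$; by the preceding proposition (and elliptic regularity for $\psi$) these lie in $C([0,T_0];H^5)$ in the space variable, with moreover $\lam\le\cio\le\Lam$. The system is triangular: the equation for $(\ul,\prel)$ involves neither $\cil$ nor $\pl$, and, after the reduction recorded just above the statement, the $\cil$-equation decouples from $\pl$. So the plan is to solve successively for $\ul$, then $\cil$, then $\pl$, and finally to reverse the reduction.

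First I would solve the velocity. The pair $(\ul,\prel)$ satisfies a homogeneous Oseen system
\begin{equation*}
\pt\ul+\ul\cdot\grad\uo+\uo\cdot\grad\ul-\nu\lap\ul+\grad\prel=0,\qquad \dive\ul=0,
\end{equation*}
with $\ul|_{\p\Omega}=0$ and $\ul(0)\in H^5$. Since $\uo\in L^\infty(0,T_0;H^5)$ and $H^5$ is a Banach algebra for $d\le3$, the standard energy scheme for the linearized Navier--Stokes equations (Galerkin approximation, estimates on $A^{k/2}\ul$ for $k\le5$, Gronwall) produces a unique $\ul\in C([0,T_0];H^5)\cap L^2(0,T_0;H^6)$, with $\prel$ recovered from the pressure equation up to a function of $t$ only; in particular, if the matching with the initial layer forces $\ul(0)=0$, then $\ul\equiv0$.

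Next, with $\ul$ in hand, I would use $\rl=0$, hence $c_2^{\scriptscriptstyle(1)}=-\tfrac{z_1}{z_2}c_1^{\scriptscriptstyle(1)}$, together with $\ro=0$, to reduce the $\cil$-equation -- exactly as displayed before the statement -- to the linear transport--diffusion equation
\begin{equation*}
\pt\cil+\uo\cdot\grad\cil+\ul\cdot\grad\cio=\frac{(\zl-\zr)\dl\dr}{\zl\dl-\zr\dr}\,\lap\cil,\qquad \cil|_{\p\Omega}=0,
\end{equation*}
whose diffusion coefficient is strictly positive because $\zl>0>\zr$ and $\dl,\dr>0$. Here $\uo\in L^\infty(0,T_0;H^5)$ and the source $\ul\cdot\grad\cio\in C([0,T_0];H^4)$, so with $\cil(0)\in H^5$ the usual parabolic $H^5$ energy estimates (commute $\p^\al$, $|\al|\le5$, through the equation and bound the transport commutator by $\|\uo\|_{H^5}\|\cil\|_{H^5}$) give a unique $c_1^{\scriptscriptstyle(1)}\in C([0,T_0];H^5)\cap L^2(0,T_0;H^6)$; then I would set $c_2^{\scriptscriptstyle(1)}:=-\tfrac{z_1}{z_2}c_1^{\scriptscriptstyle(1)}$, which lies in the same space and satisfies both $\rl=0$ and its own transport--diffusion equation.

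Finally I would recover $\pl$ from the elliptic problem
\begin{equation*}
\dive\Bigl(\sum_{i=1}^2 z_iD_i\grad\cil+\sum_{i=1}^2 z_i^2D_i\,\cil\grad\po+\bigl(\sum_{i=1}^2 z_i^2D_i\,\cio\bigr)\grad\pl\Bigr)=0,\qquad \pl|_{\p\Omega}=0 .
\end{equation*}
Since the maximum principle for $\cio$ (Lemma \ref{maximal principle} and the remark following it) gives $\sum_{i=1}^2 z_i^2D_i\,\cio\ge c_0>0$, this is uniformly elliptic; Lax--Milgram yields a unique weak solution in $H^1_0(\Omega)$, and elliptic regularity (the coefficients being in $H^5$) upgrades it to a class comparable in regularity to $\cil$, which is more than enough for the expansion. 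Reversing the manipulations of the previous step -- using the elliptic equation one checks that the $\cil\grad\po$ contributions cancel and that the Laplacian coefficient is exactly the one above -- shows that $(c_1^{\scriptscriptstyle(1)},c_2^{\scriptscriptstyle(1)},\ul,\pl,\prel)$ solves \eqref{equation of outer solution of order 1 }--\eqref{bdry condition of outer solution of order 1}, and uniqueness for the whole (linear) system follows from uniqueness at each of the three steps. The point requiring care -- rather than a genuine obstacle -- is the bookkeeping: checking that the coefficients inherited from the zeroth-order profiles are regular enough (this is where the $H^5$-regularity of $(\cio,\po,\uo)$ and the algebra property of $H^5$ for $d\le3$ enter) and that the reduction is reversible; no estimate beyond those already used for Theorem \ref{coexistence of npns} is needed.
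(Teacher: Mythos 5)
Your proposal is correct and follows precisely the route the paper signals: the paper itself writes out the reduced form (the constant-coefficient transport--diffusion equation for $\cil$ and the uniformly elliptic equation for $\pl$) immediately before the proposition and then declares the proof omitted, and your three-step decoupling --- first the linear Oseen system for $\ul$, then the scalar parabolic equation for $c_1^{\scriptscriptstyle(1)}$ with $c_2^{\scriptscriptstyle(1)} := -\tfrac{z_1}{z_2}c_1^{\scriptscriptstyle(1)}$, then the elliptic recovery of $\pl$ --- is exactly the natural implementation of that reduction, with the reversibility check and uniqueness argued correctly from the linearity and the strict ellipticity $\sum_i z_i^2 D_i \cio \ge (z_1^2D_1+z_2^2D_2)\lam>0$. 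The only silently used facts, also implicit in the paper's statement, are that the initial data must obey the compatibility $z_1 c_1^{\scriptscriptstyle(1)}(0)+z_2 c_2^{\scriptscriptstyle(1)}(0)=0$ (otherwise the algebraic constraint $\rl=0$ fails at $t=0$) and the usual boundary compatibility conditions needed to reach $C([0,T_0];H^5)\cap L^2(0,T_0;H^6)$ for the Dirichlet problems.
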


$(\cir,\pr,\ur,\prer)$ satisfy the following system
\begin{equation}\label{equation of outer solution of order 2 }
\quad \left\{ \begin{array}{l}
         \pt \cir +\uo \cdot \grad \cir + \ul \cdot \grad \cil + \ur\cdot \grad \cio  =D_i \dive (\grad \cir +z_i \cir \grad \po)\\
          \qquad \qquad \qquad \qquad \qquad \qquad \qquad \qquad \qquad \qquad +z_i \cil \grad\pl + z_i \cio \grad \pr ), \\
          \rr=-\lap  \po,\\
         \pt \ur +\ur \cdot \grad \uo + \ul \cdot \grad \ul +\uo \cdot \grad \ur -\nu \lap \ur +\grad \prer =\lap \po \grad \po, \\
         \dive \ur =0 .
    \end{array}\right.
\end{equation}
To match boundary conditions (\ref{ebdry}), We implement the system (\ref{equation of outer solution of order 2 }) with the boundary conditions:
\begin{equation}\label{bdry condition of outer solution of order 2}
\begin{split}
    &\qquad \cir(x',0,t) =-\cilbr(x',0,t), \quad \cir(x',1,t) =-\cirbr(x',1,t), \\
    &\pr(x',0,t)=-\plbr(x',0,t), \quad \pr(x',1,t)=-\prbr(x',1,t) \andf \ur |_{\p \Omega} =0,
\end{split}
\end{equation}
where $(\cilbr,\cirbr,\plbr,\prbr)$ are defined in (\ref{solution of left boundary layer of order 2}), (\ref{solution of right boundary layer of order 2}). Similarly, we have
\begin{align*}
& \pt \cir +\uo \cdot \grad \cir + \ul \cdot \grad \cil + \ur\cdot \grad \cio -\frac{(z_1-z_2)D_1D_2}{z_1D_1-z_2D_2} \lap \cir \\
&\qquad = - \frac{D_i}{z_1D_1-z_2D_2}(\pt + \uo\cdot \grad) \lap\po \nonumber  + \frac{D_1D_2}{z_1D_1-z_2D_2}\lap\lap\po \nonumber \\
&\qquad\qquad  +\frac{z_1z_2D_1D_2}{z_i(z_1D_1-z_2D_2)}\dive(\lap\po \grad \po) ,\nonumber
\end{align*}

\begin{align*}
&  \dive \Big( \sum_{i=1}^2  z_i D_i \grad \cir +\sum_{i=1}^2( z_i^2 D_i \cir) \grad \po +\sum_{i=1}^2  (z_i^2 D_i \cil) \grad\pl +\sum_{i=1}^2 (z_i^2 D_i \cio )\grad \pr \Big)  \\
& \qquad \qquad \qquad\qquad \qquad \qquad =-(\pt + \uo\cdot \grad) \lap\po ,
\end{align*}
and
\begin{prop}
  {\sl  Let $d=2, 3,$ $\lam \leq \cio(0) $ and $\bigl(\cio(0), \cil(0),\cir(0),\uo(0),\ul(0),\ur(0)\bigr)$ belong to $ H^5$, then system (\ref{equation of outer solution of order 2 })-(\ref{bdry condition of outer solution of order 2}) admits a unique solution $(\cir,\ur)$ in $C([0,T_0]; H^5)\cap L^2(0,T_0; H^6)$.}
\end{prop}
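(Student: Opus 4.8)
The plan is to reduce the system \eqref{equation of outer solution of order 2 }--\eqref{bdry condition of outer solution of order 2} to a coupled transport-diffusion/Navier--Stokes system with smooth source terms and nonhomogeneous boundary data, and then solve it by the same fixed-point scheme already used (implicitly) for the lower-order inner solutions. First I would note that once $(\cio,\uo)$ and $(\cil,\ul)$ are known (Propositions above), every term on the right-hand sides of the equations for $(\cir,\ur)$ that does not involve $(\cir,\ur,\pr)$ is a fixed, explicitly computable function. In particular, $\po=\psi+\Phi_W$ is determined by $(\cio,\uo)$, so $\lap\po$, $\lap\lap\po$, $(\pt+\uo\cdot\grad)\lap\po$ and $\dive(\lap\po\grad\po)$ all lie in suitable spaces: since $\cio,\uo\in C([0,T_0];H^5)\cap L^2(0,T_0;H^6)$, one checks from the elliptic equation \eqref{equivalent psi} for $\psi$ and elliptic regularity that $\po\in C([0,T_0];H^5)$ with $\pt\po$ controlled in $L^2$-in-time, hence the forcing terms for the $\cir$-equation and the $\ur$-equation are in $L^2(0,T_0;H^1)$ (the regularity bottleneck being $\lap\lap\po$, i.e. two derivatives below the $H^6$-in-time-integrated bound on $\po$). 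The boundary data $-\cilbr(x',0,t)$, $-\cirbr(x',1,t)$, $-\plbr(x',0,t)$, $-\prbr(x',1,t)$ are given by the boundary-layer analysis \eqref{solution of left boundary layer of order 2}--\eqref{solution of right boundary layer of order 2}; I would record that these are smooth in $(x',t)$ (the boundary-layer profiles decay exponentially in the fast variable, and their traces at $\xi=0$, $\eta=0$ are smooth functions of the tangential variables), so the trace data are admissible for an $H^5$-solution after subtracting a harmonic (or heat-) extension.

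Next I would set up the reduction exactly as in the limit system: using $\rr=-\lap\po$ together with \eqref{limit3}-type algebra, eliminate $\pr$ to obtain the effective scalar equation for $\cir$,
\[
\pt\cir+\uo\cdot\grad\cir+\ul\cdot\grad\cil+\ur\cdot\grad\cio-\frac{(z_1-z_2)D_1D_2}{z_1D_1-z_2D_2}\lap\cir=F_i,
\]
with $F_i$ the explicit forcing listed in the excerpt, coupled to the elliptic equation
\[
\dive\Bigl(\sum_{i=1}^2 z_i D_i\grad\cir+\sum_{i=1}^2 z_i^2D_i\cir\grad\po+\sum_{i=1}^2 z_i^2D_i\cil\grad\pl+\sum_{i=1}^2 z_i^2D_i\cio\grad\pr\Bigr)=-(\pt+\uo\cdot\grad)\lap\po
\]
for $\pr$, and the Stokes/Navier--Stokes equation for $\ur$ with forcing $\lap\po\grad\po$ and the quadratic coupling $\ul\cdot\grad\ul$ (which is a fixed source, not an unknown nonlinearity) plus the genuinely nonlinear $\ur\cdot\grad\uo+\uo\cdot\grad\ur$ and $\ur\cdot\grad\ur$ — actually the $\ur$-equation in \eqref{equation of outer solution of order 2 } is \emph{linear} in $\ur$ (no $\ur\cdot\grad\ur$ term appears), so the velocity equation is a linear parabolic system with smooth coefficients and data, which is solved directly by Galerkin approximation and energy estimates, giving $\ur\in C([0,T_0];H^5)\cap L^2(0,T_0;H^6)$ on the same time interval $[0,T_0]$ (no smallness or shortening of the interval needed, since there is no quadratic self-interaction). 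The only genuine coupling is the linear feedback $\ur\cdot\grad\cio$ into the $\cir$-equation and the elliptic equation for $\pr$; I would close this by a contraction mapping on $C([0,T];H^4)$ for $T\le T_0$ small (then bootstrap to $H^5$ and to the full interval $[0,T_0]$ by the linearity), using that the elliptic equation for $\pr$ is non-degenerate precisely because $\sum_i z_i^2 D_i\cio=\zl^2\dl\lc+\zr^2\dr\rc$ has a positive lower bound by the Remark following Lemma \ref{maximal principle}, so $\grad\pr$ is controlled in $H^4$ by $\cir$ in $H^4$ plus the smooth data via standard elliptic estimates with the nonhomogeneous Dirichlet datum $-\plbr,-\prbr$.

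The main obstacle, and the only point requiring care, is the interplay between the boundary data and the elliptic equation for $\pr$: because $\pr$ is not prescribed to vanish on $\p\Omega$ but rather equals the (nonzero) traces of the boundary-layer potentials, one must verify that the elliptic problem
\[
\dive\bigl((\zl^2\dl\cio+\zr^2\dr\rc)\grad\pr\bigr)=\text{(known)},\qquad \pr|_{\p\Omega}=-\plbr,-\prbr,
\]
is well-posed and that its solution has enough regularity to feed back into the $\cir$-transport equation without loss — this is where one needs the boundary-layer profiles from \eqref{solution of left boundary layer of order 2}--\eqref{solution of right boundary layer of order 2} to already have been constructed with sufficient tangential smoothness and decay, so I would state as a prerequisite (consistent with the ordering of the section) that those profiles are $C^\infty$ in $x'$ with all derivatives exponentially decaying, and then the trace terms are $H^\infty$ in $(x',t)$ and cause no difficulty. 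Everything else is a routine fixed-point/energy argument identical in structure to the proof of the preceding Proposition for $(\cil,\ul)$, which is why the paper omits the details; I would likewise only indicate the reduction and the non-degeneracy of the $\pr$-equation, and refer to the earlier Proposition for the mechanics of the estimates.
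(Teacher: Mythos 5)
The paper states this proposition without proof, so there is no reference argument to compare against; your reconstruction is the natural one and in broad outline it is correct. Once $(\cio,\uo,\po)$ and $(\cil,\ul,\pl)$ are known, the system \eqref{equation of outer solution of order 2 } for $(\cir,\pr,\ur)$ is linear, you correctly note that the $\ur$-equation has no $\ur\cdot\grad\ur$ self-interaction, and you correctly locate the non-degeneracy of the $\pr$-elliptic problem in the positive lower bound for $\sum_i z_i^2 D_i\cio$.

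Two remarks. First, there is a structural simplification you almost see but do not exploit: in the reduced formulation that the paper writes out, the system is actually \emph{triangular}. The $\ur$-equation involves only known data; the reduced $\cir$-equation (the one with diffusion coefficient $\tfrac{(z_1-z_2)D_1D_2}{z_1D_1-z_2D_2}$) involves $\ur$ through $\ur\cdot\grad\cio$ but does \emph{not} involve $\pr$; and the elliptic equation for $\pr$ is a posteriori determined by $\cir$. So one solves $\ur$, then $\cir$, then $\pr$ in sequence — no contraction mapping, no time-shortening-and-continuation, is required. Your fixed-point step is not wrong, but it is solving a coupling that is not there.

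Second, a more substantive caveat about regularity which you flag but do not resolve: the forcing of the reduced $\cir$-equation contains $\lap\lap\po$, and from the elliptic problem \eqref{equivalent psi} one only gets, starting from $\cio\in C([0,T_0];H^5)\cap L^2(0,T_0;H^6)$, that $\po\in C([0,T_0];H^5)\cap L^2(0,T_0;H^6)$, hence $\lap\lap\po\in C H^1\cap L^2 H^2$; standard parabolic regularity from $L^2_t H^2$ forcing yields $\cir\in C H^3\cap L^2 H^4$, not the $C H^5\cap L^2 H^6$ stated. The same loss appears in the unreduced form via $\dive(\cio\grad\pr)$, since $\pr$ inherits only $L^2 H^4$ from the source $(\pt+\uo\cdot\grad)\lap\po$. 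A complete proof should either note that the stated $H^5$ regularity requires higher regularity of $\cio(0)$ (i.e.\ $H^{5+2k}$ for the $k$-th corrector, the usual derivative count for such expansions), or give a specific argument for why the apparent two-derivative loss does not occur. This ambiguity is present in the Proposition as stated and is not introduced by you, but your proposal should at least record the hypothesis under which the claimed regularity is actually obtained.
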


{\Large $\bullet$}{\bf Analysis of boundary layer solutions.}

For boundary layers, we only deal with the boundary layers near $y=0$,  the same results hold for boundary layers near $y=1.$
 By comparing the leading order of Navier-Stokes equations, we have $\p_\xi \prelbfr =0 =\prelbfr(x',\xi \rightarrow +\oo) $, which leads to $\prelbfr = 0 $. The next order $(\cilbo,\plbo,\vlbo,\wlbo,\prelbfl)$ satisfy the following system:
\begin{equation}\label{equation of left boundary layer of order 0}
    \quad \left\{ \begin{array}{l}
         \p ^2_\xi \cilbo +z_i \p_\xi \cilbo \p_\xi \plbo +z_i (\gamma_i(x',0)+\cilbo) \p_\xi ^2 \plbo = 0, \quad 0<\xi,t<+\oo, \\
         -\p_\xi^2\plbo = \rlbo, \\
        \p_\xi^2 \vlbo =0, \\
      -\nu  \p_\xi^2 \wlbo + \p_\xi \prelbfl =0, \\
        \p_\xi \wlbo=0.
    \end{array}
    \right.
\end{equation}
We implement the system (\ref{equation of left boundary layer of order 0}) with the boundary conditions:
\begin{equation*}\label{bdry condition of left boundary layer of order 0}
     (\cilbo,\vlbo,\wlbo)(x',\xi=0) = 0 \andf (\cilbo,\plbo,\p_\xi \plbo,\vlbo,\wlbo,\prelbfl)(x',\xi \rightarrow +\oo)=0.
\end{equation*}

Thus we can take $(\cilbo,\plbo,\vlbo,\wlbo,\prelbfl)=0$, and by inserting them into the equations of $(\cilbl,\plbl,\vlbl,\wlbl,\prelbo)$, we obtain
\begin{equation}\label{equation of left boundary layer of order 1}
    \quad \left\{ \begin{array}{l}
         \p ^2_\xi \cilbl +z_i \gamma_i(x',0) \p_\xi ^2 \plbl = 0,  \quad 0<\xi,t<+\oo , \\
         -\p_\xi^2\plbl = \rlbl, \\
        \p_\xi^2 \vlbl =0, \\
         -\nu \p_\xi^2 \wlbl + \p_\xi \prelbo =0 ,\\
        \p_\xi \wlbl=0.
    \end{array}
    \right.
\end{equation}
We implement the system (\ref{equation of left boundary layer of order 1}) with  the boundary conditions:
\begin{equation*}\label{bdry condition of left boundary layer of order 1}
     (\cilbl,\vlbl,\wlbl)(x',\xi=0) = 0 \andf (\cilbl,\plbl,\p_\xi \plbl, \vlbl,\wlbl,\prelbo)(x',\xi \rightarrow +\oo)=0.
\end{equation*}

Thus we have $(\cilbl,\plbl,\vlbl,\wlbl,\prelbo)=0$ and by inserting them into the equations of \\$(\cilbr,\plbr,\vlbr,\wlbr,\prelbl)$, we obtain
\begin{equation}\label{equation of left boundary layer of order 2}
    \quad \left\{ \begin{array}{l}
         \p ^2_\xi \cilbr +z_i \gamma_i(x',0) \p_\xi ^2 \plbr = 0 , \quad 0<\xi,t<+\oo, \\
         -\p_\xi^2\plbr = \rlbr, \\
        \p_\xi^2 \vlbr =0, \\
         -\nu \p_\xi^2 \wlbr + \p_\xi \prelbl =0, \\
       \p_\xi \wlbr=0.
    \end{array}
    \right.
\end{equation}
To match the boundary condition $\re |_{\p \Omega}=0$, we implement the system (\ref{equation of left boundary layer of order 2})
with the boundary conditions:
\begin{gather*}
     \sum_{i=1}^2 z_i \cilbr(x,\xi=0) = \rlbr(x',\xi=0)=- \rr(x',y=0)=\lap \po (x',y=0), \\
    (\vlbr,\wlbr)(x',\xi=0) =0 \andf (\cilbr,\plbr,\p_\xi \plbr,\vlbr,\wlbr,\prelbl)(x',\xi \rightarrow +\oo)=0.
\end{gather*}

In view of $z_1 \gamma_1(x',0) + z_2 \gamma_2(x',0)=0$, one has
\begin{equation*}
    \p ^2_\xi (c_{\scriptscriptstyle 1,LB}^{\scriptscriptstyle (2)} + c_{\scriptscriptstyle 2,LB}^{\scriptscriptstyle (2)}) = 0,
\end{equation*}
 which implies $c_{\scriptscriptstyle 1,LB}^{\scriptscriptstyle (2)} = -c_{\scriptscriptstyle 2,LB}^{\scriptscriptstyle (2)}$ and $\rlbr = (z_1 - z_2 ) c_{\scriptscriptstyle 1,LB}^{\scriptscriptstyle (2)}$. By inserting them and  $ -\p_\xi^2\plbr = \rlbr$ into the equation of $c_{\scriptscriptstyle 1,LB}^{\scriptscriptstyle (2)}$, we achieve
\begin{equation*}
    \p _\xi^2 \rlbr -z_1(z_1-z_2)\gamma_1(x',0)\rlbr =0.
\end{equation*}

As a result, we deduce that
\begin{equation}\label{solution of left boundary layer of order 2}
\quad \left\{\begin{array}{l}
    \rlbr(x',\xi,t)= \lap \po (x',0,t) \exp{-\sqrt{z_1(z_1-z_2)\gamma_1(x',0)}\xi},\\
     c_{\scriptscriptstyle 1,LB}^{\scriptscriptstyle (2)} (x',\xi,t)= \frac{1}{z_1-z_2} \lap \po (x',0,t) \exp{-\sqrt{z_1(z_1-z_2)\gamma_1(x',0)}\xi},\\
     c_{\scriptscriptstyle 2,LB}^{\scriptscriptstyle (2)}(x',\xi,t) = \frac{-1}{z_1-z_2} \lap \po (x',0,t) \exp{-\sqrt{z_1(z_1-z_2)\gamma_1(x',0)}\xi}, \\
     \plbr(x',\xi,t)=\frac{-\lap \po (x',0,t)}{z_1(z_1-z_2)\gamma_1(x',0)} \exp{-\sqrt{z_1(z_1-z_2)\gamma_1(x',0)}\xi}, \\
     \vlbr=\wlbr=\prelbl =0 .
\end{array} \right.
\end{equation}

For $(\cirbk,\prbk,\vrbk,\wrbk,\prerbk)$, similarly one has
\begin{equation*}
    (\cirbo,\prbo,\vrbo,\wrbo,\prerbfl)=0,
\end{equation*}
\begin{equation*}
    (\cirbl,\prbl,\vrbl,\wrbl,\prerbo)=0,
\end{equation*}
and
\begin{equation}\label{solution of right boundary layer of order 2}
\quad \left\{\begin{array}{l}
    \rrbr(x',\eta,t)= \lap \po (x',1,t) \exp{-\sqrt{z_1(z_1-z_2)\gamma_1(x',1)}\eta},\\
     c_{\scriptscriptstyle 1,RB}^{\scriptscriptstyle (2)} (x',\eta,t)= \frac{1}{z_1-z_2} \lap \po (x',1,t) \exp{-\sqrt{z_1(z_1-z_2)\gamma_1(x',1)}\eta},\\
     c_{\scriptscriptstyle 2,RB}^{\scriptscriptstyle (2)}(x',\eta,t) = \frac{-1}{z_1-z_2} \lap \po (x',1,t) \exp{-\sqrt{z_1(z_1-z_2)\gamma_1(x',1)}\eta} ,\\
     \prbr(x',\eta,t)=\frac{-\lap \po (x',1,t)}{z_1(z_1-z_2)\gamma_1(x',1)} \exp{-\sqrt{z_1(z_1-z_2)\gamma_1(x',1)}\eta}, \\
     \vrbr=\wrbr=\prerbl =0.
\end{array} \right.
\end{equation}

{\Large $\bullet$}{\bf Analysis of initial layer solutions.}

For initial layers, the leading order $(\ciio,\uio,\preifr)$ satisfy the following system
\begin{equation}\label{equation of initial layer of order 0}
    \quad \left\{\begin{array}{l}
    \p _\tau \ciio = 0,  \quad x\in \Omega ,\tau >0,\\
    \rio=0,\\
    \p_\tau \uio +\grad \preifr =0, \\
    \dive \uio = 0.
    \end{array} \right.
\end{equation}
We implement the system (\ref{equation of initial layer of order 0}) with the boundary conditions:
\begin{equation*}\label{initial condition of initial layer of order 0}
     (\ciio,\uio,\preifr)(x',y,\tau \rightarrow +\oo) =0.
\end{equation*}
So that we have $(\ciio,\uio,\preifr)=0,$ from which,  we infer
\begin{equation}\label{equation of initial layer of order 1}
    \quad \left\{\begin{array}{l}
    \p _\tau \ciil = 0, \quad x\in \Omega ,\tau >0, \\
    \ril=0,\\
    \p_\tau \uil +\grad \preifl =0,\\
    \dive \uil = 0.
    \end{array} \right.
\end{equation}
We implement the system (\ref{equation of initial layer of order 1}) with the boundary conditions:
\begin{equation*}\label{initial condition of initial layer of order 1}
     (\ciil,\uil,\preifl)(x',y,\tau \rightarrow +\oo) =0.
\end{equation*}

Thus we have $(\ciil,\uil,\preifl)=0$. By plugging them into the next order, we obtain
\begin{equation}\label{equation of initial layer of order 2}
    \quad \left\{\begin{array}{l}
    \p _\tau \ciir = z_iD_i\dive(\cio(0)\grad \pio),  \quad x\in \Omega ,\tau >0 ,\\
    -\lap \pio=\rir, \\
    \p_\tau \uir +\grad \preio =0, \\
    \dive \uir = 0.
    \end{array} \right.
\end{equation}
To match the initial boundary condition ($\mathbf{EN}$), we implement the system (\ref{equation of initial layer of order 2}) with
the conditions:
\begin{equation}\label{initial condition of initial layer of order 2}
\begin{split}
 \rir(x',y,\tau=0)=-\rr(x',y,t=0)=\lap \po(t=0) ,\\
 (\ciir,\pio,\uir,\preio)(x',y,\tau \rightarrow +\oo) =0 \andf \pio |_{\p \Omega}=0.
    \end{split}
\end{equation}
It's easy to see that $\uir=\preio=0$. Now observing that $z_1 c_1^{\scriptscriptstyle(0)} +z_2 c_2^{\scriptscriptstyle(0)} = 0 $, one has
    $$\p_\tau (\frac{c_{\scriptscriptstyle 1,I}^{\scriptscriptstyle(2)}}{D_1}+\frac{c_{\scriptscriptstyle 2,I}^{\scriptscriptstyle(2)}}{D_2}) =0 .$$
Thus we get $\frac{c_{\scriptscriptstyle 1,I}^{\scriptscriptstyle(2)}}{D_1}+\frac{c_{\scriptscriptstyle 2,I}^{\scriptscriptstyle(2)}}{D_2}=0$, $\rir = \frac{z_1D_1-z_2D_2}{D_1}c_{\scriptscriptstyle 1,I}^{\scriptscriptstyle(2)} $ and then rewrite the equation of $\rir$ as
\begin{equation}\label{equation of rir}
    \p_\tau \rir = z_1(z_1D_1-z_2D_2)\dive\bigl(c_{\scriptscriptstyle 1}^{\scriptscriptstyle(0)}(0)\grad \pio \bigr).
\end{equation}

The next proposition is concerned with the exponential decay of $(\ciir,\pir)$.

\begin{Proposition}\label{prop ciir}
{\sl Let $\cio(0) \in H^5$ and $\ciir(0) \in H^2.$ Then the system (\ref{equation of initial layer of order 2} -\ref{initial condition of initial layer of order 2}) admits a unique solution in $C^\oo([0,+ \infty); H^2)$.
Moreover, there exist positive constants $M_1$, $M_2$ depending only on $\lam,z_i$ and $D_i$ for $i=1,2$, such that  for $0 \leq l \leq 3$, we have
\begin{equation*}
    \sum_{i=1}^2\|\p_\tau^l \ciir(\tau)\|^2_{H^2} + \|\p_\tau^l \pio(\tau)\|^2_{H^4} \leq M_1 (\sum_{i=1}^2 \|\cio(0)\|_{H^5}^{12}+1) \sum_{i=1}^2 \|\ciir(0)\|^2_{H^2} e^{- M_2 \tau}.
\end{equation*}}
\end{Proposition}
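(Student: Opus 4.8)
The plan is to eliminate $\ciir$ and $\pio$ in favour of a single nonlocal scalar evolution equation for $\rir$. Combining the relations already derived in the text — $-\lap\pio=\rir$ with $\pio|_{\p\Omega}=0$, $\f{c_{\scriptscriptstyle 1,I}^{\scriptscriptstyle(2)}}{D_1}+\f{c_{\scriptscriptstyle 2,I}^{\scriptscriptstyle(2)}}{D_2}=0$ and $\rir=\f{z_1D_1-z_2D_2}{D_1}c_{\scriptscriptstyle 1,I}^{\scriptscriptstyle(2)}$ — with \eqref{equation of rir}, and writing $a:=c_1^{\scriptscriptstyle(0)}(0)=c_1(0)$, $\be:=z_1(z_1D_1-z_2D_2)$, one has $a\ge\lam>0$ (maximum principle, i.e. the Remark after Lemma \ref{f nonincreasing} together with \eqref{uniform bound for ci}) and $\be>0$ (since $z_1>0>z_2$, $D_i>0$), and \eqref{equation of initial layer of order 2}--\eqref{initial condition of initial layer of order 2} is equivalent to
\beq\label{redrir}
\p_\tau\rir+\be\, a\,\rir=\be\,\grad a\cdot\grad\pio=:L'\rir,\qquad \rir|_{\tau=0}=\lap\po(t=0),\qquad \pio=(-\lap)^{-1}\rir,
\eeq
where $(-\lap)^{-1}$ is the solution operator of the homogeneous Dirichlet problem on $\Omega=\T^{d-1}\times(0,1)$; conversely $\ciir$ and $\pio$ are recovered from $\rir$ by the above algebraic identities and elliptic regularity, with $\sum_i\|\ciir\|_{H^2}\sim\|\rir\|_{H^2}$ and $\|\pio\|_{H^4}\lesssim\|\rir\|_{H^2}$.

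I would first settle existence, uniqueness and smoothness in $\tau$. Since $H^2(\Omega)$ is a Banach algebra for $d\le3$ and $a\in H^5$, elliptic regularity for $-\lap$ with Dirichlet data shows that $L'\rho=-\be a\rho+\be\,\grad a\cdot\grad(-\lap)^{-1}\rho$ is a bounded operator on $H^2(\Omega)$ with $\|L'\|_{\mathcal L(H^2)}\lesssim 1+\|a\|_{H^5}$. Hence $\rir(\tau)=e^{\tau L'}\rir(0)$ is the unique solution of \eqref{redrir}; it is real-analytic (indeed entire) in $\tau$ with values in $H^2$, so $\ciir\in C^\infty([0,\oo);H^2)$ and $\pio\in C^\infty([0,\oo);H^4)$. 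Moreover each $\p_\tau^l\rir$ solves \eqref{redrir} again, with datum $\p_\tau^l\rir(0)=(L')^l\rir(0)$, whence $\|\p_\tau^l\rir(0)\|_{H^2}\lesssim(1+\|a\|_{H^5})^{l}\sum_i\|\ciir(0)\|_{H^2}$ for $0\le l\le3$.

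For the decay I would start from the clean dissipation identity: testing \eqref{redrir} against $\pio$ and using $\rir=-\lap\pio$, $\pio|_{\p\Omega}=0$ and $a\ge\lam$,
\beq\label{H1dec}
\f{1}{2}\f{d}{d\tau}\|\grad\pio\|_{L^2}^2=-\be\int_\Omega a\,|\grad\pio|^2\,dx\le-\be\lam\,\|\grad\pio\|_{L^2}^2,
\eeq
so $\|\grad\pio(\tau)\|_{L^2}\le e^{-\be\lam\tau}\|\grad\pio(0)\|_{L^2}$, i.e. $\|\rir(\tau)\|_{\dot H^{-1}}$ decays exponentially. One then upgrades this to $H^2$ by a finite bootstrap in regularity: apply tangential derivatives $\p_{x'}^\alpha$, $|\alpha|\le2$ (they commute with the periodic structure and preserve $\pio|_{\p\Omega}=0$), test against $\p_{x'}^\alpha\pio$, and use the elliptic relation $\p_y^2\pio=-\lap'\pio-\rir$ and its differentiated forms to reconstruct the normal derivatives; the commutators $[\p_{x'}^\alpha,a]\grad\pio$ and the terms built on $\grad a\cdot\grad\pio$ are \emph{one derivative smoother} than the dissipative term $\be\int a|\rir|^2$, so after a $\de$-absorption they are controlled by the norm already estimated at the previous level of regularity, which decays. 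This yields $\|\rir(\tau)\|_{H^2}^2\le P(\|a\|_{H^5})\,\|\rir(0)\|_{H^2}^2\,e^{-M_2\tau}$ for a polynomial $P$ and an $M_2=M_2(\lam,z_i,D_i)>0$. Running the same scheme on $\p_\tau^l\rir$, $0\le l\le3$, with the data bound above, and translating back via $\|\p_\tau^l\pio\|_{H^4}\lesssim\|\p_\tau^l\rir\|_{H^2}$ and $\sum_i\|\p_\tau^l\ciir\|_{H^2}\sim\|\p_\tau^l\rir\|_{H^2}$, gives the asserted estimate; bookkeeping the powers of $\|a\|_{H^5}=\|c_1^{\scriptscriptstyle(0)}(0)\|_{H^5}\le\sum_i\|\cio(0)\|_{H^5}$ produces the exponent $12$.

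The main obstacle is precisely the $H^2$-estimate of $\rir$ (equivalently the $H^4$-estimate of $\pio$): $L'$ is only a bounded, non-self-adjoint operator of order zero, so there is no parabolic smoothing and the ``perturbation'' $\be\,\grad a\cdot\grad\pio$ is a priori of the same size as the dissipative term; one must exploit that after one integration by parts it is one derivative weaker, and feed in the already established lower-order decay \eqref{H1dec} as an exponentially small forcing. The ensuing boundary terms require care: on $\p\Omega$ only $\p_y\pio$ survives (every tangential derivative of $\pio$ vanishes there), and these traces are absorbed using the trace theorem and interpolation between two consecutive levels of the bootstrap.
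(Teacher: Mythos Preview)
Your reduction to \eqref{redrir} and the baseline decay identity \eqref{H1dec} are exactly what the paper does. The existence/uniqueness via $L'\in\mathcal L(H^2)$ and the observation $\p_\tau^l\rir(0)=(L')^l\rir(0)$ are fine additions; the paper skips this and goes straight to the a~priori estimates.

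Where you diverge is in the higher-regularity bootstrap. You propose to iterate \eqref{H1dec} by taking tangential derivatives, testing against $\p_{x'}^\alpha\pio$, and then reconstructing normal derivatives elliptically. The paper does something simpler and more direct: having obtained the decay of $\|\grad\pio\|_{L^2}$, it multiplies \eqref{redrir} (in your notation $\tfrac{1}{\be}\p_\tau\rir+a\,\rir=\grad a\cdot\grad\pio$) by $\rir$ itself. This immediately gives
\[
\frac{1}{2\be}\frac{d}{d\tau}\|\rir\|_{L^2}^2+\lam\|\rir\|_{L^2}^2\le\frac{\lam}{2}\|\rir\|_{L^2}^2+\frac{1}{2\lam}\|\grad a\|_{L^\infty}^2\|\grad\pio\|_{L^2}^2,
\]
and Gronwall with exponentially decaying forcing yields $\|\rir(\tau)\|_{L^2}^2\lesssim e^{-M_2\tau}$. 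One then differentiates \eqref{redrir} in space or in $\tau$ and multiplies by the corresponding derivative of $\rir$; the commutators $(\p^\alpha a)\rir$ and the differentiated right-hand side are controlled by the step already completed, so one climbs to $H^2$ in finitely many rounds. Because the equation is pointwise in $x$ (an ODE in $\tau$), no integration by parts in space is ever needed---hence no boundary terms, no trace theorem, no tangential/normal splitting.

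Your sketch has a mismatch that reflects this: testing $\p_{x'}^\alpha$ of the equation against $\p_{x'}^\alpha\pio$ produces the dissipative term $\be\int a|\grad\p_{x'}^\alpha\pio|^2$, not $\be\int a|\rir|^2$ as you write; the latter arises only if one tests against $\rir$, which is precisely the paper's move. Moreover the elliptic reconstruction you invoke ($\p_y^2\pio=-\lap'\pio-\rir$) needs $\|\rir\|_{L^2}$ as input, which your purely-tangential scheme has not yet furnished at that point. Your route can be salvaged, but the cleanest path is the paper's: use $\pio$ as test function once to start the decay, then switch to $\rir$ and its derivatives for the bootstrap.
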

\begin{proof} For simplicity, we only deal with the \textit{a priori} estimate for the system  (\ref{equation of initial layer of order 2}).
  We   get,  by taking $L^2$ inner product of \eqref{equation of rir} with $\pio$,  that
\begin{equation*}
    \frac{1}{2}\frac{d}{d\tau}  \|\grad \pio\|^2_{L^2}  + z_1(z_1D_1-z_2D_2) \int_\Omega c_{\scriptscriptstyle 1}^{\scriptscriptstyle(0)}(0)|\grad \pio|^2 \,dx =0 .
\end{equation*}
Due to $c_{\scriptscriptstyle 1}^{\scriptscriptstyle(0)}(0) \geq \lam >0$, we achieve
\begin{equation*}
    \frac{1}{2}\frac{d}{d\tau}  \|\grad \pio\|^2_{L^2}  + z_1(z_1D_1-z_2D_2) \lambda \|\grad \pio\|^2_{L^2} \leq 0,
\end{equation*}
which implies
\begin{equation}\label{pio exponentially decay}
    \|\grad \pio(\tau)\|^2_{L^2} \leq \|\grad \pio(0)\|^2_{L^2} \exp\bigl(-2z_1(z_1D_1-z_2D_2) \lambda \tau \bigr).
\end{equation}

While we get, by  inserting $-\lap \pio = \rir$ into equation (\ref{equation of rir}), that
\begin{equation}\label{equation of rir,rewrite}
    \frac{1}{z_1(z_1D_1-z_2D_2)}\p_\tau \rir+ c_{\scriptscriptstyle 1}^{\scriptscriptstyle(0)}(0)\rir =  \grad c_{\scriptscriptstyle 1}^{\scriptscriptstyle(0)}(0) \cdot \grad \pio.
\end{equation}
Then by virtue of $c_{\scriptscriptstyle 1,I}^{\scriptscriptstyle(0)}(0) \geq \lam >0$ and (\ref{pio exponentially decay}), we get, by multiplying equation (\ref{equation of rir,rewrite}) by $\rir$ and integrating over $\Omega$, that
\begin{align*}
     \frac{1}{2z_1(z_1D_1-z_2D_2)}&\frac{d}{d\tau} \|\rir\|^2_{L^2} +\lam \|\rir\|^2_{L^2} \leq  \frac{1}{2}\lam \|\rir\|^2_{L^2} \\
     &+  \frac{1}{2\lam} \|\grad c_{\scriptscriptstyle 1}^{\scriptscriptstyle(0)}(0) \|^2_{L^\oo}\|\grad \pio(0)\|^2_{L^2} \exp\bigl(-2z_1(z_1D_1-z_2D_2) \lambda \tau \bigr).
\end{align*}
By using  Gronwall's inequality, one has
\begin{equation*}
     \|\rir\|^2_{L^2} \leq  \bigl(\|\rir(0)\|^2_{L^2} + \frac{1}{2\lam^2} \|\grad c_{\scriptscriptstyle 1}^{\scriptscriptstyle(0)}(0) \|^2_{L^\oo}\|\grad \pio(0)\|^2_{L^2} \bigr) \exp\bigl( -z_1(z_1D_1-z_2D_2) \lambda \tau \bigr).
\end{equation*}

While by taking spatial or time derivative of (\ref{equation of initial layer of order 2}) and (\ref{equation of rir,rewrite}), we get  along the same line, for $0\leq l \leq 3$ and for some $M_1, M_2$, that
\begin{align*}
    \|\p_\tau^l \rir\|^2_{H^2}
    &\leq M_1 \Bigl( \|\p_\tau^l \rir(0)\|_{H^2}^2 + \sum_{j=1}^3\|\grad^j \cio(0)\|_{L^\oo}^{8-2j} \|\grad \p_\tau^l \pio(0)\|_{L^2}^2 \\
    &\quad + \sum_{j=1}^2\|\grad^j \cio(0)\|_{L^\oo}^{6-2j} \|\p_\tau^l\rir(0)\|_{L^2}^2 + \|\grad \cio(0)\|_{L^\oo}^2\|\grad \p_\tau^l\rir(0)\|_{L^2}^2    \Bigr) e^{-M_2\tau}\\
    & \leq M_1 \sum_{j=0}^3 \|\cio(0)\|_{H^5}^{2j} \|\p_\tau^l \rir(0)\|^2_{H^2} e^{- M_2 \tau}.
\end{align*}
Observing that $$\rir = \frac{z_1D_1-z_2D_2}{D_1}c_{\scriptscriptstyle 1,I}^{\scriptscriptstyle(2)} =\frac{z_2D_2-z_1D_1}{D_2}c_{\scriptscriptstyle 2,I}^{\scriptscriptstyle(2)} $$ and $$\|\p_\tau^{l+1} \rir(0) \|_{H^2} \leq M \|\cio(0)\|_{H^5} \|\p_\tau^l \rir(0)\|_{H^2}, $$ we complete the proof of Proposition \ref{prop ciir}.
\end{proof}

The next order of initial layer $(\ciis,\pil,\uis)$ satisfy
\begin{equation}\label{equation of initial layer of order 3}
    \quad \left\{\begin{array}{l}
    \p _\tau \ciis = z_iD_i\dive(\cio(0)\grad \pil + \cil(0)\grad \pio),  \quad x\in \Omega ,\tau >0, \\
    -\lap \pil=\ris, \\
    \p_\tau \uis +\grad \preil =0, \\
    \dive \uis = 0.
    \end{array} \right.
\end{equation}
We implement the system (\ref{equation of initial layer of order 3}) with the boundary conditions:
\begin{equation}\label{initial condition of initial layer of order 3}
\begin{split}
     \ris(x',y,\tau=0)=-\ris(x',y,t=0)=\lap \pl(t=0),\\
     (\ciis,\pil,\uis,\preil)(x',y,\tau \rightarrow +\oo) =0 \andf \pil |_{\p \Omega}=0.
\end{split}
\end{equation}

Similarly one has $\uis=\preil=0$ and the following Proposition.
\begin{Proposition}\label{prop cirs}
{\sl Let $\cio(0),\cil(0) \in H^5$ and $\ciis(0) \in H^2.$ Then the system (\ref{equation of initial layer of order 3})-(\ref{initial condition of initial layer of order 3}) admits a unique solution in $C^\oo([0,+\infty); H^2)$.
Moreover, there exist positive constants $M_1$, $M_2$ depending only on $\lambda,z_i$ and $D_i$ for $i=1,2$, such that  for $0 \leq l \leq 3$, we have
\begin{equation*}
\begin{split}
    &\sum_{i=1}^2\|\p_\tau^l \ciis(\tau)\|_{H^2}^2 + \|\p_\tau^l \pil(\tau)\|_{H^4}^2  \\
    &\qquad \qquad \leq M_1 \bigl(\sum_{i=1}^2 \|\cio(0)\|_{H^5}^{12} + \sum_{i=1}^2\|\cil(0)\|_{H^5}^{12} +1\bigr) \sum_{i=1}^2 \|\ciis(0)\|_{H^2}^2 e^{- M_2 \tau}.
\end{split}
\end{equation*}
If we assume in addition that $\pil\equiv 0$ and $\pir(0)=0$, then we have $\pir\equiv 0$.
}
\end{Proposition}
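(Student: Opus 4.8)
The plan is to follow the proof of Proposition~\ref{prop ciir} almost verbatim, the only genuine novelty being an extra, already-controlled forcing term built from the order-$0$ initial layer $\pio$. First I would eliminate the hydrodynamic unknowns: since $\uis$ solves $\p_\tau\uis+\grad p_I^{(1)}=0$, $\dive\uis=0$ with $\uis(\cdot,\tau\to+\oo)=0$, one gets $\uis=p_I^{(1)}=0$. Next, summing $\f1{\dl}$ times the $c_{1,I}^{(3)}$-equation and $\f1{\dr}$ times the $c_{2,I}^{(3)}$-equation and using $\zl c_1^{(0)}(0)+\zr c_2^{(0)}(0)=\rho^{(0)}(0)=0$ together with $\zl c_1^{(1)}(0)+\zr c_2^{(1)}(0)=\rho^{(1)}(0)=0$ (recall $\rho^{(1)}\equiv0$), one obtains $\p_\tau\bigl(\f{c_{1,I}^{(3)}}{\dl}+\f{c_{2,I}^{(3)}}{\dr}\bigr)=0$; the decay at $\tau=+\oo$ then forces $\f{c_{1,I}^{(3)}}{\dl}+\f{c_{2,I}^{(3)}}{\dr}\equiv0$, hence $\ris=\f{\zl\dl-\zr\dr}{\dl}c_{1,I}^{(3)}$. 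Eliminating $c_{2,I}^{(3)}$ and using $-\lap\pil=\ris$, $\pil|_{\p\Omega}=0$, the whole system collapses to the single closed equation
\[
\f1{\zl(\zl\dl-\zr\dr)}\,\p_\tau\ris+c_1^{(0)}(0)\,\ris=\grad c_1^{(0)}(0)\cdot\grad\pil+\dive\bigl(c_1^{(1)}(0)\,\grad\pio\bigr),\qquad \ris(0)=\lap\pl(0),
\]
which is precisely \eqref{equation of rir,rewrite} with the additional \emph{known} source $\dive(c_1^{(1)}(0)\grad\pio)$; by Proposition~\ref{prop ciir} this source belongs to $C^\oo([0,+\oo);H^2)$ and, together with all its derivatives, decays like $e^{-M_2\tau}$.

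For existence, uniqueness and smoothness I would view $\pil$ as a bounded linear function of $\ris$ through the Dirichlet Laplacian, so that the displayed equation becomes a linear non-autonomous ODE $\p_\tau\ris=\mathcal L(\tau)\ris+F(\tau)$ in $H^2$ with $\mathcal L(\tau)$ bounded (the coefficient $c_1^{(0)}(0)\in H^5$ is a multiplier on $H^2$) and $F\in C^\oo([0,+\oo);H^2)$; Picard iteration gives a unique global solution, and bootstrapping the equation in $\tau$ gives $C^\oo$-in-$\tau$ regularity, whence $c_{i,I}^{(3)}\in C^\oo([0,+\oo);H^2)$ and $\pil=(-\lap)_D^{-1}\ris\in C^\oo([0,+\oo);H^4)$. (Equivalently, a Galerkin scheme closed by the energy estimates below works.)

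For the decay estimates I would imitate Proposition~\ref{prop ciir} in three steps. (i) Pairing the $\ris$-equation with $\pil$ and using $-\lap\pil=\ris$, $\pil|_{\p\Omega}=0$, $c_1^{(0)}(0)\ge\lam$ and Young's inequality gives
\[
\f1{2\zl(\zl\dl-\zr\dr)}\f{d}{d\tau}\|\grad\pil\|_{L^2}^2+\f{\lam}{2}\|\grad\pil\|_{L^2}^2\le\f1{2\lam}\|c_1^{(1)}(0)\|_{L^\oo}^2\|\grad\pio\|_{L^2}^2,
\]
whose right-hand side decays exponentially by \eqref{pio exponentially decay}, so Gronwall's inequality yields $\|\grad\pil(\tau)\|_{L^2}^2\le M(\|\grad\pil(0)\|_{L^2}^2+1)e^{-c\tau}$. (ii) Substituting $-\lap\pil=\ris$ back into the equation, multiplying by $\ris$ and integrating exactly as for \eqref{equation of rir,rewrite}, the new terms are absorbed using $c_1^{(0)}(0)\ge\lam$, the bound on $\|\grad\pil\|_{L^2}$ from (i), and $\|\pio\|_{H^2}$ from Proposition~\ref{prop ciir}; Gronwall then gives exponential decay of $\|\ris\|_{L^2}$. (iii) Applying $\grad^j$ for $0\le j\le 2$ and $\p_\tau^l$ for $0\le l\le 3$ to the system and repeating (i)--(ii): each source term is controlled by a power of $\|c_i^{(0)}(0)\|_{H^5}$ and $\|c_i^{(1)}(0)\|_{H^5}$ times an already-estimated exponentially decaying quantity — in particular one has $\|\p_\tau^{l+1}\ris(0)\|_{H^2}\lesssim\|c_i^{(0)}(0)\|_{H^5}\|\p_\tau^l\ris(0)\|_{H^2}+\|c_i^{(1)}(0)\|_{H^5}\|\p_\tau^l\rir(0)\|_{H^2}$, the last factor being already estimated in Proposition~\ref{prop ciir} — and summing over $j$ and over the powers produces the stated bound; the appearance of $\|c_i^{(1)}(0)\|_{H^5}^{12}$ next to $\|c_i^{(0)}(0)\|_{H^5}^{12}$ is exactly the price of the new forcing term.

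Finally, for the last assertion: $\pil\equiv0$ forces $\ris\equiv0$, hence $c_{i,I}^{(3)}\equiv0$, so the $\ve^4$-balance for the order-$4$ initial layer $(c_{i,I}^{(4)},\Phi_I^{(2)})$ loses all of its $\pil$-dependent source terms; in the regime relevant to the construction — under assumption~\eqref{main thm,assumption4}, $\psi(0)=0$ gives $\lap\po(0)=0$, whence $\rir(0)=0$ and, by the homogeneous version of step (i), $\pio\equiv0$, and the order-$1$ outer data are then chosen so that $\pil\equiv0$ as well — the remaining source collapses too, so that $\rho_I^{(4)}=-\lap\pir$ obeys a homogeneous linear evolution equation $\p_\tau\rho_I^{(4)}=\zl(\zl\dl-\zr\dr)\dive\bigl(c_1^{(0)}(0)\grad\pir\bigr)$ with $-\lap\pir=\rho_I^{(4)}$, $\pir|_{\p\Omega}=0$; by the uniqueness from the first step, the datum $\pir(0)=0$ then forces $\pir\equiv0$. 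I expect the genuinely delicate point to be \emph{not} the energy estimates — a routine if lengthy transcription of Proposition~\ref{prop ciir} — but this last bookkeeping: one must verify carefully which products in the $\ve^4$-expansion survive once the lower-order layers are switched off, so that no uncontrolled inhomogeneity remains in the equation for $\rho_I^{(4)}$.
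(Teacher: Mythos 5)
Your proof of existence and of the decay estimates for $(\ciis,\pil)$ follows the paper's intended route --- the paper itself defers wordlessly to Proposition~\ref{prop ciir} --- and your reduction to a single scalar equation for $\ris$ with the already-controlled extra source $\dive(\cil(0)\grad\pio)$, followed by the three-step Gronwall scheme and the $x'$- and $\tau$-derivative bootstrap, is correct and essentially the same as what the paper has in mind; the extra factor $\sum_i\|\cil(0)\|_{H^5}^{12}$ in the bound is exactly the expected price of the new forcing.

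The one genuine gap is in the final assertion $\pir\equiv 0$, which you yourself flag as the delicate part. Your argument for it imports $\psi(0)=0$, $\pio\equiv 0$ and $\ciir\equiv 0$ from the setup of Theorem~\ref{main thm}, but these are not hypotheses of the Proposition. From $\pil\equiv 0$ alone one deduces only $\ris\equiv 0$, $\ciis\equiv 0$, and the compatibility relation $\dive\bigl(\cil(0)\grad\pio\bigr)=0$, none of which force $\pio\equiv 0$ or $\ciir\equiv 0$. Without that vanishing, the $\ve^2$-order initial-layer balance that determines $(c_{i,I}^{(4)},\pir)$ retains the diffusion source $D_i\lap\ciir$, and the $1/D_i$-weighted cancellation that collapses the system to a single scalar equation at orders $2$ and $3$ no longer goes through, since $c_{1,I}^{(2)}+c_{2,I}^{(2)}=\f{D_1-D_2}{D_1}\,c_{1,I}^{(2)}$ is not generically zero. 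In the paper the assertion is invoked only in a regime where $\pio\equiv 0$ and $\ciir\equiv 0$ have already been established, so the imprecision is partly shared with the paper's formulation; a clean proof of the last sentence should either add those vanishing conditions to the hypotheses or derive them from $\pil\equiv 0$, and the present argument does neither.
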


{\Large $\bullet$}{\bf Analysis of mixed layer solutions.}

For mixed boundary layer, we also only deal with the one near $y=0$. Firstly by comparing the leading order, one has $\p_\xi \prelmfr =0 =\prelmfr(x',\xi \rightarrow +\oo) $, which leads to $\prelmfr = 0 $. Thus the next order $(\cilmo,\plmo,\vlmo,\wlmo,\prelmfl)$ satisfy
\begin{equation}\label{equation of left mixed layer of order 0}
    \quad \left\{\begin{array}{l}
    \p _\tau \cilmo  =D_i \p_\xi^2 \cilmo +z_iD_i \p_\xi \cilmo \p_\xi\plmo \\
    \qquad \qquad \qquad +z_iD_i \gamma_i(x',0,0)\p_\xi^2 \plmo , \qquad 0< \xi,\tau < +\oo ,\\
    -\p_\xi^2 \plmo=\rlmo, \\
    \p_\tau \vlmo -\nu \p_\xi^2 \vlmo =0, \\
    \p_\tau \wlmo -\nu \p_\xi^2 \wlmo + \p_\xi \prelmfl=0, \\
    \p_\xi \wlmo= 0.
    \end{array} \right.
\end{equation}
We implement the system (\ref{equation of left mixed layer of order 0}) with the boundary conditions:
\begin{align*}\label{bdry condition of left mixed layer of order 0}
  &(\cilmo,\plmo,\vlmo,\wlmo)(x',\xi=0,\tau) = -(\ciio,\pio,\vio,\wio)(x',y=0,\tau)=0 ,\\
  &(\cilmo,\plmo,\vlmo,\wlmo,\prelmfl)(x',\xi\rightarrow +\oo,\tau) = 0,\\
  &(\cilmo,\plmo,\vlmo,\wlmo,\prelmfl)(x',\xi,\tau \rightarrow +\oo) = 0.
\end{align*}
Thus one has $(\cilmo,\plmo,\vlmo,\wlmo,\prelmfl)=0$. By plugging them into the equation of next order, we obtain
\begin{equation}\label{equation of left mixed layer of order 1}
    \quad \left\{\begin{array}{l}
    \p _\tau \cilml  =D_i \p_\xi^2 \cilml  +z_iD_i \gamma_i(x',0,0)\p_\xi^2 \plml,  \quad 0< \xi,\tau < +\oo ,\\
    -\p_\xi^2 \plml=\rlml ,\\
    \p_\tau \vlml -\nu \p_\xi^2 \vlml =0 ,\\
    \p_\tau \wlml -\nu \p_\xi^2 \wlml + \p_\xi \prelmo=0 ,\\
    \p_\xi \wlml= 0.
    \end{array} \right.
\end{equation}
We implement the system (\ref{equation of left mixed layer of order 1}) with the boundary conditions:
\begin{align*}\label{bdry condition of left mixed layer of order 1}
  &(\cilml,\plml,\vlml,\wlml)(x',\xi=0,\tau) = -(\ciil,\pil,\vil,\wil)(x',y=0,\tau)=0 ,\\
  &(\cilml,\plml,\vlml,\wlml,\prelmo)(x',\xi\rightarrow +\oo,\tau) = 0,\\
  &(\cilml,\plml,\vlml,\wlml,\prelmo)(x',\xi,\tau \rightarrow +\oo) = 0.
\end{align*}
Thus one has $(\cilml,\plml,\vlml,\wlml,\prelmo)=0$. By plugging them into the equation of next order, we obtain
\begin{equation}\label{equation of left mixed layer of order 2}
    \quad \left\{\begin{array}{l}
    \p _\tau \cilmr  =D_i \p_\xi^2 \cilmr  +z_iD_i \gamma_i(x',0,0)\p_\xi^2 \plmr,  \quad 0< \xi,\tau < +\oo, \\
    -\p_\xi^2 \plmr=\rlmr, \\
    \p_\tau \vlmr -\nu \p_\xi^2 \vlmr =0,\\
    \p_\tau \wlmr -\nu \p_\xi^2 \wlmr + \p_\xi \prelml=0, \\
    \p_\xi \wlmr= 0.
    \end{array} \right.
\end{equation}
We implement the system (\ref{equation of left mixed layer of order 2}) with
\begin{equation}\label{bdry condition of left mixed layer of order 2}
\begin{split}
&\cilmr (x',\xi=0,\tau) = -\ciir(x',y=0,\tau),\\
  &(\vlmr,\wlmr)(x',\xi=0,\tau) = -(\vir,\wir)(x',y=0,\tau)=0, \\
  &(\cilmr,\plmr,\p_\xi\plmr,\vlmr,\wlmr,\prelml)(x',\xi\rightarrow +\oo,\tau) = 0,\\
  &(\cilmr,\plmr,\vlmr,\wlmr,\prelml)(x',\xi,\tau \rightarrow +\oo) = 0.
\end{split}
\end{equation}

Similarly for $(\cirmk,\prmk,\vrmk,\wrmk,\prermk)$, one has
\begin{equation*}
    (\cirmo,\prmo,\vrmo,\wrmo,\prermfl)=0,
\end{equation*}
\begin{equation*}
    (\cirml,\prml,\vrml,\wrml,\prermo)=0,
\end{equation*}
and
\begin{equation}\label{equation of right mixed layer of order 2}
    \quad \left\{\begin{array}{l}
    \p _\tau \cirmr  =D_i \p_\eta^2 \cirmr  +z_iD_i \gamma_i(x',1,0)\p_\eta^2 \prmr,  \quad 0< \eta,\tau < +\oo, \\
    -\p_\eta^2 \prmr=\rrmr, \\
    \p_\tau \vrmr -\nu \p_\eta^2 \vrmr =0 ,\\
    \p_\tau \wrmr -\nu \p_\eta^2 \wrmr - \p_\eta \prerml=0, \\
    -\p_\eta \wrmr= 0.
    \end{array} \right.
\end{equation}
with boundary conditions:
\begin{equation}
\begin{split}\label{bdry condition of right mixed layer of order 2}
&\cirmr (x',\eta=0,\tau) = -\ciir(x',y=1,\tau),\\
  &(\vrmr,\wrmr)(x',\eta=0,\tau) = -(\vir,\wir)(x',y=1,\tau)=0, \\
  &(\cirmr,\prmr,\p_\eta\prmr,\vrmr,\wrmr,\prerml)(x',\eta\rightarrow +\oo,\tau) = 0,\\
  &(\cirmr,\prmr,\vrmr,\wrmr,\prerml)(x',\eta,\tau \rightarrow +\oo) = 0.
\end{split}
\end{equation}

The next Proposition is concerned with the time and spatial decay of $\cilmr$,  the same results also hold for $\cirmr$.

\begin{Proposition}\label{S4prop3}
  {\sl  Let $\cio(0) \in H^5$, $\ciir(0) \in H^2$ and $\grad^k_{x'}\cilmr(x',\cdot,0) $ belong to $ H^4(\mathbb{R}^+_\xi, \langle \xi \rangle^4 \, d\xi)$ for any $x' \in \T^{d-1}$ and $0 \leq k \leq 2$, then system (\ref{equation of left mixed layer of order 2})-(\ref{bdry condition of left mixed layer of order 2}) admits a unique solution in $C([0,+\infty); H^4(\mathbb{R}^+_\xi, \langle \xi \rangle^4 \, d\xi) )$. Moreover, there exists $M$ depending only on $\lambda, \Lambda, \ga_i(x), z_i$ and $D_i$ for $i=1,2$, such that for $0 \leq k \leq 2$ and $1\leq l \leq 2$, we have
    \begin{equation}\label{estimate of cilmr}
        \begin{split}
    &\sum_{i=1}^2\|\grad^k_{x'} \cilmr(x',\cdot,\tau)\|^2_{H^4(\mathbb{R}^+_\xi, \langle \xi \rangle^4 \, d\xi)} + \sum_{i=1}^2  \int_0^{+\oo} \|\grad^k_{x'} \p_\xi \cilmr(x',\cdot,\tau)\|^2_{H^4(\mathbb{R}^+_\xi, \langle \xi \rangle^4 \, d\xi)} \,d\tau   \\
    & \quad + \int_0^{+\oo} \|\grad^k_{x'} \rlmr(x',\cdot,\tau)\|^2_{H^4(\mathbb{R}^+_\xi, \langle \xi \rangle^4 \, d\xi)} \, d\tau
    \leq
    \sum_{i=1}^2\|\grad^k_{x'} \cilmr(x',\cdot,0)\|^2_{H^4(\mathbb{R}^+_\xi, \langle \xi \rangle^4 \, d\xi)} \\
    &\qquad + M \sum_{i=1}^2\sum_{j=0}^3 \int_0^{+\oo}  |\grad^k_{x'} \p_\tau^j \ciir(x',0,\tau)|^2 \, d\tau  + M \sum_{i=1}^2\sum_{j=0}^2 \sup_\tau |\grad^k_{x'} \p_\tau^j\ciir(x',0,\tau)|^2
        \end{split}
    \end{equation}
and
\begin{equation}\label{rlmr to plmr}
\begin{split}
  &\|\grad^k_{x'}\p_\xi \p_\tau^l \plmr(x',\cdot,\tau)\|_{L^\oo_\xi} +\|\grad^k_{x'}\p_\xi \p_\tau^l \plmr(x',\cdot,\tau)\|_{L^2_\xi} \\
  &\qquad \lesssim \|\grad^k_{x'}\rlmr(x',\cdot,\tau)\|_{H^4(\mathbb{R}^+_\xi, \langle \xi \rangle^4 \, d\xi)} +\sum_{i=1}^2\|\grad^k_{x'}\p_\xi\cilmr(x',\cdot,\tau)\|_{H^4(\mathbb{R}^+_\xi, \langle \xi \rangle^4 \, d\xi)}.\end{split}
\end{equation}}
\end{Proposition}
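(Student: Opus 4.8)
The plan is to treat $x'\in\T^{d-1}$ as a parameter. For each fixed $x'$ the system (\ref{equation of left mixed layer of order 2})--(\ref{bdry condition of left mixed layer of order 2}) is a one–dimensional initial–boundary value problem on the half–line $\{\xi>0\}$ whose differential operators have coefficients that are constant in $(\xi,\tau)$ and depend on $x'$ only through $\ga_i(x',0,0)$. The Stokes part decouples: from $\p_\tau\vlmr-\nu\p_\xi^2\vlmr=0$, $\p_\tau\wlmr-\nu\p_\xi^2\wlmr+\p_\xi\prelml=0$, $\p_\xi\wlmr=0$ and the zero boundary and decay conditions one gets $\vlmr=\wlmr=\prelml=0$, leaving only $(\cilmr,\plmr)$. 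First I would remove the inhomogeneous trace $\cilmr(x',0,\tau)=-\ciir(x',0,\tau)$ by subtracting $-\ciir(x',0,\tau)\chi(\xi)$ for a fixed cut–off $\chi\in C^\infty_c([0,+\oo))$ with $\chi(0)=1$; by Proposition \ref{prop ciir} the trace $\ciir(x',0,\tau)$ and its time derivatives up to order three decay exponentially in $\tau$, so the homogenized unknown solves the same system with a source that is compactly supported in $\xi$ (hence lies in every weighted space $L^2(\R^+_\xi,\langle\xi\rangle^4\,d\xi)$) and exponentially small in $\tau$; this source is exactly what produces the $\ciir$–terms on the right of (\ref{estimate of cilmr}). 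The mixed layer near $y=1$ is treated identically, $\eta$ replacing $\xi$.

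The heart of the matter is a weighted energy estimate for the coupled parabolic–elliptic system. Using $\zl\ga_1(x',0,0)+\zr\ga_2(x',0,0)=0$ I would decompose $\bigl(c_{1,LM}^{(2)},c_{2,LM}^{(2)}\bigr)$ into the charge mode $\rlmr=\zl c_{1,LM}^{(2)}+\zr c_{2,LM}^{(2)}$ and the electroneutral mode $c_{1,LM}^{(2)}/D_1+c_{2,LM}^{(2)}/D_2$. Substituting $-\p_\xi^2\plmr=\rlmr$ turns the reaction term $z_iD_i\ga_i(x',0,0)\p_\xi^2\plmr$ into $-z_iD_i\ga_i(x',0,0)\rlmr$; in the equation for $\rlmr$ these combine into a dissipative zeroth–order term $-z_1\ga_1(x',0,0)(\zl D_1-\zr D_2)\rlmr$ with a strictly positive coefficient, whereas in the equation for the electroneutral mode they cancel entirely. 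Then I would differentiate the system $k$ times in $\xi$, $0\le k\le 4$ (no commutators, the $\xi$–coefficients being constant), test the $i$–th equation against $\langle\xi\rangle^4\,\p_\xi^k c_{i,LM}^{(2)}/(D_i\ga_i(x',0,0))$ and sum over $i$: the cross terms collapse to $-\int_0^{+\oo}\langle\xi\rangle^4|\p_\xi^k\rlmr|^2\,d\xi$, the diffusion yields $-\sum_i (D_i\ga_i)^{-1}\int_0^{+\oo}\langle\xi\rangle^4|\p_\xi^{k+1}c_{i,LM}^{(2)}|^2\,d\xi$, and the commutators of $\p_\xi$ with the polynomial weight are of lower weight, $|(\langle\xi\rangle^4)''|\lesssim\langle\xi\rangle^2$. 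Integrating in $\tau$ against the exponentially small source gives (\ref{estimate of cilmr}) for $k=0$; the tangential derivatives $\grad_{x'}^k$, $1\le k\le 2$, are handled by the same scheme, the only extra contributions coming from $\grad_{x'}\ga_i(x',0,0)$, which are lower order and bounded in terms of $\Lam$ and $\ga_i$ — hence the stated dependence of $M$. The bound (\ref{rlmr to plmr}) is then elementary: from $-\p_\xi^2\plmr=\rlmr$ with $\plmr,\p_\xi\plmr\to 0$ at $+\oo$ one has $\p_\xi\plmr(\xi)=\int_\xi^{+\oo}\rlmr\,d\xi'$, whence $\|\p_\xi\plmr\|_{L^\oo_\xi\cap L^2_\xi}\lesssim\|\rlmr\|_{H^4(\R^+_\xi,\langle\xi\rangle^4 d\xi)}$ by Cauchy–Schwarz and a one–dimensional Hardy inequality, and for $\p_\tau^l\plmr$ one rewrites $\p_\tau\rlmr=\zl D_1\p_\xi^2 c_{1,LM}^{(2)}+\zr D_2\p_\xi^2 c_{2,LM}^{(2)}$ minus a multiple of $\rlmr$ via the equations, so that one integration in $\xi$ converts the $\p_\xi^2 c_{i,LM}^{(2)}$ into the $\p_\xi c_{i,LM}^{(2)}$ terms on the right of (\ref{rlmr to plmr}). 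Local existence in $C\bigl([0,T];H^4(\R^+_\xi,\langle\xi\rangle^4 d\xi)\bigr)$ follows from the a priori bounds by a standard Galerkin/contraction argument, and the global–in–$\tau$ statement from their uniformity.

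The step I expect to be the main obstacle is precisely the weighted estimate for the electroneutral mode. Unlike $\rlmr$, which the electroneutrality relation endows with a genuine zeroth–order damping and which therefore decays exponentially in all of the norms above, the electroneutral combination satisfies a \emph{forced} heat equation — with the effective diffusivity $(\zl-\zr)D_1D_2/(\zl D_1-\zr D_2)$ already met in (\ref{ trans + diff}) — and no damping; since the half–line heat flow does not contract polynomially weighted $L^2$ norms, the weight–commutator terms $\int\langle\xi\rangle^2|c_{i,LM}^{(2)}|^2$ are not absorbed by the dissipation and a naive Gronwall argument would lose a factor $e^{C\tau}$. Closing the estimate uniformly in $\tau$ therefore hinges on exploiting the structure of the boundary and initial data — in particular the fact that the charge trace $\rlmr(x',0,\tau)=-\rir(x',0,\tau)$ inherited from the initial layer is, in the well–prepared and electroneutral setting of Proposition \ref{prop ciir}, strongly constrained — fed into the Duhamel representation of the electroneutral mode so as to compensate the amplification caused by the weight. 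The remaining work — propagating the exponentially decaying inputs simultaneously through the four $\xi$–derivatives, two $x'$–derivatives and three $\p_\tau$–derivatives, and keeping the boundary terms at $\xi=0$ under control — is routine but lengthy.
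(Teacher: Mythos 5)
Your lift of the boundary trace and your choice of multiplier reproduce the paper's scheme: the paper lifts with $a_i(\tau)e^{-\xi}$ (rather than your generic cut-off) and tests the homogenised equation \eqref{equation of alpha_i} with $\alpha_i/(\ga_iD_i)$, so that the cross terms collapse to $\|\omega\|^2_{L^2_\xi}$ exactly as you describe. But the weighted estimate that you yourself flag as the ``main obstacle'' is not closed by the repair you propose, and this is a genuine gap. Two points. First, the charge/electroneutral decoupling you invoke is false when $D_1\ne D_2$: with $\sigma=c_1/D_1+c_2/D_2$ the diffusion term in the $\sigma$-equation is $\p_\xi^2(c_1+c_2)$, and $c_1+c_2$ is a combination of both $\rho$ and $\sigma$, so the diffusion matrix is not diagonal in $(\rho,\sigma)$ and a mode-by-mode Duhamel argument does not apply. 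Second, and more seriously, the paper's resolution of the weight-commutator problem is not a Duhamel argument but a time-derivative multiplier: it tests \eqref{equation of alpha_i} with $\xi^2\p_\tau\alpha_i/(\ga_iD_i)$, and the $\tau$-differentiated equation \eqref{equation of ptau alpha_i} with $\xi^4\p_\tau\alpha_i/(\ga_iD_i)$. With this multiplier the terms $D_i\p_\xi^2\alpha_i$ and $z_iD_i\ga_i\omega$ produce the time derivatives $\frac{d}{d\tau}\|\xi\p_\xi\alpha_i\|^2_{L^2_\xi}$ and $\frac{d}{d\tau}\|\xi\omega\|^2_{L^2_\xi}$, the new dissipation is $\|\xi\p_\tau\alpha_i\|^2_{L^2_\xi}$, and the weight commutator $\int\xi\,\p_\xi\alpha_i\,\p_\tau\alpha_i\,d\xi$ is bounded by $\|\xi\p_\tau\alpha_i\|_{L^2_\xi}\|\p_\xi\alpha_i\|_{L^2_\xi}$, hence absorbed after Young into that dissipation plus the \emph{unweighted} $\|\p_\xi\alpha_i\|^2_{L^2_\xi}$ — which is integrable in $\tau$ by the unweighted energy inequality already in hand. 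Integrating in $\tau$ then gives the uniform weighted bounds with no Gronwall loss. This ladder — unweighted $L^2_\tau$ dissipation feeding a $\p_\tau$-multiplier weighted estimate — is exactly the missing ingredient; without it the weighted half of \eqref{estimate of cilmr} is not established. Your treatment of \eqref{rlmr to plmr} via $\p_\xi\plmr(\xi)=\int_\xi^{+\infty}\rlmr\,dz$ and the equation to convert $\p_\tau^l$ matches the paper.
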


\begin{proof} Once again, we shall only deal with the \textit{a priori} estimate in weighted Sobolev space $H^4(\mathbb{R}^+_\xi, \langle \xi \rangle^4 \, d\xi)$. For simplicity, we omit $x'$, and denote $\gamma_i(x',0,0)$ by $\gamma_i$ throughout this proof. By introducing
\begin{equation}\label{change of variable}
\quad \left\{\begin{array}{l}
    a_i(\tau) \eqdefa -\ciir(y=0,\tau), \quad r(\tau)\eqdefa z_1a_1+z_2a_2, \\
     \alpha_i \eqdefa \cilmr - a_i(\tau)e^{-\xi}, \quad \omega  \eqdefa \rlmr - r(\tau)e^{-\xi} =z_1\al_1+z_2\al_2,
\end{array}\right.
\end{equation}
we consider an equivalent system:
\begin{equation}\label{equation of alpha_i}
    \p_\tau \alpha_i - D_i \p_\xi^2\alpha_i + z_iD_i\gamma_i\omega = -(\frac{d a_i}{d\tau}-D_ia_i+z_iD_i\gamma_ir)e^{-\xi},\quad 0< \xi,\tau<+\oo,
\end{equation}
implemented with boundary conditions
    $$\alpha_i(0,\tau) = \omega(0,\tau)=0 .$$

Firstly, we get, by taking $L^2(\mathbb{R}^+_\xi)$ inner product of the  equation (\ref{equation of alpha_i}) with $\frac{\al_i}{\ga_iD_i}$,
and then summing up the resulting equalities for $i=1,2$, that
\begin{align*}
     \sum_{i=1}^2 &\frac{1}{2\ga_iD_i}\frac{d}{d\tau} \|\al_i\|_{L^2_\xi}^2+ \sum_{i=1}^2 \frac{1}{\ga_i}\|\p_\xi \al_i\|_{L^2_\xi}^2  + \|\omega\|_{L^2_\xi}^2  \\
    & \leq M\sum_{i=1}^2\bigl( |a_i| + |\frac{d a_i}{d\tau}|\bigr)\int_0^{+\oo} e^{-\xi} \al_i d\xi  \\
    & \leq M\sum_{i=1}^2\bigl( |a_i|^2 + |\frac{d a_i}{d\tau}|^2 \bigr) + \frac{1}{2 \Lambda }\sum_{i=1}^2\|\p_\xi \al_i\|_{L^2_\xi}^2 ,
\end{align*}
where in the last step we used the following  equality:
\begin{equation*}
    \int_0^{+\oo} e^{-\xi} f d\xi = -\int_0^{+\oo} \frac{d}{d\xi}e^{-\xi} f d\xi = \int_0^{+\oo} e^{-\xi} f' d\xi, \quad \textit{for any} \ f\in C^1_\xi\cap L^\oo_\xi \ \textit{with} \ f(0)=0 .
\end{equation*}

As a consequence, we obtain
\begin{equation}\label{estimate of alpha}
\begin{split}
\sum_{i=1}^2 \frac{1}{2\ga_iD_i} &\|\al_i(\tau)\|_{L^2_\xi}^2+ \sum_{i=1}^2 \frac{1}{2\ga_i}\int_0^{+\oo}  \|\p_\xi \al_i\|_{L^2_\xi}^2 \,d\tau + \int_0^{+\oo} \|\omega\|_{L^2_\xi}^2 \, d\tau \\
& \leq \sum_{i=1}^2 \frac{1}{2\ga_iD_i} \|\al_i(0)\|_{L^2_\xi}^2 + M \sum_{i=1}^2\int_0^{+\oo}\bigl(|a_i|^2 + |\frac{d a_i}{d\tau}|^2\bigr) \, d\tau .
\end{split}
\end{equation}

While by taking $L^2(\mathbb{R}^+_\xi)$ inner product  of the  equation (\ref{equation of alpha_i})  with $-\frac{\p_\xi^2 \al_i}{\ga_iD_i}$ and
then summarizing the resulting equalities for $i=1,2$, one has
\begin{align*}
     \sum_{i=1}^2 &\frac{1}{2\ga_iD_i}\frac{d}{d\tau} \|\p_\xi \al_i\|_{L^2_\xi}^2+ \sum_{i=1}^2 \frac{1}{\ga_i}\|\p_\xi^2 \al_i\|_{L^2_\xi}^2  + \|\p_\xi \omega\|_{L^2_\xi}^2  \\
    & \leq M\sum_{i=1}^2\bigl( |a_i|^2 + |\frac{d a_i}{d\tau}|^2\bigr) + \frac{1}{2 \Lambda }\sum_{i=1}^2\|\p_\xi^2 \al_i\|_{L^2_\xi}^2 ,
\end{align*}
which implies
\begin{equation}\label{estimate of pxi alpha}
    \begin{split}
\sum_{i=1}^2 \frac{1}{2\ga_iD_i} &\|\p_\xi\al_i(\tau)\|_{L^2_\xi}^2+ \sum_{i=1}^2 \frac{1}{2\ga_i}\int_0^{+\oo}  \|\p_\xi^2 \al_i\|_{L^2_\xi}^2 \,d\tau + \int_0^{+\oo} \|\p_\xi \omega\|_{L^2_\xi}^2 \, d\tau \\
& \leq \sum_{i=1}^2 \frac{1}{2\ga_iD_i} \|\p_\xi \al_i(0)\|_{L^2_\xi}^2 + M \sum_{i=1}^2\int_0^{+\oo}\bigl(|a_i|^2 + |\frac{d a_i}{d\tau}|^2\bigr) \, d\tau .
\end{split}
\end{equation}

Next we consider higher derivative estimate of $\alpha_i.$ We first get, by taking time derivative of (\ref{equation of alpha_i}), that
\begin{equation}\label{equation of ptau alpha_i}
     \p_\tau^2 \alpha_i - D_i \p_\xi^2\p_\tau\alpha_i + z_iD_i\gamma_i\p_\tau\omega = -(\frac{d^2a_i}{d\tau^2}-D_i\frac{da_i}{d\tau}+z_iD_i\gamma_i\frac{dr}{d\tau})e^{-\xi} ,
\end{equation}
implemented with boundary conditions
    $$\p_\tau\alpha_i(0,\tau) = \p_\tau\omega(0,\tau)=0 .$$

Along the same line to the derivation of \eqref{estimate of alpha} and \eqref{estimate of pxi alpha}, one has, for $k=0,1$
\begin{equation}\label{estimate of pxi ptau alpha}
\begin{split}
\sum_{i=1}^2 \frac{1}{2\ga_iD_i} &\|\p_\xi^k \p_\tau \al_i(\tau)\|_{L^2_\xi}^2+ \sum_{i=1}^2 \frac{1}{2\ga_i}\int_0^{+\oo}  \|\p_\xi^{k+1}\p_\tau \al_i\|_{L^2_\xi}^2 \,d\tau + \int_0^{+\oo} \|\p_\xi^k \p_\tau\omega\|_{L^2_\xi}^2 \, d\tau \\
& \leq \sum_{i=1}^2 \frac{1}{2\ga_iD_i} \|\p_\xi^k \p_\tau \al_i(0)\|_{L^2_\xi}^2
+ M \sum_{i=1}^2\int_0^{+\oo}\bigl(|\frac{da_i}{d\tau}|^2 + |\frac{d^2 a_i}{d\tau^2}|^2\bigr)\, d\tau ,
\end{split}
\end{equation}

Now in view of (\ref{change of variable}), (\ref{estimate of alpha}), (\ref{estimate of pxi alpha}), (\ref{estimate of pxi ptau alpha}) and  $$\|\p_\xi^k \p_\tau \al_i(0)\|_{L^2_\xi} \lesssim  \| \al_i(0) \|_{H^{k+2}_\xi} +M(|a_i(0)| + |\frac{da_i}{d\tau}(0) |), $$
we get, for $0\leq k, l \leq 1$,
\begin{equation*}
    \begin{split}
        \sum_{i=1}^2\Bigl( \frac{1}{2\ga_iD_i} &\|\p_\xi^k \p_\tau^l \cilmr(\tau)\|_{L^2_\xi}^2+\frac{1}{2\ga_i}\int_0^{+\oo}  \|\p_\xi^{k+1}\p_\tau^l \cilmr\|_{L^2_\xi}^2 \,d\tau\Bigr)+ \int_0^{+\oo} \|\p_\xi^k \p_\tau^l \rlmr\|_{L^2_\xi}^2 \, d\tau \\
& \leq M\sum_{i=1}^2 \|\cilmr(0)\|_{H^3_\xi}^2 + M \sum_{i=1}^2 \sum_{j=0}^2 \int_0^{+\oo}  |\frac{d^j a_i}{d\tau^j}|^2 \, d\tau + M\sum_{i=1}^2\sum_{j=0}^1 \sup_\tau |\frac{d^ja_i}{d\tau^j}(\tau)|^2.
    \end{split}
\end{equation*}

 On the other hand, by taking $L^2(\mathbb{R}^+_\xi)$ inner product  of
 the equation (\ref{equation of alpha_i})  with $\xi^2 \frac{\p_\tau \al_i}{\gamma_iD_i}$ and integrating by parts, and then summarizing the equalities for $i=1,2$, we find
\begin{equation*}
\begin{split}
 & \sum_{i=1}^2 \frac{1}{2\ga_i}\frac{d}{d\tau}\|\xi\p_\xi \al_i\|_{L^2_\xi}^2 +  \frac{1}{2}\frac{d}{d\tau}  \|\xi\omega\|_{L^2_\xi}^2 +\sum_{i=1}^2  \frac{1}{\ga_iD_i} \|\xi \p_\tau \al_i\|_{L^2_\xi}^2 \\
    & \leq  M\sum_{i=1}^2\bigl(  |a_i|+|\frac{d a_i}{d\tau}|
 \bigr)\sum_{i=1}^2\int_0^{+\oo} e^{-\xi} \xi^2 |\p_\tau \al_i| d\xi +  M\int_0^{+\oo}|\xi\p_\xi \al_i  \p_\tau \al_i | \, d\xi\\
    &\leq  M\sum_{i=1}^2\bigl(  |a_i|^2+|\frac{d a_i}{d\tau}|^2 +
 \|\p_\tau\al_i\|_{L^2_\xi}^2 +\|\p_\xi\al_i\|_{L^2_\xi}^2\bigr) + \sum_{i=1}^2\frac{1}{2\max_i \ga_i D_i}\|\xi \p_\tau\al_i\|_{L^2_\xi}^2  ,
\end{split}
\end{equation*}
which together with (\ref{equation of alpha_i}) implies
\begin{equation}\label{estimate of xi pxi plmr}
    \begin{split}
 \sum_{i=1}^2 \frac{1}{2\ga_i} & \|\xi\p_\xi \al_i(\tau)\|_{L^2_\xi}^2 +  \frac{1}{2}  \|\xi\omega(\tau)\|_{L^2_\xi}^2 +\sum_{i=1}^2  \frac{1}{2\ga_iD_i} \int_0^\oo \|\xi \p_\tau \al_i\|_{L^2_\xi}^2 \, d\tau\\
& \leq  \sum_{i=1}^2 \frac{1}{2\ga_i}\|\xi\p_\xi \al_i(0)\|_{L^2_\xi}^2 +  \frac{1}{2}  \|\xi\omega(0)\|_{L^2_\xi}^2 \\
&\quad + M \sum_{i=1}^2\int_0^{+\oo}  \bigl( |a_i|^2+|\frac{d a_i}{d\tau}|^2 +
 \|\p_\xi \al_i\|_{L^2_\xi}^2 +\|\p_\xi^2\al_i\|_{L^2_\xi}^2 + \|\omega\|_{L^2_\xi}^2\bigr) \, d\tau .
    \end{split}
\end{equation}

Similarly, by taking $L^2(\mathbb{R}^+_\xi)$ inner product  of the equation (\ref{equation of ptau alpha_i}) with $\xi^4 \frac{\p_\tau \al_i}{\gamma_iD_i}$ and integrating by parts, and then summarizing the equalities for $i=1,2$, we obtain
\begin{equation*}
\begin{split}
   & \sum_{i=1}^2  \frac{1}{2\ga_iD_i}  \frac{d}{d\tau} \|\xi^2 \p_\tau \al_i\|_{L^2_\xi}^2+ \sum_{i=1}^2 \frac{1}{\ga_i}\|\xi^2 \p_\xi \p_\tau \al_i\|_{L^2_\xi}^2  +  \|\xi^2 \p_\tau\omega\|_{L^2_\xi}^2  \\
    & \leq  M\sum_{i=1}^2\bigl(  |\frac{d a_i}{d\tau}| + |\frac{d^2 a_i}{d\tau^2}|
 \bigr)\sum_{i=1}^2\int_0^{+\oo} e^{-\xi} \xi^4 |\p_\tau \al_i| d\xi +  M\int_0^{+\oo}|\xi^2\p_\xi\p_\tau\al_i \xi\p_\tau \al_i | \, d\xi\\
    &\leq  M\sum_{i=1}^2\bigl(  |\frac{d a_i}{d\tau}|^2 + |\frac{d^2 a_i}{d\tau^2}|^2+
 \|\xi \p_\tau\al_i\|_{L^2_\xi}^2 \bigr) + \frac{1}{2\Lambda}\sum_{i=1}^2\|\xi^2 \p_\xi \p_\tau\al_i\|_{L^2_\xi}^2  .
\end{split}
\end{equation*}

Thus in view of (\ref{estimate of alpha}), (\ref{estimate of pxi alpha}) and (\ref{estimate of xi pxi plmr}), to sum up, we get
\begin{equation*}\label{estimate of xi^2 ptau alpha}
\begin{split}
\sum_{i=1}^2& \frac{1}{2\ga_iD_i} \|\xi^2 \p_\tau \al_i(\tau)\|_{L^2_\xi}^2+ \sum_{i=1}^2 \frac{1}{2\ga_i}\int_0^{+\oo}  \|\xi^2 \p_\xi\p_\tau \al_i\|_{L^2_\xi}^2 \,d\tau + \int_0^{+\oo} \|\xi^2 \p_\tau\omega\|_{L^2_\xi}^2 \, d\tau \\
& \leq \sum_{i=1}^2 \frac{1}{2\ga_iD_i} \|\xi^2 \p_\tau \al_i(0)\|_{L^2_\xi}^2 + M \sum_{i=1}^2\int_0^{+\oo}\bigl(|\frac{da_i}{d\tau}|^2 + |\frac{d^2 a_i}{d\tau^2}|^2 +\|\xi \p_\tau \alpha_i \|_{L^2_\xi}^2\bigr) \, d\tau \\
&\leq M \sum_{i=1}^2 \|\al_i(0)\|^2_{H^2(\mathbb{R}^+_\xi, \langle \xi \rangle^4 \, d\xi)}  +M \sum_{i=1}^2\sum_{j=0}^2\int_0^{+\oo}  |\frac{d^ja_i}{d\tau^j}|^2  \, d\tau + M\sum_{i=1}^2\sum_{j=0}^1 \sup_\tau |\frac{d^ja_i}{d\tau^j}(\tau)|^2.
\end{split}
\end{equation*}

Along the same line, one has the estimate of derivative of the highest order
\begin{equation*}\label{estimate of xi^2 ptau^2 alpha}
\begin{split}
\sum_{i=1}^2& \frac{1}{2\ga_iD_i} \|\xi^2 \p_\tau^2 \al_i(\tau)\|_{L^2_\xi}^2+ \sum_{i=1}^2 \frac{1}{2\ga_i}\int_0^{+\oo}  \|\xi^2 \p_\xi\p_\tau^2 \al_i\|_{L^2_\xi}^2 \,d\tau + \int_0^{+\oo} \|\xi^2 \p_\tau^2 \omega\|_{L^2_\xi}^2 \, d\tau \\
&\leq M \sum_{i=1}^2 \|\al_i(0)\|^2_{H^4(\mathbb{R}^+_\xi, \langle \xi \rangle^4 \, d\xi)}  +M \sum_{i=1}^2\sum_{j=0}^3 \int_0^{+\oo}  |\frac{d^ja_i}{d\tau^j}|^2 \, d\tau + M\sum_{i=1}^2\sum_{j=0}^2 \sup_\tau |\frac{d^ja_i}{d\tau^j}(\tau)|^2.
\end{split}
\end{equation*}

By taking derivatives in direction $x'$, we are able to finish the proof of $(\ref{estimate of cilmr})$ by the same argument. Lastly, (\ref{rlmr to plmr}) is a direct consequence of the equation of $\p_\tau^l \cilmr$ and the following inequality
\begin{align*}
    &|\grad^k_{x'}\p_\xi \p_\tau^l \plmr| =|\int_\xi^{+\oo} \langle z\rangle ^{-2} \langle z\rangle^2 \grad^k_{x'}\p_\tau^l \rlmr dz| \lesssim  \langle \xi\rangle^{-\frac{3}{2}} \|\langle \xi \rangle^2 \grad^k_{x'}\p_\tau^l\rlmr\|_{L^2_\xi} .
\end{align*}
This competes the proof of Proposition \ref{S4prop3}.
\end{proof}

\renewcommand{\theequation}{\thesection.\arabic{equation}}
\setcounter{equation}{0}
\section{The Convergence in $L^\oo_T(H^2)$}\label{convergence of Loo}

In this section, we focus on estimating the difference between the solutions of \eqref{S1eq1} and the expansions in $L^\oo_T(H^2)$. To this end, we
shall construct approximate solutions ($c_i^{app},\Phi^{app},u^{app}$)  with the same boundary conditions as $(\cei,\Phi^\ve,\ue)$, and denote
$(\cisum,\psum,\usum) \eqdefa (\cei-c_i^{app},\Phi^\ve-\Phi^{app},\ue-u^{app})$ so that
\begin{equation}\label{S5eq1}
    \quad \left\{\begin{array}{l}
         \cisum(x',y,t) = \cei(x',y,t) - \cio(x',y,t)  -\ve\cil(x',y,t)- \ve^2 \cir(x',y,t)  \\
         \qquad \qquad \qquad- \ve^2 \ciir(x',y,\tau) - \ve^2 f(y)\cilbr(x',\xi,t) - \ve^2g(y)\cirbr(x',\eta,t) \\
         \qquad \qquad \qquad - \ve^2 f(y)\cilmr(x',\xi,t) - \ve^2g(y)\cirmr(x',\eta,t), \\
         \psum(x',y,t)= \Phi^\ve(x',y,t) -\po(x',y,t) -\ve\pl(x',y,t) - \ve^2 \pr(x',y,t)\\
         \qquad \qquad \qquad -\pio(x',y,\tau) - \ve\pil(x',y',\tau) \\
         \qquad \qquad \qquad -\ve^2f(y)\plbr(x',\xi,t)-\ve^2 g(y)\prbr(x',\eta,t), \\
         \usum(x',y,t)= \ue(x',y,t)-\uo(x',y,t) - \ve\ul(x',y,t) ,
    \end{array}
    \right.
\end{equation}
implemented with the boundary conditions:
\begin{equation}\label{bdry condition of eqsum}
    \cisum |_{\p \Omega}=0, \quad \psum |_{\p \Omega}=0 \andf  \usum |_{\p \Omega}=0.
\end{equation}

Due to (\ref{bdry condition of eqsum}), we have $\|\cisum\|_{L^2} \lesssim \|\grad \cisum\|_{L^2} \lesssim \|\lap \cisum\|_{L^2}$, and similar results hold for $\psum$ and $\usum$. We shall first handle the estimates of $\|\lap \psum\|_{L^\oo_T(L^2)}$  and $\|\pt \cisum\|_{L^\oo_T(L^2)}$, and then use the $c_i^S$ equation to derive the estimate of $\|\lap \cisum\|_{L^\oo_T(L^2)}$.

\subsection{\bf Estimate of $\|(\grad\cisum, \ve\lap \psum,\grad \usum)\|_{L^\oo_T(L^2)}$.}

The goal of this subsection is to prove the following type of inequality:
\begin{align}\label{goal estimate in Loo H1}
    \frac{d}{dt}\bigl(\int_\Omega \frac{|\grad \cisum|^2}{\cio} \,dx + \ve^2\|\lap \psum\|^2_{L^2} + \|\usum\|^2_V \bigr)
     + \int_\Omega \bigl(\frac{|\lap \cisum|^2}{\cio} + \cio|\lap\psum|^2\bigr) \,dx \\
    + \ve^{-2} \|\grad \rsum\|^2_{L^2} +\nu \|A \usum\|_{L^2}^2    \leq O(\ve^3)  \nonumber.
\end{align}
To this end, we write the equations of $(\cisum, \psum, \usum)$ as
\begin{subequations}\label{S5eq8}
\begin{gather}
\label{equation of cisum}
    \pt \cisum +\usum\cdot \grad \cisum = D_i\dive\bigl( \cio(\frac{\grad\cisum}{\cio}+ z_i\grad \psum)\bigr)+z_iD_i \dive(\cisum\grad \psum) +K_i+L_i+M_i,\\
    \rsum = -\ve^2 \lap \psum + N, \label{5.4b}\\
\label{equation of usum}
    \pt \usum +\usum\cdot \grad \usum - \nu\lap\usum - \grad p^S = -\rsum\grad\psum +O+P,\\
    \dive\usum = 0,
\end{gather}
\end{subequations}
where
\begin{align*}
    K_i &\eqdefa -(\uo+\ve\ul)\cdot \grad \cisum  \nonumber\\
    &- \usum\cdot \grad (\cio+\ve\cil+\ve^2 \cir+\ve^2\ciir + \ve^2 f \cilbr +\ve^2 g\cirbr+ \ve^2 f \cilmr +\ve^2 g\cirmr) \nonumber \\
     & +z_iD_i \dive\bigl(\cisum\grad (\po+\ve\pl+\ve^2 \pr+ \pio+\ve\pil+\ve^2f\plbr+\ve^2g\prbr)\bigr)   \nonumber\\
    &+z_iD_i \dive\bigl((\ve\cil+\ve^2 \cir+\ve^2\ciir+ \ve^2 f \cilbr +\ve^2 g\cirbr+ \ve^2 f \cilmr +\ve^2 g\cirmr)\grad \psum\bigr) \nonumber ,
\end{align*}
\begin{align*}
    L_i &\eqdefa z_iD_i\bigl(\cio-\gamma_i(x',0,t)\bigr)f\p_\xi^2\plbr +z_iD_i\bigl(\cio-\gamma_i(x',1,t)\bigr)f\p_\eta^2\prbr \nonumber \\
    &\quad +z_iD_i\dive\bigl((\cio-\cio(0))\grad\pio\bigr),
\end{align*}
\begin{align*}
   & M_i \eqdefa -\ve^3\ul\cdot\grad\cir + \ve^2 \ur\cdot \grad \cio  \\
    & -(\uo+\ve\ul)\cdot \grad( \ve^2\ciir + \ve^2 f \cilbr +\ve^2 g\cirbr+ \ve^2 f \cilmr +\ve^2 g\cirmr)  \\
    & + \ve^2D_i\lap\ciir +\ve^2 D_i (f \lap_{x'} \cilbr + g \lap_{x'}\cirbr +f \lap_{x'} \cilmr + g \lap_{x'}\cirmr )  \\
    &+\ve^2 D_i ( f'' \cilbr +g''\cirbr+f'' \cilmr +g''\cirmr)  \\
    &+2\ve D_i ( f' \p_\xi \cilbr -  g'\p_\eta \cirbr+f' \p_\xi \cilmr -  g'\p_\eta \cirmr)  \\
    &+\ve^2 z_iD_i\grad \cio\grad (f\plbr+g\prbr)  +\ve^2z_iD_i\cio (f''\plbr+g''\prbr+f \lap_{x'} \plbr + g \lap_{x'}\prbr)  \\
    &+2\ve z_iD_i\cio(f' \p_\xi \plbr - g'\p_\eta \prbr) \\
     &+ \ve^3 z_iD_i\dive(\cil\grad \pr +\cir\grad \pl) +\ve^4 z_iD_i\dive(\cir\grad\pr) + \ve z_iD_i\dive(\cio\grad\pil) \\
     &+z_iD_i \dive\bigl((\ve\cil+\ve^2 \cir+\ve^2\ciir)\grad (\pio+\ve\pil+\ve^2f\plbr+\ve^2g\prbr)\bigr) \\
    &+z_iD_i \dive\bigl((\ve^2 f \cilbr +\ve^2 g\cirbr+\ve^2 f \cilmr +\ve^2 g\cirmr)\grad (\pio+\ve\pil+\ve^2f\plbr+\ve^2g\prbr)\bigr) \\
    &+z_iD_i \dive\bigl((\ve^2\ciir+\ve^2 f \cilbr +\ve^2 g\cirbr+\ve^2 f \cilmr +\ve^2 g\cirmr)\grad (\po+\ve\pl+\ve^2 \pr)\bigr) \\
    & -\ve^2 f \pt\cilbr -\ve^2 g \pt\cirbr-z_iD_i\gamma_i(x',0,0)f\p_\xi^2\plmr-z_iD_i\gamma_i(x',1,0)g\p_\eta^2\prmr ,
\end{align*}
\begin{align*}
    N \eqdefa& -\ve^3 \lap \pl -\ve^4 \lap \pr -\ve^3 \lap \pil - 2\ve^3(f'\p_\xi\plbr - g'\p_\eta \prbr)\\
    &-\ve^4(f\lap_{x'}\plbr+g\lap_{x'}\prbr +f''\plbr+g''\prbr )+\ve^2(f\p_\xi \plmr+g\p_\eta \prmr)  ,
\end{align*}
\begin{align}
   O \eqdefa& -\usum\cdot \grad (\uo+\ve\ul)-(\uo+\ve\ul)\cdot \grad \usum \nonumber \\
    & - \ve^2(\rr+\rir+f\rlbr+g\rrbr+f\rlmr+g\rrmr)\grad\psum \nonumber \\
    &- \rsum \grad (\po+\ve\pl+\ve^2 \pr+ \pio+\ve\pil+\ve^2f\plbr+\ve^2g\prbr) \nonumber,
\end{align}
and
\begin{align}
    P \eqdefa & -\ve^2\ul\cdot\grad\ul-\ve^2(\rr+\rir+f\rlbr+g\rrbr+f\rlmr+g\rrmr)\nonumber \\
    & \qquad \cdot \grad (\po+\ve\pl+\ve^2 \pr+ \pio+\ve\pil+\ve^2f\plbr+\ve^2g\prbr) \nonumber .
\end{align}

\begin{thm}\label{thm4}
    {\sl
Let $d=2$ or $3$,  we assume that the initial data is ``well-prepared" i.e. $\re(0)=0$, $\lam \leq \cio(0) $, $\bigl(\cio(0), \cil(0),\cir(0),\uo(0),\ul(0),\ur(0)\bigr) \in  H^5$, $\bigl(\ciir(0), \ciis(0)\bigr) \in H^2$ and $\grad^k_{x'}\cilmr(x',\cdot,0)$ belong to  $H^4(\mathbb{R}^+_\xi, \langle \xi \rangle^4 \, d\xi)$ for any $x'\in \mathbb{T}^{d-1}$ and $0 \leq k \leq 2$. Let $(\cei, \ue,\pe )\in C([0,T];H^5)$ be the strong solution of system (\ref{S1eq1})-(\ref{ebdry}) with initial data $\bigl(\cei(0),\ue(0)\bigr) $, where $T \leq T_0$ for $T_0$ 
 being determined by Proposition \ref{prop ciir}. If there exists a positive constant $M_1>0 $ so that
\begin{align}\label{initial condition of thm4}
     &\|\grad \cisum(0)\|_{L^2} + \|\grad \usum(0)\|_ {L^2} +\ve\|\lap \psum(0)\|_ {L^2} \leq M_1 \ve^\frac{3}{2} .
\end{align}
Then for $\ve$ being sufficiently small, there exists a positive constant $M_2$ depending only on initial data, $\nu,M_1, T,\lam, \Lam, W(x), \ga_i(x), z_i$ and $D_i$ for $i=1,2$, so that
\begin{subequations}\label{estimate of lap psum}
\begin{gather}
\label{estimate of lap psum1}
     \|\grad \cisum\|_{L^\oo_T(L^2)} + \|\grad \usum\|_ {L^\oo_T(L^2)} + \ve \|\lap \psum\|_ {L^\oo_T(L^2)}  \leq M_2 \ve^\frac{3}{2} ,\\
     \|\lap  \cisum\|_{L^2_T(L^2)} + \|A \usum\|_ {L^2_T(L^2)}  + \|\lap \psum\|_ {L^2_T(L^2)}+\ve^{-1} \|\grad \rsum\|_ {L^2_T(L^2)}  \leq M_2 \ve^\frac{3}{2}. \label{estimate of lap psum2}
\end{gather}
\end{subequations}}
\end{thm}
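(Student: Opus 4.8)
The plan is to prove the differential inequality \eqref{goal estimate in Loo H1} and then close by Gronwall's inequality together with a continuity argument in $\ve$. Denote by $\cY^\ve(t)$ and $\cZ^\ve(t)$ the energy and the dissipation appearing on the left of \eqref{goal estimate in Loo H1}:
\[
\cY^\ve\eqdefa\sum_{i=1}^2\int_\Omega\f{|\grad\cisum|^2}{\cio}\,dx+\ve^2\|\lap\psum\|_{L^2}^2+\|\usum\|_V^2,
\]
\[
\cZ^\ve\eqdefa\sum_{i=1}^2\int_\Omega\Bigl(\f{|\lap\cisum|^2}{\cio}+\cio|\lap\psum|^2\Bigr)\,dx+\ve^{-2}\|\grad\rsum\|_{L^2}^2+\nu\|A\usum\|_{L^2}^2.
\]
First I would record the ingredients used throughout: $\cio$ obeys the maximum principle, so $\lam\le\cio\le\Lam$ and $1/\cio$ is a bounded smooth function; all profiles built in Section~\ref{analysis of layers} are smooth, with the initial and mixed layers decaying exponentially in $\tau=t/\ve^2$ (Propositions~\ref{prop ciir}, \ref{prop cirs}) and the boundary and mixed-boundary layers decaying exponentially in $\xi$, resp.\ $\eta$ (see Proposition~\ref{S4prop3} and the explicit formulas \eqref{solution of left boundary layer of order 2}--\eqref{solution of right boundary layer of order 2}); and, since $\cisum,\usum$ vanish on $\p\Omega$, Poincaré's inequality gives $\|\cisum\|_{H^1}\lesssim\sqrt{\cY^\ve}$, $\|\cisum\|_{H^2}\lesssim\sqrt{\cZ^\ve}$, similarly for $\usum$, while \eqref{5.4b} together with $\|\grad N\|_{L^2}=O(\ve^2)$ yields $\ve\|\psum\|_{H^3}\lesssim\sqrt{\cZ^\ve}+O(\ve^2)$.

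The core of the proof is a one-derivative-higher analogue of Proposition~\ref{S3prop1}. One differentiates $\cY^\ve$, which amounts to testing the $\cisum$-equation \eqref{equation of cisum} against $-2\,\dive(\grad\cisum/\cio)$ (legitimate because $\cisum|_{\p\Omega}\equiv0$ forces $\pt\cisum|_{\p\Omega}=0$), using the relation \eqref{5.4b}, the identity $\rsum=\sum_i z_i\cisum$ and the structure of the limiting $\psi$-equation \eqref{equivalent psi} to generate the terms $\int_\Omega\cio|\lap\psum|^2\,dx$ and $\ve^{-2}\|\grad\rsum\|_{L^2}^2$, and testing the $\usum$-equation \eqref{equation of usum} against $A\usum$. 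As in the proof of Proposition~\ref{S3prop1}, the dangerous cross terms between the $\grad\cisum$-diffusion, the $z_i\grad\psum$-drift and the electric force $-\rsum\grad\psum$ cancel up to lower order \emph{precisely because} $\rsum+\ve^2\lap\psum=N$ with $N=O(\ve^2)$; this produces the full dissipation $\cZ^\ve$ on the left and leaves on the right the genuinely quadratic term $z_iD_i\dive(\cisum\grad\psum)$ (bounded by $\|\cisum\|_{L^\infty}\|\grad\psum\|_{L^2}\sqrt{\cZ^\ve}$ and Gagliardo--Nirenberg, e.g.\ $\|\cisum\|_{L^\infty}^2\lesssim\|\cisum\|_{H^1}\|\cisum\|_{H^2}$ in $d=2$ and the analogous interpolation in $d=3$, the $\cZ^\ve$-part being absorbed), the Navier--Stokes nonlinearity $\usum\cdot\grad\usum$ and the force $-\rsum\grad\psum$ tested against $A\usum$ (controlled as in the proof of Theorem~\ref{coexistence of npns}, cf.\ \eqref{growth of ue,2d}--\eqref{growth of ue,3d}), together with the contributions of $K_i$, $L_i$, $M_i$, $O$ and $P$.

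The forcing terms are then organized by their order in $\ve$. The terms in $K_i$ and $O$ are linear in $(\cisum,\psum,\usum)$ with fixed smooth coefficients, so pairing them with the test functions and using $\lam\le\cio\le\Lam$ bounds their contribution by $\delta\cZ^\ve+M\Psi(t)\cY^\ve$, with $\Psi\in L^1(0,T)$ built from $L^\infty$-norms of the profiles and the $\delta\cZ^\ve$-part absorbed. Every term of $M_i$, $N$ and $P$ carries an explicit factor $\ve^2$ or higher, once one notes that the apparent $O(\ve)$ pieces are negligible: $f',g'$ are supported in $\{1/4\le y\le3/4\}$, where the $\xi$- and $\eta$-layers are exponentially small in $1/\ve$, and $\pil\equiv0$ by Proposition~\ref{prop cirs} under the well-prepared/$\psi(0)=0$ hypothesis; hence the $L^2_T(L^2)$-norms of these terms are $O(\ve^2)$ and their contribution to $\cY^\ve$ is $O(\ve^4)$. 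The only forcing without an explicit $\ve$-gain is $L_i$, and there the gain comes from Taylor-expanding a slow coefficient against an exponentially localized layer: $|\cio(x',y,t)-\gamma_i(x',0,t)|\lesssim y=\ve\xi$ paired with $|\p_\xi^2\plbr|\lesssim e^{-c\xi}$ gives, after the change of variables $y=\ve\xi$, $\|(\cio-\gamma_i(x',0,t))f\,\p_\xi^2\plbr\|_{L^2_y}\lesssim\ve^{3/2}$; and $|\cio(x,t)-\cio(x,0)|\lesssim t$ paired with the $\tau$-exponential decay of $\grad\pio,\lap\pio$ gives $\|\dive\bigl((\cio-\cio(0))\grad\pio\bigr)\|_{L^2_x}\lesssim t\,e^{-c\tau/2}$, whose $L^2_T(L^2)$-norm is $O(\ve^3)$ since $t=\ve^2\tau$. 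Thus $\|L_i\|_{L^2_T(L^2)}=O(\ve^{3/2})$, contributing $O(\ve^3)$ to $\cY^\ve$.

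Collecting everything yields an estimate of the form $\f{d}{dt}\cY^\ve+c_0\cZ^\ve\le M\Psi(t)\cY^\ve+M\ve^3\bigl(1+\cY^\ve\bigr)^{\tht}+M\ve^3$, with $\Psi\in L^1(0,T)$ and $\tht$ an interpolation exponent, which is the quantitative form of \eqref{goal estimate in Loo H1}. Since $\cY^\ve(0)=O(\ve^3)$ by \eqref{initial condition of thm4}, a continuity argument in $\ve$ (the bound $\cY^\ve\le M_2\ve^3$ being open) together with Gronwall's inequality gives $\cY^\ve(t)+c_0\int_0^t\cZ^\ve\,dt'\le M_2\ve^3$ on $[0,T]$ for $\ve\le\ve_0$, which is exactly \eqref{estimate of lap psum1}; the bound \eqref{estimate of lap psum2} then follows from the time integral of $\cZ^\ve$ together with $\|\lap\psum\|_{L^2}^2\le\lam^{-1}\int_\Omega\cio|\lap\psum|^2\,dx$. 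I expect the main obstacle to be the bookkeeping of the principal-part cancellations at this higher derivative level: guaranteeing that no term with at most one power of $\ve$ survives forces the simultaneous use of the Poisson relation $\rsum=-\ve^2\lap\psum+N$, the maximum-principle bounds on $\cio$, and the exponential localization of every layer profile, while the quadratic term $z_iD_i\dive(\cisum\grad\psum)$ tested against $\sim\lap\cisum$ requires a genuine smallness input on $\cY^\ve$ — so the three energy estimates must be run together (they couple through $-\rsum\grad\psum$) before Gronwall is applied.
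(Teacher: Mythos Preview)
Your overall architecture matches the paper's: differentiating $\cY^\ve$ amounts to testing \eqref{equation of cisum} against $-\dive\bigl(\tfrac{\grad\cisum}{\cio}+z_i\grad\psum\bigr)$ (once the $\ve^2\|\lap\psum\|_{L^2}^2$ piece is handled via \eqref{5.4b}) and \eqref{equation of usum} against $A\usum$, then closing by Gronwall inside a bootstrap. But a step is missing. Expanding the principal dissipation term leaves a commutator error $M\|\grad\cio\|_{L^\infty}^2\|\grad\psum\|_{L^2}^2$ (from the $z_i\grad\cio\cdot\grad\psum$ piece of $\dive(\cio X)$). This is neither $\lesssim\Psi(t)\,\cY^\ve$ with $\Psi\in L^1(0,T)$ uniformly in $\ve$ --- only $\ve^2\|\lap\psum\|_{L^2}^2$ sits in $\cY^\ve$, so you would pick up $\ve^{-2}$ in the Gronwall weight --- nor absorbable into $\delta\cZ^\ve$, since the coefficient $\|\grad\cio\|_{L^\infty}^2$ carries no smallness. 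The paper resolves this by \emph{first} proving a refined $L^2$-level estimate, Lemma~\ref{refined convergence rate of L2}: testing \eqref{equation of cisum} against $\tfrac{\cisum}{\cio}+z_i\psum$ on the bootstrap interval $[0,T^\star]$ yields $\|\grad\psum\|_{L^2_{T^\star}(L^2)}^2+\ve^2\|\lap\psum\|_{L^2_{T^\star}(L^2)}^2\lesssim\ve^3$, and this is what feeds both the $\|\grad\psum\|_{L^2}^2$ error and the $\|\lap\psum\|_{L^2}^4$ factors from the quadratic term into the $O(\ve^3)$ inhomogeneity $\kappa_4$ of Proposition~\ref{prop5.1}. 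Without this intermediate lemma your differential inequality does not close.

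Two smaller slips. The last line of $M_i$ contains $-z_iD_i\gamma_i(x',0,0)\,f\,\p_\xi^2\plmr$ and its right-boundary analogue with \emph{no} explicit power of $\ve$ and with $f$ rather than $f'$, so your support argument does not apply; the $L^2_T(L^2)$-norm of this piece is only $O(\ve^{3/2})$ (via $dy\,dt=\ve^3\,d\xi\,d\tau$ and Proposition~\ref{S4prop3}), which is still sufficient for an $O(\ve^3)$ contribution, but your claim that ``every term of $M_i$ carries $\ve^2$'' is false. And $\pil\equiv0$ is not a hypothesis of Theorem~\ref{thm4}; that simplification appears only later, in the proof of Theorem~\ref{main thm}, through the extra assumption $\psi(0)=0$. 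Under the hypotheses of Theorem~\ref{thm4} you must retain $\pil$ and use its exponential decay in $\tau$.
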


\begin{lem}\label{estimate of K_i-P_i}
    Under the assumptions in Theorem \ref{thm4}, for $\ve \leq 1$, one has
    \beq\label{S5eq9}
    \begin{split}
       & \|K_i\|_{L^2 } +\|O_i\|_{L^2}\lesssim  \Bigl(\int_\Omega \frac{|\grad \cisum|^2}{\cio} \,dx\Bigr)^\frac{1}{2} +\|\usum\|_V + \ve \|\lap \psum\|_{L^2} , \\
       &\|L_i\|_{L^\oo(0,T;L^2)} +\|M_i\|_{L^\oo(0,T;L^2)}  \lesssim \ve^\frac{1}{2} , \\
       &\|L_i\|_{L^2(0,T;L^2)} +\|M_i\|_{L^2(0,T;L^2)} \lesssim \ve^{\frac{3}{2}} , \\
       &\|N\|_{L^2(0,T;L^2)} \lesssim \ve^3,\quad \|\grad N\|_{L^2(0,T;L^2)} \lesssim \ve^\frac{5}{2}, \quad \|\pt N\|_{L^2(0,T;L^2)} \lesssim \ve^\frac{3}{2},\\
       & \|P\|_{L^\oo(0,T;L^2)} +\|P\|_{L^2(0,T;L^2)} \lesssim \ve^2  .\\.
    \end{split}\eeq
\end{lem}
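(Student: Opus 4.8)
The plan is to run through the six groups of remainder terms $K_i$, $L_i$, $M_i$, $N$, $O$, $P$ and split them into two kinds. The terms collected in $K_i$ and $O$ are all of the form (a smooth, uniformly bounded component of the approximate solution) $\times$ (one of $\cisum,\psum,\usum$ or a first derivative of one of them). For these the strategy is to pull the smooth factor out in $L^\oo$ and apply the Poincar\'e inequality — legitimate since $\cisum,\psum,\usum$ vanish on $\p\Omega$, so $\|\cisum\|_{L^2}\lesssim\|\grad\cisum\|_{L^2}$ and $\|\grad\psum\|_{L^2}\lesssim\|\lap\psum\|_{L^2}$ — together with the maximum principle and \eqref{uniform bound for ci}, which give $\cio\ge\lam>0$ and hence $\|\grad\cisum\|_{L^2}\approx(\int_\Omega|\grad\cisum|^2/\cio\,dx)^{1/2}$. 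Two points need care: the terms $z_iD_i\dive\bigl((\ve\cil+\ve^2\cir+\dots)\grad\psum\bigr)$ in $K_i$ and $\ve^2(\rr+\rir+\dots)\grad\psum$, $\rsum\grad(\po+\ve\pl+\dots)$ in $O$ each produce a factor $\lap\psum$, but every coefficient multiplying it carries at least one power of $\ve$ (and $\rsum=-\ve^2\lap\psum+N$ by \eqref{5.4b}), so these contributions are $\lesssim\ve\|\lap\psum\|_{L^2}$ for $\ve\le 1$; the leftover piece $N\grad(\po+\dots)$ of $\rsum$ in $O$ is a purely known term controlled exactly as $N$ is controlled below. This yields the first line of \eqref{S5eq9}.

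For the purely known remainders (the commutators in $L_i$, the leftover terms in $M_i$, and all of $N$ and $P$) the $\ve$-power counting rests on three mechanisms. \emph{Spatial rescaling}: a boundary- or mixed-layer profile $h(x',y/\ve,\cdot)$ cut off by $f$ (or $h(x',(1-y)/\ve,\cdot)$ by $g$) satisfies $\|f\,h\|_{L^2(\Omega)}^2=\ve\int_0^{1/(2\ve)}\|h(\cdot,\xi,\cdot)\|^2\,d\xi$, so each undifferentiated layer factor gains $\ve^{1/2}$ while each $y$-derivative of such a factor costs $\ve^{-1}$. \emph{Temporal rescaling}: a profile depending only on $\tau=t/\ve^2$ that is square-integrable in $\tau$ gains an extra $\ve$ when its norm is taken in $L^2(0,T;L^2)$ rather than $L^\oo(0,T;L^2)$, because $\int_0^T|h(t/\ve^2)|^2\,dt=\ve^2\int_0^{T/\ve^2}|h(\tau)|^2\,d\tau$; the required $L^2_\tau$-control comes from the exponential decay of the initial-layer profiles $\ciir,\pio,\pil$ (Propositions~\ref{prop ciir} and~\ref{prop cirs}) and from Proposition~\ref{S4prop3} for $\cilmr,\rlmr$ and, through \eqref{rlmr to plmr}, for $\p_\xi\plmr$. \emph{Vanishing or exponentially small prefactors}: in $L_i$ one uses $\cio(x',y,t)-\ga_i(x',0,t)=O(y)=O(\ve\xi)$ near $y=0$ (from $\cio|_{\p\Omega}=\ga_i$ and $\cio\in H^5\hookrightarrow C^1$) and $\cio(\cdot,t)-\cio(\cdot,0)=O(t)$ together with the exponential $\tau$-decay of $\grad\pio$, while in $M_i$ and $N$ the terms carrying $f'$ or $g'$ are supported in $\{1/4\le y\le1/2\}$, where $\xi,\eta\ge(4\ve)^{-1}$, so the exponentially decaying boundary-layer profiles there are smaller than any power of $\ve$.

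Assembling these: for $P$ every term carries an explicit $\ve^2$ and involves only bounded profiles, so $\|P\|_{L^\oo_T(L^2)}+\|P\|_{L^2_T(L^2)}\lesssim\ve^2$. For $N$, the dominant contributions are the non-layer terms $\ve^3\lap\pl$ and $\ve^4\lap\pr$, controlled in $L^2_T(L^2)$ by the $L^2_T H^6$-regularity of $\cil,\cir$ established in Section~\ref{analysis of layers} together with elliptic regularity, giving $\|N\|_{L^2_T(L^2)}\lesssim\ve^3$; one spatial derivative costs $\ve^{-1/2}$ on the mixed-layer pieces, giving $\|\grad N\|_{L^2_T(L^2)}\lesssim\ve^{5/2}$, while $\pt$ acting on a $\tau$-profile costs $\ve^{-2}$, partly offset by the two rescalings, giving $\|\pt N\|_{L^2_T(L^2)}\lesssim\ve^{3/2}$. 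For $L_i$, the spatial rescaling combined with the vanishing prefactors gives $\lesssim\ve^{3/2}$ in both $L^\oo_T(L^2)$ and $L^2_T(L^2)$; for $M_i$, after discarding the exponentially small $f',g'$ pieces, the surviving layer terms are controlled the same way except for the extremal one $z_iD_i\ga_i(x',0,0)f\p_\xi^2\plmr$, for which $\|f\p_\xi^2\plmr\|_{L^2(\Omega)}\lesssim\ve^{1/2}\|\p_\xi^2\plmr\|_{L^2_\xi}=\ve^{1/2}\|\rlmr\|_{L^2_\xi}$, bounded by $\ve^{1/2}$ in $L^\oo_T(L^2)$ and, by the temporal gain and Proposition~\ref{S4prop3}, by $\ve^{3/2}$ in $L^2_T(L^2)$; these worst cases produce the stated $\ve^{1/2}$ and $\ve^{3/2}$ bounds for $\|L_i\|+\|M_i\|$.

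I expect the difficulty to be organizational rather than conceptual: with so many terms one must verify for \emph{each} of them that every $y$-derivative landing on a rescaled layer profile is matched by enough explicit powers of $\ve$ to keep the $\ve^{1/2}$-per-profile gain nonnegative, that the mixed-layer profiles are only ever used through the weighted-in-$\xi$, $L^2$-in-$\tau$ norms and the pointwise bounds \eqref{rlmr to plmr} furnished by Proposition~\ref{S4prop3}, and that the $L_i$-commutators genuinely vanish to first order in $y$ at the boundary and in $t$ at the initial time. The mixed-layer terms $\cilmr,\plmr$ are the most delicate, since they are the only ones whose control is not pointwise-in-time.
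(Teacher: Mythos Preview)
Your proposal is correct and follows essentially the same approach as the paper's proof: bound each term by H\"older's inequality, pull smooth approximate-solution factors out in $L^\oo$ or $W^{1,\oo}$, and then invoke the layer estimates from Section~\ref{analysis of layers}. The paper's appendix simply lists the term-by-term H\"older bounds and refers back to the earlier propositions, whereas you have articulated the underlying bookkeeping as three explicit mechanisms (spatial rescaling $\ve^{1/2}$ per layer factor, temporal rescaling $\ve$ in $L^2_T$ for $\tau$-profiles, and vanishing/exponentially small prefactors); this is a helpful organization but not a different argument. One minor simplification relative to your sketch: in $O$ the paper does not go through \eqref{5.4b} to handle $\rsum\grad(\po+\cdots)$ but simply uses $\rsum=\sum_i z_i\cisum$ together with Poincar\'e to get $\|\rsum\|_{L^2}\lesssim(\int|\grad\cisum|^2/\cio)^{1/2}$ directly.
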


The proof of Lemma \ref{estimate of K_i-P_i} involves tedious calculation, we
 leave it in the Appendix \ref{appA}.

 With Lemma \ref{estimate of K_i-P_i}, we are able to refine the convergence rate of $\|(\grad\cisum, \ve\lap \psum,\grad \usum)\|_{L^\oo_T(L^2)}$, which will be crucial for the proof of Theorem \ref{thm4}. In order to do so, let us define
 \begin{equation*}
 T^\star\eqdefa\sup\bigl\{\ t\leq T; \quad \|\grad \cisum\|_{L^\infty_t(L^2)}\leq 1,
 \ \|\Delta\Phi^S\|_{L^\infty_t(L^2)}\leq 1, \
 \|\grad \usum\|_{L^\infty_t(L^2)}\leq 1 \ \bigr\}.
 \end{equation*}

\begin{lem}\label{refined convergence rate of L2}
    Under the assumptions in Theorem \ref{thm4}, for $\ve$ being sufficiently small, there exists a constant $M$ depending only on initial data, $\nu,M_1, T,\lam, \Lam, z_i$ and $D_i$  for $i=1,2$, so that
\begin{subequations}
    \begin{gather}\label{S5eq10}
         \|\cisum\|_{L^\oo_{T^\star}(L^2)}^2 +\ve^2 \|\grad \psum\|_{L^\oo_{T^\star}(L^2)}^2 + \|\usum\|_{L^\oo_{T^\star}(L^2)}^2 \leq M \ve^3, \\
         \|\grad \cisum\|_{L^2_{T^\star}(L^2)}^2  + \|\grad \psum\|_{L^2_{T^\star}(L^2)}^2 + \ve^2 \|\lap \psum\|_{L^2_{T^\star}(L^2)}^2+ \|\grad \usum\|_{L^2_{T^\star}(L^2)}^2 \leq M \ve^3.
    \end{gather}
\end{subequations}
\end{lem}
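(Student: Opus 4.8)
The plan is to carry out the modulated-energy estimate of Section~\ref{section3} at the level of the remainder $(\cisum,\psum,\usum)$, now measuring the difference between $(\cei,\Phi^\ve,\ue)$ and the \emph{approximate} solution $(c_i^{app},\Phi^{app},u^{app})$ defined through \eqref{S5eq1} rather than the exact limit. Introduce
\begin{equation*}
\mathcal H^\ve(t)\eqdefa\sum_{i=1}^2\int_\Omega c_i^{app}\,\vphi\Bigl(\frac{\cei}{c_i^{app}}\Bigr)(t)\,dx+\frac{\ve^2}2\|\grad\psum(t)\|_{L^2}^2+\frac12\|\usum(t)\|_{L^2}^2
\end{equation*}
and, in analogy with \eqref{difinition of Theta},
\begin{equation*}
\wt\Theta^\ve\eqdefa\sum_{i=1}^2 D_i\int_\Omega\frac{|\grad\cisum|^2}{\cio}\,dx+\sum_{i=1}^2 z_i^2 D_i\int_\Omega \cio\,|\grad\psum|^2\,dx+D^*\Bigl\|\frac{\rsum}{\ve}\Bigr\|_{L^2}^2+\nu\|\grad\usum\|_{L^2}^2.
\end{equation*}
For $\ve$ small one has $c_i^{app}=\cio+O(\ve)$ with $\lam\le\cio\le\Lam$ by Lemma~\ref{maximal principle}, so Lemma~\ref{convexity of vphi} gives $\mathcal H^\ve(t)\approx\|\cisum(t)\|_{L^2}^2+\ve^2\|\grad\psum(t)\|_{L^2}^2+\|\usum(t)\|_{L^2}^2$; together with $\|\psum\|_{L^2}\lesssim\|\grad\psum\|_{L^2}\lesssim\|\lap\psum\|_{L^2}$, $\|\rsum\|_{L^2}\le\ve^2\|\lap\psum\|_{L^2}+\|N\|_{L^2}$ and Lemma~\ref{estimate of K_i-P_i}, bounding $\mathcal H^\ve$ and $\int_0^{T^\star}\wt\Theta^\ve\,dt'$ by $O(\ve^3)$ on $[0,T^\star]$ yields the assertion.

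I would first differentiate $\mathcal H^\ve$ along \eqref{S5eq8}, running the scheme of Proposition~\ref{S3prop1}: the $\vphi$-part with the electrochemical-potential-type multiplier $\log(\cei/c_i^{app})$, the $\frac{\ve^2}2\|\grad\psum\|^2$-part via $\tfrac{\ve^2}2\tfrac{d}{dt}\|\grad\psum\|^2=\int_\Omega\psum\,\pt\rsum\,dx-\int_\Omega\psum\,\pt N\,dx$ with $\pt\rsum$ rewritten through \eqref{mass conservtion of divergence form} and \eqref{equation of cisum}, and the velocity part by testing \eqref{equation of usum} against $\usum$. Completing squares in the entropy dissipation (which, exactly as in Lemma~\ref{lem:energy}(3), produces the $D^*\|\rsum/\ve\|^2$ and $\sum_i z_i^2 D_i\int\cio|\grad\psum|^2$ terms of $\wt\Theta^\ve$), using \eqref{5.4b} in the role played by \eqref{equivalent psi} in Proposition~\ref{S3prop1}, and absorbing the commutator remainders by Lemma~\ref{convexity of vphi}, I expect a differential inequality
\begin{equation*}
\frac{d}{dt}\mathcal H^\ve+c\,\wt\Theta^\ve\le C\bigl(1+\|\grad u^{app}\|_{L^\oo}+\sum_{i=1}^2\|\grad c_i^{app}\|_{L^\oo}^2+\|\grad\Phi^{app}\|_{L^\oo}^2\bigr)\mathcal H^\ve+C\ve^2\|\lap\Phi^{app}\|_{L^2}^2+\mathcal R^\ve,
\end{equation*}
with $c>0$ depending only on $\nu,\lam,\Lam,z_i,D_i$, and $\mathcal R^\ve$ collecting the residuals $K_i,L_i,M_i,N,O,P$, the term $-\int_\Omega\psum\,\pt N\,dx$, and the cubic self-interactions $z_iD_i\dive(\cisum\grad\psum)$ and $-\rsum\grad\psum$. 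On $[0,T^\star]$ the cubic terms are perturbative: e.g. the worst one, $z_iD_i\int_\Omega\frac{\cisum}{\cio}\grad\psum\cdot\grad\cisum\,dx$, is $\lesssim\|\cisum\|_{L^2}^{1/2}\|\grad\cisum\|_{L^2}^{3/2}\|\lap\psum\|_{L^2}\le\delta\wt\Theta^\ve+C_\delta\mathcal H^\ve$ by Gagliardo--Nirenberg and $\|\lap\psum\|_{L^\oo_{T^\star}(L^2)}\le1$. Crucially, the quadratic cross-term $\sum_i z_iD_i\int_\Omega\grad\cisum\cdot\grad\psum\,dx$ must \emph{not} be integrated by parts against $\lap\psum=\ve^{-2}(N-\rsum)$ --- that would create a fatal $\ve^{-2}\|\cisum\|_{L^2}^2$ in the Gronwall constant --- but estimated directly by $\delta\bigl(\int\frac{|\grad\cisum|^2}{\cio}+\int\cio|\grad\psum|^2\bigr)$; here it is essential that $\wt\Theta^\ve$ contains the non-degenerate term $\int\cio|\grad\psum|^2$. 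The remaining residuals are controlled by Lemma~\ref{estimate of K_i-P_i}: $K_i,O$ are $\lesssim(\wt\Theta^\ve)^{1/2}+\ve\|\lap\psum\|_{L^2}$, contributing $\delta\wt\Theta^\ve+C_\delta\mathcal H^\ve+C\ve^2\|\lap\psum\|_{L^2}^2$; whereas $L_i,M_i$ ($O(\ve^{3/2})$), $P$ ($O(\ve^2)$) and $N,\pt N,\grad N$ ($O(\ve^3),O(\ve^{3/2}),O(\ve^{5/2})$) in $L^2_{T^\star}(L^2)$, after Cauchy--Schwarz against $\cisum,\usum,\psum,\rsum$ (using $\|\cisum\|_{L^2}+\|\psum\|_{L^2}\lesssim(\wt\Theta^\ve)^{1/2}$, $\|\rsum\|_{L^2}=\ve\|\rsum/\ve\|_{L^2}$) and time integration, give at most $\delta\int_0^t\wt\Theta^\ve\,dt'+C_\delta\ve^3$.

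It remains to integrate in time, take $\delta$ small so $\tfrac c2\wt\Theta^\ve$ survives, treat $\int_0^t\ve^2\|\lap\Phi^{app}\|_{L^2}^2\,dt'$ and the boundary-in-time terms (the latter as in \eqref{S3eq6}), and apply Gronwall. The initial value is $\mathcal H^\ve(0)\lesssim\ve^3$ by assumption \eqref{initial condition of thm4} (via $\|\cisum(0)\|_{L^2}\lesssim\|\grad\cisum(0)\|_{L^2}$ and $\ve^2\|\grad\psum(0)\|_{L^2}^2\le(\ve\|\lap\psum(0)\|_{L^2})^2$), and any boundary-in-time term $\int_\Omega\psum(0)\rsum(0)\,dx$ produced by an integration by parts in $t$ is $O(\ve^3)$ because assumption \eqref{main thm,assumption4} forces the leading initial layers to vanish (cf. Propositions~\ref{prop ciir} and~\ref{prop cirs}), so that $\|\psum(0)\|_{L^2}\lesssim\ve^{1/2}$ and $\|\rsum(0)\|_{L^2}\lesssim\ve^{5/2}$. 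Gronwall on $[0,T^\star]$, with a constant depending only on the $\ve$-uniform norms of the smooth profiles, then yields $\mathcal H^\ve(t)+\int_0^t\wt\Theta^\ve\,dt'\lesssim\ve^3$ for $t\le T^\star$, which is the claim.

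The main difficulty is bookkeeping: checking that every error contribution is genuinely $O(\ve^3)$ and that no negative power of $\ve$ enters the Gronwall constant. Two points are delicate: (i) never reducing $\grad\cisum\cdot\grad\psum$ or $\grad\rsum\cdot\grad\psum$ to $\ve^{-2}\int\cisum(\rsum-N)$-type expressions, keeping them as honest gradient pairings absorbed by the non-degenerate part of $\wt\Theta^\ve$; and (ii) the term $\int_0^t\int_\Omega\psum\,\pt N\,dx\,dt'$, which is $O(\ve^3)$ only because $\pt N$ --- of $O(1)$ amplitude, since $\p_t$ hits the fast time $\tau=t/\ve^2$ in the mixed-layer part of $N$ --- has $L^2_{T^\star}(L^2)$-norm $O(\ve^{3/2})$ by the exponential $\tau$-decay of the mixed layers (Proposition~\ref{S4prop3}), and is paired with $\|\psum\|_{L^2_{T^\star}(L^2)}\lesssim\|\grad\psum\|_{L^2_{T^\star}(L^2)}\lesssim\bigl(\int_0^{T^\star}\wt\Theta^\ve\,dt'\bigr)^{1/2}$ rather than with the $\ve$-degenerate energy.
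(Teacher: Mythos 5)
Your strategy is correct in substance and does reproduce the lemma, but it is heavier than the paper's argument. You propose to differentiate the full relative entropy $\sum_i\int c_i^{app}\vphi(\cei/c_i^{app})$ and test with a logarithmic electrochemical-potential multiplier, rerunning Proposition~\ref{S3prop1} at the remainder level. Because $c_i^{app}$ does not solve a mass-conservation equation exactly, this forces you to carry $\pt c_i^{app}$ (with its fast $\p_\tau$ pieces) through the entropy identity; the residuals it generates are precisely $K_i,L_i,M_i$ re-emerging, so it works, but with substantially more bookkeeping. The paper instead writes the equation for $\cisum$ directly (\ref{equation of cisum}) and tests it against the \emph{linearized} multiplier $\frac{\cisum}{\cio}+z_i\psum$, which produces $\frac12\frac{d}{dt}\int\frac{(\cisum)^2}{\cio}$ and, via the remainder Poisson relation (\ref{5.4b}), $\frac{\ve^2}{2}\frac{d}{dt}\|\grad\psum\|_{L^2}^2+\int_\Omega\pt N\,\psum\,dx$ in one stroke, with no time integration by parts; your concern about a boundary-in-time term $\int_\Omega\psum(0)\rsum(0)\,dx$ (and the appeal to \eqref{main thm,assumption4}, which is not part of Theorem~\ref{thm4}'s hypotheses) is therefore moot. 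Your diagnosis of delicate point (i) is a little off: the cross term $\int\grad\rsum\cdot\grad\psum$ is not estimated "directly" against the dissipation, which would cost a full factor; it is converted through (\ref{5.4b}) into the good-sign $\ve^2\|\lap\psum\|_{L^2}^2$ (equivalently $\|\rsum/\ve\|_{L^2}^2$) plus an $\ve^{-2}\|N\|_{L^2}^2\lesssim\ve^4$ residual. What genuinely must be avoided is pairing $\lap\psum$ against a lone $\cisum$, and the paper's completed-square identity never does so. Your treatment of the cubic $\dive(\cisum\grad\psum)$ under the $T^\star$ bootstrap, your residual accounting via Lemma~\ref{estimate of K_i-P_i}, your observation on $\pt N$, and the initial-data bound from \eqref{initial condition of thm4} all match the paper, and Gronwall closes as you describe.
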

\begin{proof} Thanks to \eqref{5.4b}, we get, by taking $L^2$ inner product of $\pt\cisum + \usum\cdot \grad \cisum$ with $\frac{\cisum}{\cio}+ z_i\psum$  and  using integration by parts and then summarizing the resulting equalities for $i=1,2$, that
\begin{align*}
     &\sum_{i=1}^2  \int_{\Omega} (\pt\cisum + \usum\cdot \grad \cisum)(\frac{\cisum}{\cio}+ z_i\psum) \,dx\\
     &\quad=\frac{1}{2}\frac{d}{dt} \int_\Omega \frac{(\cisum)^2}{\cio}\, dx - \frac{1}{2}\int_\Omega (\cisum)^2\frac{d}{dt}(\frac{1}{\cio})\, dx + \int_\Omega \frac{\usum}{\cio} \grad\cisum \cisum\, dx \\
     &\qquad + \frac{\ve^2}{2}\frac{d}{dt}\|\grad \psum\|_{L^2}^2 +  \int_{\Omega} \pt N \psum \,dx - \int_{\Omega} \usum \cdot \grad \psum \rsum \,dx \\
     & \quad \geq \frac{1}{2}\frac{d}{dt} \int_\Omega \frac{(\cisum)^2}{\cio}\, dx+ \frac{\ve^2}{2}\frac{d}{dt}\|\grad \psum\|_{L^2}^2 - \frac{D^*}{16} \sum_{i=1}^2  \Bigl( \int_{\Omega} \frac{|\grad \cisum|^2}{\cio} \,dx + z_i^2 \int_{\Omega} \cio|\grad \psum|^2 \,dx \Bigr) \\
     & \qquad -\frac{\nu}{2}\|u\|_V^2- M\|\pt \cio\|_{L^\oo} \int_\Omega \frac{(\cisum)^2}{\cio}\, dx -M\|\grad \cisum \|_{L^2}^4\|\usum\|_{L^2}^2\\
     &\qquad  - \int_{\Omega} \usum \cdot \grad \psum \rsum \,dx -M  \|\pt N\|_{L^2}^2 .
 \end{align*}

Observing that $\Lambda \geq\cio \geq \lambda >0$, we find
   \begin{align*}
        &\sum_{i=1}^2 D_i\int_{\Omega} \dive\bigl(\cio(\frac{\grad\cisum}{\cio}+z_i\grad\psum)\bigr) (\frac{\cisum}{\cio} + z_i \psum) \, dx \\
        &\quad =- \sum_{i=1}^2 D_i\int_{\Omega} \cio(\frac{\grad\cisum}{\cio}+z_i\grad\psum) (\frac{\grad \cisum}{\cio} + z_i \grad \psum + \cisum\grad(\frac{1}{\cio})) \, dx \\
        &\quad \leq  - \frac{D^*}{2} \sum_{i=1}^2  \Bigl( \int_{\Omega} \frac{|\grad \cisum|^2}{\cio} \,dx + z_i^2 \int_{\Omega} \cio|\grad \psum|^2 \,dx \Bigr) - \ve^2D^*  \|\lap \psum\|_{L^2}^2 \\
        & \qquad + M\|\grad\cio\|_{L^\oo}^2 \int_\Omega \frac{(\cisum)^2}{\cio}\, dx + \ve^{-2}\|N\|_{L^2}^2 .
    \end{align*}

Next we estimate the quadratic term. Indeed  by using integration by parts and Sobolev's embedding, one has
\begin{equation*}
    \begin{split}
        &\sum_{i=1}^2 D_i\int_{\Omega} \dive(\cisum\grad \psum) (\frac{\cisum}{\cio} + z_i \psum) \, dx \\
         &\quad \leq M(\|\grad\cio\|_{L^\oo}+ 1) \|\cisum\|_{L^2}^\frac{1}{2} \|\grad\cisum\|_{L^2}^\frac{1}{2}\|\lap\psum\|_{L^2} (\|\grad \cisum\|_{L^2} + \|\grad \psum\|_{L^2}) \\
        &\quad \leq M(\|\grad\cio\|_{L^\oo}^4+ 1)\|\lap\psum\|_{L^2}^4\|\cisum\|_{L^2}^2  \\
        &\qquad  + \frac{D^*}{16} \sum_{i=1}^2  \Bigl( \int_{\Omega} \frac{|\grad \cisum|^2}{\cio} \,dx + z_i^2 \int_{\Omega} \cio|\grad \psum|^2 \,dx \Bigr) .
    \end{split}
\end{equation*}

By using integration by parts and H\"older's inequality, one has
\begin{align*}
    &\sum_{i=1}^2 \int_{\Omega} K_i(\frac{\cisum}{\cio}+ z_i \psum) \, dx \leq \frac{D^*}{32} \sum_{i=1}^2  \Bigl( \int_{\Omega} \frac{|\grad \cisum|^2}{\cio} \,dx + z_i^2 \int_{\Omega} \cio|\grad \psum|^2 \,dx \Bigr) \\
      &\quad +  M\sum_{i=1}^2 (1+\|\grad\cio\|_{L^\oo}^2)\Bigl(\|\uo+\ve\ul\|_{L^\oo}^2  \int_\Omega \frac{(\cisum)^2}{\cio}\, dx  \\
      &\quad  +\|\cio+\ve\cil+\ve^2 \cir+\ve^2\ciir + \ve^2 f \cilbr +\ve^2 g\cirbr+ \ve^2 f \cilmr +\ve^2 g\cirmr\|^2_{L^\oo} \|\usum\|_{L^2}^2 \\
       & \quad  +\|\grad (\po+\ve\pl+\ve^2 \pr+ \pio+\ve\pil+\ve^2f\plbr+\ve^2g\prbr)\|_{L^\oo}^2   \int_\Omega \frac{(\cisum)^2}{\cio}\, dx  \\
    &\quad +  \ve^2 \|\cil+\ve \cir+\ve\ciir + \ve f \cilbr +\ve g\cirbr+ \ve f \cilmr +\ve g\cirmr\|_{L^\oo}^2 \|\grad \psum\|_{L^2}^2   \Bigr).
\end{align*}

The estimates of rest terms are straightforward
\begin{align*}
    \sum_{i=1}^2 \int_{\Omega} (L_i +M_i)(\frac{\cisum}{\cio}+z_i \psum) \,dx &\leq \frac{D^*}{32} \sum_{i=1}^2  \Bigl( \int_{\Omega} \frac{|\grad \cisum|^2}{\cio} \,dx + z_i^2 \int_{\Omega} \cio|\grad \psum|^2 \,dx \Bigr) \\
    &\quad +M\sum_{i=1}^2(\|L_i\|_{L^2}^2 + \|M_i\|_{L^2}^2).
\end{align*}

Whereas by multiplying (\ref{equation of usum}) by $\usum$ and using integration by parts, we get
\begin{equation*}
    \begin{split}
        \frac{1}{2}&\frac{d}{dt} \|\usum\|_{L^2}^2 +\nu \|\grad \usum\|_{L^2}^2 \\
        &\leq  \bigl(1+\|\grad(\uo+\ve\ul)\|_{L^\oo}+ \ve^4\|\rr+\rir+f\rlbr+g\rrbr+f\rlmr+g\rrmr\|_{L^\oo}^2 \\
        &\qquad +\|(\po+\ve\pl+\ve^2 \pr+ \pio+\ve\pil+\ve^2f\plbr+\ve^2g\prbr)\|_{L^\oo}^2 \bigr)\|\usum\|_{L^2}^2\\
        &\quad  - \int_{\Omega} \usum \cdot \grad \psum \rsum \,dx  +  \|P\|_{L^2}^2 + \frac{D^*}{16} \sum_{i=1}^2  \bigl( \int_{\Omega} \frac{|\grad \cisum|^2}{\cio} \,dx + z_i^2 \int_{\Omega} \cio|\grad \psum|^2 \,dx \bigr).\\
    \end{split}
\end{equation*}

By summing up all the above inequalities and using (\ref{result 1 in thm 1}-\ref{result 3 in thm 1}) and  the estimates of the layer solutions obtained  in Section \ref{analysis of layers}, we achieve
\begin{equation}\label{S5eq11}
\begin{split}
   \frac{1}{2} &\sum_{i=1}^2 \frac{d}{dt}\int_\Omega \frac{(\cisum)^2}{\cio}\,dx + \frac{\ve^2}{2} \frac{d}{dt}\|\grad \psum\|^2_{L^2}  +\frac{1}{2}\frac{d}{dt}\|\usum\|_{L^2}^2 +\frac{\nu}{2} \|\grad \usum\|_{L^2}^2 +\frac{D^*\ve^2}{2}\|\lap \psum\|^2_{L^2}\\
    &+ \frac{D^*}{4}   \sum_{i=1}^2\bigl(\int_\Omega \frac{|\grad \cisum |^2}{\cio}\,dx+z^2_i \int_\Omega \cio|\grad \psum|^2 \,dx\bigr)  +D^*\ve^2\|\lap \psum\|^2_{L^2} \\
    &\leq \kappa_1(t)\Bigl(\int_\Omega \frac{(\cisum)^2}{\cio}\,dx+ \ve^2\|\grad \psum\|_{L^2}^2  + \|\usum\|_{L^2}^2 \Bigr) +\ka_2(t) \qquad  \text{on} \ [0,T^\star],
\end{split}
\end{equation} provided that
$\ve<1$ is so  small that $M_1 \ve^\frac{1}{2} \leq \frac{1}{2} $ and consequently $0< T^\star$,
where $\ka_1$ and $\ka_2$ are positive functions satisfying: for some $M$ depending only on initial data, $\nu,M_1, T,\lam, \Lam,W(x),$ $\ga_i(x), z_i$ and $D_i$  for $i=1,2$, there hold
\begin{equation*}
    \int_0^{T^\star} \kappa_1(t')\, dt' \leq M \andf \int_0^{T^\star} \kappa_2(t') \, dt' \leq M\ve^3.
\end{equation*}

By using Gronwall's inequality to \eqref{S5eq11} with initial conditions (\ref{initial condition of thm4}), we complete the proof of Lemma \ref{refined convergence rate of L2}.
\end{proof}

\begin{prop}\label{prop5.1}
  {\sl   Under the assumptions in Theorem \ref{thm4}, for $\ve$ being sufficiently small and for any $t \in [0,T^\star],$ one has
    \begin{equation}\label{estimate of thm4}
    \begin{split}
    \frac{d}{dt}&\Bigl(\frac{1}{2} \sum_{i=1}^2 \int_\Omega \frac{|\grad\cisum|^2}{\cio}\,dx + \frac{\ve^2}{2} \|\lap \psum\|^2_{L^2}  +\frac{1}{2}\|\usum\|_{V}^2\Bigr) +\frac{\nu}{2} \|A\usum\|_{L^2}^2\\
    &+ \frac{D^*}{4}   \sum_{i=1}^2\bigl(\int_\Omega \frac{|\lap \cisum |^2}{\cio}\,dx+z^2_i \int_\Omega \cio|\lap \psum|^2 \,dx\bigr) +\frac{D^*}{\ve^2}\|\grad \rsum\|^2_{L^2}    \\
    &\quad \leq \kappa_3(t) \bigl(\sum_{i=1}^2\int_\Omega \frac{|\grad\cisum|^2}{\cio}\,dx + \ve^2\|\lap \psum\|^2_{L^2} + \|\usum\|_{V}^2 \bigr) +\kappa_4(t) ,
\end{split}\end{equation}
where $\kappa_3$ and $\kappa_{4}$ are positive functions satisfying: for some $M$ depending only on initial data, $\nu,M_1, T,\lam, \Lam, W(x), \ga_i(x), z_i$ and $D_i$ for $i=1,2$, there hold
\begin{equation*}
    \int_0^{T^\star} \kappa_3(t') \,dt' \leq M \andf \int_0^{T^\star} \kappa_{4}(t') \, dt' \leq M\ve^3 .
\end{equation*}}
\end{prop}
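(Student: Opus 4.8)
The plan is to carry out a high-order energy estimate for the error system \eqref{S5eq8} at the level of the $L^\infty_T(H^1)$-type functional appearing on the left-hand side of \eqref{estimate of thm4}, namely $\mathcal H^S(t)\eqdefa\frac12\sum_{i=1}^2\int_\Omega\frac{|\grad\cisum|^2}{\cio}\,dx+\frac{\ve^2}2\|\lap\psum\|_{L^2}^2+\frac12\|\usum\|_V^2$. This is the natural energy: the coefficient $1/\cio$ is chosen so that the diffusion term in \eqref{equation of cisum} produces a clean dissipation, and $\ve^2\|\lap\psum\|_{L^2}^2$ is the analogue (at one derivative higher) of $\ve^2\|\grad\psi^\ve\|_{L^2}^2$ used in Section \ref{section3}, while $\|\usum\|_V^2=\|\grad\usum\|_{L^2}^2$ pairs with testing \eqref{equation of usum} against $A\usum$.

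First I would differentiate $\mathcal H^S$ in time. For the concentration part, I apply $\grad$ to \eqref{equation of cisum} and take the $L^2$ inner product with $\frac{\grad\cisum}{\cio}$; equivalently, test $\pt\cisum+\usum\cdot\grad\cisum$ against $-\dive\bigl(\frac{\grad\cisum}{\cio}\bigr)$, summing over $i$. Using $\rsum=-\ve^2\lap\psum+N$ from \eqref{5.4b} to rewrite $\sum z_i\cisum=\rsum$, the principal term $D_i\dive(\cio(\frac{\grad\cisum}{\cio}+z_i\grad\psum))$ yields, after integration by parts, the good dissipation $-\frac{D^*}2\sum_i\bigl(\int\frac{|\lap\cisum|^2}{\cio}+z_i^2\int\cio|\lap\psum|^2\bigr)-\ve^2 D^*\|\grad\rsum\|_{L^2}^2/\ve^4$ plus commutator terms controlled by $\|\grad\cio\|_{L^\infty}$, $\|\lap\cio\|_{L^\infty}$ times $\mathcal H^S$ (these are $\ve$-independent since $\cio$ solves the limit system with $H^5$ data, cf. the Proposition after \eqref{equation of outer solution of order 2 }) and an $\ve^{-2}\|\grad N\|_{L^2}^2$ remainder, which by Lemma \ref{estimate of K_i-P_i} is $O(\ve^3)$. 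The semilinear term $z_iD_i\dive(\cisum\grad\psum)$ is handled exactly as in the proof of Lemma \ref{refined convergence rate of L2}: integrate by parts, use $\|\cisum\|_{L^2}\lesssim\|\grad\cisum\|_{L^2}$, Gagliardo–Nirenberg and Young's inequality to absorb it into the dissipation at the cost of $M(\|\grad\cio\|_{L^\infty}^4+1)\|\lap\psum\|_{L^2}^4\mathcal H^S$ — note that on $[0,T^\star]$ we have $\|\lap\psum\|_{L^\infty_t(L^2)}\le1$, so $\|\lap\psum\|_{L^2}^4\le\|\lap\psum\|_{L^2}^2$ and this is linear in $\mathcal H^S$ with an integrable-in-time (indeed, $L^\infty$ after using Lemma \ref{refined convergence rate of L2}) coefficient. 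For $\usum$, test \eqref{equation of usum} with $A\usum$, use the standard Navier–Stokes estimates of \cite{Constantin & Foias} as in \eqref{growth of ue,2d}–\eqref{growth of ue,3d} for the convection term $\usum\cdot\grad\usum$, and handle $-\rsum\grad\psum$ and the cross terms with $\uo,\ul$ using $\rsum=-\ve^2\lap\psum+N$ together with the boundary-layer bounds; the delicate coupling term $-\rsum\grad\psum$ and the transport-of-$\rsum$ structure are dealt with by the same integration-by-parts trick as in \eqref{S3eq6}, matching $\int\usum\cdot\grad\psum\,\rsum$ against the corresponding term produced by the $\cisum$ equation so they cancel.

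The source terms $K_i,L_i,M_i,O,P,N$ are then absorbed using Lemma \ref{estimate of K_i-P_i}: the terms $K_i$ and $O$ are bounded by the square root of the energy plus $\|\usum\|_V+\ve\|\lap\psum\|_{L^2}$, so after Young's inequality a small fraction goes into the dissipation and the rest contributes $\kappa_3(t)\mathcal H^S$ with $\int_0^{T^\star}\kappa_3\le M$; the terms $L_i,M_i$ contribute $M\sum_i(\|L_i\|_{L^2}^2+\|M_i\|_{L^2}^2)$ which by Lemma \ref{estimate of K_i-P_i} integrates to $O(\ve^3)$; $N$ enters through $\ve^{-2}\|N\|_{L^2}^2\lesssim\ve^4$ and $\ve^{-2}\|\grad N\|_{L^2}^2\lesssim\ve^3$ and $\|\pt N\|_{L^2}^2\lesssim\ve^3$; and $P$ gives $\|P\|_{L^2}^2\lesssim\ve^4$. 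Collecting everything yields \eqref{estimate of thm4} with $\kappa_4$ a sum of these $O(\ve^3)$ contributions, so $\int_0^{T^\star}\kappa_4\le M\ve^3$, and $\kappa_3$ a sum of $L^\infty_T$ norms of $\uo,\ul,\po,\pl,\pi^{(0)}$-type quantities plus $\|\lap\psum\|_{L^2}^2$-type terms that are bounded on $[0,T^\star]$ via Lemma \ref{refined convergence rate of L2}, giving $\int_0^{T^\star}\kappa_3\le M$.

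The main obstacle I expect is the bookkeeping for the highest-order term involving $\ve^{-2}\|\grad\rsum\|_{L^2}^2$ versus $\ve^2\|\lap\psum\|_{L^2}^2$: one must keep track of where the single power of $\ve^{-2}$ in the definition of $\rsum/\ve^2$ sits so that the $-\rsum\grad\psum$ force in the momentum equation and the $z_iD_i\dive(\cisum\grad\psum)$ semilinear term are both absorbed without losing any power of $\ve$, and simultaneously ensure the transport term $\int\psum\,(\pt\rsum+\ue\cdot\grad\rsum)$-type contribution (the $H^1$-analogue of \eqref{S3eq6}) is genuinely $O(\ve^3)$ rather than merely $O(\ve)$ — this is exactly the subtlety flagged in the Remark after the proof of Theorem \ref{thm1}, and it is resolved here because the approximate solution has been constructed (through the boundary and mixed layers of order $\ve^2$) precisely so that the obstruction term is cancelled. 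A secondary technical point is that the Gagliardo–Nirenberg exponents and the absorption of the quadratic-in-energy terms require $\ve$ small enough that $M_1\ve^{1/2}\le\frac12$ and $T^\star>0$, which is consistent with the continuity/bootstrap argument that Theorem \ref{thm4} will run afterwards.
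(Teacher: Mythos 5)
Your proposal follows essentially the same route as the paper: the same energy functional $\mathcal H^S$, the test function $-\dive\bigl(\frac{\grad\cisum}{\cio}+z_i\grad\psum\bigr)$ for the concentration equations (your text writes only $-\dive(\frac{\grad\cisum}{\cio})$, but the dissipation and $\ve^2\|\lap\Phi^S\|^2$ terms you then produce show you mean the full test function), testing the velocity equation against $A\usum$, absorbing the quadratic $\dive(\cisum\grad\psum)$ via Gagliardo--Nirenberg, and invoking Lemma \ref{estimate of K_i-P_i}, the bootstrap on $[0,T^\star]$, and Lemma \ref{refined convergence rate of L2}. This is in substance the paper's argument.

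One remark about the ``main obstacle'' paragraph: the cancellation you propose, matching $\int_\Omega\usum\cdot\grad\psum\,\rsum\,dx$ between the two equations as in \eqref{S3eq6}, is used only at the $L^2$ level (Lemma \ref{refined convergence rate of L2}, where the velocity is tested against $\usum$), and it does not carry over to the $H^1$ estimate here, since the velocity equation is now paired with $A\usum$ and the forcing contributes $\int_\Omega\rsum\grad\psum\cdot A\usum\,dx$, which has no matching partner from the concentration equations. The paper does not attempt such a cancellation in Proposition \ref{prop5.1}; instead it estimates this term directly, using $\rsum=-\ve^2\lap\psum+N$ together with Sobolev embeddings, and this suffices precisely because the residuals $K_i,L_i,M_i,N,O,P$ have been made small through the higher-order layer construction — as you in fact state correctly in your concluding sentence. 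So the proposal is sound, but the hypothetical appeal to a Section~\ref{section3}-style cancellation is a red herring that would not close the estimate, whereas the direct bound does.
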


\begin{proof} The main idea of the proof is to take $L^2$ inner product of
 the equation (\ref{equation of cisum}) with  $-\dive(\frac{\grad \cisum}{\cio}+z_i\grad\psum).$  Indeed we
first get, by using integration by parts, that
    \begin{align*}
       - &\sum_{i=1}^2\int_\Omega \pt\cisum \dive(\frac{\grad \cisum}{\cio}+z_i\grad \psum) \, dx \\
        &=\frac{1}{2}\frac{d}{dt}\sum_{i=1}^2\int_\Omega \frac{|\grad \cisum|^2}{\cio} \, dx-\sum_{i=1}^2 \frac{1}{2}\int_\Omega |\grad\cisum|^2 \pt \Bigl(\frac{1}{\cio}\Bigr) \, dx \ -\int_\Omega \pt\rsum \lap\psum \, dx\\
        & \geq \frac{1}{2} \frac{d}{dt}\sum_{i=1}^2\int_\Omega \frac{|\grad \cisum|^2}{\cio} \, dx  + \frac{\ve^2}{2} \frac{d}{dt}\|\lap \psum\|_{L^2}^2  -M \sum_{i=1}^2\|\pt \cio\|_{L^\oo} \int_\Omega \frac{|\grad \cisum|^2}{\cio} \, dt \\
        &\quad -\frac{D^*}{16} \sum_{i=1}^2 \bigl(\int_\Omega \frac{|\lap \cisum |^2}{\cio}\,dx+z^2_i \int_\Omega \cio|\lap \psum|^2 \,dx \bigr)   - M\|\pt N\|^2_{L^2},
    \end{align*}

and
\begin{align*}
        - &\sum_{i=1}^2\int_\Omega \usum\cdot\grad \cisum \dive(\frac{\grad \cisum}{\cio}+z_i\grad \psum) \, dx \\
        &\geq -M\sum_{i=1}^2 (1+\|\grad\cio\|_{L^\oo}^4) \|\grad \cisum\|_{L^2}^2\|\usum\|_V^4 - \frac{D^*}{16}  \sum_{i=1}^2\bigl(\int_\Omega \frac{|\lap \cisum |^2}{\cio}\,dx+z^2_i \int_\Omega \cio|\lap \psum|^2 \,dx \bigr) .
    \end{align*}

While for the dissipation term, we find
    \begin{align*}
         -& \sum_{i=1}^2\int_\Omega D_i \dive\bigl(\cio (\frac{\grad \cisum}{\cio}+z_i\grad \psum)\bigr) \dive(\frac{\grad \cisum}{\cio}+z_i\grad \psum) \, dx \\
         &=  - \sum_{i=1}^2\int_\Omega D_i\cio \bigl(\frac{\lap\cisum}{\cio} + z_i\frac{\grad \cio}{\cio}\grad \psum +z_i\lap\psum \bigr)\bigl(\frac{\lap\cisum}{\cio} + \grad\cisum \grad (\frac{1}{\cio}) + z_i \lap\psum\bigr)  \, dx \\
         &\leq  -\frac{D^*}{2} \sum_{i=1}^2\int_\Omega \cio|\frac{\lap\cisum}{\cio}+z_i\lap\psum|^2 \, dx  +M\sum_{i=1}^2 (\|\grad \cio\|_{L^\oo}^2+1)\int_\Omega \frac{|\grad \cisum|^2}{\cio} \, dx \\
         &\quad + M\sum_{i=1}^2(\|\grad \cio\|_{L^\oo}^2+1) \|\grad \psum\|_{L^2}^2 \\
         &= -\frac{D^*}{2}  \sum_{i=1}^2\bigl(\int_\Omega \frac{|\lap \cisum |^2}{\cio}\,dx+z^2_i \int_\Omega \cio|\lap \psum|^2 \,dx \bigr) + \frac{D^*}{\ve^2} \|\grad \rsum\|_{L^2}^2  \\
         & \quad +M\sum_{i=1}^2(\|\grad \cio\|_{L^\oo}^2+1)\int_\Omega \frac{|\grad \cisum|^2}{\cio} \, dx \\
         &\quad+ M\sum_{i=1}^2(\|\grad \cio\|_{L^\oo}^2+1)\|\grad \psum\|_{L^2}^2+ M\ve^{-2} \|\grad N\|_{L^2}^2.
    \end{align*}

For the quadratic term, we have
    \begin{align*}
        -& \sum_{i=1}^2z_i D_i\int_\Omega \dive(\cisum\grad \psum) \dive(\frac{\grad \cisum}{\cio}+z_i\grad \psum) \, dx \\
        &\leq  M\sum_{i=1}^2  \|\grad\cisum\|_{L^2}^\frac{1}{2} \|\lap\cisum\|_{L^2}^\frac{1}{2} \|\lap\psum\|_{L^2}(\|\grad \cio\|_{L^\oo}+1)(\|\lap\cisum\|_{L^2}+\|\lap\psum\|_{L^2})\\
        &\leq M\sum_{i=1}^2(\|\grad \cio\|_{L^\oo}^4+1) \|\lap\psum\|_{L^2}^4\|\grad \cisum\|_{L^2}^2 + \frac{D^*}{32}\sum_{i=1}^2\bigl(\int_\Omega \frac{|\lap \cisum |^2}{\cio}\,dx+z^2_i \int_\Omega \cio|\lap \psum|^2 \,dx \bigr).
    \end{align*}

The estimates of rest terms are straightforward
\begin{align*}
    -& \sum_{i=1}^2z_i D_i\int_\Omega (K_i+L_i+M_i) \dive(\frac{\grad \cisum}{\cio}+z_i\grad \psum) \, dx \\
    &\quad \leq M\sum_{i=1}^2(\|\grad \cio\|_{L^\oo}^2+1)(\|K_i\|_{L^2}^2 + \|L_i\|_{L^2}^2+ \|M_i\|_{L^2}^2)\\
    &\qquad +\frac{D^*}{32}\sum_{i=1}^2\bigl(\int_\Omega \frac{|\lap \cisum |^2}{\cio}\,dx+z^2_i \int_\Omega \cio|\lap \psum|^2 \,dx \bigr).
\end{align*}

By summarizing the above estimates, we achieve
\begin{equation}\label{estimate of cisum,sum}
    \begin{split}
         \frac{1}{2} &\frac{d}{dt}\sum_{i=1}^2 \int_\Omega \frac{|\grad\cisum|^2}{\cio}\,dx + \frac{5D^*}{16}   \sum_{i=1}^2\bigl(\int_\Omega \frac{|\lap \cisum |^2}{\cio}\,dx+z^2_i \int_\Omega \cio|\lap \psum|^2 \,dx \bigr)+\frac{D^*}{\ve^2}\|\grad \rsum\|^2_{L^2}    \\
         &\leq M\bigl(\sum_{i=1}^2\|\pt\cio\|_{L^\oo}  + \sum_{i=1}^2(\|\grad \cio\|_{L^\oo}^4+1)(\|\lap\psum\|_{L^2}^4+1) \bigr) \sum_{i=1}^2 \int_\Omega \frac{|\grad\cisum|^2}{\cio}\,dx  \\
         &\quad + M \sum_{i=1}^2(\|\grad \cio\|_{L^\oo}^4+1)\bigl(\|\grad \cisum\|_{L^2}^2 \|\usum\|_V^4 + \|K_i\|_{L^2}^2 +\|L_i\|_{L^2}^2 +\|M_i\|_{L^2}^2\bigr)\\
         &\quad +M\sum_{i=1}^2(\|\grad \cio\|_{L^\oo}^2+1)\|\grad \psum\|_{L^2}^2 +M\|\pt N\|_{L^2}^2 +M\ve^{-2} \|\grad N\|_{L^2}^2.
    \end{split}
\end{equation}

Similarly, we get, taking $L^2$ inner product of the  equation (\ref{equation of usum}) with $A\usum,$ that
\begin{equation}\label{estimate of usum,sum}
    \begin{split}
        \frac{1}{2}\frac{d}{dt}\|\usum\|_{V}^2  +\frac{\nu}{2} \|A\usum\|_{L^2}^2 &\leq M \|\lap \psum\|_{L^2}^4 \sum_{i=1}^2 \int_\Omega \frac{|\grad\cisum|^2}{\cio}\,dx+ M\|\usum\|_V^6+ \|O\|_{L^2}^2 \\
        &\quad + \frac{D^*}{16}\sum_{i=1}^2\bigl(\int_\Omega \frac{|\lap \cisum |^2}{\cio}\,dx+z^2_i \int_\Omega \cio|\lap \psum|^2 \,dx \bigr) +\|P\|_{L^2}^2 .
    \end{split}
\end{equation}

Notice that $\|\grad \cisum\|_{L^2}+\|\lap\psum\|_{L^2}+\|\usum\|_{V}  \leq 3$ on $[0,T^\star]$, by virtue of (\ref{result 1 in thm 1}-\ref{result 3 in thm 1}), Lemma \ref{estimate of K_i-P_i}, Lemma \ref{refined convergence rate of L2} and all the estimates of the layer solutions obtained in Section \ref{analysis of layers},
 we deduce (\ref{estimate of thm4}) from (\ref{estimate of cisum,sum}) and (\ref{estimate of usum,sum}).
\end{proof}

With Proposition \ref{prop5.1}, we are able to complete the proof of Theorem \ref{thm4}.

\begin{proof}[Proof of Theorem \ref{thm4}]
By using Gronwall's inequality to (\ref{estimate of thm4}) with initial conditions (\ref{initial condition of thm4}), one has (\ref{estimate of lap psum1}) and (\ref{estimate of lap psum2}) for $t\leq T^\star$. In particular, we have $\|\grad \cisum\|_{L^2}+\ve\|\lap \psum\|_{L^2} + \|\usum\|_{V}\leq M\ve^\frac{3}{2}$ for some $M$ irrelevant with $T^\star$. Thus by taking $\ve<1$ being sufficiently small so that $M\ve^\frac{1}{2} \leq \frac{1}{2}$, we conclude that $T^\star = T$ by the standard continuous argument, which completes the proof of Theorem \ref{thm4}.
\end{proof}

\subsection{\bf Estimate of $\|(\pt\cisum,\ve \grad \pt \psum,\pt \usum)\|_{L^\oo_T(L^2)}$.}
The goal of this subsection is to prove the following type of inequality:
\begin{align}\label{goal estimate in Loo H2}
    \frac{d}{dt}\int_\Omega \Bigl(\frac{|\p_t \cisum|^2}{\cio} + \ve^2|\grad \pt \psum|^2 + |\pt \usum|^2\Bigr)\,dx  + \int_\Omega \Bigl(\frac{|\grad \pt \cisum|^2}{\cio} + \cio|\grad \pt \psum|^2\Bigr) \,dx \\
    + \ve^2 \|\pt \lap \psum \|^2_{L^2} +\nu \|\pt \usum\|_V^2    \leq O(\ve) . \nonumber
\end{align}

For simplicity, we only n consider the difference functions as follows:
\begin{equation}\label{S5eq2}
    \quad \left\{\begin{array}{l}
         \cirs(x',y,t) = \cei(x',y,t) - \cio(x',y,t)  -\ve\cil(x',y,t)- \ve^2 \ciir(x',y,\tau)  \\
         \qquad \qquad \qquad - \ve^2 f(y)\cilmr(x',\xi,\tau) - \ve^2g(y)\cirmr(x',\eta,\tau) ,\\
        \prs(x',y,t)= \Phi^\ve(x',y,t) -\po(x',y,t)-\pio(x',y,\tau)-\ve\pil(x',y,\tau), \\
         \urs(x',y,t)= \ue(x',y,t)-\uo(x',y,t),
    \end{array}
    \right.
\end{equation}
which satisfy the following boundary conditions:
\begin{equation}
    \cirs |_{\p \Omega}=0, \quad \prs |_{\p \Omega}=0 \andf \urs |_{\p \Omega}=0.
\end{equation}
\begin{rmk}
    Based on (\ref{goal estimate in Loo H2}), the construction of $(\cirs,\prs,\urs)$ guarantees homogeneous Dirichlet boundary conditions and collects all the layer solutions appearing in the left-hand side of (\ref{goal estimate in Loo H2}). Though the function $\ve^2 f(y)\cilmr(x',\xi,\tau)$, $\ve^2g\cirmr(x',\eta,\tau) $ are too ``small" to appear in the energy estimate (\ref{goal estimate in Loo H2}), they compensate the boundary value of $\ve^2 \ciir(x',y,\tau)$.
\end{rmk}
Under the assumption of Theorem \ref{thm4},  we have
\begin{subequations}
\begin{gather}
\label{estimate of lap prs1}
     \|\grad \cirs\|_{L^\oo_T(L^2)} + \ve^\frac{1}{2}\|\urs\|_ {L^\oo_T(V)} + \ve \|\lap \prs\|_ {L^\oo_T(L^2)}  \leq M_2 \ve^\frac{3}{2} ,\\
     \ve^\frac{1}{2}\|\lap  \cirs\|_{L^2_T(L^2)} + \|A \urs\|_ {L^2_T(L^2)}  + \ve^\frac{1}{2}\|\lap \prs\|_ {L^2_T(L^2)}  \leq M_2 \ve. \label{estimate of lap prs2}
\end{gather}
\end{subequations}

It is easy to observe that  $(c_i^R,\Phi^R,u^R)$ verifies
\begin{subequations}\label{S5eq3}
\begin{gather}
\label{equation of cirs}
    \pt \cirs +\urs\cdot \grad \cirs -\ve z_iD_i\dive(\cio \grad \pil+\cil\grad \pio) = D_i \dive(\cio(\frac{\grad \cirs}{\cio} + z_i \grad \prs)) \\
   \qquad \qquad +z_iD_i\dive(\cirs\grad\prs)+E_i+F_i+G_i+H_i, \nonumber\\
\label{equation of rrs}
    \qquad\rrs=-\ve^2\lap \prs -\ve^2 \lap \po -\ve^3 \lap \pil +\ve^2 f\p_\xi^2 \plmr+\ve^2g\p_\eta^2 \prmr,\\
\label{equation of urs}
    \pt \urs +(\urs+\uo)\cdot \grad \urs - \nu \lap \urs -\grad p^R= -\urs \cdot \grad \uo \\
    \qquad \qquad -(\rrs + \ve^2\rir+\ve^2f\rlmr+\ve^2g\rrmr)\grad(\prs+\po+\pio+\ve \pil), \nonumber \\
    \dive\urs =0 ,
\end{gather}
\end{subequations}
where
\begin{align}
   & E_i \eqdefa -\uo\cdot\grad \cirs -\urs\cdot \grad (\cio+\ve\cil+\ve^2 \ciir+\ve^2 f\cilmr+\ve^2 g \cirmr ) \nonumber \\
    &\qquad + z_iD_i\dive\big((\ve\cil +\ve^2\ciir+\ve^2f\cilmr+\ve^2g\cirmr)\grad\prs\big) \nonumber \\
    &\qquad + z_iD_i\dive\big(\cirs \grad (\po+\pio+\ve\pil) \big)\nonumber, \\
    &F_i \eqdefa z_iD_i\dive\bigl((\cio-\cio(0))\grad \pio\bigr) - z_iD_i\gamma_i(x',0,0)f\p_\xi^2 \plmr -   z_iD_i\gamma_i(x',1,0)g\p_\eta^2 \prmr ,\nonumber \\
    &G_i \eqdefa \ve \ul\cdot \grad \cio- \ve z_iD_i \dive(\cio \grad \pl) +2\ve D_i(f'\p_\xi \cilmr - g'\p_\eta \cirmr), \nonumber\\
    &H_i \eqdefa -\ve^2\uo \cdot \grad (\ciir+f\cilmr+g\cirmr) + \ve^2 D_i \lap\ciir \nonumber \\
    &\qquad + \ve^2D_i(f\lap_{x'}\cilmr + g\lap_{x'} \cirmr) +\ve^2D_i (f''\cilmr+ g'' \cirmr) \nonumber \\
    &\qquad  + \ve^2 z_iD_i\dive(\cil\grad\pil)+\ve^2z_iD_i\dive\bigl((\ciir+f\cilmr+g\cirmr)\grad(\po+\pio+\ve\pil )\bigr) . \nonumber
\end{align}

By taking time derivative of equation (\ref{equation of cirs}) and (\ref{equation of urs}), we get
\beq\label{equation of pt cirs}
\begin{split}
    \pt^2 \cirs +& \pt \urs\cdot \grad \cirs +\urs\cdot \grad \pt \cirs-\ve^{-1} z_iD_i\dive(\cio \grad \p_\tau \pil+\cil\grad \p_\tau \pio) \\
    &= D_i \dive(\cio(\frac{\grad \pt \cirs}{\cio} + z_i \grad \pt\prs)) +z_iD_i\pt \dive(\cirs\grad\prs)  \\
    &\quad +z_iD_i\dive(\pt \cio \grad \prs) +\ve z_iD_i\dive(\pt\cio \grad \pil+\pt\cil\grad \pio) \\
  &\quad  +\pt E_i+\pt F_i+\pt G_i+\pt H_i ,
\end{split} \eeq
and \beq\label{equation of pt urs}
\begin{split}
    &\pt^2 \urs +(\urs+\uo)\cdot \grad \pt \urs - \nu \lap \pt \urs -\grad \pt p^R \\
    & \quad = - \pt(\urs+\uo)\cdot \grad  \urs-\pt \urs \cdot \grad \uo - \urs \cdot \grad \pt\uo  \\
    & \qquad -(\pt\rrs + \p_\tau \rir+f\p_\tau \rlmr+g\p_\tau \rrmr)\grad(\prs+\po+\pio+\ve \pil)  \\
    & \qquad -(\rrs + \ve^2\rir+\ve^2f\rlmr+\ve^2g\rrmr)\grad(\pt\prs+\pt\po+\ve^{-2}\p_\tau\pio+ \ve^{-1}\p_\tau\pil).
\end{split}\eeq

\begin{thm}\label{thm3}
{\sl
Under the assumptions in Theorem \ref{thm4}, if we assume in addition, that
\begin{subequations}\label{assumption in thm3}
    \begin{gather}
         \|\cirs(0)\|_{L^2}   \leq M_1 \ve^2, \label{assumption 1 in thm3} \\
     \|\pt \cirs(0)\|_{L^2} + \|\pt \urs(0)\|_ {L^2} + \ve \|\grad \pt \prs(0)\|_ {L^2}  \leq M_1 \ve^\frac{1}{2} .\label{assumption 2 in thm3}
    \end{gather}
\end{subequations}
Then for $\ve$ being sufficiently small, there exists a positive constant $M_2$ depending only on initial data, $\nu,M_1, T,\lam, \Lam, W(x), \ga_i(x), z_i$ and $D_i$ for $i=1,2$, so that
\begin{subequations}
    \begin{gather}
     \|\pt \cirs\|_{L^\oo_T(L^2)} + \|\pt \urs\|_ {L^\oo_T(L^2)} + \ve \|\grad \pt \prs\|_ {L^\oo_T(L^2)}  \leq M_2 \ve^\frac{1}{2} ,\label{result1 of thm3}\\
     \|\grad \pt \cirs\|_{L^2_T(L^2)} + \|\grad \pt \urs\|_ {L^2_T(L^2)}  + \|\grad \pt \prs\|_ {L^2_T(L^2)}+\ve \|\lap \pt \prs\|_ {L^2_T(L^2)}  \leq M_2 \ve^\frac{1}{2}\label{result2 of thm3} .
\end{gather}\end{subequations}}
\end{thm}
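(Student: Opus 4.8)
The plan is to mimic the proof of Theorem \ref{thm4}, now applied to the once time-differentiated system \eqref{equation of pt cirs}--\eqref{equation of pt urs} and to the energy functional
\[
\mathcal E(t)\eqdefa \frac12\sum_{i=1}^2\int_\Omega\frac{|\pt\cirs|^2}{\cio}\,dx+\frac{\ve^2}{2}\|\grad\pt\prs\|_{L^2}^2+\frac12\|\pt\urs\|_{L^2}^2 .
\]
I would run a continuation argument with the time $T^{\star\star}$ defined by requiring $\|\pt\cirs\|_{L^\infty_t(L^2)}$, $\ve\|\grad\pt\prs\|_{L^\infty_t(L^2)}$ and $\|\pt\urs\|_{L^\infty_t(L^2)}$ to stay $\le1$. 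The target is the differential inequality \eqref{goal estimate in Loo H2} on $[0,T^{\star\star}]$, of the form $\frac{d}{dt}\mathcal E+(\text{dissipation})\le\kappa_3(t)\mathcal E+\kappa_4(t)$ with $\|\kappa_3\|_{L^1_T}\lesssim1$ and $\|\kappa_4\|_{L^1_T}\lesssim\ve$. Since \eqref{assumption 2 in thm3} gives $\mathcal E(0)\lesssim\ve$, Gronwall then yields $\mathcal E(t)+\int_0^t(\text{dissipation})\lesssim\ve$ with a constant independent of $T^{\star\star}$, which for $\ve$ small forces $T^{\star\star}=T$, and \eqref{result1 of thm3}--\eqref{result2 of thm3} follow after translating back through the differentiated relation \eqref{equation of rrs}.

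\textbf{Preliminary inputs.} First I would record, as the $\pt$-analogue of Lemma \ref{estimate of K_i-P_i}, the bounds on $E_i,F_i,G_i,H_i$ and on $\pt E_i,\pt F_i,\pt G_i,\pt H_i$ in the relevant norms; these follow from \eqref{estimate of lap prs1}--\eqref{estimate of lap prs2}, the regularity of $\cio,\cil,\uo$, and the exponential-in-$\tau$ decay of the initial- and mixed-layer profiles from Propositions \ref{prop ciir}, \ref{prop cirs} and \ref{S4prop3}. The ``regular'' parts of $\pt E_i,\pt F_i,\pt G_i,\pt H_i$ are $O(\ve^{1/2})$ in $L^\infty_T(L^2)$ and $O(\ve)$ in $L^2_T(L^2)$, while the ``singular'' parts, produced whenever $\pt$ hits a profile in $\tau=t/\ve^2$ (such as the term $\ve^{-1}z_iD_i\dive(\cio\grad\p_\tau\pil+\cil\grad\p_\tau\pio)$ in \eqref{equation of pt cirs}), are of the form $\ve^{-m}$ times an exponentially decaying profile and therefore have $L^1_T(L^2)$ norm $O(\ve^{2-m})$, because $\int_0^T e^{-ct/\ve^2}\,dt=O(\ve^2)$; the borderline case $m=1$ still produces $O(\ve)$. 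I would also prove, as in Lemma \ref{refined convergence rate of L2} but for $(\cirs,\prs,\urs)$ and using the extra hypothesis \eqref{assumption 1 in thm3}, the lower-order bound $\|\cirs\|_{L^\infty_T(L^2)}+\ve\|\grad\prs\|_{L^\infty_T(L^2)}+\|\urs\|_{L^\infty_T(L^2)}\lesssim\ve^2$, which is needed to absorb the quadratic terms.

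\textbf{The energy estimate.} Next I would take the $L^2$ inner product of \eqref{equation of pt cirs} with $\frac{\pt\cirs}{\cio}+z_i\pt\prs$, sum over $i$, insert the differentiated relation \eqref{equation of rrs}, and take the $L^2$ inner product of \eqref{equation of pt urs} with $\pt\urs$ (all three $\pt$-quantities vanish on $\partial\Omega$, so Poincar\'e is available). As in Proposition \ref{prop5.1}, the dissipation terms give $\sum_i D_i\int_\Omega\cio|\frac{\grad\pt\cirs}{\cio}+z_i\grad\pt\prs|^2\,dx+\nu\|\grad\pt\urs\|_{L^2}^2$, which controls $\sum_i\|\grad\pt\cirs\|_{L^2}^2+\|\grad\pt\prs\|_{L^2}^2+\ve^2\|\pt\lap\prs\|_{L^2}^2+\|\grad\pt\urs\|_{L^2}^2$ up to lower order. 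The remaining terms are then sorted into: (i) those bounded by $\kappa_3(t)\mathcal E(t)$ with $\kappa_3\in L^1_T$, namely the transport terms, the contributions involving $\pt\cio$ or $\pt\uo$, and the terms linear in $\pt\prs$; (ii) absorbable dissipation $\delta(\text{dissipation})$ from the quadratic term $\pt\dive(\cirs\grad\prs)$, handled after the product rule and one integration by parts using the $H^1$-bounds \eqref{estimate of lap prs1}, Ladyzhenskaya/Sobolev inequalities valid for $d=2,3$, and the smallness $\|\grad\cirs\|_{L^\infty_T(L^2)}+\ve\|\lap\prs\|_{L^\infty_T(L^2)}\lesssim\ve^{3/2}$; and (iii) genuinely inhomogeneous terms going into $\kappa_4$ --- the regular parts of $\pt E_i,\pt F_i,\pt G_i,\pt H_i$, and the singular initial/mixed-layer terms together with the $\rrs$-forcing of \eqref{equation of pt urs}, which I would estimate in $L^1_T(L^2)$, pair against $\sup_t\|\frac{\pt\cirs}{\cio}+z_i\pt\prs\|_{L^2}$ or $\sup_t\|\pt\urs\|_{L^2}$, and absorb by a small multiple of $\sup_{[0,T^{\star\star}]}\mathcal E$.

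\textbf{Main obstacle.} As in the appendix computation behind Lemma \ref{estimate of K_i-P_i}, the real difficulty is the bookkeeping: after one more $t$-derivative, every profile in $\tau=t/\ve^2$ introduces a factor $\ve^{-2}$, and one must check term by term --- for the derivatives of the $L_i$-type, $M_i$-type and $\rrs$-type remainders --- that this singular factor is always strictly dominated by the $\ve^2$ gained from integrating the exponential decay in $\tau$ (and, where needed, by the already established $\ve^2$- and $\ve^{3/2}$-smallness of $(\cirs,\prs,\urs)$ and its first derivatives), so that the total forcing stays $O(\ve)$ in $L^1_T(L^2)$ and $\mathcal E$ stays $O(\ve)$. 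The one structurally delicate step beyond this accounting is the quadratic term $\pt\dive(\cirs\grad\prs)$ in dimension $d=3$, where the loss in the Sobolev embedding has to be beaten by the $H^1$-level smallness of $\cirs,\prs$ coming from Theorem \ref{thm4}.
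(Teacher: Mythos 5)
Your proposal matches the paper's proof in all essential respects: the same energy functional, the same test functions $\frac{\pt\cirs}{\cio}+z_i\pt\prs$ and $\pt\urs$ against the time-differentiated system, the same decomposition of forcing into a $\kappa\sup_{[0,t]}\mathcal E$ part with $\kappa\in L^1_T$ and an $O(\ve)$-in-$L^1_T$ remainder, and the same key observation that each $\pt$ landing on a $\tau=t/\ve^2$ profile costs $\ve^{-2}$ (or $\ve^{-1}$ in the borderline cases) while $\int_0^T e^{-c\tau}\,dt=O(\ve^2)$ repays it, closing with Gronwall from $\mathcal E(0)\lesssim\ve$. The only cosmetic differences are that you insert an extra continuation time $T^{\star\star}$ (unneeded, since the dissipation closes directly on $[0,T]$) and you propose an auxiliary $L^\infty_T(L^2)$-smallness lemma for $(\cirs,\prs,\urs)$ where the paper simply invokes the bounds inherited from Theorem \ref{thm4} and the hypothesis $\|\cirs(0)\|_{L^2}\leq M_1\ve^2$.
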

Since Theorem \ref{thm3} is a direct consequence of  the following estimate (\ref{estimate of cirs and urs, sum}) and Gronwall's inequality, it suffices to prove (\ref{estimate of cirs and urs, sum}).
\begin{prop}\label{estimate of thm3}
    Under the assumptions in Theorem \ref{thm3}, for any $t\in [0,T]$: one has
    \begin{equation}\label{estimate of cirs and urs, sum}
        \begin{split}
     \frac{d}{dt} & \Bigl(\sum_{i=1}^2 \int_{\Omega } \frac{|\pt\cirs|^2}{\cio} \,dx +\ve^2\|\grad \pt \prs\|^2_{L^2}
     + \frac{1}{2}\|\pt \urs\|_{L^2}^2\Bigr) +\frac{\nu}{4} \|\grad \pt \urs\|^2_{L^2}  \\
    & +\frac{D^*}{8}\Bigl( \sum_{i=1}^2  \int_\Omega \frac{|\grad \pt \cirs|^2}{\cio}\,dx +\sum_{i=1}^2 z_i^2 \int_\Omega \cio |\grad \pt \prs|^2 \,dx\Bigr) +\frac{D^*\ve^2}{2} \|\lap \pt \prs \|^2_{L^2}  \\
    \leq& \kappa_5(t) \sup_{[0,t]}(\sum_{i=1}^2 \int_{\Omega } \frac{|\pt\cirs|^2}{\cio} \,dx +\ve^2\|\grad \pt \prs\|^2_{L^2} + \|\pt \urs\|_{L^2}^2) + \kappa_6(t),
    \end{split}
    \end{equation}
where $\kappa_5,\kappa_6$ are positive functions satisfying
\begin{equation*}
   \int_0^T \kappa_5(t') \, dt' \sim O(1) \andf  \int_0^T \kappa_6(t') \, dt' \sim O(\ve), \quad \textit{as} \ \ve\rto 0.
\end{equation*}
\end{prop}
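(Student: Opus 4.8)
\noindent\textbf{Proof plan for Proposition \ref{estimate of thm3}.}
The plan is to run a modulated energy estimate on the time-differentiated systems \eqref{equation of pt cirs}--\eqref{equation of pt urs}, exactly parallel to the proof of Proposition \ref{prop5.1} but with $\pt$ in place of the spatial gradient. Concretely, I would take the $L^2(\Omega)$ inner product of \eqref{equation of pt cirs} with $\frac{\pt\cirs}{\cio}+z_i\pt\prs$, sum over $i=1,2$, and simultaneously take the $L^2$ inner product of \eqref{equation of pt urs} with $\pt\urs$. The second-order term $\sum_i\int_\Omega\pt^2\cirs\bigl(\frac{\pt\cirs}{\cio}+z_i\pt\prs\bigr)\,dx$ produces $\frac12\frac{d}{dt}\sum_i\int_\Omega\frac{|\pt\cirs|^2}{\cio}\,dx$ up to the harmless term $-\frac12\int_\Omega|\pt\cirs|^2\pt(1/\cio)\,dx$, together with $\sum_iz_i\int_\Omega\pt^2\cirs\,\pt\prs\,dx=\int_\Omega\pt^2\rrs\,\pt\prs\,dx$; inserting $\rrs=-\ve^2\lap\prs+(\text{lower order in }\ve)$ from \eqref{equation of rrs} and integrating by parts twice in $x$ converts this into $\frac{\ve^2}{2}\frac{d}{dt}\|\grad\pt\prs\|_{L^2}^2$ plus controllable remainders, which accounts for the $\ve^2\|\grad\pt\prs\|^2$ in the energy functional on the left of \eqref{estimate of cirs and urs, sum}. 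The dissipation $\sum_iD_i\int_\Omega\dive\bigl(\cio(\frac{\grad\pt\cirs}{\cio}+z_i\grad\pt\prs)\bigr)\bigl(\frac{\pt\cirs}{\cio}+z_i\pt\prs\bigr)\,dx$, after one integration by parts, equals $-\sum_iD_i\int_\Omega\cio\bigl|\frac{\grad\pt\cirs}{\cio}+z_i\grad\pt\prs\bigr|^2\,dx$ up to a term absorbed by a small multiple of the same quantity plus a multiple of $\int_\Omega\frac{|\pt\cirs|^2}{\cio}$; bounding $D_i\geq D^*$ and expanding the square exactly as in Lemma \ref{lem:energy} --- the cross term $2\sum_iz_i\int_\Omega\grad\pt\cirs\cdot\grad\pt\prs=2\int_\Omega\grad\pt\rrs\cdot\grad\pt\prs=2\ve^2\|\lap\pt\prs\|_{L^2}^2+(\text{l.o.t.})$ being \emph{positive} --- then yields the full dissipation $\sum_i\int_\Omega\frac{|\grad\pt\cirs|^2}{\cio}$, $\sum_iz_i^2\int_\Omega\cio|\grad\pt\prs|^2$, and $\ve^2\|\lap\pt\prs\|_{L^2}^2$ appearing on the left of \eqref{estimate of cirs and urs, sum}.

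Next I would dispose of the ``regular'' right-hand sides. The transport terms $\pt\urs\cdot\grad\cirs$, $\urs\cdot\grad\pt\cirs$, $(\urs+\uo)\cdot\grad\pt\urs$ and the first-order terms in $\pt E_i$ are handled by $\dive\urs=\dive\uo=0$, the homogeneous Dirichlet conditions, Sobolev embedding, and the a priori bounds already in hand: the $L^\oo_T(L^2)$ estimates \eqref{result 1 in thm 1}--\eqref{result 3 in thm 1} of Theorem \ref{thm1} and the $L^\oo_T(L^2)\cap L^\oo_T(V)$-type bounds \eqref{estimate of lap prs1}--\eqref{estimate of lap prs2} coming from Theorem \ref{thm4}; these contribute $\kappa_5(t)\cdot(\text{energy})$ with $\kappa_5$ built from $L^\oo$-norms of $\cio,\cil,\uo$ and $\grad\po$ and from $\|\urs\|_V$, all of which have $O(1)$ time integral by the regularity of the limit system. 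The genuinely nonlinear term $z_iD_i\pt\dive(\cirs\grad\prs)=z_iD_i\dive(\pt\cirs\grad\prs)+z_iD_i\dive(\cirs\grad\pt\prs)$ is treated by integration by parts together with the Gagliardo--Nirenberg inequality $\|g\|_{L^4}\lesssim\|g\|_{L^2}^{1/2}\|\grad g\|_{L^2}^{1/2}$, absorbing the top-order factors into a small multiple of the dissipation and keeping the residual as $\bigl(\|\lap\prs\|_{L^2}^4+1\bigr)\times(\text{energy})$; since $\|\lap\prs\|_{L^2}=O(\ve^{1/2})$ by Theorem \ref{thm4}, its fourth power is time-integrable with a small integral. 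Finally, all the inhomogeneous layer terms hidden in $\pt F_i,\pt G_i,\pt H_i$ and in the forcing of \eqref{equation of pt urs} are bounded using the decay estimates of Propositions \ref{prop ciir}, \ref{prop cirs} and \ref{S4prop3}; this is the analogue, at the level of time derivatives, of Lemma \ref{estimate of K_i-P_i}, and supplies the bulk of $\kappa_6(t)$.

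The delicate point --- and the step I expect to be the main obstacle --- is the terms carrying \emph{negative} powers of $\ve$: the term $\ve^{-1}z_iD_i\dive(\cio\grad\p_\tau\pil+\cil\grad\p_\tau\pio)$ on the left of \eqref{equation of pt cirs}, and the terms $(\rrs+\ve^2\rir+\cdots)\grad(\ve^{-2}\p_\tau\pio+\ve^{-1}\p_\tau\pil)$ on the right of \eqref{equation of pt urs} (together with the $\pt$-derivatives of the lower-order layer pieces of $\rrs$). These must \emph{not} be estimated by a plain Young inequality against the dissipation, since that would only produce $\int_0^T\kappa_6=O(1)$. Instead one pairs them with the test function keeping the $L^2$-factor of the test function as the square root of the energy $\mathcal{E}(t)$, estimating such a term by $\ve^{-1}\|\p_\tau\pil(\cdot,t/\ve^2)\|_{H^1}\,\mathcal{E}(t)^{1/2}$; then, because $\p_\tau\pil,\p_\tau\pio,\rir$ decay exponentially in $\tau$ by Propositions \ref{prop ciir}--\ref{prop cirs} and $dt=\ve^2\,d\tau$, one has $\ve^{-1}\int_0^T\|\p_\tau\pil(\cdot,t/\ve^2)\|_{H^1}\,dt=\ve\int_0^{+\oo}\|\p_\tau\pil(\cdot,\tau)\|_{H^1}\,d\tau=O(\ve)$. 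Splitting $g(t)\,\mathcal{E}(t)^{1/2}\le\frac{\delta}{2}g(t)\,\mathcal{E}(t)+\frac{1}{2\delta}g(t)$ and bounding $\mathcal{E}(t)$ by $\sup_{[0,t]}\mathcal{E}$ in the first piece is exactly what produces the factor $\sup_{[0,t]}(\cdots)$ on the right of \eqref{estimate of cirs and urs, sum}, both pieces contributing $O(\ve)$ after absorbing them into $\kappa_5,\kappa_6$. Collecting the above yields \eqref{estimate of cirs and urs, sum}; the essential obstacle is precisely this $\ve$-power bookkeeping --- ensuring that each of the (long) lists of layer and residual terms in $\pt E_i,\dots,\pt H_i$, once paired with $\frac{\pt\cirs}{\cio}+z_i\pt\prs$, integrated in $t$, and weighed against the exponential-in-$\tau$ decay of the initial and mixed layers, contributes at most $O(\ve)$ to $\int_0^T\kappa_6$ and at most $O(1)$ to $\int_0^T\kappa_5$. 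With \eqref{estimate of cirs and urs, sum} in hand, Theorem \ref{thm3} follows from Gronwall's inequality together with the initial bounds \eqref{assumption in thm3}.
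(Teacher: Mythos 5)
Your overall framework is the same as the paper's: test the time-differentiated system \eqref{equation of pt cirs}--\eqref{equation of pt urs} against $\frac{\pt\cirs}{\cio}+z_i\pt\prs$ and $\pt\urs$, use the identity $\pt^2\rrs = -\ve^2\lap\pt^2\prs + (\text{layer pieces})$ to produce $\frac{\ve^2}{2}\frac{d}{dt}\|\grad\pt\prs\|_{L^2}^2$, absorb the quadratic and dissipative terms in the standard way, and bound the inhomogeneous layer contributions via the exponential $\tau$--decay of Propositions~\ref{prop ciir}, \ref{prop cirs}, \ref{S4prop3}. You also correctly flag the $\ve^{-1}$ singular transport term on the left of \eqref{equation of pt cirs} and the $\ve^{-2}\grad\p_\tau\pio$ term in \eqref{equation of pt urs} as the obstacle.

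However, your proposed treatment of the $\ve^{-1}$ term has a genuine gap. You propose to estimate
$\ve^{-1}\bigl|\int_\Omega \dive(\cio\grad\p_\tau\pil+\cil\grad\p_\tau\pio)(\frac{\pt\cirs}{\cio}+z_i\pt\prs)\,dx\bigr|$
by $\ve^{-1}\|\p_\tau\pil\|_{H^1}\,\mathcal{E}^{1/2}$, i.e.\ by controlling the $L^2$ norm of the test function by $\mathcal{E}(t)^{1/2}$. But the test function contains $z_i\pt\prs$, and the modulated energy only carries $\ve^2\|\grad\pt\prs\|_{L^2}^2$; hence $\|\pt\prs\|_{L^2}\lesssim\|\grad\pt\prs\|_{L^2}\lesssim \ve^{-1}\mathcal{E}^{1/2}$, not $\mathcal{E}^{1/2}$. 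Redoing your calculation with the correct $\ve^{-1}$ penalty gives a coefficient $\sim\ve^{-2}\|\p_\tau\pil(\cdot,t/\ve^2)\|_{H^2}$, whose time integral is $\ve^{-2}\cdot\ve^2\int_0^\infty\|\p_\tau\pil(\tau)\|_{H^2}\,d\tau = O(1)$; this lands in $\kappa_6$ only as $O(1)$, not $O(\ve)$, so the conclusion is not reached. (The same difficulty appears if instead you absorb into the $\|\grad\pt\prs\|_{L^2}^2$ or $\ve^2\|\lap\pt\prs\|_{L^2}^2$ dissipation.)

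The missing idea is a \emph{structural cancellation}. When one expands $\int_\Omega\pt^2\rrs\,\pt\prs\,dx$ using \eqref{5.4b}, the $-\ve^3\lap\pil$ term contributes (recall $\pt^2\pil=\ve^{-4}\p_\tau^2\pil$) the singular piece $-\ve^{-1}\int_\Omega\lap\p_\tau^2\pil\,\pt\prs\,dx$. On the other hand, pairing the $\ve^{-1}$ transport term with $z_i\pt\prs$, summing over $i$, and invoking the equation $\p_\tau^2\lap\pil=-\sum_iz_i^2D_i\dive(\cio(0)\grad\p_\tau\pil+\cil(0)\grad\p_\tau\pio)$ yields $+\ve^{-1}\int_\Omega\lap\p_\tau^2\pil\,\pt\prs\,dx$ \emph{plus} remainder terms in which the coefficients $\cio$, $\cil$ are replaced by $\cio-\cio(0)$, $\cil-\cil(0)=O(t)=O(\ve^2\tau)$; the extra $\ve^2\tau$ factor is what makes those remainders $O(\ve)$ after integration in time. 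The two $\ve^{-1}\int\lap\p_\tau^2\pil\,\pt\prs\,dx$ pieces cancel exactly, and it is only after this cancellation that every residual term is small enough. Your plan, as written, has no mechanism to dispose of this $\ve^{-1}\int\lap\p_\tau^2\pil\,\pt\prs$ piece, and a naive Young inequality produces $\int_0^T\kappa_6 = O(1)$ rather than $O(\ve)$. With that cancellation added, the rest of your bookkeeping (pairing with the test function, $dt=\ve^2\,d\tau$, exponential decay in $\tau$, splitting $g\mathcal{E}^{1/2}\le\frac{\delta}{2}g\mathcal{E}+\frac{1}{2\delta}g$) is indeed how the paper closes the estimate.
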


\begin{proof} By taking $L^2$ inner product  of the equation (\ref{equation of pt urs}) with
$\pt \urs$ and  using integration by parts,  we get
\begin{align*}
    \frac{1}{2}&\frac{d}{dt}  \|\pt  \urs\|_{L^2}^2  + \nu \|\grad \pt \urs\|^2_{L^2}
        \leq
      \|A\urs\|_{L^2}\|\pt\urs\|_{L^2}\|\grad \pt\urs\|_{L^2} + \|\pt \uo\|_{L^\oo}\|A \urs\|_{L^2} \|\pt \urs\|_{L^2} \\
    & \quad +\|\grad \uo\|_{L^\oo} \|\pt \urs\|_{L^2}^2 + \|\grad\pt \uo\|_{L^\oo}  \| \urs\|_{L^2} \| \pt \urs\|_{L^2} \\
    &\quad + \|\lap \prs\|_{L^2} \|\pt \rrs\|_{L^2} \|\grad \pt \urs\|_{L^2} +  \|\grad \prs\|_{L^2} \|\p_\tau \rir+f \p_\tau \rlmr+ g \p_\tau \rrmr\|_{L^\oo} \|\pt \urs\|_{L^2} \\
    & \quad +  \| \grad(\po+\pio+\ve \pil) \|_{L^\oo} \|\pt \rrs\|_{L^2} \|\pt \urs\|_{L^2}  \\
    & \quad + \| \grad(\po+\pio+\ve \pil)\|_{L^\oo}\|\p_\tau \rir+f \p_\tau \rlmr+ g \p_\tau \rrmr\|_{L^2} \|\pt \urs\|_{L^2}  \\
    &\quad +\|\rrs\|_{L^\oo} \|\grad \pt \prs\|_{L^2} \|\pt \urs\|_{L^2}  \\
    &\quad +\| \ve^2 \grad \pt \po + \grad \p_\tau \pio + \ve\grad\p_\tau \pil \|_{L^\oo} \|\ve^{-2} \rrs \|_{L^2}  \|\pt \urs\|_{L^2}  \\
    &\quad +\ve^2\|\rir+ f\rlmr+g\rrmr\|_{L^\oo} \|\grad \pt \prs\|_{L^2}  \|\pt \urs\|_{L^2}   \\
    &\quad + \|\rir+ f\rlmr+g\rrmr\|_{L^2}  \| \ve^2 \grad \pt \po + \grad \p_\tau \pio + \ve\grad\p_\tau \pil \|_{L^\oo}  \|\pt \urs\|_{L^2}.
    \end{align*}
    Applying H\"older's inequality yields
    \begin{align*}
    \frac{1}{2}&\frac{d}{dt}  \|\pt  \urs\|_{L^2}^2  + \nu \|\grad \pt \urs\|^2_{L^2}\\
    & \leq M\Bigl( 1+\frac{1}{\nu}+ \frac{1}{\nu} \|A\urs\|_{L^2}^2 + \| \pt \uo\|_{H^3}^2 + \|\grad \uo\|_{L^\oo} + \|\rrs \|_{L^\oo}^2+ \|\ve^{-2} \rrs \|_{L^2}^2 \\
    & \qquad \quad + \ve^4\|\rir+ f\rlmr+g\rrmr\|_{L^\oo}^2 + \|\rir+ f\rlmr+g\rrmr\|_{L^2}^2  \Bigr) \|\pt\urs\|_{L^2}^2 \\
    &\quad + M(\frac{1}{\nu}\|\lap \prs\|_{L^2}^2 + \| \grad(\po+\pio+\ve \pil) \|_{L^\oo}^2 ) \sum_{i=1}^2 \int_{\Omega } \frac{|\pt\cirs|^2}{\cio} \,dx \\
    &\quad + \frac{3\nu}{4}\|\grad \pt \urs\|_{L^2}^2 +   \frac{D^*}{16}\sum_{i=1}^2 z_i^2 \int_\Omega \cio |\grad \pt \prs|^2 \,dx \\
    &\quad + M\bigl(\|A \urs\|_{L^2}^2  +\|\grad \prs \|_{L^2}^2\|\p_\tau\rir+ f\p_\tau\rlmr+g\p_\tau\rrmr\|_{L^\oo}^2 \bigr) \\
    &\quad +M\| \grad(\po+\pio+\ve \pil)\|_{L^\oo}^2 \|\p_\tau \rir+f \p_\tau \rlmr+ g \p_\tau \rrmr\|_{L^2}^2 \\
    & \quad + M\| \ve^2 \grad \pt \po + \grad \p_\tau \pio + \ve\grad\p_\tau \pil \|_{L^\oo}^2.
\end{align*}

By inserting (\ref{result 1 in thm 1}-\ref{result 3 in thm 1}), (\ref{estimate of lap prs1}-\ref{estimate of lap prs2}) and all the estimates of the layer solutions obtained in Section \ref{analysis of layers} into the above inequality, we achieve
\begin{equation}\label{estimate of urs,sum}
    \begin{split}
         \frac{1}{2}\frac{d}{dt}\|\pt \urs\|_{L^2}^2 + \frac{\nu}{4} \|\grad \pt \urs\|^2_{L^2} \leq &\kappa_7(t) \bigl(\sum_{i=1}^2\int_{\Omega } \frac{|\pt\cirs|^2}{\cio} \,dx + \|\pt u^R\|^2_{L^2} \bigr) \\
    &  +   \frac{D^*}{16} \sum_{i=1}^2 z_i^2 \int_\Omega \cio |\grad \pt \prs|^2 \,dx   + \kappa_8(t),
    \end{split}
\end{equation}

where  $\kappa_7$, $\kappa_8$ are positive functions satisfying
\begin{equation*}
    \int_0^T  \kappa_7(t') \,dt' \sim O(1) \andf \int_0^T  \kappa_8(t') \,dt' \sim O(\ve^2), \quad \textit{as $\ve \rto 0$} .
\end{equation*}

Below let us handle the estimate  of $(\pt\cirs, \pt\prs).$ We shall first take $L^2$ inner product
of the equation (\ref{equation of pt cirs}) with $\frac{\pt \cirs }{\cio} + z_i \pt \prs$ 
 and then summarizing the resulting equalities for $i=1,2.$  
Indeed  by virtue of $\pt \prs|_{\p\Omega}=0$ and Poincar{\'e}'s inequality, one has
\begin{align*}
  \sum_{i=1}^2 \int_{\Omega } & \pt^2 \cirs (\frac{\pt \cirs }{\cio} + z_i \pt \prs) \,dx = \sum_{i=1}^2 \int_{ \Omega } \frac{\pt^2 \cirs \pt \cirs }{\cio} \,dx + \int_{\Omega }\pt^2 \rrs \pt \prs \,dx \\
  &=\frac{1}{2} \frac{d}{dt} \sum_{i=1}^2 \int_{\Omega } \frac{|\pt\cirs|^2}{\cio} \,dx - \frac{1}{2}\sum_{i=1}^2 \int_{ \Omega } |\pt\cirs|^2 \pt(\frac{1}{\cio}) \,dx \\
  & \quad +\frac{\ve^2}{2} \frac{d}{dt}\|\grad \pt \prs\|^2_{L^2} + \ve^2 \int_{ \Omega } \grad \pt^2\po \grad\pt\prs \,dx   - \ve^{-1} \int_{ \Omega } \lap \p_\tau^2 \pil \pt \prs \,dx \\
  &\quad - \ve^{-1}\int_{ \Omega } \p_\xi \p_\tau^2\plmr (f\p_y\pt\prs+f'\pt \prs ) \,dx'dy  \\
  &\quad + \ve^{-1}\int_{ \Omega } \p_\eta \p_\tau^2 \prmr (g\p_y\pt\prs+g'\pt \prs ) \,dx'dy\\
  &\geq \frac{1}{2} \frac{d}{dt} \sum_{i=1}^2 \int_{\Omega } \frac{|\pt\cirs|^2}{\cio} \,dx +\frac{\ve^2}{2} \frac{d}{dt}\|\grad \pt \prs\|^2_{L^2} \\
  & \quad - M\|\pt \cio\|_{L^\oo} \sum_{i=1}^2 \int_{\Omega } \frac{|\pt\cirs|^2}{\cio} \,dx -\ve^2 \|\grad \pt \prs\|^2_{L^2} -\ve^2 \|\grad \pt^2 \po\|^2_{L^2}\\
  &  \quad - M\ve^{-2}\|\p_\xi \p_\tau^2 \plmr\|^2_{L^2} - M\ve^{-2}\| \p_\eta \p_\tau^2  \prmr \|^2_{L^2}  \\
  &\quad  - \frac{D^*}{16} \sum_{i=1}^2 z_i^2 \int_\Omega \cio |\grad \pt \prs|^2 \,dx - \ve^{-1} \int_{ \Omega } \lap \p_\tau^2 \pil \pt \prs \,dx .
\end{align*}

Recalling that $\p_\tau^2 \lap \pil = -\sum_{i=1}^2z_i^2D_i\dive(\cio(0)\grad \p_\tau\pil + \cil(0)\grad \p_\tau \pio).$
We get, by using integration by parts, that
\begin{align*}
    -&\ve^{-1} \sum_{i=1}^2z_iD_i \int_\Omega \dive(\cio\grad \p_\tau \pil+\cil\grad\p_\tau \pio)(\frac{\pt \cirs }{\cio}+z_i \pt \prs)\,dx  \\
    &= \ve^{-1} \sum_{i=1}^2z_iD_i \int_\Omega (\cio\grad \p_\tau \pil+\cil\grad\p_\tau \pio) \grad (\frac{\pt \cirs }{\cio}) \,dx \\
    &\quad +\ve^{-1} \sum_{i=1}^2z_i^2 D_i \int_\Omega \Big( (\cio-\cio(0)) \grad \p_\tau \pil+(\cil-\cil(0)) \grad\p_\tau \pio\Big)  \grad\pt \prs \,dx  \\
    &\quad + \ve^{-1} \int_{ \Omega } \lap \p_\tau^2 \pil \pt \prs \,dx  \\
     &\geq -M\sum_{i=1}^2\bigl(\|\pt\cio\|_{L^\oo}^2 + \ve^{-2}\|\cio\|_{L^\oo}^2\|\grad\p_\tau\pil\|_{L^2}^2+\ve^{-2}\|\cil\|_{L^\oo}^2\|\grad\p_\tau\pio\|_{L^2}^2 \bigr) \int_{\Omega } \frac{|\pt\cirs|^2}{\cio} \,dx \\
     &\quad -M \ve^2\sum_{i=1}^2\bigl(\|\pt \cio\|_{L^\oo}^2 + \|\pt \cil\|_{L^\oo}^2\bigr) \|\grad \pt \prs\|^2_{L^2}   \\
     &\quad -M\sum_{i=1}^2\bigl(\|\pt\cio\|_{L^\oo}^2 + \ve^{-2}\|\cio\|_{L^\oo}^2\|\grad\p_\tau\pil\|_{L^2}^2+\ve^{-2}\|\cil\|_{L^\oo}^2\|\grad\p_\tau\pio\|_{L^2}^2 \bigr) \\
     &\quad -\|\tau \grad \p_\tau \pio\|_{L^2}^2- \|\tau \grad \p_\tau \pil\|_{L^2}^2+ \ve^{-1} \int_{ \Omega } \lap \p_\tau^2 \pil \pt \prs \,dx .
\end{align*}

While for the dissipation term, one has
    \begin{align*}
         \sum_{i=1}^2 & D_i  \int_\Omega\dive(\cio(\frac{\grad \pt \cirs}{\cio} + z_i \grad \pt\prs))(\frac{\pt \cirs }{\cio} + z_i \pt \prs) \,dx \\
    & = -\sum_{i=1}^2  D_i  \int_\Omega \cio|\frac{\grad \pt \cirs}{\cio} + z_i \grad \pt\prs|^2 \,dx \\
    &\quad-\sum_{i=1}^2  D_i  \int_\Omega \cio(\frac{\grad \pt \cirs}{\cio} + z_i \grad \pt\prs)\pt\cirs \grad (\frac{1}{\cio}) \,dx  \\
    & \leq -\frac{D^*}{2}\sum_{i=1}^2  \int_\Omega \cio|\frac{\grad \pt \cirs}{\cio} + z_i \grad \pt\prs|^2 \,dx +M\sum_{i=1}^2\|\grad\cio\|_{L^\oo}^2 \|\pt \cirs\|^2_{L^2}\\
    &\leq -\frac{D^*}{2}\Bigl( \sum_{i=1}^2  \int_\Omega \frac{|\grad \pt \cirs|^2}{\cio}\,dx +\sum_{i=1}^2 z_i^2 \int_\Omega \cio |\grad \pt \prs|^2 \,dx +2\ve^2 \|\lap \pt \prs \|^2_{L^2} \Bigr)  \\
    & \quad -\ve^2 D^*\int_\Omega \lap \pt \po \lap \pt \prs \,dx  - \ve D^* \int_\Omega \lap \p_\tau \pil \lap \pt \prs \,dx   \\
    &\quad - \int_\Omega (f \p_\tau \rlmr +g \p_\tau \rrmr ) \lap \pt \prs \,dx +M\sum_{i=1}^2\bigl(\|\grad\cio\|_{L^\oo}^2 \|\pt \cirs\|^2_{L^2}\bigr)  \\
    & \leq -\frac{D^*}{2}\Bigl( \sum_{i=1}^2  \int_\Omega \frac{|\grad \pt \cirs|^2}{\cio}\,dx +\sum_{i=1}^2 z_i^2 \int_\Omega \cio |\grad \pt \prs|^2 \,dx \Bigr) -\frac{D^*\ve^2}{2} \|\lap \pt \prs \|^2_{L^2}   \\
    & \quad +M\sum_{i=1}^2\|\grad\cio\|_{L^\oo}^2\int_\Omega \frac{|\grad \pt \cirs|^2}{\cio}\,dx + M \ve^2 \|\lap \pt \po \|_{L^2}^2 + M\|\lap \p_\tau \pil \|_{L^2}^2   \\
    &\quad +M\ve^{-2} \bigl( \| \p_\tau \rlmr\|_{L^2}^2   + \|\p_\tau \rrmr\|_{L^2}^2 \bigr)    .
    \end{align*}

For the quadratic terms, noting that $\|\cirs\|_{L^\oo}\lesssim 1$, we get, by using integration by parts and H{\" o}lder's inequality, that
    \begin{align*}
        \sum_{i=1}^2 & \int_\Omega \pt \urs \cdot \grad \cirs (\frac{\pt \cirs }{\cio} + z_i \pt \prs)\,dx  \\
    & = - \sum_{i=1}^2 \int_\Omega \pt \urs \cdot   \frac{\grad \pt \cirs }{\cio} \cirs \,dx - \sum_{i=1}^2 \int_\Omega \pt \urs \cdot \grad (\frac{1}{\cio})  \pt \cirs  \cirs \,dx  \\
    &  \quad - \int_\Omega\pt \urs \cdot \grad \pt \prs \rrs \,dx  \\
    & \geq - M\|\pt \urs\|^2_{L^2} - M\sum_{i=1}^2\|\grad \cio\|_{L^\oo}^2 \int_{\Omega } \frac{|\pt\cirs|^2}{\cio} \,dx \\
    &\quad - \frac{D^*}{32} \Big( \sum_{i=1}^2  \int_\Omega \frac{|\grad \pt \cirs|^2}{\cio}\,dx +\sum_{i=1}^2 z_i^2 \int_\Omega \cio |\grad \pt \prs|^2 \,dx \Big) .
    \end{align*}
Thanks to $\dive \urs =0$, one has
\begin{align*}
    \sum_{i=1}^2 &\int_\Omega  \urs \cdot \grad \pt \cirs (\frac{\pt \cirs }{\cio} + z_i \pt \prs)\,dx  \\
    & = - \frac{1}{2}\sum_{i=1}^2 \int_\Omega \urs\cdot\grad(\frac{1}{\cio}) |\pt\cirs|^2 \,dx   - \int_\Omega \urs \cdot \grad \pt \prs \pt \rrs \,dx \\
    & \geq -M\|\grad \cio\|_{L^\oo}\|\urs\|_V\|\pt\cirs\|_{L^2}^{\frac{1}{2}}\|\grad\pt\cirs\|_{L^2}^\frac{3}{2} -M\|\urs\|_V\|\pt\rrs\|_{L^2}^\frac{1}{2}\|\grad\pt\rrs\|_{L^2}^\frac{1}{2}\|\grad \pt\prs\|_{L^2}\\
    & \geq -M( \sum_{i=1}^2\|\grad \cio\|_{L^\oo}^4+1)\|\urs\|^4_{V} \sum_{i=1}^2 \int_{\Omega } \frac{|\pt\cirs|^2}{\cio} \,dx \\
    & \quad -  \frac{D^*}{32} \Big( \sum_{i=1}^2  \int_\Omega \frac{|\grad \pt \cirs|^2}{\cio}\,dx +\sum_{i=1}^2 z_i^2 \int_\Omega \cio |\grad \pt \prs|^2 \,dx \Big) .
\end{align*}

Whereas for the term $\pt \dive(\cirs\grad \prs)$, one has
\begin{equation*}
\begin{split}
    \sum_{i=1}^2 &z_iD_i\int_\Omega \pt \dive(\cirs\grad\prs)   (\frac{\pt \cirs }{\cio} + z_i \pt \prs) \,dx  \\
   & =   -\sum_{i=1}^2 z_iD_i \int_\Omega \pt\cirs\grad \prs \grad(\frac{\pt \cirs }{\cio} + z_i \pt \prs) \,dx    \\
    &  \quad  -\sum_{i=1}^2z_iD_i \int_\Omega \cirs\grad \pt\prs \grad(\frac{\pt \cirs }{\cio} + z_i \pt \prs) \,dx  , \\
\end{split} \end{equation*}
from which, we infer
\begin{equation*}
\begin{split}
 \sum_{i=1}^2 &z_iD_i\int_\Omega \pt \dive(\cirs\grad\prs)   (\frac{\pt \cirs }{\cio} + z_i \pt \prs) \,dx   \\
    &\leq  M\sum_{i=1}^2 \bigl(\|\pt\cirs\|_{L^2}^\frac{1}{2}\|\|\grad\pt \cirs\|_{L^2}^\frac{1}{2} \|\lap\prs\|_{L^2} + \|\grad \cirs\|_{L^2}^\frac{1}{2}\|\lap\cirs\|_{L^2}^\frac{1}{2}\|\grad \pt\prs\|_{L^2} \bigr) \\
    &\qquad \times \bigl(1+\|\grad \cio\|_{L^\oo} \bigr) \bigl(\| \grad\pt \cirs\|_{L^2} + \|\grad \pt \prs \|_{L^2} \bigr)  \\
    &\leq M \sum_{i=1}^2\bigl(1+\|\grad \cio\|_{L^\oo}^4 \bigr)\|\lap\prs\|_{L^2}^4 \int_{\Omega } \frac{|\pt\cirs|^2}{\cio} \,dx\\
    &\quad + M\sum_{i=1}^2 \bigl(1+\|\grad \cio\|_{L^\oo}^4 \bigr)\|\grad\cirs\|_{L^2}^2\|\lap\cirs\|_{L^2}^2 \|\grad \pt \prs\|_{L^2}^2  \\
    & \quad + \frac{D^*}{32} \Big( \sum_{i=1}^2  \int_\Omega \frac{|\grad \pt \cirs|^2}{\cio}\,dx +\sum_{i=1}^2 z_i^2 \int_\Omega \cio |\grad \pt \prs|^2 \,dx  \Big).
\end{split} \end{equation*}

The estimates of the rest terms are much more straightforward. Precisely, by using integration by parts and H\"older's inequality, one has
\begin{align*}
   \sum_{i=1}^2 &\int_\Omega \bigl(z_iD_i\dive(\pt \cio \grad \prs) +\ve z_iD_i\dive(\pt\cio \grad \pil \\
   &\qquad \qquad \qquad \qquad \qquad \qquad +\pt\cil\grad \pio) \bigr)\bigl(\frac{\pt \cirs }{\cio} + z_i \pt \prs\bigr) \,dx   \\
    \leq &M\sum_{i=1}^2\bigl(\|\pt\cio\|_{L^\oo} +\|\pt\cil\|_{L^\oo} \bigr) \bigl(\| \grad\prs\|_{L^2}+ \ve\|\grad \pio\|_{L^2} + \ve\|\grad \pil\|_{L^2} \bigr) \\
   & \qquad \times \bigl(1+\|\grad \cio\|_{L^\oo} \bigr)\bigl(\|\grad\pt \cirs\|_{L^2}+ \|\grad \pt \prs \|_{L^2}\bigr) \\
   \leq &  \frac{D^*}{32} \bigl( \sum_{i=1}^2  \int_\Omega \frac{|\grad \pt \cirs|^2}{\cio}\,dx +\sum_{i=1}^2 z_i^2 \int_\Omega \cio |\grad \pt \prs|^2 \,dx  \bigr)  \\
   & + M\sum_{i=1}^2\bigl(\|\pt\cio\|_{L^\oo}^2 +\|\pt\cil\|_{L^\oo}^2 \bigr) \bigl(1+\|\grad \cio\|_{L^\oo}^2 \bigr) \\
   &\qquad \times \bigl(\| \grad\prs\|_{L^2}^2+ \ve\|\grad \pio\|_{L^2}^2 + \ve\|\grad \pil\|_{L^2}^2 \bigr).
\end{align*}
Similarly, one has
\begin{align*}
    &\sum_{i=1}^2 \int_\Omega \pt E_i (\frac{\pt \cirs }{\cio} + z_i \pt \prs) \,dx  \\
    &= \sum_{i=1}^2 \int_\Omega  (\pt\uo \cirs +\uo\pt\cirs) \cdot \grad(\frac{\pt \cirs }{\cio} + z_i \pt \prs) \,dx  \\
    & \quad+ \sum_{i=1}^2\int_\Omega  (\cio+\ve\cil+\ve^2 \ciir+\ve^2 f\cilmr+\ve^2 g \cirmr ) \pt \urs  \cdot \grad(\frac{\pt \cirs }{\cio} + z_i \pt \prs) \,dx  \\
    & \quad + \sum_{i=1}^2 \int_\Omega  (\pt \cio+\ve \pt \cil+\p_\tau \ciir+f\p_\tau \cilmr+g \p_\tau \cirmr ) \urs \cdot \grad (\frac{\pt \cirs }{\cio} + z_i \pt \prs) \,dx  \\
    & \quad -\sum_{i=1}^2 z_iD_i \int_\Omega  (\ve \cil+\ve^2\ciir+\ve^2 f\cilmr+\ve^2 g\cirmr )\grad \pt \prs \grad (\frac{\pt \cirs }{\cio} + z_i \pt \prs) \,dx  \\
    & \quad -\sum_{i=1}^2 z_iD_i \int_\Omega  (\ve \pt \cil+\p_\tau \ciir+f \p_\tau\cilmr+ g\p_\tau \cirmr )\grad \prs \grad (\frac{\pt \cirs }{\cio} + z_i \pt \prs) \,dx  \\
    & \quad - \sum_{i=1}^2z_iD_i \int_\Omega \pt\cirs \grad(\po+\pio+\ve\pil)\grad (\frac{\pt \cirs }{\cio} + z_i \pt \prs) \,dx \\
    &\quad - \sum_{i=1}^2z_iD_i \int_\Omega (\int_0^1 \pt\cirs(\theta t)d \theta)\tau \grad( \ve^2 \pt\po+\p_\tau \pio+\ve\p_\tau\pil )\grad (\frac{\pt \cirs }{\cio} + z_i \pt \prs) \,dx  \\
    &\quad -\sum_{i=1}^2 z_iD_i \int_\Omega \cirs(0)\grad(\pt\po+\ve^{-2}\p_\tau \pio+\ve^{-1}\p_\tau\pil )\grad (\frac{\pt \cirs }{\cio} + z_i \pt \prs) \,dx, \end{align*}
    from which, we infer
    \begin{align*}
     \sum_{i=1}^2 &\int_\Omega \pt E_i (\frac{\pt \cirs }{\cio} + z_i \pt \prs) \,dx  \\
    & \leq  M\sum_{i=1}^2\bigl(1+\|\grad \cio\|_{L^\oo}^2 \bigr) \bigl(\|\uo\|_{L^\oo}^2 +\|\grad(\po+\pio+\ve\pil)\|^2_{L^\oo} \\
    &\qquad \qquad + \|\tau \grad( \ve^2 \pt\po+\p_\tau \pio+\ve\p_\tau\pil )\|_{L^\oo}^2 \bigr) \sup_{[0,t]} \int_{\Omega } \frac{|\pt\cirs|^2}{\cio} \,dx  \\
    &\quad +M\sum_{i=1}^2\bigl(1+\|\grad \cio\|_{L^\oo}^2 \bigr) \| \cio+\ve\cil+\ve^2 \ciir+\ve^2 f\cilmr+\ve^2 g \cirmr \|_{L^\oo}^2 \|\pt\urs\|^2_{L^2} \\
    &\quad +M\ve^2 \sum_{i=1}^2\bigl(1+\|\grad \cio\|_{L^\oo}^2 \bigr) \|\cil+\ve\ciir+\ve f\cilmr+\ve g\cirmr\|_{L^\oo}^2 \|\grad \pt \prs\|_{L^2}^2\\
    &\quad + \frac{D^*}{32} \Bigl( \sum_{i=1}^2  \int_\Omega \frac{|\grad \pt \cirs|^2}{\cio}\,dx +\sum_{i=1}^2 z_i^2 \int_\Omega \cio |\grad \pt \prs|^2 \,dx  \Bigr)  \\
    & \quad +M\sum_{i=1}^2\bigl(1+\|\grad \cio\|_{L^\oo}^2 \bigr) \bigl( \|\pt \cio+\ve \pt \cil+\p_\tau \ciir+f\p_\tau \cilmr+g \p_\tau \cirmr\|_{L^\oo}^2\|\urs\|_{L^2}^2 \\
    &\qquad \qquad  +\|\pt\uo\|_{L^\oo}^2\|\cirs\|_{L^2}^2 +\|\ve \pt \cil+\p_\tau \ciir+f \p_\tau\cilmr+ g\p_\tau \cirmr \|_{L^\oo}^2\|\grad \prs\|_{L^2}^2 \\
    &\qquad \qquad + \|\cirs(0)\|_{L^2}^2 \|\grad(\pt\po+\ve^{-2}\p_\tau \pio+\ve^{-1}\p_\tau\pil )\|_{L^\oo}^2 \bigr).
\end{align*}

Along the same line, we obtain
\begin{align*}
    &\sum_{i=1}^2\int_\Omega \pt F_i (\frac{\pt \cirs }{\cio} + z_i \pt \prs) \,dx  \\
    &= -\sum_{i=1}^2z_i D_i \int_\Omega \pt \cio \grad \pio  \grad(\frac{\pt \cirs }{\cio} + z_i \pt \prs) \,dx  \\
    &\quad  -\sum_{i=1}^2z_i D_i  \int_\Omega (\int_0^t \pt \cio(\theta s) ds ) \tau \grad \p_\tau \pio  \grad(\frac{\pt \cirs }{\cio} + z_i \pt \prs) \,dx  \\
    &\quad + \sum_{i=1}^2z_i D_i \gamma_i(x',0,0)\ve^{-1} \int_\Omega \p_\xi \p_\tau \plmr \p_y\bigl( f(\frac{\pt \cirs }{\cio} + z_i \pt \prs)\bigr) \,dx  \\
    &\quad - \sum_{i=1}^2z_i D_i \gamma_i(x',1,0)\ve^{-1} \int_\Omega \p_\eta \p_\tau\prmr \p_y \bigl( g(\frac{\pt \cirs }{\cio} + z_i \pt \prs)\bigr) \,dx  \\
    & \leq  \frac{D^*}{32} \Bigl( \sum_{i=1}^2  \int_\Omega \frac{|\grad \pt \cirs|^2}{\cio}\,dx +\sum_{i=1}^2 z_i^2 \int_\Omega \cio |\grad \pt \prs|^2 \,dx \Bigr) \\
    &\quad +M\sum_{i=1}^2\bigl(1+\|\grad \cio\|_{L^\oo}^2 \bigr) \bigl(\|\grad \pio\|^2_{L^2}\|\pt\cio\|_{L^\oo}^2 +\|\tau \grad \p_\tau \pio\|^2_{L^2}\|\pt\cio\|_{L^\oo}^2 \\
    &\qquad \qquad \quad + \ve^{-2}\|\p_\xi \p_\tau \plmr \|^2_{L^2} + \ve^{-2}\|\p_\eta \p_\tau \prmr \|^2_{L^2}\bigr) ,
\end{align*}
and
\begin{align*}
    &\sum_{i=1}^2  \int_\Omega \pt G_i (\frac{\pt \cirs }{\cio} + z_i \pt \prs) \,dx \\
     & \leq   \frac{D^*}{32} \Bigl( \sum_{i=1}^2  \int_\Omega \frac{|\grad \pt \cirs|^2}{\cio}\,dx +\sum_{i=1}^2 z_i^2 \int_\Omega \cio |\grad \pt \prs|^2 \,dx \Bigr) \\
     & \quad + M \sum_{i=1}^2\bigl(1+\|\grad \cio\|_{L^\oo}^2 \bigr)\bigl( \ve^2 \|\pt(\ul \cio)\|_{L^2}^2 + \ve^2 \|\pt(\cio \grad \pl) \|_{L^2}^2 \\
     &\qquad \qquad  + \ve^{-2}\|\p_\xi \p_\tau \cilmr\|_{L^2}^2 +\ve^{-2}\|\p_\eta \p_\tau \cilmr\|_{L^2}^2 \bigr),
\end{align*}
and
\begin{align*}
   & \sum_{i=1}^2 \int_\Omega \pt H_i (\frac{\pt \cirs }{\cio} + z_i \pt \prs) \,dx \\
    & \leq  \frac{D^*}{32} \Bigl( \sum_{i=1}^2  \int_\Omega \frac{|\grad \pt \cirs|^2}{\cio}\,dx +\sum_{i=1}^2 z_i^2 \int_\Omega \cio |\grad \pt \prs|^2 \,dx \Bigr) \\
    &\quad + M \sum_{i=1}^2\bigl(1+\|\grad \cio\|_{L^\oo}^2 \bigr) \bigl( \ve^2\|\pt \uo\|_{L^\oo}^2 \|\ciir+ f\cilmr +g\cirmr\|_{L^2}^2  \\
    &\qquad +\|\uo\|_{L^\oo}^2 \|\p_\tau(\ciir+ f\cilmr +g\cirmr)\|_{L^2}^2 + \|\grad \p_\tau \ciir\|_{L^2}^2 + \|\grad_{x'}\p_\tau(f\cilmr+g\cirmr)\|_{L^2}^2 \\
    &\qquad +\|\p_\tau(f''\cilmr+g''\cirmr) \|_{L^2}^2 + \ve^2 \|\pt\cil\|_{L^\oo}^2\|\grad \pil\|_{L^2}^2 +\|\cil\|_{L^\oo}^2\|\grad \p_\tau \pil\|_{L^2}^2 \\
    &\qquad +\|\p_\tau (\ciir+ f\cilmr +g\cirmr) \|_{L^2}^2 \|\grad(\po+\pio+\ve\pil)\|_{L^\oo}^2\\
    &\qquad +\|\ciir+ f\cilmr +g\cirmr\|_{L^2}^2 \|\grad(\ve^2 \pt\po+\p_\tau \pio+\ve\p_\tau \pil)\|_{L^\oo}^2 \bigr).
\end{align*}

By summing up the above inequalities and using (\ref{result 1 in thm 1}-\ref{result 3 in thm 1}), (\ref{estimate of lap prs1}-\ref{estimate of lap prs2}) and all the estimates concerning the layer solutions, which have been obtained in Section \ref{analysis of layers},  we achieve
\beq\label{estimate of cirs,sum}
\begin{split}
    & \frac{d}{dt} \sum_{i=1}^2 \int_{\Omega } \frac{|\pt\cirs|^2}{\cio} \,dx +\ve^2 \frac{d}{dt}\|\grad \pt \prs\|^2_{L^2}   \\
    &\quad +\frac{3D^*}{16}\Bigl( \sum_{i=1}^2  \int_\Omega \frac{|\grad \pt \cirs|^2}{\cio}\,dx +\sum_{i=1}^2 z_i^2 \int_\Omega \cio |\grad \pt \prs|^2 \,dx \Bigr)+\frac{D^*\ve^2}{2} \|\lap \pt \prs \|^2_{L^2}   \\
    &\leq \kappa_9(t) \sup_{[0,t]}(\sum_{i=1}^2 \int_{\Omega } \frac{|\pt\cirs|^2}{\cio} \,dx +\ve^2\|\grad \pt \prs\|^2_{L^2} + \|\pt \urs\|_{L^2}^2) + \kappa_{10}(t) ,
\end{split}\eeq
where $\kappa_9,\kappa_{10} $ are positive functions satisfying
\begin{equation*}
   \int_0^T \kappa_9(t') dt' \sim O(1) \andf  \int_0^T \kappa_{10}(t') dt' \sim O(\ve), \quad \textit{as} \ \ve \rto 0.
\end{equation*}

By combining (\ref{estimate of urs,sum}) and (\ref{estimate of cirs,sum}) together, we thus complete the proof of Proposition \ref{estimate of thm3}.
\end{proof}

\begin{rmk}
Recalling the definition of $(\cirs,\prs,\urs)$ and $(\cisum,\psum,\usum)$, we deduce that
\begin{equation}\label{estimate of pt cirs}
    \|\pt \cisum\|_{L^2} + \ve\|\pt \grad \psum\|_{L^2} + \|\pt \usum\|_{L^2} \lesssim \ve^{\frac{1}{2}}.
\end{equation}
\end{rmk}

\subsection{\bf Estimate of $\|\lap \cisum\|_{L^\oo_T(L^2)}$.}
Firstly we rewrite (\ref{equation of cisum}) as
\begin{align*}
    D_i \lap \cisum = \pt \cisum + \usum\cdot \grad \cisum -z_iD_i \dive(\cio\grad \psum) -z_iD_i \dive(\cisum\grad \psum) -K_i-L_i-M_i ,
\end{align*}
from which, we infer
\begin{align*}
    \|\lap \cisum\|_{L^2} &\lesssim \|\pt \cisum\|_{L^2}  +\|\lap \psum\|_{L^2}(1+\|\grad\cio\|_{L^\oo})  + \|K_i\|_{L^2}+\|L_i\|_{L^2}+\|M_i\|_{L^2}  \\
    &\quad   \|\grad\cisum\|_{L^2}^\frac{1}{2}\|\lap\cisum\|_{L^2}^\frac{1}{2}(\|\usum\|_V+\|\lap\psum\|_{L^2}) .
\end{align*}
Applying H\"older inequality gives rise to
\begin{align*}
    \|\lap \cisum\|_{L^2} &\lesssim \|\pt \cisum\|_{L^2}  +\|\lap \psum\|_{L^2}(1+\|\grad\cio\|_{L^\oo})  + \|K_i\|_{L^2}+\|L_i\|_{L^2}+\|M_i\|_{L^2}  \\
    &\quad   \|\grad\cisum\|_{L^2}(\|\usum\|_V^2+\|\lap\psum\|_{L^2}^2),
\end{align*}
which together with (\ref{result 1 in thm 1}-\ref{result 3 in thm 1}), (\ref{estimate of lap psum1}-\ref{estimate of lap psum2}), (\ref{result1 of thm3}-\ref{result2 of thm3}), Lemma \ref{estimate of K_i-P_i} and all the estimates of the layer solutions obtained in Section \ref{analysis of layers} implies the following Corollary:

\begin{cor}\label{cor5.1}
    {\sl
Under the assumptions in Theorem \ref{thm4} and Theorem \ref{thm3}, one has
\begin{equation*}
    \|\cisum\|_{L^\oo_T(H^2)} +  \|\rsum\|_{L^\oo_T(H^2)} \lesssim \ve^\frac{1}{2},
\end{equation*}
and consequently
\begin{equation*}
     \|\cei- \cio\|_{L^\oo_T(H^2)} +  \|\re\|_{L^\oo_T(H^2)} \lesssim \ve^\frac{1}{2}.
\end{equation*}
    }\end{cor}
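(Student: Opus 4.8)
The plan is to upgrade the $H^1$-type control of $\cisum$ established in Theorem \ref{thm4} to $L^\oo_T(H^2)$ by reading the $\cisum$-equation \eqref{equation of cisum} as an elliptic identity for $\lap\cisum$, and then to translate the resulting bounds on $(\cisum,\rsum)$ --- together with those on $(\cirs,\prs,\urs)$ from Theorem \ref{thm3} --- into bounds on $(\cei-\cio,\re)$ by substituting the expansions \eqref{S5eq1}, \eqref{S5eq2} and estimating each layer profile in $H^2$.

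For the elliptic step I would first isolate the Laplacian in \eqref{equation of cisum},
\begin{equation*}
D_i\lap\cisum=\pt\cisum+\usum\cdot\grad\cisum-z_iD_i\dive(\cio\grad\psum)-z_iD_i\dive(\cisum\grad\psum)-K_i-L_i-M_i,
\end{equation*}
take the $L^2$ norm, and handle the only genuinely nonlinear term $\dive(\cisum\grad\psum)$ through the Gagliardo--Nirenberg estimate $\|\grad\cisum\|_{L^4}\lesssim\|\grad\cisum\|_{L^2}^{1/2}\|\lap\cisum\|_{L^2}^{1/2}$, absorbing the surplus factor $\|\lap\cisum\|_{L^2}^{1/2}$ on the left by Young's inequality. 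This controls $\|\lap\cisum\|_{L^2}$, pointwise in time, by $\|\pt\cisum\|_{L^2}$, by $(1+\|\grad\cio\|_{L^\oo})\|\lap\psum\|_{L^2}$, by $\sum_i(\|K_i\|_{L^2}+\|L_i\|_{L^2}+\|M_i\|_{L^2})$, and by a quadratic remainder $\|\grad\cisum\|_{L^2}(\|\usum\|_V^2+\|\lap\psum\|_{L^2}^2)$. Into this I would insert: $\|\pt\cisum\|_{L^\oo_T(L^2)}\lesssim\ve^{1/2}$ from \eqref{estimate of pt cirs}; $\|\lap\psum\|_{L^\oo_T(L^2)}\lesssim\ve^{1/2}$, $\|\grad\cisum\|_{L^\oo_T(L^2)}\lesssim\ve^{3/2}$, $\|\usum\|_{L^\oo_T(V)}\lesssim\ve^{3/2}$ from \eqref{estimate of lap psum1}; and from Lemma \ref{estimate of K_i-P_i} the bounds $\|K_i\|_{L^2}\lesssim\ve^{3/2}$ and $\|L_i\|_{L^\oo_T(L^2)}+\|M_i\|_{L^\oo_T(L^2)}\lesssim\ve^{1/2}$. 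The dominant terms being $O(\ve^{1/2})$, I conclude $\|\lap\cisum\|_{L^\oo_T(L^2)}\lesssim\ve^{1/2}$; since $\cisum|_{\p\Omega}=0$, the chain $\|\cisum\|_{L^2}\lesssim\|\grad\cisum\|_{L^2}\lesssim\|\lap\cisum\|_{L^2}$ recorded after \eqref{bdry condition of eqsum} then gives $\|\cisum\|_{L^\oo_T(H^2)}\lesssim\ve^{1/2}$, and since $\rsum=z_1c_1^S+z_2c_2^S$ also $\|\rsum\|_{L^\oo_T(H^2)}\lesssim\ve^{1/2}$.

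For the transfer step I would write, from \eqref{S5eq1}, $\cei-\cio=\cisum+\ve\cil+\ve^2\cir+\ve^2\ciir+\ve^2 f\cilbr+\ve^2 g\cirbr+\ve^2 f\cilmr+\ve^2 g\cirmr$. The outer profiles $\cil,\cir\in C([0,T];H^5)$ contribute $O(\ve)$ in $H^2$, and the initial layer contributes $\ve^2\sup_\tau\|\ciir(\tau)\|_{H^2}\lesssim\ve^2$ by Proposition \ref{prop ciir}. The delicate pieces are the boundary and mixed layers: in a term such as $\ve^2 f(y)\cilbr(x',y/\ve,t)$ each $\p_y$ produces a factor $\ve^{-1}$, while the change of variable $dy=\ve\,d\xi$ combined with the exponential decay in $\xi$ from \eqref{solution of left boundary layer of order 2} --- respectively, for the mixed layers $\cilmr$, $\cirmr$, the weighted bound \eqref{estimate of cilmr} --- produces a factor $\ve^{1/2}$, so its $H^2$ norm is $O(\ve^2\cdot\ve^{-2}\cdot\ve^{1/2})=O(\ve^{1/2})$. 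Collecting these with the previous step gives $\|\cei-\cio\|_{L^\oo_T(H^2)}\lesssim\ve^{1/2}$, and since the limit system enforces $z_1c_1^{(0)}+z_2c_2^{(0)}=\rho^{(0)}=0$ one has $\re=z_1(c_1^\ve-c_1^{(0)})+z_2(c_2^\ve-c_2^{(0)})$, whence $\|\re\|_{L^\oo_T(H^2)}\lesssim\ve^{1/2}$. I expect the main obstacle to be bookkeeping rather than genuine analysis: one has to make sure (i) that passing from the $\pt\cirs$ bound of Theorem \ref{thm3} to \eqref{estimate of pt cirs} costs nothing, because $\cisum$ and $\cirs$ differ only by $\ve^2$ times boundary and mixed profiles whose time derivatives are again exponentially small, and (ii) that each boundary or mixed layer multiplied by $\ve^2$ genuinely lands in $H^2$ with a surplus power of $\ve$, which is exactly where the exponential decay in \eqref{solution of left boundary layer of order 2} and the weighted estimate \eqref{estimate of cilmr} get used.
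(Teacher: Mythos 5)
Your argument matches the paper's proof essentially line for line: isolating $D_i\lap\cisum$ from \eqref{equation of cisum}, taking $L^2$ norms, treating $\dive(\cisum\grad\psum)$ via Gagliardo--Nirenberg and absorbing the half power of $\|\lap\cisum\|_{L^2}$ by Young, then inserting the bounds from Theorems \ref{thm4} and \ref{thm3}, Lemma \ref{estimate of K_i-P_i}, and the remark \eqref{estimate of pt cirs}. Your transfer step (expanding $\cei-\cio$ via \eqref{S5eq1} and counting powers of $\ve$ for each layer, with the $\ve^{1/2}$ coming from the $dy=\ve\,d\xi$ change of variables against exponential/weighted decay in $\xi$) is also what the paper relies on, and the two bookkeeping caveats you flag are precisely the ones the paper addresses in the remark following Proposition \ref{estimate of thm3}.
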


Now we are in a position to complete the proof of Theorem \ref{m ain thm}.
\begin{proof}[Proof of Theorem \ref{main thm}]
It suffices to verify the assumptions in Corollary \ref{cor5.1}. First of all,  thanks to  (\ref{main thm,assumption2}), Theorem \ref{coexistence of npns} implies the existence of $T$, which equals $+\oo$ in the case $d=2$, so that the systems \eqref{S1eq1} and \eqref{S1eq2} have unique
strong
solution on $[0,T].$
Then in view of  (\ref{main thm,assumption4}), (\ref{solution of left boundary layer of order 2}) and (\ref{solution of right boundary layer of order 2}), one has $$\plbr(0)=\prbr(0)=0.$$
Furthermore, due to (\ref{main thm,assumption1}) and $\Phi^\ve(0)=\pw=\po(0)$, we have
\begin{gather*}
    \cil(0)=\cir(0)=\ciir(0)=\cilmr(0)=\cirmr(0)=0,\\
    \pl(0)=\pr(0)=\pio(0)=\pil(0)=0 \andf \ul(0)=\ur(0)=0,
\end{gather*}
which together with the analysis in Section \ref{analysis of layers} ensures that
\begin{align*}
    \cil=\cir=\ciir=\cilmr=\cirmr=0,\\
    \pl=\pr=\pio=\pil=0, \quad \ul=\ur=0,
\end{align*}
and
\begin{equation}
    \begin{gathered}\label{residue term t=0}
     \cisum(0)=\cirs(0)= \cei(0)-c_i(0),\quad  \psum(0)=\prs(0)=\pe(0)-\psi(0)=0,\\
     \usum(0)=\urs(0)=\ue(0)-u_0,\quad  L_i(0)=M_i(0)=N(0)=P(0)=0,\\
    O(0) = -\usum(0)\cdot\grad u_0-u_0\cdot\grad\usum(0),\\
     K_i(0) = -u_0\cdot\grad\cisum(0)-\usum(0)\cdot\grad c_i(0) + z_iD_i\dive\bigl(\cisum(0)\grad\pw\bigr),\\
    \pt N =- 2\ve^3\pt(f'\p_\xi\plbr - g'\p_\eta \prbr)-\ve^4\pt(f\lap_{x'}\plbr+g\lap_{x'}\prbr +f''\plbr+g''\prbr ).
 \end{gathered}
\end{equation}
Thus all the assumptions in Theorem \ref{thm3} and Theorem \ref{thm4} hold except (\ref{assumption 2 in thm3}).
 
 Next we verify (\ref{assumption 2 in thm3}). In view of (\ref{solution of left boundary layer of order 2}) and (\ref{solution of right boundary layer of order 2}), we infer
\begin{equation}\label{O,K,ptN}
 \begin{gathered}
        \|O(0)\|_{L^2} \lesssim \|u_0\|_{H^3} \|\usum(0)\|_{V},\quad  \|\pt N(0)\|_{L^2} \lesssim \ve^\frac{7}{2}\|\pt \psi(0)\|_{H^2},\\
        \|K_i(0)\|_{L^2} \lesssim \|u_0\|_{H^2}\|\grad \cisum(0)\|_{L^2}  + \|c_i(0)\|_{H^3}\|\usum(0)\|_{L^2} + \|\pw\|_{H^4}\|\grad\cisum(0)\|_{L^2}.
\end{gathered}
\end{equation}

By virtue  of the equation (\ref{equation of cisum}) and (\ref{residue term t=0}), one has
\begin{align*}
    \|\pt\cisum(0)\| &\lesssim \|\usum(0)\|_{V}\|\lap\cisum(0)\|_{L^2} + \|\lap\cisum(0)\|_{L^2} +\|K_i(0)\|_{L^2}.
\end{align*}

Notice that $\rsum(0)=\re(0)=0$, we deduce from  equation (\ref{equation of usum}) that
\begin{align*}
    \|\pt\usum(0)\|_{L^2} \lesssim \|A\usum(0)\|_{L^2} + \|A\usum(0)\|_{L^2}^2 + \|O(0)\|_{L^2}.
\end{align*}

To estimate $\|\grad \pt \psum\|_{L^2}$, we  get, by first multiplying the equation of $\cisum$ by $z_i$ and  summing up  for $i=1,2$, and then inserting the equation (\ref{5.4b}) into the resulting equation, that
\begin{align*}
    -\ve^2 \lap \pt \psum + \pt N + \usum\cdot\grad\rsum &=\sum_{i=1}^2z_iD_i\lap\cisum +\sum_{i=1}^2z_i^2D_i\dive(\cio\grad\psum) \\
    &\quad + \sum_{i=1}^2z_i^2D_i \dive(\cisum\grad\psum) +\sum_{i=1}^2z_i(K_i+L_i+M_i),
\end{align*}
from which, one has
\begin{align*}
    &\lap( \ve^2  \pt \psum +\sum_{i=1}^2z_iD_i\cisum +\sum_{i=1}^2z_i^2D_i\cio\psum) = \usum \cdot \grad \rsum + \sum_{i=1}^2z_i^2D_i\dive(\psum\grad \cio) \\
    &\qquad \qquad \qquad \qquad\qquad \qquad  -\sum_{i=1}^2z_i^2D_i \dive(\cisum\grad\psum) - \sum_{i=1}^2z_i(K_i+L_i+M_i) +\pt N.
\end{align*}
It follows from standard elliptic estimate and  $\psum(0)=0=\rsum(0)$ that
\begin{align*}
    &\ve^2 \|\grad \pt \psum(0) \|_{L^2} \lesssim \sum_{i=1}^2 \|\grad\cisum(0)\|_{L^2} +  \|\pt N(0)\|_{L^2}  +\sum_{i=1}^2\|K_i(0)\|_{L^2}.
\end{align*}

Finally, by inserting (\ref{main thm,assumption1}-\ref{main thm,assumption3}), Lemma \ref{estimate of K_i-P_i} and (\ref{O,K,ptN}) into the above inequalities, we achieve
$$\|\pt\cisum(0)\|_{L^2}+\|\pt\usum(0)\|_{L^2}+\ve\|\grad \pt \psum(0)\|_{L^2} \lesssim \ve^\frac{1}{2}.$$
This verifies (\ref{assumption 2 in thm3}) and we thus completes the proof of Theorem \ref{main thm}.
\end{proof}

\appendix
\section{The proof of Lemma \ref{estimate of K_i-P_i} }\label{appA}

The goal of this section is to present the proof of
 Lemma \ref{estimate of K_i-P_i}.

 \begin{proof}[Proof of
 Lemma \ref{estimate of K_i-P_i}] The estimates of the terms $K_i$ to $P_i$ are straight application of H\"older's inequality.
 We first observe that
\begin{equation*}
    \begin{split}
\| &K_i\|_{L^2}
     \leq M\|\uo + \ve \ul\|_{L^\oo} (\int_\Omega \frac{|\grad \cisum|^2}{\cio} \,dx)^\frac{1}{2} \\
     & + M \| \cio+\ve\cil+\ve^2 \cir+\ve^2\ciir + \ve^2 f \cilbr +\ve^2 g\cirbr+ \ve^2 f \cilmr +\ve^2 g\cirmr) \|_{H^2} \|\usum\|_{H^1}\\
     & +M \| \po+\ve\pl+\ve^2 \pr+ \pio+\ve\pil+\ve^2f\plbr+\ve^2g\prbr \|_{W^{2,\oo}} (\int_\Omega \frac{|\grad \cisum|^2}{\cio} \,dx)^\frac{1}{2} \\
     & +M\ve \| \cil+\ve \cir+\ve\ciir+ \ve f \cilbr +\ve g\cirbr+ \ve f \cilmr +\ve g\cirmr  \|_{W^{1,\oo}} \|\lap \psum\|_{L^2} .
    \end{split}
\end{equation*}
For  $L_i$, one has
\begin{equation*}
\|L_i\|_{L^2}
      \leq M\ve \|\grad \cio \|_{L^\oo} \bigl(\|\xi \p^2_\xi \plbr\|_{L^2} +\|\xi \p^2_\eta \prbr\|_{L^2} \bigr) + M\ve^2 \|\pt \cio\|_{H^2} \|\pio\|_{H^2} .
\end{equation*}

For the estimate of $M_i$, we find
\begin{align*}
&\|M_i\|_{L^2}\leq \ve^3 \|\ul \cdot \grad \cir\|_{L^2} + \ve^2 \|\ur\cdot \grad \cio\|_{L^2} + M\ve^2 \|\lap \ciir\|_{L^2} \\
      & \quad + \ve\|\uo+\ve\ul\|_{L^\oo} \| \grad( \ve\ciir + \ve f \cilbr +\ve g\cirbr+ \ve f \cilmr +\ve g\cirmr)\|_{L^2} \\
      &\quad + M\ve^2 \|f \lap_{x'} \cilbr + g \lap_{x'}\cirbr +f \lap_{x'} \cilmr + g \lap_{x'}\cirmr \|_{L^2}\\
      &\quad+ M\ve^2 \|f'' \cilbr +g''\cirbr+f'' \cilmr +g''\cirmr\|_{L^2} \\
      &\quad +M\ve \|f' \p_\xi \cilbr -  g'\p_\eta \cirbr+f' \p_\xi \cilmr - \ve g'\p_\eta \cirmr\|_{L^2}\\
      &\quad +M\ve^2 \|\grad \cio\|_{L^\oo} \|\grad (f\plbr+g\prbr) \|_{L^2}\\
      &\quad + M \ve^2 \|\cio\|_{L^\oo} \|f''\plbr+g''\prbr+f \lap_{x'} \plbr + g \lap_{x'}\prbr\|_{L^2}\\
      &\quad + M\ve  \|\cio\|_{L^\oo} \|f' \p_\xi \plbr - \ve g'\p_\eta \prbr \|_{L^2} +M\ve \|\cio\|_{H^2}\|\pil\|_{H^2} \\
      &\quad +M\ve^3 \bigl(\|\cil\|_{H^2} \|\pr\|_{H^2} +\|\cir\|_{H^2} \|\pl\|_{H^2} +\ve \|\cir\|_{H^2} \|\pr\|_{H^2}  \bigr) \\
      &\quad + M\ve \| \cil + \ve\cir + \ve \ciir\|_{H^1} \|\grad (\pio+\ve\pil+\ve^2f\plbr+\ve^2g\prbr)\|_{L^2} \\
      &\quad +M\ve \| \cil + \ve\cir + \ve \ciir\|_{L^\oo}\|\lap(\pio+\ve\pil+\ve^2f\plbr+\ve^2g\prbr) \|_{L^2}\\
      &\quad +M\ve \|\grad (\ve f \cilbr +\ve g\cirbr+\ve f \cilmr +\ve g\cirmr)  \|_{L^2}\| \grad (\pio+\ve\pil+\ve^2f\plbr+\ve^2g\prbr) \|_{L^\oo}\\
      &\quad + M\ve^2 \| f \cilbr + g\cirbr+ f \cilmr + g\cirmr)  \|_{L^2}\| \lap (\pio+\ve\pil+\ve^2f\plbr+\ve^2g\prbr) \|_{L^\oo}\\
      &\quad +M\ve \| \grad(\ve\ciir+\ve f \cilbr +\ve g\cirbr+\ve f \cilmr +\ve g\cirmr) \|_{L^2} \|\po+\ve\pl+\ve^2 \pr\|_{H^1} \\
      &\quad +M\ve^2  \| \ciir+ f \cilbr +g\cirbr+f \cilmr + g\cirmr) \|_{L^2}\|\po+\ve\pl+\ve^2 \pr\|_{H^2}\\
      &\quad + M\ve^2\bigl( \|\pt\cilbr\|_{L^2} + \|\pt\cirbr\|_{L^2} \bigr) + M\|\p_\xi^2\plmr\|_{L^2}+M\|\p_\eta^2\prmr\|_{L^2}.
\end{align*}

For  $N_i$, we have
\begin{equation*}
    \begin{split}
\|N_i\|_{L^2}
      &\leq \ve^3 \| \pl\|_{H^2 } + \ve^4 \|\pr\|_{H^2} + \ve^3 \|\pil\|_{H^2} + M\ve^3 \bigl(\|\p_\xi \plbr\|_{L^2} +\|\p_\eta \prbr\|_{L^2}\bigr) \\
      &\quad+ \ve^4 \| f\lap_{x'}\plbr+g\lap_{x'}\prbr +f''\plbr+g''\prbr \|_{L^2}+ \ve^2\|f\p_\xi^2\plmr+g\p_\eta^2\prmr\|_{L^2},
    \end{split}
\end{equation*}
and
\begin{equation*}
    \begin{split}
\|\grad N_i\|_{L^2}
      &\leq \ve^3 \| \pl\|_{H^3 } + \ve^4 \|\pr\|_{H^3} + \ve^3 \|\pil\|_{H^3} + M\ve^3 \bigl(\|\grad_{x'} \p_\xi \plbr\|_{L^2} +\|\grad_{x'}\p_\eta \prbr\|_{L^2}\bigr) \\
      &\quad+ \ve^4 \| f\grad_{x'}\lap_{x'}\plbr+g\grad_{x'}\lap_{x'}\prbr +f''\grad_{x'}\plbr+g''\grad_{x'}\prbr \|_{L^2} \\
      &\quad + M\ve^2 \bigl(\|\p_\xi^2 \plbr\|_{L^2} +\|\p_\eta^2 \prbr\|_{L^2}\bigr)\\
      &\quad + \ve^3 \| \p_\xi(f\lap_{x'}\plbr+f''\plbr)-\p_\eta(g\lap_{x'}\prbr +g''\prbr )\|_{L^2} \\
      &\quad+M\ve^2 \bigl(\|\grad_{x'}\p_\xi^2 \plmr\|_{L^2} +\|\grad_{x'}\p_\eta^2 \prmr\|_{L^2}\bigr) +\ve\| \p_\xi(f\p_\xi^2\plmr)-\p_\eta(g\p_\eta^2\prmr  )\|_{L^2}  ,
    \end{split}
\end{equation*}
and
\begin{equation*}
    \begin{split}
        \|\pt N_i\|_{L^2}
        &\leq \ve^3 \| \pt\pl\|_{H^2 } + \ve^4 \|\pt \pr\|_{H^2} + \ve \|\p_\tau \pil\|_{H^2} + M\ve^3 \bigl(\|\p_\xi \pt \plbr\|_{L^2} +\|\p_\eta \pt \prbr\|_{L^2}\bigr) \\
      &\quad+ \ve^4 \| f\lap_{x'}\pt\plbr+g\lap_{x'}\pt\prbr +f''\pt\plbr+g''\pt\prbr \|_{L^2}\\
      &\quad+\| f\p_\xi^2 \p_\tau\plmr + g\p_\eta^2 \p_\tau\prmr\|_{L^2}.
    \end{split}
\end{equation*}

For $O_i$, one has
\begin{equation*}
    \begin{split}
&\|O_i\|_{L^2}
     \leq \|\uo+\ve\ul\|_{H^2} \| \usum\|_{H^1} \\
     &\quad + M\ve^2 \|\rr+\rir+f\rlbr+g\rrbr+f\rlmr+g\rrmr\|_{L^\oo}\|\lap \psum\|_{L^2} \\
     &\quad + M \|\grad (\po+\ve\pl+\ve^2 \pr+ \pio+\ve\pil+\ve^2f\plbr+\ve^2g\prbr)\|_{L^\oo} \Bigl(\int_\Omega \frac{|\grad \cisum|^2}{\cio} \,dx\Bigr)^\frac{1}{2} .
    \end{split}
\end{equation*}

For  $P_i$, we have
\begin{equation*}
    \begin{split}
\|P_i\|_{L^2}
      &\leq \ve^2 \|\ul\|_{L^\oo}\|\grad \ul\|_{L^2}+\ve^2 \|\rr+\rir+f\rlbr+g\rrbr+f\rlmr+g\rrmr\|_{L^\oo} \\
      &\qquad\quad   \times \|\grad (\po+\ve\pl+\ve^2 \pr+ \pio+\ve\pil+\ve^2f\plbr+\ve^2g\prbr)\|_{L^2}.
    \end{split}
\end{equation*}

By summarizing the above estimates and using the estimates which we obtained in the previous sections, we arrive at
\eqref{S5eq9}. This completes the proof of Lemma \ref{estimate of K_i-P_i}.
\end{proof}

\subsection*{Acknowledgements} Ping Zhang is partially  supported by National Key R$\&$D Program of China under grant
  2021YFA1000800, K. C. Wong Education Foundation and
  by National Natural Science Foundation of China  under Grant 12031006.

\end{document}